\theoremstyle{plain} 
\newtheorem{Theorem}{Theorem}[section]
\newtheorem*{Theorem*}{Theorem}
\newtheorem{Lemma}[Theorem]{Lemma}
\newtheorem{Corollary}[Theorem]{Corollary}
\newtheorem*{Corollary*}{Corollary}
\newtheorem{Proposition}[Theorem]{Proposition}
\newtheorem*{Proposition*}{Proposition}
\newtheorem{MainTheorem}{Theorem}
\theoremstyle{definition}
\newtheorem{Definition}[Theorem]{Definition}
\newtheorem{Proposition-Definition}[Theorem]{Proposition-Definition}
\newtheorem*{Definition*}{Definition}
\newtheorem{Example}[Theorem]{Example}
\newtheorem{Conjecture}[Theorem]{Conjecture}
\newtheorem*{Conjecture*}{Conjecture}
\newtheorem*{Question*}{Question}
\theoremstyle{remark}
\newtheorem{Remark}[Theorem]{Remark}
\newtheorem*{Remark*}{Remark}
\newcommand{\coker}{\operatorname{coker}}
\newcommand{\im}{\operatorname{im}}
\newcommand{\Spec}{\operatorname{Spec}}
\newcommand{\Spm}{\operatorname{Spm}}
\newcommand{\Gr}{\operatorname{Gr}}
\newcommand{\Res}{\operatorname{Res}}
\newcommand{\rank}{\operatorname{rank}}
\newcommand{\Hom}{\operatorname{Hom}}
\newcommand{\RHom}{\operatorname{RHom}}
\newcommand{\Ext}{\operatorname{Ext}}
\newcommand{\Tor}{\operatorname{Tor}}
\newcommand{\id}{\operatorname{id}}
\newcommand{\Gal}{\operatorname{Gal}}
\newcommand{\Frob}{\operatorname{Frob}}
\newcommand{\eC}{\mathscr{C}}
\newcommand{\eA}{\mathscr{A}}
\newcommand{\eD}{\mathscr{D}}
\newcommand{\eH}{\mathscr{H}}
\newcommand{\eM}{\mathscr{M}}
\newcommand{\cHom}{\eH\!\operatorname{om}} 
\newcommand{\bC}{\mathbb{C}}
\newcommand{\bF}{\mathbb{F}}
\newcommand{\bG}{\mathbb{G}}
\newcommand{\bN}{\mathbb{N}}
\newcommand{\bP}{\mathbb{P}}
\newcommand{\bQ}{\mathbb{Q}}
\newcommand{\bR}{\mathbb{R}}
\newcommand{\bZ}{\mathbb{Z}}
\DeclareMathAlphabet{\pazocal}{OMS}{zplm}{m}{n} 
\newcommand{\cA}{\pazocal{A}}
\newcommand{\cB}{\pazocal{B}}
\newcommand{\cF}{\pazocal{F}}
\newcommand{\cM}{\eM}
\newcommand{\cO}{\pazocal{O}}
\newcommand{\cV}{\pazocal{V}}
\newcommand{\fa}{\mathfrak{a}}
\newcommand{\fd}{\mathfrak{d}}
\newcommand{\fj}{\mathfrak{j}}
\newcommand{\fm}{\mathfrak{m}}
\newcommand{\fp}{\mathfrak{p}}
\newcommand{\fq}{\mathfrak{q}}
\newcommand{\fv}{\mathfrak{v}}
\numberwithin{equation}{section}
\title{On the Integral Part of $A$-Motivic Cohomology}
\author{Quentin Gazda\thanks{\textbf{ Affiliation:} CMLS, \'Ecole Polytechnique, Palaiseau, France \\ \textbf{ Mail:} quentin@gazda.fr.}}
\date{}
\def\temp{&} \catcode`&=\active \let&=\temp 
\begin{document}

\maketitle

\begin{center}
\textbf{Abstract}
\end{center}
The deepest arithmetic invariants attached to an algebraic variety defined over a number field $F$ are conjecturally captured by the \emph{integral part} of its \emph{motivic cohomology}. There are essentially two ways of defining it when $X$ is a smooth projective variety: one is via the $K$-theory of a regular integral model, the other is through its $\ell$-adic realization. Both approaches are conjectured to coincide. 

This paper initiates the study of motivic cohomology for global fields of positive characteristic, hereafter named \emph{$A$-motivic cohomology}, where classical mixed motives are replaced by mixed Anderson $A$-motives. Our main objective is to set the definitions of the \emph{integral part} and the \emph{good $\ell$-adic part} of the $A$-motivic cohomology using Gardeyn's notion of \emph{maximal models} as the analogue of regular integral models of varieties. Our main result states that the \emph{integral part} is contained in the \emph{ good $\ell$-adic part}. As opposed to what is expected in the number field setting, we show that the two approaches do not match in general.

We conclude this work by introducing the submodule of \emph{regulated extensions} of mixed Anderson $A$-motives, for which we expect the two approaches to match, and solve some particular cases of this expectation. 

{\footnotesize
\tableofcontents
}

\section{Introduction}\label{chap:introduction}
\subsection{The number field picture}\label{section:numberfieldspicture}
The idea of \emph{mixed motives} and \emph{motivic cohomology} has been gradually formulated by Deligne, Beilinson and Lichtenbaum and aims to extend Grothendieck's philosophy of pure motives. Before discussing the function fields side, subject of the paper, let us first present the classical setting to derive some motivations. \\

The theory, mostly conjectural, starts with a number field $F$. The hypothetical landscape portrays a $\bQ$-linear Tannakian category $\cM\cM_F$ of \emph{mixed motives} over $F$, equipped with several \emph{realization functors} having $\cM\cM_F$ as source (see \cite[\S 1]{deligne-droite}). Among them, the \emph{$\ell$-adic realization functor} $V_{\ell}$, for a prime number $\ell$, takes values in the category of continuous $\ell$-adic representations of the absolute Galois group $G_F=\Gal(\bar{F}|F)$, given a fixed algebraic closure $\bar{F}$ of $F$.

It is expected that \emph{reasonable} cohomology theories factor through the category $\cM\cM_F$. For instance, the $\ell$-adic realization should recover the \'etale cohomology of algebraic varieties with coefficients in $\bQ_{\ell}$ in the following way:  for all integer $i$, one foresees the existence of a functor $h^i$, from the category of algebraic varieties over $F$ to $\cM\cM_F$, making the following diagram of categories commute: 
\begin{equation}
\begin{tikzcd}
\{\text{Varieties}/F\} \arrow[r,"h^i"] \arrow[dr,"X\mapsto \operatorname{H}^i_{\text{\'et}}(X\times_F \bar{F}{,}\bQ_{\ell})"'] & \cM\cM_F \arrow[d,"V_{\ell}"]\\
& \operatorname{Rep}_{\bQ_{\ell}}(G_F) 
\end{tikzcd} \nonumber
\end{equation}
According to Deligne \cite[\S 1.3]{deligne-droite}, the category $\cM\cM_F$ should admit a \emph{weight filtration} in the sense of Jannsen \cite[Def 6.3]{jannsen}, which would coincide with the classical weight filtration of varieties. The \emph{weights} of a mixed motive $M$ would then be defined as the breaks of its weight filtration.\\

From the Tannakian formalism, $\cM\cM_F$ admits a tensor operation extending the fiber product on varieties. We fix $\mathbbm{1}$ a neutral object. Let $M$ be a mixed motive over $F$. According to Beilinson \cite[\S 0.3]{beilinson} (see also \cite[Def 17.2.11]{andre}), the \emph{motivic cohomology of $M$} is defined as the complex 
\[
\RHom_{\cM\cM_F}(\mathbbm{1},M)
\]
in the derived category of $\bQ$-vector spaces. Its $i$th cohomology is the $\bQ$-vector space $\Ext^i_{\cM\cM_F}(\mathbbm{1},M)$, the space of $i$-fold extensions of $\mathbbm{1}$ by $M$ in $\cM\cM_F$. We quote from~\cite[\S2]{scholl} and \cite[\S1.3]{deligne-droite} respectively:
\begin{Conjecture*}
We expect that:
\begin{enumerate}[label=(C$\arabic*$)]
\item\label{item:conjectureC1} for $i\not\in \{0,1\}$, $\Ext^i_{\cM\cM_F}(\mathbbm{1},M)=0$,
\item\label{item:conjectureC2} if $w$ denotes the smallest weight of $M$ and $w\geq 0$, then  $\Ext^1_{\cM\cM_F}(\mathbbm{1},M)=0$.
\end{enumerate}
\end{Conjecture*}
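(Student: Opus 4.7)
The approach is to reduce both parts of the conjecture to known or conjectural statements about rational $K$-theory and continuous Galois cohomology, passing through the expected realization functors. Because $\cM\cM_F$ is itself conjectural, every step below is necessarily conditional on the chosen model (Voevodsky, Nori, or Jannsen).

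The first step is a weight-filtration d\'evissage. Any short exact sequence in $\cM\cM_F$ yields a long exact sequence of $\Ext$'s, so by induction on the length of the weight filtration we may assume $M$ is pure of some weight $w$; in case (C2) this is where the hypothesis $w \geq 0$ enters. In case (C1) we further decompose into isotypic components to reduce to $M = \mathbbm{1}(n)$ for some $n \in \bZ$. Granting the conjectural identification
\[
\Ext^i_{\cM\cM_F}(\mathbbm{1},\mathbbm{1}(n)) \;\cong\; H^i_{\mathrm{mot}}(F,\bQ(n)) \;\cong\; K_{2n-i}(F)^{(n)}_{\bQ},
\]
the problem becomes one about Adams eigenspaces of $K_*(F) \otimes \bQ$. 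Vanishing for $i < 0$ is then the Beilinson--Soul\'e conjecture for $F$, while vanishing for $i \geq 2$ should follow from the cohomological dimension $\leq 1$ of $\cM\cM_F$, itself a consequence of compatibility with continuous Galois cohomology together with $\mathrm{cd}_\ell(G_F) = 2$ and the vanishing $H^2(G_F, \bQ_\ell(n)) = 0$ for $n \geq 1$.

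For (C2) the key step is a regulator computation. With $M$ pure of weight $w \geq 0$, the $\ell$-adic realization yields
\[
\Ext^1_{\cM\cM_F}(\mathbbm{1},M) \longrightarrow H^1_{\mathrm{cont}}(G_F, V_\ell(M)),
\]
which under the Beilinson--Bloch--Kato conjectures factors through the Selmer subgroup $H^1_f(G_F, V_\ell(M))$. For $w > 0$ the target vanishes by a weight argument (no nontrivial mixed Hodge structure is an extension of $\bR$ by a pure object of strictly positive weight, and the $p$-adic counterpart holds by Fontaine's theory); for $w = 0$ one uses that Artin motives have trivial Selmer group, a classical consequence of Dirichlet-type vanishing at all places. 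Injectivity of the regulator, itself a standard conjecture, then transports this vanishing back to $\cM\cM_F$.

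The main obstacle is foundational. Absent an unconditional construction of $\cM\cM_F$ carrying a strict weight filtration and realizations compatible with the Adams grading of $K$-theory, none of the above reductions can be fully verified; and even granting such a category, the Beilinson--Soul\'e vanishing required for (C1) is open in general, as is the injectivity of the regulator required for (C2). In effect, proving this conjecture is tantamount to simultaneously constructing mixed motives over $F$ and establishing the Beilinson--Bloch--Kato conjectures, which is why only partial progress in specific weights (via Borel's theorem and work of Soul\'e) is currently available. This impasse is a principal motivation for the present paper's shift to the function-field setting, where Anderson's $A$-motives supply an honest abelian category in which analogues of (C1) and (C2) can be formulated and attacked directly.
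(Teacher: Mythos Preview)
This statement is presented in the paper as a \emph{conjecture}, not a theorem: it appears in the introductory Section~\ref{section:numberfieldspicture} as background on the number field picture, quoted from Scholl and Deligne. The paper offers no proof of it, nor does it claim one; the entire category $\cM\cM_F$ here is the hypothetical category of mixed motives over a number field, whose very existence remains open. There is therefore no proof in the paper to compare your proposal against.

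Your write-up correctly identifies this. What you have produced is not a proof but a heuristic road map, explicitly conditional on the construction of $\cM\cM_F$, on the Beilinson--Soul\'e vanishing conjecture, and on injectivity of regulator maps --- and you say as much in your final paragraph. That is a fair summary of why (C1) and (C2) are believed, though some of the details are loose (for instance, the reduction in (C1) to $M=\mathbbm{1}(n)$ by ``isotypic components'' is not available: pure motives of a given weight are not in general sums of Tate twists). As a sketch of the conjectural landscape it is reasonable; as a proof it is, by your own admission, not one.

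The paper's actual contribution lies elsewhere: it establishes function-field analogues of (C1) and (C2) for Anderson $A$-motives (Theorem~B in the introduction, proved via Propositions~\ref{prop:separated-weights-ext}, \ref{prop-torsion-in-Ext-positive-weights} and Theorem~\ref{thm:main0}). Those arguments are direct and unconditional, exploiting the explicit description of extension modules from Theorem~A and the slope filtration of isocrystals, and bear no resemblance to the conditional $K$-theoretic reductions you outline for the number field case.
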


Let us focus on the $\bQ$-vector space of $1$-fold extensions $\Ext^1_{\cM\cM_F}(\mathbbm{1},M)$. A subspace thereof of fundamental importance is the space of \emph{extensions having everywhere good reduction}. In the literature, we encounter two definitions which are expected to give the same result. Let us first describe its local constructions.

\paragraph{Via the $\ell$-adic realization:}
Let $F_{\fp}$ be the local field of $F$ at a finite place $\fp$, and let $M_{\fp}$ be a mixed motive over $F_{\fp}$. Let $G_{\fp}$ be the absolute Galois group of $F_{\fp}$, and let $I_{\fp}$ be its inertia subgroup. Given a prime number $\ell$, one predicts that the $\ell$-adic realization $V_{\ell}$ is an exact functor. This allows one to construct a $\bQ$-linear morphism, called \emph{the $\ell$-adic realization map of $M_{\fp}$},
\begin{equation}
r_{M,\ell,\fp}:\Ext^1_{\cM\cM_{F_{\fp}}}(\mathbbm{1}_{\fp},M_{\fp})\longrightarrow \operatorname{H}^1(G_{\fp},V_{\ell}M_{\fp}) \nonumber
\end{equation}
which maps the class of an exact sequence $[E_{\fp}]:0\to M_{\fp}\to E_{\fp}\to \mathbbm{1}_{\fp}\to 0$ in $\cM\cM_{F_{\fp}}$ to the class of the continuous cocycle $c:G_{\fp}\to V_{\ell}M_{\fp}$ associated to the class of the exact sequence $[V_\ell E_{\fp}]:0\to V_{\ell}M_{\fp} \to V_{\ell}E_{\fp} \to V_{\ell}\mathbbm{1}_{\fp}\to 0$ in $\operatorname{Rep}_{\bQ_{\ell}}(G_{\fp})$.

Suppose $\ell$ does not divide $\fp$. Following Scholl \cite{scholl}, we say that $[E_{\fp}]\in \Ext^1_{\cM\cM_{F_{\fp}}}(\mathbbm{1},M)$ has \emph{good reduction} if $r_{M,\ell,\fp}([E_{\fp}])$ splits as a representation of $I_{\fp}$ (that is, $[V_\ell E_{\fp}]$ is zero in $\operatorname{H}^1(I_{\fp},V_{\ell}M_{\fp})$). In \cite[\S 2  Rmk]{scholl}, Scholl conjectures:
\begin{Conjecture*}
We expect that:
\begin{enumerate}[label=(C$3$)]
\item\label{item:conjectureC3} The property that $[E_{\fp}]$ has good reduction is independent of the prime $\ell$.
\end{enumerate}
\end{Conjecture*}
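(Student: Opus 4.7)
The plan is to exploit the expected \emph{compatibility} of the family of $\ell$-adic realizations attached to $E_{\fp}$, and to reduce the conjecture to the $\ell$-independence of the associated Weil--Deligne representations. First, I would reinterpret the good reduction property: saying that $r_{M,\ell,\fp}([E_{\fp}])$ splits as an $I_{\fp}$-representation amounts to requiring the existence of an $I_{\fp}$-equivariant section $V_{\ell}\mathbbm{1}_{\fp}\to V_{\ell}E_{\fp}$. By Grothendieck's local monodromy theorem, after passing to a suitable finite extension of $F_{\fp}$, inertia acts quasi-unipotently on $V_{\ell}E_{\fp}$, and its action is encoded in a nilpotent monodromy operator $N_{\ell}\in\End(V_{\ell}E_{\fp})$. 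The splitting condition then becomes a statement on the compatibility of $N_{\ell}$ with the subobject $V_{\ell}M_{\fp}$, namely the vanishing of a linear-algebraic obstruction class computed from $N_{\ell}$ together with the Frobenius action.

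Second, I would package these data into a filtered Weil--Deligne representation $\mathrm{WD}_{\ell}(E_{\fp})$, the filtration coming from the subobject $V_{\ell}M_{\fp}\subset V_{\ell}E_{\fp}$. The question of whether $[V_{\ell}E_{\fp}]$ splits over $I_{\fp}$ then translates into a purely linear-algebraic statement about splittings of this filtered Weil--Deligne representation, depending only on the Jordan structure of $N_{\ell}$ and on the action of Frobenius on the generalized eigenspaces. This is precisely the kind of datum one expects to be intrinsic to the motive, and therefore $\ell$-independent.

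The key input would therefore be the conjectural $\ell$-independence of the isomorphism class of $\mathrm{WD}_{\ell}(E_{\fp})$, once extended to a common algebraically closed coefficient field. Granting this, the splitting property is an intrinsic invariant of $E_{\fp}$, and \ref{item:conjectureC3} follows. Alternatively, working more Tannakianly, one could try to deduce the independence from the compatibility of the family of fiber functors $(V_{\ell})_{\ell}$ on $\cM\cM_{F_{\fp}}$, using that an extension in a Tannakian category should be detected by its image under any faithful exact tensor functor.

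The principal obstacle is this last independence statement. In the number field setting, $\ell$-independence of Weil--Deligne representations is known for pure motives arising from smooth projective varieties, by work surrounding the Weil conjectures, but only partial results are available for mixed motives attached to more general varieties. A proof at the level of the abstract category $\cM\cM_{F_{\fp}}$ would therefore seem to require both a construction of that category with the expected properties and a substantial extension of current $\ell$-independence results. A realistic intermediate target would be to first establish \ref{item:conjectureC3} inside the Tannakian subcategory of $\cM\cM_{F_{\fp}}$ generated by $h^i(X)$ for $X$ smooth projective over $F$, by combining the known strict compatibility of such systems with an explicit analysis of extension classes.
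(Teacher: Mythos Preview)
The statement you are attempting to prove is presented in the paper as a \emph{conjecture}, not a theorem. It appears in Section~\ref{section:numberfieldspicture}, ``The number field picture'', as part of the motivational background on classical mixed motives, and is attributed to Scholl. The paper offers no proof of \ref{item:conjectureC3}; indeed, the entire number-field discussion in Section~\ref{section:numberfieldspicture} is explicitly framed as conjectural (the category $\cM\cM_F$ itself is only hypothetical in that setting). The paper's actual contributions concern the function-field analogue, where the corresponding independence-of-$\ell$ statement is likewise left open (see the Remark following Proposition~\ref{prop-characterization-of-extension-good} and Conjecture~\ref{conjecture-reg}).

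Your proposal is a reasonable heuristic outline, but it is not a proof, and you yourself acknowledge this: the ``key input'' you invoke, namely the $\ell$-independence of the isomorphism class of the Weil--Deligne representation $\mathrm{WD}_{\ell}(E_{\fp})$ for a general mixed motive $E_{\fp}$, is itself an open conjecture of comparable depth to \ref{item:conjectureC3}. So what you have written is a reduction of one conjecture to another, not a proof. Moreover, even the reduction step is not entirely straightforward: the splitting of the extension over $I_{\fp}$ is a statement about the extension class in $H^1(I_{\fp},V_{\ell}M_{\fp})$, and extracting this from the Weil--Deligne data requires more than just knowing the isomorphism class of $\mathrm{WD}_{\ell}(E_{\fp})$; one needs compatibility of the filtration by $V_{\ell}M_{\fp}$ across $\ell$, which amounts to a \emph{filtered} or \emph{motivic} compatibility statement that is again conjectural. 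In short, there is no gap to identify in a comparison with the paper's proof, because the paper gives none; your write-up should be understood as a strategy sketch contingent on several deep open problems.
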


We then define $\Ext^1_{\text{good}}(\mathbbm{1},M_{\fp})_{\ell}$ as the subspace of $\Ext^1_{\cM\cM_F}(\mathbbm{1},M_{\fp})$ consisting of extensions having good reduction. By \ref{item:conjectureC3}, it should not depend on $\ell$: this is the \emph{$\fp$-integral part} of the motivic cohomology of $\underline{M}_\fp$. 

\paragraph{Via the $K$-theory of regular models:} 
An other conjectural way of defining the \emph{$\fp$-integral part} of motivic cohomology uses its expected link with $K$-theory. Following Beilinson \cite{beilinson} in the case where $M_{\fp}$ is of the form $h^{i-1}(X)(n)$ for a smooth projective variety $X$ over $F_{\fp}$ and two integers $n$, $i\geq 1$, there should be a natural isomorphism of $\bQ$-vector spaces (see \emph{loc.~cit.} for details):
\begin{equation}\label{eq:mc-via-K-theory}
\Ext^1_{\cM\cM_{F_{\fp}}}(\mathbbm{1},M_{\fp})\stackrel{\sim}{\longrightarrow} (K_{2n-i}(X)\otimes_{\bZ}\bQ)^{(n)}. 
\end{equation}
Assume that $X$ has a regular model $\mathcal{X}$ over $\cO_{\fp}$ (\emph{i.e.} $\mathcal{X}$ is regular over $\Spec \cO_{\fp}$ and $\mathcal{X}\times_{\Spec \cO_{\fp}}\Spec F_{\fp}=X$). Then, we define $\Ext^1_{\cO_{\fp}}(\mathbbm{1},M_{\fp})$ to be the inverse image of 
\[
\operatorname{image}\left((K_{2n-i}(\mathcal{X})\otimes_{\bZ}\bQ)^{(n)}\longrightarrow (K_{2n-i}(X)\otimes_{\bZ}\bQ)^{(n)}\right)
\]
through \eqref{eq:mc-via-K-theory}. By  \cite[Lem. 8.3.1]{beilinson}, this does not depend on the choice of the model $\mathcal{X}$. The next conjecture supersedes \ref{item:conjectureC3}:
\begin{Conjecture*}
We expect that
\begin{enumerate}[label=(C$4$)]
\item\label{item:conjectureC4} For any prime $\ell$ not under $\fp$, we have $\Ext^1_{\cO_{\fp}}(\mathbbm{1},M_{\fp})=\Ext^1_{\text{good}}(\mathbbm{1},M_{\fp})_{\ell}$.
\end{enumerate}
\end{Conjecture*}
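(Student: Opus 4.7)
The plan is to split the conjectured equality into two inclusions: one should be a routine consequence of smooth--proper base change, while the reverse is the arithmetic heart of the statement and essentially open in the generality stated.

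To establish $\Ext^1_{\cO_{\fp}}(\mathbbm{1},M_{\fp}) \subseteq \Ext^1_{\text{good}}(\mathbbm{1},M_{\fp})_{\ell}$, I would fix a regular model $\mathcal{X}/\cO_{\fp}$ and lift \eqref{eq:mc-via-K-theory} to the model via a Chern character
\[
(K_{2n-i}(\mathcal{X})\otimes_{\bZ}\bQ)^{(n)} \longrightarrow H^i_{\text{\'et}}(\mathcal{X},\bQ_{\ell}(n)),
\]
compatible with its generic analogue. The Hochschild--Serre spectral sequence for $\mathcal{X}\to\Spec\cO_{\fp}$ produces a subspace $H^1(G_{\fp},H^{i-1}_{\text{\'et}}(X\times\overline{F_{\fp}},\bQ_{\ell}(n)))$ of $H^i_{\text{\'et}}(\mathcal{X},\bQ_{\ell}(n))$ whose image in $H^1(G_{\fp},V_{\ell}M_{\fp})$ contains the $\ell$-adic realization of any class coming from $K$-theory of $\mathcal{X}$. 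Because $\ell$ is invertible on $\cO_{\fp}$, smooth and proper base change identifies the coefficient $G_{\fp}$-module with $H^{i-1}_{\text{\'et}}(\mathcal{X}_{s}\times\overline{k(\fp)},\bQ_{\ell}(n))$, on which the inertia subgroup $I_{\fp}$ acts trivially; the realization therefore restricts to $0$ in $H^1(I_{\fp},V_{\ell}M_{\fp})$, which is the defining property of good reduction.

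For the converse inclusion, my strategy is to compare the $K$-theoretic localization sequence associated to the triple $\mathcal{X}_{s}\hookrightarrow \mathcal{X}\hookleftarrow X$ with its \'etale Gysin counterpart, with the Chern character joining the two. Given an extension with good reduction, the plan is to first lift its $\ell$-adic realization through Hochschild--Serre and Gysin to a class in $H^i_{\text{\'et}}(\mathcal{X},\bQ_{\ell}(n))$, and then to argue that this lift originates in $(K_{2n-i}(\mathcal{X})\otimes\bQ)^{(n)}$ via the Chern character.

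The main obstacle is precisely this last step: the surjectivity of the Chern character onto the unramified part of Galois cohomology is a form of the Bloch--Kato and Beilinson--Soul\'e conjectures and is not known for arbitrary mixed motives --- even \ref{item:conjectureC3} itself is open, so in practice one would have to grant it a priori. A realistic program would therefore be to first accept \ref{item:conjectureC3}, reduce to the smooth projective case $M_{\fp}=h^{i-1}(X)(n)$, and attempt the surjectivity explicitly in small weight via cyclotomic units, modular units, or Beilinson's higher regulator elements, leaving the general case conjectural.
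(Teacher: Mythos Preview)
The statement you are addressing is labelled \emph{Conjecture} in the paper and is presented only as background motivation from the number field picture; the paper offers no proof and none is expected, since (C4) is open. The paper's actual results concern the function field analogue (Theorems~C and~D for $A$-motives), where it proves the inclusion $\Ext^1_{\cO_{\fp}}\subset \Ext^1_{\text{good}}$ and then exhibits in \S\ref{subsec:counter-example} an explicit extension showing that equality \emph{fails}. So there is no ``paper's own proof'' to compare against, and your proposal is best read as a heuristic programme rather than a proof.

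That said, your sketch of the inclusion $\Ext^1_{\cO_{\fp}}\subset \Ext^1_{\text{good}}$ has a gap worth flagging. You invoke smooth and proper base change to conclude that $I_{\fp}$ acts trivially on the relevant cohomology of the special fibre, but the hypothesis in the paper is only that $\mathcal{X}$ is \emph{regular} over $\cO_{\fp}$, not smooth. When $X$ has bad reduction, inertia acts nontrivially on $V_{\ell}M_{\fp}=H^{i-1}_{\text{\'et}}(X\times\overline{F_{\fp}},\bQ_{\ell}(n))$, and the claim that classes coming from $K_{*}(\mathcal{X})$ have unramified realization requires a genuinely different argument (comparison of the localization sequence in $K$-theory with purity in \'etale cohomology, together with control of the boundary map via the regularity of $\mathcal{X}$). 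Your assessment of the reverse inclusion---that it amounts to a surjectivity statement for the regulator of Bloch--Kato type and is wide open in general---is accurate, and is consistent with the paper's stance of recording (C4) as a conjecture rather than attempting it.
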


The global version of the integral part of the motivic cohomology of a mixed motive $M$ over $F$ is recovered as follows. The motive $M$ induces a motive $M_{\fp}$ over $F_{\fp}$ by localization. Assuming conjecture \ref{item:conjectureC3}, we say that an extension $[E]$ (of $\mathbbm{1}$ by $\underline{M}$) has \emph{everywhere good reduction} if, for all $\fp$, the extension $[E_\fp]$ belongs to $\Ext^1_{\cM\cM_{F_{\fp}}}(\mathbbm{1}_{\fp},M_{\fp})_{\ell}$ for some prime $\ell$ not dividing $\fp$. We denote by $\Ext^1_{\text{good}}(\mathbbm{1},M)$ the subspace of $\Ext^1_{\cM\cM_F}(\mathbbm{1},M)$ consisting of extensions having everywhere good reduction.

Similarly, in the case where $M=h^{i-1}(X)(n)$ for a smooth projective variety $X$ over $F$, we let $\Ext^1_{\cO_F}(\mathbbm{1},M)$ be the subspace of extensions $[E]$ such that $[E_{\fp}]$ belongs to $\Ext^1_{\cO_\fp}(\mathbbm{1}_\fp,M_\fp)$ for all finite places $\fp$ of $F$. In virtue of the previous conjectures, we should have:
\[
\Ext^1_{\text{good}}(\mathbbm{1},M)=\Ext^1_{\cO_F}(\mathbbm{1},M)\cong (K_{2n-i}(\mathcal{X})\otimes_{\bZ}\bQ)^{(n)},
\]
where $\mathcal{X}$ is a regular model of $X$ over $\cO_F$.

The space $\Ext^1_{\cO_F}(\mathbbm{1},M)$ is at the heart of Beilinson's conjectures, the next expectation being the starting point thereof:
\begin{Conjecture*}
We expect that
\begin{enumerate}[label=(C$5$)]
\item\label{item:conjectureC5} The space $\Ext^1_{\cO_F}(\mathbbm{1},M)$ has finite dimension over $\mathbb{Q}$.
\end{enumerate}
\end{Conjecture*}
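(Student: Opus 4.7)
Statement (C5) is Beilinson's foundational finiteness conjecture, and is not known in general; my ``plan'' can at best amount to reducing it to a different but comparably difficult open problem about algebraic $K$-theory, and acknowledging where the obstacle lies.

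The plan is to exploit the conjectural identification \eqref{eq:mc-via-K-theory}. When $M=h^{i-1}(X)(n)$ for $X$ smooth projective over $F$, the very definition of $\Ext^1_{\cO_F}(\mathbbm{1},M)$ realizes it as the image of $(K_{2n-i}(\cX)\otimes_{\bZ}\bQ)^{(n)}$ in $(K_{2n-i}(X)\otimes_{\bZ}\bQ)^{(n)}$ for $\cX$ a regular proper model of $X$ over $\cO_F$. It therefore suffices to prove that each Adams eigenspace of $K_{\ast}(\cX)\otimes \bQ$ is finite dimensional. First I would invoke independence of the model (asserted in \cite[Lem.~8.3.1]{beilinson}) to make the statement well-posed. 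In the simplest case $X=\Spec F$ the finiteness in question is Borel's theorem on $K_{\ast}(\cO_F)\otimes \bQ$, established via Matsushima's formula applied to the stable cohomology of the arithmetic lattices $\GL_N(\cO_F)$. For general $\cX$ the statement reduces to Bass's conjecture on finite generation of $K_{\ast}(\cX)$ for regular $\cX$ of finite type over $\bZ$, known in arithmetic dimension $\leq 1$ by Quillen and in a handful of scattered higher-dimensional cases.

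The main obstacle is therefore precisely this last step: extending Borel's and Quillen's finiteness beyond arithmetic dimension one. No general technique is known, and the geometric methods available over finite fields — in particular the motivic cohomology finiteness results of Geisser--Levine — do not transfer to the mixed-characteristic setting. Consequently I would not expect a direct attack on (C5) to succeed in the number field picture; instead one treats (C5) as an organizing principle, motivating the geometric constructions (regular models, $\ell$-adic realization, integral part) whose function field analogues are the true subject of the paper, and where the greater rigidity of positive-characteristic arithmetic may leave some hope of a substantive result.
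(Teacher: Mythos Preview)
Your assessment is correct: the paper does not prove (C5), nor does it attempt to. The statement appears in Section~\ref{section:numberfieldspicture} purely as a recorded expectation from the classical Beilinson framework, alongside (C1)--(C4), to motivate the function field investigation that follows. There is no proof in the paper to compare your proposal against.

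Your reduction to Bass's conjecture via the $K$-theoretic description of the integral part is the standard way to situate (C5), and your remark that the paper treats (C5) as an organizing principle rather than a target is exactly right: indeed, the paper goes on to observe that the naive function field analogue of (C5) \emph{fails} (the $A$-module $\Ext^1_{\cO_F}(\mathbbm{1},\underline{M})$ is typically not finitely generated), which is part of the motivation for introducing regulated extensions in Section~\ref{sec:regulated-extensions}.
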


\subsection{The function field picture}
Despite its intrinsic obscurities, motivic cohomology remains a difficult subject also because its definition sits on a completely conjectural framework. The present paper grew out as an attempt to understand the analogous picture in function field arithmetic. There, the theory looks more promising using \emph{Anderson $A$-motives}, instead of classical motives, whose definition is well-established. This parallel has been drawn by many authors and led to celebrated achievements. The analogue of the Tate conjecture \cite{taguchi} \cite{tamagawa}, of Grothendieck's periods conjecture \cite{papanikolas} and of the Hodge conjecture \cite{hartl-juschka} are now theorems on the function fields side. The recent volume \cite{motif} records some of these feats. Counterparts of Motivic cohomology in function field arithmetic have not been studied yet, although recent works of Taelman \cite{taelman-dirichlet} \cite{taelman-t-motif} and Mornev \cite{mornev-shtuka} strongly suggest the pertinence of such a project.

\subsubsection*{The setting}
Let $\bF$ be a finite field, $q$ its number of elements, and let $(C,\cO_C)$ be a geometrically irreducible smooth projective curve over $\bF$. We let $K$ be the function field of $C$ and we fix a closed point $\infty$ on $C$. The $\bF$-algebra:
\begin{equation}
A:=\cO_C(C\setminus\{\infty\}) \nonumber
\end{equation}
has $K$ as its field of fractions. We let $K_{\infty}$ be the completion of $K$ with respect to the valuation $v_{\infty}$ associated to $\infty$, and let $K_{\infty}^s$ be the separable closure of $K_{\infty}$. The analogy with number fields that guide us in this text is:
\begin{center}
\begin{tabular}{lccccccccccc}
Number fields: & $\bZ$ & $\subset$ & $\bQ$ & $\subset$ & $\bR$ & $\subset$ & $\bC$ \\
& $\wr$ & & $\wr$ & & $\wr$ & & $\wr$ \\
Function fields: & $A$ & $\subset$ & $K$ & $\subset$ & $K_{\infty}$ & $\subset$ & $K_{\infty}^s$ 
\end{tabular}
\end{center}

Let $R$ be an $A$-algebra. The analogy with number fields disappears when one considers the tensor product $A\otimes R$, which is at the heart of the definition of Anderson $A$-motives (unlabeled fiber and tensor products are over $\bF$). We consider the ring endomorphism $\tau$ of $A\otimes R$ which acts as the identity on $A$ and as the $q$-Frobenius on $R$. We let $\fj$ be the ideal of $A\otimes R$ generated by the set $\{a\otimes 1-1\otimes a|a\in A\}$. \\

Following \cite{anderson}, an \emph{Anderson $A$-motive $\underline{M}$ over $R$} is a pair $(M,\tau_M)$ where $M$ denotes a finite projective $A\otimes R$-module of constant rank, and where $\tau_M:(\tau^*M)[\fj^{-1}]\to M[\fj^{-1}]$ is an $(A\otimes R)[\fj^{-1}]$-linear isomorphism (Definition \ref{def:Amotives}). We let $\cM_R$ denote the category of Anderson $A$-motives with obvious morphisms. $\cM_R$ is known to be $A$-linear, rigid monoidal, and is exact in the sense of Quillen (Proposition \ref{prop:MF-is-exact}) but not abelian (\cite[\S 2.3]{hartl-juschka} or Subsection \ref{section:definitions}). Let $\mathbbm{1}$ in $\cM_R$ be a neutral object for the tensor operation.

\subsubsection*{Extensions of $A$-motives}
The category $\cM_R$, or rather full subcategories of it, will play the role of the category of Grothendieck's motives. Guided by this, the next theorem already describes the analogue of motivic cohomology in an explicit manner and is the starting point of our research (see Theorem \ref{thm:cohomology-in-MR}). Let $\underline{M}$ be an $A$-motive over $R$.
\begin{MainTheorem}\label{mthm:extension}
The cohomology of the complex $\left[M\xrightarrow{\id-\tau_M} M[\fj^{-1}]\right]$ of $A$-modules, sitting in degrees zero and one, computes the extension modules $\Ext^i_{\cM_R}(\mathbbm{1},\underline{M})$ for all $i$.
\end{MainTheorem}
We immediately deduce that $\Ext^i_{\cM_R}(\mathbbm{1},\underline{M})$ vanishes for $i>1$. For $i=1$, the $A$-module of degree one extensions admit the following explicit description. There is a natural surjective morphism
\begin{equation}\label{eq:iota}
\iota:M[\fj^{-1}]\longrightarrow\Ext^1_{\cM_F}(\mathbbm{1},\underline{M})
\end{equation}
which maps $m\in M[\fj^{-1}]$ to the class of the extension of $\mathbbm{1}$ by $\underline{M}$ whose middle term is the $A$-motive given by $\left[M\oplus (A\otimes R),\left(\begin{smallmatrix}\tau_M & m \\ 0 & 1\end{smallmatrix}\right)\right]$ (Subsection \ref{sec:extensions-in-MF}). The kernel of $\iota$ being $(\id-\tau_M)(M)$, we recover the isomorphism provided by Theorem \ref{mthm:extension}.

\begin{Remark*}
Extension groups in the full subcategory of $\cM_R$ consisting of \emph{effective $A$-motives} (see Definition \ref{def:Amotives-effective}) were already determined in the existing literature (see \emph{e.g.} \cite{taelman-woods}, \cite{taelman-t-motif}, \cite{papanikolas-ramachandran}). The novelty of Theorem \ref{mthm:extension} is to consider the whole category $\cM_R$. 
\end{Remark*}

To pursue the analogy with number fields, we now present the notion of \emph{weights} and \emph{mixedness} for Anderson $A$-motives over fields. In the case $A=\bF[t]$ or $\deg(\infty)=1$ and over a complete algebraically closed base field, the corresponding definitions were carried out respectively by Taelman \cite{taelman} and Hartl--Juschka \cite{hartl-juschka}. We completed this picture in the most general way (over any $A$-field and without any restriction on $\deg(\infty)$). \\

Let $R=F$ be a field. To an Anderson $A$-motive $\underline{M}$ over $F$, we attach an \emph{isocrystal} $\operatorname{I}_{\infty}(\underline{M})$ at $\infty$ (in the sense of \cite{mornev-isocrystal}). The naming \emph{isocrystal} is borrowed from $p$-adic Hodge theory, where the function field setting allows to apply the non-archimedean theory at the infinite point $\infty$ of $C$ as well. Following \cite[1.9]{anderson}, we call $\underline{M}$ \emph{pure} of weight $\mu$ if its associated isocrystal is pure of slope $-\mu$ (Definition \ref{def:pure-A-motives}). More generally, we call $\underline{M}$ \emph{mixed} if there exists rational numbers $\mu_1<\ldots <\mu_s$ together with a finite ascending filtration of $\underline{M}$ by saturated sub-$A$-motives:
\[
0=W_{\mu_0}\underline{M}\subsetneq W_{\mu_1}\underline{M}\subsetneq \cdots \subsetneq W_{\mu_s}\underline{M}=\underline{M}
\]
for which the successive quotients $W_{\mu_i}\underline{M}/W_{\mu_{i-1}}\underline{M}$ are pure of weight $\mu_i$ (Definition \ref{def:mixed-A-motives}). We show in Proposition \ref{def:weight-filtration} that such a filtration--when it exists--is unique, as well as the numbers $\mu_i$ which are then called the \emph{weights} of $\underline{M}$. As it was observed by Hartl--Juschka (\cite[Ex. 2.3.13]{hartl-juschka}), there exist non mixed $A$-motives. Nonetheless, it is always possible to define the weights of a (not necessarily mixed) $A$-motive via the Dieudonn\'e-Manin decomposition of isocrystals (see Definition \ref{def:weights-A-motives}). We denote by $\cM\cM_F$ the full subcategory of $\cM_F$ whose objects are mixed Anderson $A$-motives over $F$. The main results of Section \ref{sec:mixed A motives} are gathered in the next theorem.

\begin{MainTheorem}\label{mthm:Extensions-mixed}
Let $\underline{M}$ be an object of $\cM\cM_F$. If all the weights of $\underline{M}$ are non-positive, then every extension of $\mathbbm{1}$ by $\underline{M}$ is mixed, that is: 
\[
\Ext^1_{\cM\cM_F}(\mathbbm{1},\underline{M})=\Ext^1_{\cM_F}(\mathbbm{1},\underline{M}).\]
If all the weights of $\underline{M}$ are positive, then an extension of $\mathbbm{1}$ by $\underline{M}$ is mixed if and only if its class is torsion, that is: 
\[
\Ext^1_{\cM\cM_F}(\mathbbm{1},\underline{M})=\Ext^1_{\cM_F}(\mathbbm{1},\underline{M})^{\operatorname{tors}}.\]
Furthermore, for $i>1$, $\Ext^i_{\cM\cM_F}(\mathbbm{1},\underline{M})$ is a torsion module for all $\underline{M}$.
\end{MainTheorem}
\begin{Remark*}
For our analogy to be complete, one would rather seek for a $K$-linear category~: where $\cM\cM_F$ is $A$-linear, the classical category $\cM\cM_\bQ$ is $\bQ$-linear. To obtain a $K$-linear category out of $\cM\cM_F$, one introduces $\cM\cM^{\text{iso}}_F$--the category of \emph{mixed $A$-motives up to isogenies}--whose objects are the ones of $\cM\cM_F$ and whose Hom-spaces are given by ${\Hom_{\cM\cM_F}(-,-)\otimes_A K}$ (\emph{e.g.} \cite{hartl-isogeny}, \cite{hartl-juschka}). Theorem \ref{mthm:Extensions-mixed} implies that $\Ext^i_{\cM\cM^{\text{iso}}_F}(\mathbbm{1},\underline{M})=0$ for $i>1$ and $\Ext^1_{\cM\cM^{\text{iso}}_F}(\mathbbm{1},\underline{M})=0$ if the weights of $\underline{M}$ are positive. This reveals that the analogue of the number fields conjecture \ref{item:conjectureC1} and \ref{item:conjectureC2} are true for function fields. Note, however, that contrary to what is expected for number fields, the full subcategory of pure $A$-motives is not semi-simple. Hence, we cannot expect any $1$-fold Yoneda extension of two pure $A$-motives to split, even if they have the same weight.
\end{Remark*}

\subsubsection*{The good $\ell$-adic part}
To present the theory of good and integral extensions, we now assume that $F$ is a finite field extension of $K$ (namely, a \emph{global function field}). Let $\fp$ be a finite place of $F$ (\emph{i.e.} not above $\infty$), $F_{\fp}$ the associated local function field, $F_{\fp}^s$ a separable closure of $F_\fp$ and $G_{\fp}=\Gal(F_{\fp}^s|F_\fp)$ the absolute Galois group of $F_\fp$ equipped with the profinite topology. Given a maximal ideal $\ell$ in $A$ which does not lie under $\fp$, there is an \emph{$\ell$-adic realization functor} from $\cM_{F_\fp}$ to the category of continuous $\cO_{\ell}$-linear representations of $G_\fp$. Given an object $\underline{M}_{\fp}=(M_{\fp},\tau_M)$ of $\cM_{F_\fp}$, it is defined as the $\cO_{\ell}$-module
\begin{equation}
\operatorname{T}_{\ell}\underline{M}_{\fp}:=\varprojlim_n ~\{m\in (M_{\fp}\otimes_{F_\fp} F_{\fp}^{s})/\ell^n (M_{\fp}\otimes_{F_\fp} F_{\fp}^{s})~|~m=\tau_M(\tau^*m)\}  \nonumber
\end{equation}
where $G_\fp$ acts compatibly on the right of the tensor $M_{\fp}\otimes_{F_{\fp}} F_{\fp}^{s}$ (Definition \ref{def:m-adic realization functor}).  \\

We prove in Corollary \ref{adic-realization-exact} that $\operatorname{T}_{\ell}$ is exact. This paves the way for introducing \emph{extensions with good reduction}, as Scholl did in the number fields setting. Let $I_{\fp}\subset G_{\fp}$ be the inertia subgroup. We consider the \emph{$\ell$-adic realization map} restricted to $I_{\fp}$:
\begin{equation}\label{intro:good-reduction-sholl}
r_{\underline{M},\ell,\fp}:\Ext^1_{\cM_{F_\fp}}(\mathbbm{1}_{\fp},\underline{M}_{\fp})\longrightarrow \operatorname{H}^1(I_{\fp},\operatorname{T}_{\ell}\underline{M}_{\fp})
\end{equation} 
(we refer to Subsection \ref{sec:extensions-having-good-reduction}). Mimicking Scholl's approach, we say that an extension $[\underline{E}_{\fp}]$ of $\mathbbm{1}_{\fp}$ by $\underline{M}_{\fp}$ has \emph{good reduction} if $[\underline{E}_{\fp}]$ lies in the kernel of \eqref{intro:good-reduction-sholl}. As in the number field setting, we expect this definition to be independent of $\ell$, although this is presumably out of our reach. We let $\Ext^1_{\text{good}}(\mathbbm{1}_{\fp},\underline{M}_{\fp})_{\ell}$ denote the kernel of $r_{\underline{M},\ell,\fp}$ (Definition \ref{def:ext-good-red-with-resp-to-ell}). 

\subsubsection*{The integral part}
Gardeyn in \cite{gardeyn2} has introduced a notion of \emph{maximal models} for $\tau$-sheaves. Inspired by Gardeyn's work, we developed the notion of \emph{maximal integral models of $A$-motives} (Section \ref{chapter:integral models}). They form the function field analogue of N\'eron models of abelian varieties, or more generally, of regular models of varieties. \\

Let $\underline{M}$ be an $A$-motive over $F$, and denote by $\underline{M}_\fp$ the $A$-motive of $F_\fp$ obtained from $\underline{M}$ by base-change along $F\subset F_\fp$. Let $\cO_{\fp}$ be the valuation ring of $F_{\fp}$ and let also $\cO_F$ denote the integral closure of $A$ in $F$.

\begin{Definition*}[Definition \ref{def:integral-models-of-motives}]
Let $L$ be a finitely generated sub-$A\otimes \cO_\fp$-module of $M_\fp$ (resp. $A\otimes \cO_F$-module of $M$).
\begin{enumerate}[label=$(\arabic*)$]
\item We say that $L$ is an $\cO_\fp$-\emph{model for $\underline{M}_{\fp}$} if it generates $M_\fp$ over $F_\fp$ and $\tau_{M_\fp}(\tau^*L)\subset L[\fj^{-1}]$. 
\item We say that $L$ is an $\cO_F$-\emph{model for $\underline{M}$} if it generates $M$ over $F$ and $\tau_M(\tau^*L)\subset L[\fj^{-1}]$. 
\end{enumerate}
We say that $L$ is \emph{maximal} if $L$ is not strictly contained in any other model. 
\end{Definition*}

As opposed to \cite[Def 2.1 \& 2.3]{gardeyn2}, we do not ask for an $\cO_\fp$-model (resp. $\cO_F$-model) to be locally free. We show that this is automatic for maximal ones (Theorem \ref{thm:integral-models-are-locally-free-local}). Compared to Gardeyn, our exposition is therefore simplified and avoids the use of a technical lemma due to L. Lafforgue \cite[\S2.2]{gardeyn2}. Our next result should be compared with \cite[Prop 2.13]{gardeyn2} (see Proposition \ref{prop-existence-of-maximal-integral-model-global} and Theorems \ref{thm:integral-models-are-locally-free} and \ref{thm:from-global-to-local} in the text).
\begin{MainTheorem}\label{mthm:Maximal-Model}
A maximal $\cO_\fp$-model $M_{\cO_{\fp}}$ for $\underline{M}_\fp$ (resp. $\cO_F$-model $M_{\cO_F}$ for $\underline{M}$) exists and is unique. It is locally free over $A\otimes \cO_\fp$ (resp. $A\otimes\cO_F$). In addition, the maximal local and global models are related by canonical isomorphisms:
\[
M_{\cO_F}\cong \bigcap_{\fp}{(M\cap M_{\cO_\fp})}, \quad \text{and} \quad M_{\cO_{\fp}}\cong M_{\cO_F}\otimes_{\cO_F}\cO_\fp,
\]
where the intersection is taken over finite places $\fp$ of $F$.
\end{MainTheorem}

Along the way, we also prove a good reduction criterion for $A$-motives, in the style of N\'eron-Ogg-Shafarevi\v{c} (Proposition \ref{prop-good-reduction-local-case}).\\

Then, we call \emph{$\fp$-integral} any extension of $\mathbbm{1}$ by $\underline{M}$ which arises as an element of $\iota(M_{\cO_{\fp}}[\fj^{-1}])$ \eqref{eq:iota}. We let $\Ext^1_{\cO_\fp}(\mathbbm{1}_\fp,\underline{M}_\fp)$ be the module of $\fp$-integral extensions (Definition \ref{def:integral-part-local}). Our main result (repeated from Theorem \ref{thm:main1}) is the next:
\begin{MainTheorem}\label{mthm:Integral-inside-Good}
Let $\ell$ be a maximal ideal of $A$ which does not lie under $\fp$. Then $\Ext^1_{\cO_{\fp}}(\mathbbm{1}_{\fp},\underline{M}_{\fp})$ is a sub-$A$-module of $\Ext^1_{\operatorname{good}}(\mathbbm{1}_{\fp},\underline{M}_{\fp})_{\ell}$. 
\end{MainTheorem} 

Surprisingly enough, we cannot claim equality in general. In Subsection \ref{subsec:counter-example}, in the simplest case of the neutral $A$-motive, we construct for some $\ell$ and $\fp$ an explicit extension in $\Ext^1_{\text{good}}(\mathbbm{1}_{\fp},\mathbbm{1}_{\fp})_{\ell}$ which does not belong to $\Ext^1_{\cO_\fp}(\mathbbm{1}_{\fp},\mathbbm{1}_{\fp})$. \\

In Subsection \ref{subsec:integral-part}, we define the \emph{global} version of the above. Namely, the $A$-motive $\underline{M}$ defines an $A$-motive $\underline{M}_{F_\fp}$ over $F_{\fp}$ by extending the base field. We let $\Ext^1_{\cO_F}(\mathbbm{1},\underline{M})$ be the module of \emph{integral extensions}; \emph{i.e.} of extensions that are $\fp$-integral for all $\fp$ after base-change along $F\subset F_\fp$ (Definition \ref{def:integral-part-local}). Our second main result (repeated from Theorem \ref{thm:main2}) is the following:
\begin{MainTheorem}\label{mthm:Global-Integral}
The $A$-module $\Ext^1_{\cO_F}(\mathbbm{1},\underline{M})$ equals the image of $M_{\cO_F}[\fj^{-1}]$ through $\iota$. In addition, $\iota$ induces a natural isomorphism of $A$-modules:
\[
\frac{M_{\cO_F}[\fj^{-1}]}{(\id-\tau_M)(M_{\cO_F})}\stackrel{\sim}{\longrightarrow} \Ext^1_{\cO_F}(\mathbbm{1},\underline{M}).
\]
\end{MainTheorem}

\subsubsection*{Regulated extensions of $A$-motives}
We are facing two main issues to pursue our analogy: the counterpart of Conjecture \ref{item:conjectureC4} does not hold true and, more seriously, neither is the counterpart of \ref{item:conjectureC5}: the $A$-module $ \Ext^1_{\cO_F}(\mathbbm{1},\underline{M})$ is typically not finitely generated. Those facts suggest that the category $\cM_F$--and also $\cM\cM_F$--is \emph{too huge} to held a convincing motivic cohomology theory. We end this text by presenting a conjectural picture aiming to answer the analogue of \ref{item:conjectureC4} and \ref{item:conjectureC5}. \\

Pink, in the context of function fields Hodge structures \cite[\S 6]{pink}, is facing a similar issue. There, he introduced the notion of \emph{Hodge additivity}, whose counter-part for $A$-motives is as follow.
\begin{Definition*}[\emph{cf}. \ref{subsec:Hodge-polygon} for details]
Let $0\to \underline{M}\to \underline{E} \to \underline{N} \to 0$ be an exact sequence of $A$-motives over $F$. We say that $[\underline{E}]$ is \emph{regulated} if the Hodge polygon of the Hodge-Pink structure attached to $\underline{M}\oplus \underline{N}$ matches the one of $\underline{E}$ (see \cite[\S 6]{pink}). We denote by $\Ext^{1,\text{reg}}$ the submodule of regulated extensions.
\end{Definition*}

Pink's intuition for introducing \emph{Hodge additivity} is driven by the inexact feature of the operation assigning to a Hodge-Pink structure its "classical" Hodge structure (\emph{i.e.} the data of its Hodge filtration). Although Pink is concerned with computation of Hodge groups, for our purpose this prevents the well-definedness of a \emph{regulator map} from the module of extensions of $A$-motives to that of extensions of "classical" Hodge structures (see Remark \ref{rmk:regulation}). \emph{Regulated extensions} are designed to resolve this point, hence the naming. The notion of \emph{regulation} has applications in sequels to this text \cite{gazda2}, \cite{gazda-maurischat-ext}. \\

In Corollary \ref{cor:regulated-extensions-1-M}, we prove that $\iota$ induces an isomorphism of $A$-modules:
\[
\frac{M+\tau_{M}(\tau^*M)}{(\id-\tau_M)(M)}\stackrel{\sim}{\longrightarrow} \Ext^{1,\text{reg}}_{\cM_{F}}(\mathbbm{1},\underline{M}).
\]
While $\Ext^{1,\text{reg}}_{\cO_F}(\mathbbm{1},\underline{M})$ is still not finitely generated over $A$ in general, a version of Conjecture \ref{item:conjectureC5} involving the infinite places holds (we refer the reader to \cite[Thm. 4.1]{gazda2}). Concerning the analogue of Conjecture \ref{item:conjectureC4}, we strongly suspect regulated extensions to also correct the integral$\neq$good extensions phenomenon; namely we expect the following to hold for $A$-motives $\underline{M}_\fp$ over $F_\fp$ (see Conjecture \ref{conjecture-reg}):
\begin{Conjecture*}
Let $\ell$ be a maximal ideal of $A$ not under $\fp$. Then, 
\[
\Ext^{1,\text{reg}}_{\cO_{\fp}}(\mathbbm{1}_{\fp},\underline{M}_{\fp})=\Ext^{1,\text{reg}}_{\text{good}}(\mathbbm{1}_{\fp},\underline{M}_{\fp})_{\ell}.
\] 
In particular, the module $\Ext^{1,\text{reg}}_{\text{good}}(\mathbbm{1}_{\fp},\underline{M}_{\fp})_{\ell}$ does not depend on $\ell$.
\end{Conjecture*}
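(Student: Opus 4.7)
The inclusion $\Ext^{1,\text{reg}}_{\cO_\fp}(\mathbbm{1}_{\fp},\underline{M}_{\fp}) \subseteq \Ext^{1,\text{reg}}_{\text{good}}(\mathbbm{1}_{\fp},\underline{M}_{\fp})_\ell$ is formal: Theorem C already supplies $\Ext^1_{\cO_\fp}(\mathbbm{1}_\fp,\underline{M}_\fp) \subseteq \Ext^1_{\text{good}}(\mathbbm{1}_\fp,\underline{M}_\fp)_\ell$, and intersecting with the submodule of regulated classes yields the desired inclusion. The substance of the conjecture is the reverse inclusion, which I plan to attack as follows.

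Represent a regulated class as $\iota(m)$ with $m \in M_\fp + \tau_M(\tau^* M_\fp)$, using Corollary \ref{cor:regulated-extensions-1-M}. The task is to produce $n \in M_\fp$ such that $m + (\id - \tau_M)(n) \in M_{\cO_\fp} + \tau_M(\tau^* M_{\cO_\fp})$. The cleanest reformulation is geometric: let $E_{\cO_\fp}$ denote the (unique) maximal integral model of the ambient extension $\underline{E}_\fp$, and let $\pi : E_\fp \twoheadrightarrow A \otimes F_\fp$ be the canonical projection onto the unit quotient. Then $\pi(E_{\cO_\fp})$ is a $\tau$-stable sub-$(A\otimes\cO_\fp)$-module of the maximal integral model $A\otimes\cO_\fp$ of $\mathbbm{1}_\fp$; using that $M_{\cO_\fp} + E_{\cO_\fp}$ is again an integral model of $\underline{E}_\fp$, maximality forces $M_{\cO_\fp} \subseteq E_{\cO_\fp}$, so that $[\underline{E}_\fp]$ has integral reduction if and only if the inclusion $\pi(E_{\cO_\fp}) \hookrightarrow A \otimes \cO_\fp$ is an equality. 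The problem therefore reduces to proving this surjectivity under the hypotheses of regulated and good $\ell$-adic reduction.

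My strategy is to analyze the possible defect of $\pi(E_{\cO_\fp})$ at each maximal ideal $\fq$ of $A \otimes \cO_\fp$ separately. For $\fq$ not lying above $\fj$, the Frobenius $\tau_E$ is an isomorphism locally, and I would convert the good $\ell$-adic hypothesis into surjectivity of $\pi$ at $\fq$ through the equivalence between $\tau$-modules over strictly henselian valuation rings and continuous representations of the inertia group (already implicit in Gardeyn's formalism of maximal models for $\tau$-sheaves); concretely, the triviality of the inertia action on $T_\ell \underline{E}_\fp$ should produce a section of $\pi$ after base change to $\cO_\fp^{\text{unr}}$, which then descends by Galois. At $\fq$ lying over $\fj$ the $\ell$-adic realization carries no information, and the decisive input becomes the regulated hypothesis: via Pink's formalism of Hodge-Pink structures, it precisely controls the pole order of $m$ at $\fj$ and prevents the loss of integrality highlighted by the counterexample of Subsection \ref{subsec:counter-example}.

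The chief obstacle, I expect, will be assembling these local surjectivity statements into a single global integral section; this effectively demands a compatibility between $\ell$-adic and Hodge-Pink realizations of $\underline{E}_\fp$, in the spirit of a $p$-adic comparison theorem transplanted to the function field side. As a prudent first step I would test the strategy when $\underline{M}_\fp$ is pure and for Carlitz-type motives, where both the weight filtration and the Hodge-Pink structure take their simplest form and where the two sides can be computed explicitly, before attempting the mixed case in full generality.
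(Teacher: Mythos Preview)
The statement you are attempting is a \emph{conjecture} in the paper, not a theorem: the paper explicitly says the general case remains open and only establishes two special cases (Theorem~\ref{thm:true-for-pure0}: $\underline{M}$ effective, pure of weight $0$, with good reduction; Theorem~\ref{thm:true-for-carlitz}: $\underline{M}=\underline{A}(p^k)$). So there is no ``paper's proof'' to compare against for the full statement. Your final paragraph, proposing to test the strategy first on pure motives and Carlitz twists, is exactly where the paper lands---but the paper's arguments in those cases are quite different from the geometric reformulation you outline. For pure weight~$0$ it proceeds via the Frobenius-space comparison of Section~\ref{sec:comparison-frobenius-space}: one shows (Lemma~\ref{lem:ell-adic-closed}) that $N_{\cO_\fp}+(\id-\tau_N)(N)$ is already $\ell$-adically closed, and then Proposition~\ref{prop:variation-integral-model:motive-to-frobenius} together with Proposition~\ref{prop:artin-schreier} converts the good-$\ell$-adic hypothesis into membership in that closure. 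For the Carlitz twists, the argument is a direct valuation computation modulo $\ell^N$.

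Your reformulation $[\underline{E}_\fp]\in\Ext^1_{\cO_\fp}\Leftrightarrow\pi(E_{\cO_\fp})=A\otimes\cO_\fp$ is correct and attractive, but the local analysis you propose has a real gap. You write that for $\fq$ not over $\fj$ you would ``convert the good $\ell$-adic hypothesis into surjectivity of $\pi$ at $\fq$''. This cannot work as stated: the good-$\ell$-adic hypothesis is a condition on the \emph{single} completion at $\ell$, and carries a priori no information at a maximal ideal $\fq$ of $A\otimes\cO_\fp$ lying over a different prime of $A$. Indeed, passing from the $\ell$-adic condition to information at all such $\fq$ is essentially equivalent to the independence-of-$\ell$ clause that is \emph{part of} the conjecture, so you would be assuming what you want to prove. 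The paper sidesteps this by working entirely inside the $\ell$-adic completion and then using the extra structural input (weight~$0$ purity, or the explicit Carlitz shape) to descend back to $N$; your proposed local-to-global patching would need a genuinely new idea to replace that input in the general case.
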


We conclude this text by solving particular instances of the above conjecture (Subsection \ref{subsec:regulated-good-reduction}). The general case, however, remains open.
 
\paragraph{Acknowledgments:} The main results presented in this text were obtained during the author's PhD thesis. I would like to express my profound gratitude to Professors Gebhard Böckle and Urs Hartl, as well as my advisor, Professor Federico Pellarin. I am greatly indebted to the anonymous referee for their advice and suggestions, which significantly contributed to improving the quality of this document. Among the suggestions made by the referee, I am especially grateful for their assistance in completing the original proof of Theorem \ref{thm:true-for-carlitz} and resolving the above conjecture for positive Carlitz twists $\underline{A}(n)$. Our initial argument only applied to cases where $n$ was a power of the characteristic of $\bF$. Parts of this text were revised during my time at the Max Planck Institute for Mathematics in Bonn, and I am thankful for their hospitality and financial support.

\subsection{Plan of the paper}
The paper is organized as follows.\\

In the beginning of Section \ref{chapter:definitions}, Subsection \ref{section:definitions}, we review the usual set up (notations, definitions, basic properties) of $A$-motives over an arbitrary commutative and unital $A$-algebra. We follow \cite{hartl-juschka} and \cite{hartl-isogeny} as a guideline, though the former reference is concerned with the particular choice of a closed point $\infty$ of degree one and over a complete algebraically closed field. Most of the results on $A$-motives extend without changes to our more general setting. In Subsection \ref{sec:extensions-in-MF}, $A$-Motivic cohomology is introduced. We describe the extension modules in categories of $A$-motives and obtain Theorem \ref{mthm:extension} as Theorem \ref{thm:cohomology-in-MR} in the text. In Subsection \ref{sec:extensions-having-good-reduction}, we recall the definition and main properties of the $\ell$-adic realization functor for $A$-motives, and introduce \emph{extensions having good reduction with respect to $\ell$} in Definition \ref{def:ext-good-red-with-resp-to-ell}. \\

Section \ref{sec:mixed A motives} is concerned with mixed $A$-motives. In the beginning of Subsection \ref{subsec:mixed-A-motives} we recall, and add some new material, to the theory of function fields isocrystals in the steps of \cite{mornev-isocrystal}. The main ingredient, used later on in Subsection \ref{subsec:mixed-A-motives} to define the category of mixed $A$-motives over $A$-fields, is the existence and uniqueness of the slope filtration (extending \cite[Prop. 1.5.10]{hartl} to general coefficient rings $A$). We focus on extension modules in the category $\cM\cM_F$ in Subsection \ref{subsec:extension-modules-of-mixed} where we deduce Theorem \ref{mthm:Extensions-mixed} from Propositions \ref{prop:separated-weights-ext}, \ref{prop-torsion-in-Ext-positive-weights} and Theorem \ref{thm:main0}.\\

In Section \ref{chapter:integral models}, we develop the notion of \emph{maximal integral models} of $A$-motives over local and global function fields. It splits into four subsections. In Subsection \ref{subsec:integral-models-of-frobenius-modules}, we present integral models of \emph{Frobenius spaces} over local function fields. The theory is much easier than the one for $A$-motives, introduced over a local function field in Subsection \ref{sec:integral-models-A-motives-local} and over a global function field in Subsection \ref{sec:integral-models-A-motives-global}. Although our definition of integral model is inspired by Gardeyn's work in the context of $\tau$-sheaves \cite{gardeyn2}, our presentation is simpler as we removed the \emph{locally free} assumption. That maximal integral models are locally free is automatic, as we show in Theorems \ref{thm:integral-models-are-locally-free-local} and \ref{thm:integral-models-are-locally-free}. The chief aim of this section, however, is Subsection \ref{subsec:integral-part} where we use the results of the earlier subsections to prove Theorems \ref{mthm:Integral-inside-Good} and \ref{mthm:Global-Integral} (respectively Theorems \ref{thm:main1} and \ref{thm:main2} in the text). \\

In our last Section \ref{sec:regulated-extensions}, we introduce the notion of \emph{regulated extensions of $A$-motives} with an eye toward understanding the lack of equality in Theorem \ref{mthm:Integral-inside-Good}, highlighted in Subsection \ref{subsec:counter-example}. We recall the definition of Hodge polygons, as introduced in \cite{pink}, in Subsection \ref{subsec:Hodge-polygon}. Those are used to define regulated extensions in Definition \ref{def:regulated-extensions}. We conclude this text by Subsection \ref{subsec:regulated-good-reduction}, where we present a general hope that $\Ext^{1,\text{reg}}_{\cO_\fp}(\mathbbm{1}_\fp,\underline{M}_\fp)$ and $\Ext^{1,\text{reg}}_{\text{good}}(\mathbbm{1}_\fp,\underline{M}_\fp)_{\ell}$ match. We then prove some particular instances of this expectation, namely when $\underline{M}$ arises from a Frobenius space with good reduction (Theorem \ref{thm:true-for-pure0}), or when $\underline{M}$ is a positive Carlitz's twist $\underline{A}(n)$ for $n\geq 0$ (Theorem \ref{thm:true-for-carlitz}). \\

All along this text, we make constant use of extension modules in exact but non necessarily abelian categories. Although everything works as expected, we decided to add an appendix on exact categories as Section \ref{sec:exact-categories} because we did not find all of the relevant references in the literature. 

\section{Anderson $A$-motives and their extension modules}\label{chapter:definitions}
Let $\bF$ be a finite field with $q$ elements. By convention, unlabeled tensor products and fiber products throughout this text are over $\bF$; all algebras are associative, commutative and with unit. Let $(C,\cO_C)$ be a geometrically irreducible smooth projective curve over $\bF$, and fix a closed point $\infty$ on $C$. Let $A:=\cO_C(C\setminus\{\infty\})$ be the ring of regular functions on $C\setminus \{\infty\}$ and let $K$ be the function field of $C$. 
 
\subsection{Definition of $A$-motives}\label{section:definitions}
This subsection is devoted to define and recall the main properties of Anderson $A$-motives. We begin with a paragraph of notations.

\subsubsection*{Preliminaries on $\fj$ and $\tau$}
Let $R$ be an $\bF$-algebra and let $\kappa:A\to R$ be an $\bF$-algebra morphism. $R$ will be referred to as \emph{the base algebra} and $\kappa$ as the \emph{characteristic morphism}. The kernel of $\kappa$ is called \emph{the characteristic of $(R,\kappa)$}. We consider the ideal $\fj=\fj_{\kappa}$ of $A\otimes R$ generated by the set $\{a\otimes 1-1\otimes \kappa(a)|a\in A\}$; $\fj$ is equivalently defined as the kernel of $A\otimes R\to R$, $a\otimes f\mapsto \kappa(a)f$. The following observation appears in \cite{hartl-isogeny}.
\begin{Lemma}\label{lem:j-Cartier}
The ideal $\fj$ is an invertible $A\otimes R$-module. 
\end{Lemma}

According to Lemma \ref{lem:j-Cartier}, the $A\otimes R$-module $\fj^n$ makes sense for any integer $n$ and we have canonical inclusions $\fj^{-n}\hookrightarrow \fj^{-(n+1)}$. Given an $A\otimes R$-module $M$, we denote by $M[\fj^{-1}]$ the colimit
\[
M[\fj^{-1}]:=\varinjlim~ M\otimes_{A\otimes R}\fj^{-n}
\]
taken over non-negative integers $n$ in the category of $A\otimes R$-modules.
\begin{Remark}
For consistency with \cite{hartl-isogeny}, observe that the map of schemes: 
\[
D(\fj)=(\Spec A\otimes R)\setminus V(\fj) \longrightarrow \Spec(A\otimes R)[\fj^{-1}], \quad \fp \longmapsto \fp[\fj^{-1}],
\]
composed with $\Spec(A\otimes R)[\fj^{-1}]\to \Spec A\otimes R$, $\fq\mapsto \fq\cap (A\otimes R)$, coincides with the canonical inclusion $D(\fj)\hookrightarrow \Spec A\otimes R$ of an open subscheme. Hence, $D(\fj)$ and $\Spec(A\otimes R)[\fj^{-1}]$ are canonically isomorphic. As tensor products commute with direct limits, we further have:
\[
M[\fj^{-1}]\cong M\otimes_{A\otimes R}(A\otimes R)[\fj^{-1}]
\]
which implies that $M[\fj^{-1}]$ is obtained by pulling back $M$ along the open immersion
\[
(\Spec A\otimes R)\setminus V(\fj) \hookrightarrow \Spec A\otimes R.
\]
\end{Remark}

Given another module $N$, we shall use freely the identifications\footnote{For any integer $n$, the module $\fj^{n}$ is finite projective and hence $\Hom(N,M)\otimes \fj^n\cong \Hom(N,M \otimes \fj^n)$; the first isomorphism follows as direct limits commute with $\Hom(N,-)$. The second one follows from $\Hom(N\otimes \fj^{-n},M[\fj^{-1}])\cong \Hom(N,M[\fj^{-1}]\otimes \fj^{n})$ together with $M[\fj^{-1}]\otimes \fj^n\cong M[\fj^{-1}]$ (as tensor products commute with colimits).}
\[
\Hom_{A\otimes R}(N,M)[\fj^{-1}]\stackrel{\sim}{\rightarrow} \Hom_{A\otimes R}(N,M[\fj^{-1}])\stackrel{\sim}{\leftarrow} \Hom_{(A\otimes R)[\fj^{-1}]}(N[\fj^{-1}],M[\fj^{-1}]).
\]
We shall also use, without further notice, that the map $A\otimes R\to (A\otimes R)[\fj^{-1}]$ is flat\footnote{This follows from the flatness of the $A\otimes R$-module $\fj^{n}$ together with the fact that colimits preserve exact sequences.}.\\

We denote by $\tau:A\otimes R\to A\otimes R$ be the $A$-linear morphism given by $a\otimes r\mapsto a\otimes r^q$ on elementary tensors. We denote by $\tau^*M$ be the pull-back of $M$ by $\tau$ (\cite[A.II.\S 5]{bourbaki}); \emph{i.e.} $\tau^*M$ is the $A\otimes R$-module
\begin{equation}
\tau^*M:=(A\otimes R)\otimes_{\tau, A\otimes R}M \nonumber
\end{equation}
where the subscript $\tau$ signifies that the relation $(a\otimes_{\tau} bm)=(a\tau(b)\otimes_{\tau} m)$ holds for all elements $a,b\in A\otimes R$, $m\in M$, and where the module structure corresponds to $b\cdot (a\otimes_{\tau} m):=(ba\otimes_{\tau} m)$. We let 
\[
\mathbf{1}:\tau^*(A\otimes R)\longrightarrow A\otimes R, 
\]
be the $A\otimes R$-linear morphism which maps $(a\otimes r)\otimes_{\tau} (b\otimes s)\in \tau^*(A\otimes R)$ to $ab\otimes rs^q\in A\otimes R$.

\subsubsection*{Anderson $A$-motives}
The next definition takes its roots in the work of Anderson \cite{anderson}, though this version is borrowed from \cite[Def.~2.1]{hartl-isogeny}:
\begin{Definition}\label{def:Amotives}
An \emph{Anderson $A$-motive $\underline{M}$ (over $R$) } is a pair $(M,\tau_M)$ where $M$ is a projective $A\otimes R$-module of finite constant rank and where $\tau_M:(\tau^*M)[\fj^{-1}]\to M[\fj^{-1}]$ is an isomorphism of $(A\otimes R)[\fj^{-1}]$-modules. \\
In all the following, we shall simply write \emph{$A$-motive} instead of \emph{Anderson $A$-motive}. The \emph{rank of $\underline{M}$} is the (constant) rank of $M$ over $A\otimes R$. \\
A morphism $(M,\tau_M)\to (N,\tau_N)$ of $A$-motives (over $R$) is an $A\otimes R$-linear morphism ${f:M\to N}$ such that $f\circ \tau_M=\tau_N\circ \tau^*f$. We let $\cM_R$ be the $A$-linear category of $A$-motives over $R$.
\end{Definition}

\begin{Remark}
$A$-motives as in Definition \ref{def:Amotives} are called \emph{abelian $A$-motives} by several authors (see \emph{e.g.} \cite{brownawell-papanikolas}). The word \emph{abelian} refers to the assumption that the underlying ${A\otimes R}$\nobreakdash-module is finite projective. Dropping this assumption is not a good strategy in our work, as too many analogies with number fields motives would fail to hold. 
\end{Remark}

\begin{Definition}\label{def:Amotives-effective}
An $A$-motive $\underline{M}=(M,\tau_M)$ over $R$ is called \emph{effective} if $\tau_M(\tau^*M)\subset M$. We let $\cM_R^{\text{eff}}$ be the full subcategory of $\cM_R$ whose objects are effective $A$-motives.
\end{Definition}

Given an $A$-motive $\underline{M}$, we will write $M$ without an underline to refer to its underlying module and write $\tau_N$ to refer to the underlying morphism. We list below some important classical constructions:
\begin{enumerate}[label=$-$]
\item Let $\mathbbm{1}$ be the \emph{unit $A$-motive over $R$} defined as $(A\otimes R,\textbf{1})$.
\item The \emph{direct sum} of two $A$-motives $\underline{M}$ and $\underline{N}$, denoted $\underline{M}\oplus \underline{N}$, is representable in $\cM_R$ and corresponds to the $A$\nobreakdash-motive whose underlying $A\otimes R$-module is $M\oplus N$ and whose $\tau$-linear morphism is $\tau_M\oplus \tau_N$.
\item Their \emph{tensor product}, denoted $\underline{M}\otimes \underline{N}$, is defined to be $(M\otimes_{A\otimes R}N,\tau_M\otimes \tau_N)$. The tensor operation admits $\mathbbm{1}$ as a neutral object.
\item The \emph{internal hom} is the object $\underline{\eH}=\underline{\cHom}(\underline{M},\underline{N})$ of $\cM_R$ representing the functor $\underline{X}\mapsto \Hom_{\cM_R}(\underline{X}\otimes\underline{M},\underline{N})$. One verifies that its underlying module is $\eH=\Hom_{A\otimes R}(M,N)$ with $\tau_{\eH}$ given by
\[
\tau_{\eH}:(\tau^*\eH)[\fj^{-1}]=\Hom_{A\otimes R}((\tau^*M)[\fj^{-1}],(\tau^*N)[\fj^{-1}])\stackrel{\sim}{\to} \Hom_{A\otimes R}(M[\fj^{-1}],N[\fj^{-1}])=\eH[\fj^{-1}], 
\]
mapping $h\mapsto \tau_N\circ h \circ \tau_M^{-1}$.
\item The \emph{dual} of $\underline{M}$ is defined to be the $A$-motive $\underline{\cHom}(\underline{M},\mathbbm{1})$.
\item Given $S$ an $R$-algebra, there is a \emph{base-change} functor $\cM_R\to \cM_S$ mapping the $A$\nobreakdash-motive ${\underline{M}=(M,\tau_M)}$ over $R$ to $\underline{M}_S:=(M\otimes_R S,\tau_M\otimes_R \id_S)$.
\item In the situation where $S$ is a finitely presented flat $R$-module, there is also a \emph{restriction functor} $\Res_{S/R}:\cM_S\to \cM_R$ mapping an $A$-motive $\underline{M}$ over $S$ to $\underline{M}$ seen as an $A$-motive over $R$. Given two $A$-motives $\underline{M}$ and $\underline{N}$ over $R$ and $S$ respectively, we have
\begin{equation}
\Hom_{\cM_R}(\underline{M},\Res_{S/R}\underline{N}) =\Hom_{\cM_S}(\underline{M}_S,\underline{N}). \nonumber
\end{equation}
In other words, the base-change functor is left-adjoint to the restriction functor.
\end{enumerate}

\begin{Example}[Carlitz's motive]\label{ex:carlitz-motive}
Let $C=\bP_{\bF}^1$ be the projective line over $\bF$ and let $\infty$ be the closed point of coordinates $[0:1]$. If $t$ is any element in $\cO(\bP^1_{\bF}\setminus\{\infty\})$ whose order of vanishing at $\infty$ is $1$, we have an identification $A=\bF[t]$. For an $\bF$-algebra $R$, the tensor product $A\otimes R$ is identified with $R[t]$. The morphism $\tau$ acts on $p(t)\in R[t]$ by raising its coefficients to the $q$th-power. It is rather common to denote by $p(t)^{(1)}$ the polynomial $\tau(p(t))$. Let $\kappa:A\to R$ be an $\bF$-algebra morphism and let $\theta=\kappa(t)$. The ideal $\fj\subset R[t]$ is principal, generated by $(t-\theta)$. \\
The Carlitz $\bF[t]$-motive $\underline{C}$ over $R$ is defined by the couple $(R[t],\tau_{C})$ where $\tau_C$ maps $\tau^*p(t)$ to $(t-\theta)p(t)^{(1)}$. Its $n$th tensor power $\underline{C}^n:=\underline{C}^{\otimes n}$ is isomorphic to the $\bF[t]$-motive whose underlying module is $R[t]$ and where $\tau_{C^{n}}$ maps $\tau^*p(t)$ to $(t-\theta)^n p(t)^{(1)}$. \\
To match with number fields analogy, we let $\underline{A}(n):=\underline{C}^{-n}=(\underline{C}^{n})^{\vee}$. For $A=\bF[t]$, this notation stresses that $\underline{A}(1)$ plays the role of the number fields Tate motive $\bZ(1)$ and, more generally, that $\underline{A}(n)$ plays the role of $\bZ(n)$.
\end{Example}

In particular, the category $\cM_R$ is additive and $A$-linear and it is a mere verification that that data of $(\cM_R,\otimes,\mathbbm{1})$ forms a symmetric monoidal additive category which is rigid (\emph{e.g.} \cite[\S 2.3]{hartl-juschka}). \\

In general, morphisms in $\cM_R$ might not admit kernel nor cokernel because the underlying module of the naturally defined kernel or cokernel might not be projective over $A\otimes R$. Nonetheless, there are some favorable situations where they are: 
\begin{Lemma}\label{lem:kernel-cokernel}
Let $f:\underline{M}\to \underline{N}$ be a morphism in $\cM_R$.
\begin{enumerate}
\item If its underlying map of modules is surjective, then $f$ admits a kernel in $\cM_R$ and its underlying module is $\ker f$.
\item If its underlying map of modules is a split injection, then $f$ admits a cokernel in $\cM_R$ and its underlying module is $\coker f$.
\end{enumerate}
\end{Lemma}
\begin{proof}
We only prove the first part, as the argument for the second one is similar. By assumption, $f:M\to N$ is surjective. As $N$ is projective, $f$ admits a splitting, amounting to $M\cong \ker f\oplus N$. We deduce that $\ker f$ is finite projective (as $M$ is) and that the sequence $0\to (\tau^* \ker f)[\fj^{-1}]\to (\tau^* M)[\fj^{-1}]\to (\tau^* N)[\fj^{-1}]\to 0$ is exact. By the universal property of $\ker f$, there exists a unique dashed arrow making the following diagram commute:
\begin{equation}
\begin{tikzcd}
0 \arrow[r] & {(\tau^* \ker f)[\fj^{-1}]}\arrow[r]\arrow[d,dashed,"\tau_{\ker f}","\exists!"'] & {(\tau^* M)[\fj^{-1}]} \arrow[r]\arrow[d,"\tau_M","\wr"'] & {(\tau^* N)[\fj^{-1}]} \arrow[d,"\tau_N","\wr"']\arrow[r] & 0 \\
0 \arrow[r] & {(\ker f)[\fj^{-1}]} \arrow[r] & {M[\fj^{-1}]} \arrow[r] & {N[\fj^{-1}]} \arrow[r] & 0
\end{tikzcd}
\nonumber
\end{equation}
and this dashed arrow is an isomorphism by the Snake Lemma. That is, $\underline{\ker}(f):=(\ker f,\tau_{\ker f})$ defines an $A$-motive over $R$ sitting in an sequence $\underline{S}:0\to \underline{\ker}(f)\to \underline{M}\to \underline{N}\to 0$.

It remains to check its universal property. Let $g:\underline{M}'\to \underline{M}$ be a morphism in $\cM_R$ whose composition with $f$ is zero. From the universal property of $\ker f$ in $\mathbf{Mod}_{A\otimes R}$, there exists a unique $h:M'\to \ker f$ through which $g$ factors. Since $\underline{S}$ splits in $\mathbf{Mod}_{A\otimes R}$, $(\ker f)[\fj^{-1}]$ (resp. $(\tau^*\ker f)[\fj^{-1}])$) is also a kernel of $f[\fj^{-1}]$ (resp. $(\tau^* f)[\fj^{-1}]$) and $g$ factors uniquely through it. By uniqueness, we obtain $h[\fj^{-1}]=\tau_{\ker f}\circ (\tau^* h)\circ \tau_{M'}^{-1}[\fj^{-1}]$ from which we deduce that $h$ extends (uniquely) to a morphism $h:\underline{M}'\to \underline{\ker}(f)$.
\end{proof}

We end this paragraph by showing a useful corollary of Lemma \ref{lem:kernel-cokernel}. Recall that a \emph{retraction} in an additive category is a morphism $r:X\to Y$ for which there exists a section $s:Y\to X$ such that $r\circ s=\id_Y$; an additive category is called \emph{weakly idempotent complete} if every retraction admits a kernel \cite[\S 7]{buhler}. The following consequence will be important for the result in the appendix to apply, especially Proposition~\ref{prop:pull-push-are-exact}.
\begin{Corollary}\label{cor:wic}
The additive category $\cM_R$ is weakly idempotent complete. 
\end{Corollary}
\begin{proof}
A retraction in $\cM_R$ is necessarily surjective at the level of modules, hence admits a kernel by \ref{lem:kernel-cokernel}. 
\end{proof}

\subsubsection*{Isogenies of $A$-motives}
The category $\cM_R$ of $A$-motives over $R$ is generally \emph{not} abelian, even if $R=F$ is a field. To remedy to the non-abelian feature, we shall introduce next the category $\cM^{\text{iso}}_R$ of $A$-motives \emph{up to isogenies over $R$} (see Definition \ref{def:tilde-motives}), which is abelian whenever $R=F$ is a field. Following \cite[Def.~5.5, Prop.~5.8, Thm.~5.12, Cor.~5.15]{hartl-isogeny}, we recall:
\begin{Proposition-Definition}\label{def:isogeny}
A morphism $f:\underline{M}\to \underline{N}$ in $\cM_R$ is an \emph{isogeny} if one of the following equivalent conditions is satisfied.
\begin{enumerate}[label=$(\alph*)$]
\item $f$ is injective and $\coker(f:M\to N)$ is a finite projective $R$-module,
\item\label{item:isogeny-rank} $M$ and $N$ have the same rank and $\coker f$ is finite over $R$,
\item\label{item:isogeny-a} there exists $0\neq a\in A$ such that $f$ induces an isomorphism of $(A\otimes R)[a^{-1}]$-modules $M[a^{-1}]\stackrel{\sim}{\to} N[a^{-1}]$,
\item there exists $0\neq a\in A$ and $g:\underline{N}\to \underline{M}$ in $\cM_R$ such that $f\circ g=a\id_N$ and $g\circ f=a\id_M$.
\end{enumerate}
If an isogeny between $\underline{M}$ and $\underline{N}$ exists, $\underline{M}$ and $\underline{N}$ are said to be \emph{isogenous}.
\end{Proposition-Definition}

We then introduce the notion of \emph{saturation}. 
\begin{Definition}\label{def:saturation}
Let $f:\underline{N}\to \underline{M}$ be a morphism of $A$-motives.
\begin{enumerate}
\item We call $f$ \emph{saturated}, or say that \emph{$\underline{N}$ is saturated in $\underline{M}$}, if $f$ admits a cokernel in $\cM_R$.
\item Let $\mathbf{Sat}(f)$ denote the category of diagrams in $\cM_R$ of the form
\begin{equation}
\begin{tikzcd}
 & \underline{K}\arrow[d,"g"] \\
\underline{N}\arrow[ur]\arrow[r,"f"] & \underline{M}
\end{tikzcd}\nonumber
\end{equation}
where $g$ is saturated, with morphisms in $\mathbf{Sat}(f)$ being morphisms of diagrams which are the identity on both $\underline{N}$ and $\underline{M}$. When it exists, we write an initial object in $\mathbf{Sat}(f)$--unique up to unique isomorphism--in the form 
\begin{equation}
\begin{tikzcd}
 & \underline{N}^{\operatorname{sat}}\arrow[d,"f^{\operatorname{sat}}"] \\
\underline{N}\arrow[ur,"i"]\arrow[r,"f"] & \underline{M}
\end{tikzcd}\nonumber
\end{equation}
and call $f^{\operatorname{sat}}:\underline{N}^{\operatorname{sat}}\to \underline{M}$ the \emph{saturation of $f$}.
\end{enumerate}
\end{Definition}
\begin{Lemma}\label{lem:satiso}
Assume that $R=F$ is a field and let $f:\underline{N}\to \underline{M}$ be a morphism in $\cM_F$. Then $f$ admits a kernel  in $\cM_F$, and $\mathbf{Sat}(f)$ admits an initial object. If further $f$ is a monomorphism, the canonical map $i:\underline{N}\to \underline{N}^{\operatorname{sat}}$ is an isogeny. 
\end{Lemma}
\begin{proof}
The choice of $R=F$ a field makes $A\otimes F$ into a Dedekind domain. Therefore, any torsion-free subquotient of a finite projective $A\otimes F$-module is itself finite projective; hence so is the module $\ker(f)$. That $\tau_N$ induces an isomorphism ${\tau_{\ker f}:\tau^*\ker(f)[\fj^{-1}]\stackrel{\sim}{\to} \ker(f)[\fj^{-1}]}$ and that $\underline{\ker}(f):=(\ker(f),\tau_{\ker f})$ represents a kernel of $f$ in $\cM_F$ is a mere verification similar to the proof of Lemma \ref{lem:kernel-cokernel}. 

For what remains, we assume--up to replacing $\underline{N}$ by the quotient $\underline{N}/\underline{\ker}(f)$--that $f$ is an inclusion of subobjects $\underline{N}\hookrightarrow \underline{M}$; \emph{i.e.} on the underlying modules, $f$ is the inclusion $N\subset M$. We consider the submodules
\begin{align*}
N^{\operatorname{sat}} &:=\left\{m\in M~|~\text{there~exists~}a\in A\otimes F:~a\cdot n\in N\right\}, \\
(\tau^*N)^{\operatorname{sat}} &:=\left\{m\in \tau^*M~|~\text{there~exists~}a\in A\otimes F:~a\cdot n\in \tau^*N\right\}
\end{align*}
of $M$ and $\tau^*M$ respectively. Both are finite projective $A\otimes F$-modules and so is $M/N^{\operatorname{sat}}$. One verifies that $\tau_M$ induces an isomorphism
\[
(\tau_N)^{\operatorname{sat}}:(\tau^*N)^{\operatorname{sat}}[\fj^{-1}]\stackrel{\sim}{\longrightarrow} N^{\operatorname{sat}}[\fj^{-1}].
\]
It remains to show that the couple $(N^{\operatorname{sat}},(\tau_N)^{\operatorname{sat}})$ forms an $A$-motive $\underline{N}^{\operatorname{sat}}$; namely, it suffices to prove the equality $\tau^*N^{\operatorname{sat}}=(\tau^*N)^{\operatorname{sat}}$ as submodules of $\tau^*M$. That $\underline{N}^{\operatorname{sat}}\hookrightarrow \underline{M}$ is a saturation of $f$ will then be clear. \\
We consider the following ideal of $A\otimes F$:
\[
\fv:=\operatorname{Ann}(N^{\operatorname{sat}}/N)=\{a\in A\otimes F~|~\text{for~all~}m\in N^{\operatorname{sat}}:~am\in N\}.
\]
As the modules $(\tau^*N)^{\operatorname{sat}}/\tau^*N$ and $N^{\operatorname{sat}}/N$ are isomorphic away from $\fj$ (via the induced map from $\tau_M$), we obtain 
\[
\fv=\operatorname{Ann}(N^{\operatorname{sat}}/N)=\operatorname{Ann}((\tau^*N)^{\operatorname{sat}}/\tau^*N))\subset \operatorname{Ann}(\tau^*N^{\operatorname{sat}}/\tau^*N)=\tau(\fv)
\]
as ideals of $(A\otimes F)[\fj^{-1}]$, where we used $\tau^*N^{\operatorname{sat}}\subset (\tau^*N^{\operatorname{sat}})$ for the above inclusion and \cite[Tag 07T8]{stack} for the last equality. Using prime ideal decomposition in the Dedekind domain $A\otimes F$ yields $\tau(\fv)=\fv$. Consequently, $\fv$ descends to an ideal of $A$ from which we obtain $(\tau^*N)^{\operatorname{sat}}=\tau^*N^{\operatorname{sat}}$, as desired. In addition, this shows that the cokernel of $N\hookrightarrow N^{\operatorname{sat}}$ is $A$-torsion, hence that $\underline{N}\to \underline{N}^{\operatorname{sat}}$ is an isogeny (use definition \ref{item:isogeny-a} for $a\in A$ a generator of a principal ideal in the form $\fv^h$).
\end{proof}

The previous lemma motivates the definition of the category of \emph{$A$-motives up to isogenies} (see \cite[Def. 2.3.1]{hartl-juschka}).
\begin{Definition}\label{def:tilde-motives}
Let $\cM^{\text{iso}}_R$ be the $K$-linear category whose objects are those of $\cM_R$ and where the hom-sets of two objects $\underline{M}$ and $\underline{N}$ is given by the $K$-vector space 
\begin{equation}
\Hom_{\cM^{\text{iso}}_R}(\underline{M},\underline{N}):=\Hom_{\cM_R}(\underline{M},\underline{N})\otimes_A K. \nonumber
\end{equation}
We call the objects of $\cM^{\text{iso}}_R$ the \emph{$A$-motives over $R$ up to isogenies}.
\end{Definition}
An isogeny in $\cM_R$ then becomes an isomorphism in $\cM^{\text{iso}}_R$. In particular, over $R=F$ a field, Lemma \ref{lem:satiso} implies that any morphism admits a kernel and a cokernel in $\cM^{\text{iso}}_F$. The following is detailed in \cite[Prop. 2.3.4]{hartl-juschka}:
\begin{Proposition}
The category $\cM^{\text{iso}}_F$ is abelian.
\end{Proposition}

\subsection{Extension modules in $\cM_R$}\label{sec:extensions-in-MF}
In this subsection, we are concerned with the computation of extension modules in categories of $A$-motives. Theorem \ref{mthm:extension} of the introduction is proved below (Theorem \ref{thm:cohomology-in-MR}). \\

Let $R$ be an $A$-algebra. As mentioned above, the category of $A$-motives over $R$ is not abelian. However, there is a notion of exact sequences in the category $\cM_R$ which we borrow from \cite[Rmk. 2.3.5(b)]{hartl-juschka}:
\begin{Definition}\label{def:exact-sequence}
We say that a sequence $0\to \underline{M}'\to \underline{M} \to \underline{M}''\to 0$ in $\cM_R$ is \emph{exact} if its underlying sequence of $A\otimes R$-modules is exact.
\end{Definition} 
The next proposition appears and is discussed in \cite[Rmk. 2.3.5(b)]{hartl-juschka} and will allow us to consider extension modules in Subsection \ref{subsec:extension-modules-of-mixed}. Although stated in the case where $R$ is a particular $A$-algebra and $\deg(\infty)=1$, its statement extends to our setting:
\begin{Proposition}\label{prop:MF-is-exact}
The category $\cM_R$ together with the notion of exact sequences as in Definition \ref{def:exact-sequence} forms an exact category (Definition \ref{def:exact-category}).
\end{Proposition}
\begin{proof}
We consider the forgetful functor $F:\cM_R\to \mathbf{Mod}_{A\otimes R}$. In particular, a sequence $\underline{S}$ in $\cM_R$ is exact if and only if $F(\underline{S})$ is exact. By Lemma \ref{lem:kernel-cokernel}, Proposition \ref{prop:Functor-to-exactCat} applies to the functor $F$, thus completing the proof.
\end{proof}

By the above proposition, we can make sense of $\Ext^i_{\cM_R}$ for $i\in \{0,1\}$ but, as $\cM_R$ is not abelian, there is an ambiguity about what one means for $i>1$. This ambiguity is cleared up in the appendix where we defined higher extension modules in exact categories. By the fact that $\cM_R$ is weakly idempotent complete (Corollary \ref{cor:wic}) and by the embedding Theorem \ref{thm:embedding-thm}\ref{item:abelian-embedding-reflects-epi}, higher extension modules in $\cM_R$ behave as well as in abelian categories.\\

Let $\underline{M}$ and $\underline{N}$ be two $A$-motives over $R$. The morphisms from $\underline{N}$ to $\underline{M}$ in $\cM_R$ are precisely the $A\otimes R$-linear maps of the underlying modules $f:N\to M$ such that $\tau_M\circ \tau^*f=f\circ \tau_N$. The module of degree zero extensions is set to be the module of homomorphisms in $\cM_R$, and hence:
\begin{equation}
\Ext^0_{\cM_R}(\underline{N},\underline{M}):=\Hom_{\cM_R}(\underline{N},\underline{M})=\{f\in \Hom_{A\otimes R}(N,M)~|~\tau_M\circ \tau^*f=f\circ \tau_N\}. \nonumber
\end{equation}
The next proposition computes the module of degree one extensions.
\begin{Proposition}\label{prop-cohomology-in-tilde}
There is a surjective morphism of $A$-modules, functorial in both $\underline{N}$ and $\underline{M}$,
\begin{equation}
\iota:\Hom_{A\otimes R}(\tau^*N,M)[\fj^{-1}]\twoheadrightarrow \Ext^1_{\cM_R}(\underline{N},\underline{M}) \nonumber
\end{equation}
whose kernel is $\left\{f\circ \tau_N-\tau_M\circ \tau^*f~|~f\in \Hom_{A\otimes R}(N,M)\right\}$. $\iota$ maps a morphism $u\in \Hom_{A\otimes R}(\tau^*N,M)[\fj^{-1}]$ to the class of the extension $[M\oplus N,\left(\begin{smallmatrix} \tau_M & u \\ 0 & \tau_N \end{smallmatrix}\right)]$ in $\Ext^1_{\cM_R}(\underline{N},\underline{M})$.
\end{Proposition}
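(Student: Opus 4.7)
The plan is to present every extension of $\underline{N}$ by $\underline{M}$ in $\cM_R$ concretely by splitting the underlying $A\otimes R$-module sequence, and to read off the parameter $u$ as the off-diagonal Frobenius block. First, given $u\in\Hom_{A\otimes R}(\tau^*N,M)[\fj^{-1}]$, finite generation of $\tau^*N$ lets me view $u$ as a map $\tau^*N[\fj^{-1}]\to M[\fj^{-1}]$ with $\fj$-poles of bounded order. The block matrix $\smallmat{\tau_M}{u}{0}{\tau_N}$ is then upper triangular with isomorphisms on the diagonal, hence defines an isomorphism $\tau^*(M\oplus N)[\fj^{-1}]\stackrel{\sim}{\to}(M\oplus N)[\fj^{-1}]$. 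Since $M\oplus N$ is finite locally free, this produces an $A$-motive $\underline{E}_u$, and the canonical inclusion and projection fit it into a short exact sequence $0\to\underline{M}\to\underline{E}_u\to\underline{N}\to 0$ in $\cM_R$. That $u\mapsto[\underline{E}_u]$ is $A$-linear, and functorial in both $\underline{M}$ and $\underline{N}$, is a routine Baer-sum verification.

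For surjectivity, let $0\to\underline{M}\xrightarrow{i}\underline{E}\xrightarrow{p}\underline{N}\to 0$ be any extension. The underlying sequence of $A\otimes R$-modules splits because $N$ is locally free, hence projective, so a choice of splitting $s\colon N\to E$ identifies $E\cong M\oplus N$ as $A\otimes R$-modules. In this decomposition, the identity $\tau_E\circ\tau^*i=i\circ\tau_M$ forces the $(1,1)$-entry of $\tau_E$ to equal $\tau_M$ and the $(2,1)$-entry to vanish; the identity $\tau_N\circ\tau^*p=p\circ\tau_E$ then forces the $(2,2)$-entry to be $\tau_N$. The remaining $(1,2)$-entry is a map $\tau^*N[\fj^{-1}]\to M[\fj^{-1}]$ which, again by finite generation of $\tau^*N$, defines a class $u\in\Hom_{A\otimes R}(\tau^*N,M)[\fj^{-1}]$; by construction $[\underline{E}_u]=[\underline{E}]$.

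To compute the kernel, I would compare two splittings $s$ and $s'$ of the same underlying extension: they differ by $i\circ f$ for a unique $f\in\Hom_{A\otimes R}(N,M)$, and the corresponding change of basis on $M\oplus N$ is the unipotent $\smallmat{1}{f}{0}{1}$. Conjugation of the Frobenius matrix yields
\[
\smallmat{1}{-f}{0}{1}\smallmat{\tau_M}{u}{0}{\tau_N}\smallmat{1}{\tau^*f}{0}{1}=\smallmat{\tau_M}{u+\tau_M\circ\tau^*f-f\circ\tau_N}{0}{\tau_N},
\]
so $u$ changes by $-(f\circ\tau_N-\tau_M\circ\tau^*f)$; since the assignment $f\mapsto f\circ\tau_N-\tau_M\circ\tau^*f$ is $A$-linear, the set of such changes is stable under negation and coincides with the kernel described in the statement. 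Conversely, any morphism $\underline{E}_u\stackrel{\sim}{\to}\underline{E}_{u'}$ in $\cM_R$ lying over $\id_{\underline{M}}$ and $\id_{\underline{N}}$ is necessarily upper unipotent, so of the above form; this shows that two parameters produce equivalent extensions precisely when their difference is a coboundary.

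The principal technical subtlety, and the only real obstacle, is the $\fj$-bookkeeping: I must check that both the off-diagonal block $u$ extracted from an extension and the coboundaries $f\circ\tau_N-\tau_M\circ\tau^*f$ actually lie in $\Hom_{A\otimes R}(\tau^*N,M)[\fj^{-1}]$ rather than in some larger localisation of $\Hom_{A\otimes R}(\tau^*N,M)$. This follows from the finite generation of $\tau^*N$ over $A\otimes R$, which bounds the $\fj$-poles of any map into $M[\fj^{-1}]$ uniformly; once this identification is in place, the block-matrix manipulations above are straightforward.
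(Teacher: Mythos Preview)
Your proof is correct and follows essentially the same route as the paper's: split the underlying module sequence using projectivity of $N$, read off the upper-triangular block form of $\tau_E$ to extract $u$, and compute the kernel by observing that any equivalence of extensions over $\id_{\underline{M}}$ and $\id_{\underline{N}}$ is given by a unipotent matrix $\smallmat{1}{f}{0}{1}$. The only cosmetic difference is that the paper computes the kernel by comparing $\underline{E}_u$ directly to the split extension rather than to a general $\underline{E}_{u'}$, but this is equivalent once $A$-linearity is established; your explicit remark on the $\fj$-bookkeeping (via finite presentation of $\tau^*N$) is a worthwhile clarification that the paper leaves implicit.
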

\begin{proof}
Consider an exact sequence $[\underline{E}]:0\to\underline{M}\stackrel{i}{\to}  \underline{E} \stackrel{\pi}{\to}  \underline{N}\to 0$ in $\cM_R$, namely, an exact sequence of the underlying $A\otimes R$-modules with commuting $\tau$-action. Because $N$ is a projective module, there exists $s:N\to E$ a section of the underlying short exact sequence of $A\otimes R$-modules. We let $\xi:=i\oplus s:M\oplus N\to E$. This produces an equivalence of extensions:
\begin{equation}
\begin{tikzcd}
0\arrow[r] & \underline{M}  \arrow[r,"i"] &   \underline{E}  \arrow[r,"\pi"] & \underline{N}  \arrow[r] & 0 \\
0\arrow[r] & \underline{M} \arrow[u,"\id"] \arrow[r] & (M\oplus N, \xi^{-1}\circ \tau_E\circ \xi) \arrow[u,"\xi"] \arrow[r] & \underline{N} \arrow[u,"\id"] \arrow[r] & 0
\end{tikzcd}
\nonumber
\end{equation}
Because $\xi^{-1}\circ \tau_M \circ \xi$ is an isomorphism from $\tau^*M[\fj^{-1}]\oplus \tau^*N[\fj^{-1}]$ to $M[\fj^{-1}]\oplus N[\fj^{-1}]$ which restricts to $\tau_M$ on the left and to $\tau_N$ on the right, there exists $u\in \Hom_{(A\otimes R)[\fj^{-1}]}(\tau^*N[\fj^{-1}],M[\fj^{-1}])=\Hom_{A\otimes R}(\tau^*N,M)[\fj^{-1}]$ such that $\xi^{-1}\circ \tau_E\circ \xi=\left(\begin{smallmatrix} \tau_M & u \\ 0 & \tau_N \end{smallmatrix}\right)$. We have just shown that the map
\begin{equation}
\iota:\Hom_{A\otimes R}(\tau^*N,M)[\fj^{-1}]\longrightarrow \Ext^1_{\cM_R}(\underline{N},\underline{M}), \quad u\longmapsto [M\oplus N,\left(\begin{smallmatrix} \tau_M & u \\ 0 & \tau_N \end{smallmatrix}\right)]\nonumber
\end{equation}
is onto. Note that $\iota(0)$ corresponds to the class of the split extension. Further, $\iota(u+v)$ corresponds to the Baer sum of $\iota(u)$ and $\iota(v)$. In addition, given the exact sequence $[\underline{E}]$ and $a\in A$, the pullback of multiplication by $a$ on $N$ by $\pi$ gives another extension which defines $a\cdot [\underline{E}]$. If $[\underline{E}]=\iota(u)$, it is formal to check that $a\cdot [\underline{E}]=\iota(au)$. As such, $\iota$ is a surjective $A$-module morphism. To find its kernel, it suffices to determine whenever $\iota(u)$ is equivalent to the split extension. This happens if and only if there is a commutative diagram in $\cM_R$ of the form
\begin{equation}
\begin{tikzcd}
0\arrow[r] & \underline{M}  \arrow[r]\arrow[d,"\id_M"] &   \underline{M}\oplus \underline{N}  \arrow[d,"h"]\arrow[r] & \underline{N}  \arrow[d,"\id_N"]\arrow[r] & 0 \\
0\arrow[r] & \underline{M} \arrow[r] & \left[M\oplus N, \left(\begin{smallmatrix} \tau_M & u \\ 0 & \tau_N \end{smallmatrix}\right)\right] \arrow[r] & \underline{N} \arrow[r] & 0
\end{tikzcd}
\nonumber
\end{equation}
where $h$ is an isomorphism in $\cM_R$. Since the diagram commutes in the category of $A\otimes R$\nobreakdash-modules, it follows that $h$ is of the form $\left(\begin{smallmatrix} \id_M & f  \\ 0 & \id_N \end{smallmatrix}\right)$ for an $A\otimes R$-linear map $f:N\to M$. Because it is a diagram in $\cM_R$, it further requires commuting $\tau$-action, that is: 
\begin{equation}
\begin{pmatrix}\tau_M & u \\ 0 & \tau_N \end{pmatrix} \tau^*\begin{pmatrix}\id_M & f \\ 0 & \id_N \end{pmatrix}=\begin{pmatrix}\id_M & f \\ 0 & \id_N \end{pmatrix}\begin{pmatrix}\tau_M & 0 \\ 0 & \tau_N \end{pmatrix}. \nonumber
\end{equation}
The above equation amounts to $u=f\circ \tau_N-\tau_M\circ \tau^*f$, and hence 
\begin{equation}
\ker(\iota)=\left\{f\circ \tau_N-\tau_M\circ \tau^*f~|~f\in \Hom_{A\otimes R}(N,M)\right\} .\nonumber
\end{equation}
This concludes the proof.
\end{proof}

\begin{Corollary}\label{cor:ontoonextension}
Let $f:\underline{M}\to \underline{M}''$ be an epimorphism in $\cM_R$. Then, the induced map $\Ext^1_{\cM_R}(\underline{N},\underline{M})\to \Ext^1_{\cM_R}(\underline{N},\underline{M}'')$ is onto. 
\end{Corollary}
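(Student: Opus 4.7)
By Proposition \ref{prop-cohomology-in-tilde}, both extension modules in question are surjective images of the corresponding modules $\Hom_{A\otimes R}(\tau^*N,-)[\fj^{-1}]$ via the map $\iota$. My first step is to verify that, under these parametrizations, the $A$-linear map $f_* : \Ext^1_{\cM_R}(\underline{N},\underline{M}) \to \Ext^1_{\cM_R}(\underline{N},\underline{M}'')$ induced by $f$ is simply post-composition $u \mapsto f\circ u$. Given $u \in \Hom_{A\otimes R}(\tau^*N, M)[\fj^{-1}]$, the $A\otimes R$-linear morphism $(f,\id_N):M\oplus N \to M''\oplus N$ intertwines the two $\tau$-structures on the extensions $\iota(u)$ and $\iota(f\circ u)$ thanks to the identity $f\circ\tau_M = \tau_{M''}\circ\tau^*f$ satisfied by any morphism of $A$-motives. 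This map realizes the pushout, so $f_*\iota(u) = \iota(f\circ u)$, reducing the problem to the surjectivity of
\[
f_* : \Hom_{A\otimes R}(\tau^*N, M)[\fj^{-1}] \longrightarrow \Hom_{A\otimes R}(\tau^*N, M'')[\fj^{-1}].
\]

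Next, I would exploit that the pull-back $\tau^*N$ of the finite locally free $A\otimes R$-module $N$ is again finite locally free, hence projective. The convention on exact sequences in $\cM_R$ (Definition \ref{def:exact-sequence}) means that an admissible epimorphism $\underline{M}\to\underline{M}''$ has surjective underlying $A\otimes R$-linear map $f:M\to M''$. Projectivity of $\tau^*N$ then yields surjectivity of $\Hom_{A\otimes R}(\tau^*N, M) \to \Hom_{A\otimes R}(\tau^*N, M'')$.

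It remains to transport this surjectivity through the operation $-[\fj^{-1}]$; this is where the Noetherian hypothesis on $R$ enters. It ensures that $A\otimes R$ is Noetherian, that $\fj$ is finitely generated, and that $\tau^*N$ is finitely presented, from which one deduces a natural identification $\Hom_{A\otimes R}(\tau^*N, P)[\fj^{-1}] \cong \Hom_{A\otimes R}(\tau^*N, P[\fj^{-1}])$ for $P=M$ and $P=M''$. Since the operation $-[\fj^{-1}]$ corresponds geometrically to restricting the associated coherent sheaf on $\Spec(A\otimes R)$ to the open complement of $V(\fj)$, it is right exact, so $M[\fj^{-1}]\to M''[\fj^{-1}]$ remains surjective. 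Combined with the projectivity of $\tau^*N$, this delivers the required surjectivity of $f_*$, and the corollary follows. The main technical hurdle is the interplay between $\Hom$ and the non-standard operation $-[\fj^{-1}]$ (which is not a localization at a multiplicative set when $\fj$ fails to be principal); handling it cleanly is precisely the role played by the Noetherian assumption.
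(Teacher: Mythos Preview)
Your proposal is correct and follows essentially the same route as the paper's proof: both reduce, via the functorial surjection of Proposition~\ref{prop-cohomology-in-tilde}, to showing that $\Hom_{A\otimes R}(\tau^*N,M)[\fj^{-1}]\to\Hom_{A\otimes R}(\tau^*N,M'')[\fj^{-1}]$ is onto, using that $\tau^*N$ is finite locally free (hence projective over the Noetherian ring $A\otimes R$) and that an epimorphism in $\cM_R$ is surjective on underlying modules. Your write-up is more explicit than the paper's on two points the paper leaves tacit—namely the check that the push-forward on $\Ext^1$ corresponds to post-composition under $\iota$, and the justification that the operation $-[\fj^{-1}]$ preserves surjectivity—but the underlying argument is the same.
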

\begin{proof}
Because $N$ is finite projective over $A\otimes R$, so is  $\tau^*N$ and hence the functor $\Hom_{A\otimes R}(\tau^*N,-)$ is exact. That $f$ is an epimorphism means that $f$ is a surjective morphism of the underlying modules. The induced morphism 
\[
\Hom_{A\otimes R}(\tau^*N,M)[\fj^{-1}]\longrightarrow \Hom_{A\otimes R}(\tau^* N,M'')[\fj^{-1}]
\]
is therefore surjective, and we conclude by Proposition \ref{prop-cohomology-in-tilde}.
\end{proof}  

As an consequence of the above corollary and Proposition \ref{prop:vanishing-higher-ext} of the appendix, we record:
\begin{Proposition}\label{prop:higher-vanishing}
The modules $\Ext^i_{\cM_R}(\underline{N},\underline{M})$ vanish for $i>1$. 
\end{Proposition}

Propositions \ref{prop-cohomology-in-tilde} and \ref{prop:higher-vanishing} combine to show that the cohomology of the complex
\begin{equation}\label{eq:complex-extension}
\left[\Hom_{A\otimes R}(N,M)\xrightarrow{\tau_N^{\vee}-\tau_M}\Hom_{A\otimes R}(\tau^*N,M)[\fj^{-1}]\right],
\end{equation}  
sitting in degree zero and one, computes the extension modules in the corresponding degree. In that respect, we make a slight abuse of notations in making the next definition:
\begin{Definition}
We denote by $\RHom_{\cM_R}(\underline{N},\underline{M})$ the complex \eqref{eq:complex-extension}.
\end{Definition}
\begin{Remark}
The notation "$\RHom$" is generally reserved for the \emph{derived $\Hom$}, namely, the universal extension of the $\Hom$ functor in the derived category. Here we do not consider a derived category, since $\cM_R$ is not abelian, and there is no confusion possible.
\end{Remark}

The category $(\cM_R,\otimes, \mathbbm{1})$ is a monoidal exact category in the sense of Definition \ref{def:monoidal-exact}. From Corollary \ref{cor:dualizable-tensor}, we have functorial isomorphisms for all $i\geq 0$:
\begin{equation}\label{eq:explici-ext1-map-dual}
\Ext^i_{\cM_R}(\underline{N},\underline{M})\stackrel{\sim}{\longrightarrow} \Ext^i_{\cM_R}(\mathbbm{1},\underline{M}\otimes \underline{N}^{\vee}). 
\end{equation} 
In particular, there is no loss of generality in considering extension modules of the form $\Ext^i_{\cM_R}(\mathbbm{1},\underline{M})$. From now on, we will be mainly interested in extension modules of the latter form. For readability, we introduce here and elsewhere the notation $(\id-\tau_M)(M)$ to designate the sub-$A$-module
\[
(\id-\tau_M)(M):=\left\{m-\tau_M(\tau^*m)~|~m\in M\right\}
\]
of $M[\fj^{-1}]$. We shall restate the main results of this section in this case (repeated from Theorem \ref{mthm:extension} of the introduction).
\begin{Theorem}\label{thm:cohomology-in-MR}
The cohomology of $\RHom_{\cM_R}(\mathbbm{1},\underline{M})$ is computed by the cohomology of the complex of $A$-modules
\begin{equation}
\left[M\xrightarrow{\id-\tau_M} M[\fj^{-1}]\right] \nonumber
\end{equation}
sitting in degree zero and one. Further, the natural $A$-linear surjection
\begin{equation}
\iota:M[\fj^{-1}]\twoheadrightarrow \Ext^1_{\cM_R}(\mathbbm{1},\underline{M}), \nonumber
\end{equation}
whose kernel is $(\id-\tau_M)(M)$, is given by mapping $m\in M[\fj^{-1}]$ to the class of the extension 
\[0\to \underline{M}\to \left[M\oplus (A\otimes R),\left(\begin{smallmatrix} \tau_M & m\cdot \mathbf{1} \\ 0 & \mathbf{1} \end{smallmatrix}\right)\right]\to \mathbbm{1}\to 0.
\] 
\end{Theorem}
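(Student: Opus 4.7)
The plan is to obtain Theorem \ref{thm:cohomology-in-MR} as a direct specialization of the two preceding results (Proposition \ref{prop-cohomology-in-tilde} and the subsequent complex description) to the case $\underline{N} = \mathbbm{1}$. The only real work is to track what the abstract Hom-modules and differentials become under the natural evaluation-at-$1$ identifications.

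First, I would fix the two identifications of $A \otimes R$-modules that will translate everything. Since $\mathbbm{1} = (A \otimes R, \mathbf{1})$, evaluation at $1$ gives $\Hom_{A \otimes R}(A \otimes R, M) \xrightarrow{\sim} M$, and the canonical isomorphism $\mathbf{1}: \tau^*(A \otimes R) \xrightarrow{\sim} A \otimes R$ together with evaluation at $1$ gives $\Hom_{A \otimes R}(\tau^*(A \otimes R), M) \xrightarrow{\sim} M$. Under these, I would compute the differential $\tau_N^{\vee} - \tau_M$ of the complex \eqref{eq:complex-extension}: for $f \in \Hom_{A \otimes R}(N, M)$ with $f(1) = m$, the composite $f \circ \tau_N$ corresponds under the identification to $f(\mathbf{1}(1 \otimes_\tau 1)) = m$, while $\tau_M \circ \tau^*f$ evaluated at $1 \otimes_\tau 1$ yields $\tau_M(1 \otimes_\tau m) = \tau_M(\tau^*m)$. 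Hence the differential is precisely $m \mapsto m - \tau_M(\tau^*m)$, i.e. $\id - \tau_M$, mapping $M$ to $M[\fj^{-1}]$. This establishes the first assertion about the complex representing $\RHom_{\cM_R}(\mathbbm{1}, \underline{M})$.

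For the second assertion, I would simply transport the surjection of Proposition \ref{prop-cohomology-in-tilde}, namely $\Hom_{A\otimes R}(\tau^*N, M)[\fj^{-1}] \twoheadrightarrow \Ext^1_{\cM_R}(\underline{N}, \underline{M})$, through the identification above to obtain the surjection $\iota: M[\fj^{-1}] \twoheadrightarrow \Ext^1_{\cM_R}(\mathbbm{1}, \underline{M})$. The explicit formula in Proposition \ref{prop-cohomology-in-tilde} shows that an element $m \in M[\fj^{-1}]$, viewed as the morphism $\tau^*(A \otimes R) \to M$ sending $1 \otimes_\tau 1$ to $m$, corresponds to the extension with underlying module $M \oplus (A \otimes R)$ and $\tau$-structure $\left(\begin{smallmatrix} \tau_M & m \cdot \mathbf{1} \\ 0 & \mathbf{1} \end{smallmatrix}\right)$, as claimed. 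The kernel, which in Proposition \ref{prop-cohomology-in-tilde} was $\{f \circ \tau_N - \tau_M \circ \tau^*f \mid f \in \Hom_{A \otimes R}(N, M)\}$, translates under the same identification to $\{m - \tau_M(\tau^*m) \mid m \in M\} = (\id - \tau_M)(M)$.

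No genuine obstacle is expected: the entire content of the theorem has already been proved in the general form, and the proof amounts to the bookkeeping described above. The only point one must be slightly careful with is distinguishing $m \in M$ from $\tau^*m \in \tau^*M$ when writing $\tau_M$, to be sure the identifications send $\tau_N^\vee - \tau_M$ to $\id - \tau_M$ and not to some twist thereof.
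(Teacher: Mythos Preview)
Your proposal is correct and matches the paper's approach exactly: the paper presents Theorem \ref{thm:cohomology-in-MR} as a restatement of Proposition \ref{prop-cohomology-in-tilde} and the subsequent proposition on the complex \eqref{eq:complex-extension}, specialized to $\underline{N}=\mathbbm{1}$, without giving a separate proof. Your explicit tracking of the identifications $\Hom_{A\otimes R}(A\otimes R,M)\cong M$ and $\Hom_{A\otimes R}(\tau^*(A\otimes R),M)\cong M$ to see that the differential becomes $\id-\tau_M$ is precisely the bookkeeping the paper leaves to the reader.
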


\begin{Remark}
From $\Hom_{\cM^{\text{iso}}_R}(-,-)=\Hom_{\cM_R}(-,-)\otimes_A K$ (\emph{cf}. Definition \ref{def:tilde-motives}), the extension spaces of $\mathbbm{1}$ by $\underline{M}$ in the $K$-linear category $\cM^{\text{iso}}_R$ are computed by the complex 
\begin{equation}
\left[M\otimes_A K\xrightarrow{\id-\tau_M} M[\fj^{-1}]\otimes_A K\right]. \nonumber
\end{equation}
\end{Remark}

\subsection{Extensions having good reduction}\label{sec:extensions-having-good-reduction}
We introduce here the function field analogue of the $\ell$-adic realization functor (Definition \ref{def:m-adic realization functor}), and show that it is exact (Proposition \ref{prop-exact-tate}). It will allow us to define \emph{extensions with good reduction} next (Definition \ref{def:ext-good-red-with-resp-to-ell}).\\

Let $\ell$ be a maximal ideal of $A$ and denote by $\cO_{\ell}$ the completed local ring of $A$ at $\ell$. We let $F$ be a field containing $K$ and let $\kappa:A\to F$ be the inclusion. Let $F^s$ be a separable closure of $F$ and denote by $G_F=\Gal(F^s|F)$ the absolute Galois group of $F$ equipped with the profinite topology. The group $G_F$ acts $A$-linearly on the left-hand side of the tensor $A\otimes F^s$, and this action extends by continuity to an $\cO_{\ell}$-linear action of $G_F$ on the algebra
\[
\cA_{\ell}(F^s):=(A\otimes F^s)^{\wedge}_{\ell}=\varprojlim_n~(A\otimes F^s)/\ell^n(A\otimes F^s)
\]
leaving $\cA_{\ell}(F):=(A\otimes F)^{\wedge}_{\ell}$ invariant. If $\bF_{\ell}$ denotes the residue field of $\cO_{\ell}$ and $\pi$ a uniformizer, we have an identification $\cA_{\ell}(F)=(\bF_{\ell}\otimes F)[\![\pi]\!]$.
\begin{Remark}\label{rmk:Witt-vectors}
In the function field/number field dictionary, the assignment $R\mapsto \cA(R)$ is akin to the $p$-typical Witt vectors construction $R\mapsto W(R)$ (\emph{e.g.} \cite[\S 1.1]{hartl-dico}). 
\end{Remark}
Let $\underline{M}=(M,\tau_M)$ be an $A$-motive over $F$ of rank $r$. Let $\underline{M}_{F^s}=(M_{F^s},\tau_M)$ be the $A$-motive over $F^s$ obtained from $\underline{M}$ by base-change. $G_F$ acts $\cO_{\ell}$-linearly on:
\[
(M_{F^s})^{\wedge}_{\ell}:=\varprojlim_{n}~(M\otimes_F F^s)/\ell^n (M\otimes_F F^s)=M\otimes_{A\otimes F}\cA_{\ell}(F^s)
\]
and leaves the submodule $M^{\wedge}_{\ell}=M\otimes_{A\otimes F}\cA_{\ell}(F)$ invariant. Following \cite[\S 1.8]{anderson}, we define:
\begin{Definition}\label{def:m-adic realization functor}
The \emph{$\ell$-adic realization} $\operatorname{T}_{\ell}\underline{M}$ of $\underline{M}$ consists of the $\cO_{\ell}$-module
\begin{equation}
\operatorname{T}_{\ell}\underline{M}:=\left\{m\in (M_{F^s})^{\wedge}_{\ell}~|~m=\tau_M(\tau^* m)\right\} \nonumber
\end{equation}
together with the compatible action of $G_F$ which it inherits as a submodule of $(M_{F^s})^{\wedge}_{\ell}$. 
\end{Definition}
\begin{Remark}
In \cite{mornev-isocrystal}, Mornev extended this construction to the situation where $\ell$ is the closed point~$\infty$.
\end{Remark}

The next lemma is well-known in the case of $\tau$-sheaves (\emph{e.g.} \cite[Prop.6.1]{taguchi-wan}). 
\begin{Lemma}\label{lem-rankr-tate-module}
The map $\operatorname{T}_{\ell}\underline{M}\otimes_{\cO_{\ell}}\cA_{\ell}(F^s)\to (M_{F^s})^{\wedge}_{\ell}$, $\omega\otimes f\mapsto \omega\cdot f$ is an isomorphism of $\cA_{\ell}(F^s)$-modules. In particular, the $\cO_{\ell}$-module $\operatorname{T}_{\ell}\underline{M}$ is free of rank $r$ and the action of $G_F$ on $\operatorname{T}_{\ell}\underline{M}$ is continuous.
\end{Lemma}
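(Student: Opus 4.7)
The plan is to prove the result by reducing modulo $\ell^n$ at each level and then taking the inverse limit. First I observe that $\ell$ and $\fj$ are coprime in $A\otimes F$: the isomorphism $(A\otimes F)/\fj \simeq F$ induced by $\kappa$ sends a nonzero $a\in\ell$ to the nonzero element $\kappa(a)\in F$, so $\ell+\fj = A\otimes F$. Therefore $\fj$ becomes a unit in $\cA_\ell(F)$ and $\cA_\ell(F^s)$, and $\tau_M$ induces a genuine isomorphism $\tau_M\colon \tau^*\widehat{(M_{F^s})}_\ell \xrightarrow{\sim} \widehat{(M_{F^s})}_\ell$ after $\ell$-adic completion, without need of inverting $\fj$.

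For each $n\geq 1$, set $R_n:=(A/\ell^n)\otimes F^s$ and $M_n := \widehat{(M_{F^s})}_\ell/\ell^n\widehat{(M_{F^s})}_\ell$, a free $R_n$-module of rank $r$ endowed with the induced isomorphism $\tau_{M_n}\colon \tau^*M_n \xrightarrow{\sim} M_n$. The technical heart of the proof is the claim that $M_n^{\tau_{M_n}=1}$ is a free $A/\ell^n$-module of rank $r$ and that the natural map $M_n^{\tau_{M_n}=1}\otimes_{A/\ell^n} R_n \to M_n$ is an isomorphism. For the base case $n=1$, I write $k:=A/\ell$ and use the splitting $R_1 = k\otimes F^s \simeq \prod_{\sigma\colon k\hookrightarrow F^s}F^s$, which decomposes $M_1 \simeq \bigoplus_\sigma N_\sigma$ with each $N_\sigma$ free of rank $r$ over $F^s$. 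The Frobenius $\tau$ cyclically permutes the factors $N_\sigma$ within each $\mathrm{Frob}_q$-orbit on the set of embeddings, and composing $\tau_{M_1}$ once around an orbit of size $d$ yields an $F^s$-semilinear equation on a single factor whose solution set is an $\bF_{q^d}$-vector space of dimension $r$ by Lang's theorem applied to $\mathrm{GL}_r(F^s)$. Gathering over orbits assembles a free rank-$r$ $k$-submodule of $\tau$-invariants inside $M_1$. For $n>1$, I induct via the short exact sequence of $\tau$-modules $0\to (\ell^{n-1}/\ell^n)\otimes_k M_1 \to M_n \to M_{n-1}\to 0$, using the surjectivity of $\tau-1$ on $M_1$ (from the $n=1$ case, together with its iteration) to extract a short exact sequence of $\tau$-invariants; freeness of rank $r$ for $M_n^{\tau_{M_n}=1}$ over $A/\ell^n$ then follows by a cardinality count and lifting of a basis.

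Passing to the inverse limit is routine: the transition maps $M_n^{\tau_{M_n}=1}\to M_{n-1}^{\tau_{M_{n-1}}=1}$ are surjective, so Mittag-Leffler holds, and $T_\ell\underline{M}$ is a free $\cO_\ell$-module of rank $r$; tensoring the finite-level isomorphisms with $\cA_\ell(F^s)$ and passing to the limit yields the desired $\cA_\ell(F^s)$-linear isomorphism $T_\ell\underline{M}\otimes_{\cO_\ell}\cA_\ell(F^s) \xrightarrow{\sim} \widehat{(M_{F^s})}_\ell$. Continuity of the $G_F$-action is automatic: each $M_n^{\tau_{M_n}=1}$ is a finite set, and its elements solve polynomial equations defined over $F$, hence live in a finite Galois extension $L_n/F$; consequently $\mathrm{Gal}(F^s/L_n)$ fixes $M_n^{\tau_{M_n}=1}$ pointwise, and the profinite action on the inverse limit is continuous. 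The main obstacle is the $n=1$ case of the technical claim, where Lang's theorem must be applied with care to handle the cyclic permutation of factors induced by the splitting $k\otimes F^s \simeq \prod_\sigma F^s$; everything else is formal devissage and a passage to the limit.
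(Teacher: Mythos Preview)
Your proof is correct and follows essentially the same architecture as the paper's: reduce modulo $\ell^n$, establish the finite-level isomorphism and freeness, then pass to the inverse limit; continuity is handled identically. The only real difference is that the paper dispatches the finite-level step in one line by citing \cite[Prop.~1.1]{katz} (which asserts precisely that for a semi-simple $q$-linear map on a finite-dimensional $F^s$-space the invariants span over $F^s$), whereas you unpack this by decomposing along $k\otimes F^s\simeq\prod_\sigma F^s$, applying Lang's theorem on each Frobenius orbit for $n=1$, and then running a d\'evissage on $n$ via the filtration by powers of $\ell$. Your route is more self-contained but longer; the paper's is shorter at the cost of an external reference.
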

\begin{proof}
Let $n\geq 1$. In the ring $A\otimes F$, the ideals $\ell^n$ and $\fj$ are coprime. Hence, the following composition of $A\otimes F$-linear maps is a well-defined isomorphism:
\[
\varphi_n : \tau^*(M/\ell^n M)\cong \frac{(\tau^*M)}{\ell^n (\tau^*M)}\cong \frac{(\tau^*M)[\fj^{-1}]}{\ell^n (\tau^*M)[\fj^{-1}]}\stackrel{\tau_M}{\longrightarrow} \frac{M[\fj^{-1}]}{\ell^n M[\fj^{-1}]}\cong M/\ell^nM.
\]   
The data of $\varphi_n$ induces a semi-simple $q$-linear map\footnote{For $k$ a field containing $\bF$ and $V$ a $k$-vector space, an $\bF$-linear endomorphism $f$ of $V$ is \emph{$q$-linear} if $f(rv)=r^qf(v)$ for all $r\in k$ and $v\in V$.} (in the sense of \cite[\S 1]{katz}) on the finite dimensional $F^s$-vector space:
\begin{equation}
(M\otimes_F F^s)/\ell^n(M\otimes_F F^s)=(M_{F^s})^{\wedge}_{\ell}/\ell^n (M_{F^s})^{\wedge}_{\ell}. \nonumber
\end{equation}
By Lang's isogeny theorem (\emph{e.g.} \cite[Prop. 1.1]{katz}), the multiplication map
\begin{equation}\label{eq-lang-mod-n}
\{m\in (M_{F^s})^{\wedge}_{\ell}/\ell^n (M_{F^s})^{\wedge}_{\ell}~|~\tau_M(\tau^*m)=m\}\otimes_{\bF} F^s\to (M_{F^s})^{\wedge}_{\ell}/\ell^n (M_{F^s})^{\wedge}_{\ell}
\end{equation}
is an isomorphism. Taking the inverse limit of \eqref{eq-lang-mod-n} over all $n$ yields the desired isomorphism.

As $(M_{F^s})^{\wedge}_{\ell}$ is free of rank $r$ over $\cA_{\ell}(F^s)$, the same is true for the module $(M_{F^s})^{\wedge}_{\ell}/\ell^n (M_{F^s})^{\wedge}_{\ell}$ over $(A/\ell^{n})\otimes F^s$.  The isomorphism \eqref{eq-lang-mod-n} implies that the $A/\ell^{n}$-module
\begin{equation}
[(M_{F^s})^{\wedge}_{\ell}/\ell^n(M_{F^s})^{\wedge}_{\ell}]^{\tau_M=1}:=\{m\in (M_{F^s})^{\wedge}_{\ell}~|~\tau_M(\tau^*m)=m\}\nonumber
\end{equation}
is free of rank $r$ over $A/\ell^n$. Their projective limit $\operatorname{T}_{\ell}\underline{M}$ is thus a free $\cO_{\ell}$-module of rank $r$.

By definition, the action of $G_F$ on $\operatorname{T}_{\ell}\underline{M}$ is continuous if, and only if, for all $n$, the induced action of $G_F$ on $\operatorname{T}_{\ell}\underline{M}/\ell^n \operatorname{T}_{\ell}\underline{M}$ factors through $\Gal(E_n|F)$ for some finite Galois extension $E_n$ of $F$. Let $\textbf{t}=\{t_{1},\ldots,t_{s}\}$ be a basis of the finite dimensional $F$-vector space $M^{\wedge}_{\ell}/\ell^nM^{\wedge}_{\ell}$. Let $F_M$ be the matrix of $\tau_M$ written in the basis $\tau^*\textbf{t}$ and $\textbf{t}$. Let $\bm{\omega}=\{\omega_1,\ldots,\omega_s\}$ be a basis of $\operatorname{T}_{\ell}\underline{M}/\ell^n\operatorname{T}_{\ell}\underline{M}$ over $\bF$. By \eqref{eq-lang-mod-n}, $\bm{\omega}$ is a basis of $(M_{F^s})^{\wedge}_{\ell}/\ell^n(M_{F^s})^{\wedge}_{\ell}$ over $F^s$, and we let $w_{ij}\in F^s$ be the coefficients of $\bm{\omega}$ expressed in $\mathbf{t}$, that is, for $i\in \{1,\ldots,s\}$, $\omega_i=\sum{w_{ij}t_j}$. We let $E_n$ denote the Galois closure of the finite separable extension $F(w_{ij}|(i,j)\in\{1,\ldots,s\}^2)$ of $F$ in $F^s$. Then,
\begin{equation}
\operatorname{T}_{\ell}\underline{M}/\ell^n\operatorname{T}_{\ell}\underline{M}=\{m\in (M\otimes_{F}E_n)/\ell^n(M\otimes_{F}E_n)~|~\tau_M(\tau^*m)=m\}. \nonumber
\end{equation}
That is, the action of $G_F$ factors through $\Gal(E_n|F)$, as desired.
\end{proof}

\begin{Proposition}\label{prop-exact-tate}
The following sequence of $\cO_{\ell}[G_F]$-modules is exact
\begin{equation}
0\to \operatorname{T}_{\ell}\underline{M}\longrightarrow (M_{F^s})^{\wedge}_{\ell}\xrightarrow{\id-\tau_M} (M_{F^s})^{\wedge}_{\ell} \longrightarrow 0. \nonumber
\end{equation}
\end{Proposition}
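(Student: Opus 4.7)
The plan is to establish exactness at each of the three positions separately. Left exactness and exactness in the middle are essentially tautological: by Definition \ref{def:m-adic realization functor}, $T_{\ell}\underline{M}$ is precisely the kernel of $\id-\tau_M$ acting on $\widehat{(M_{F^s})}_{\ell}$, and the inclusion $T_{\ell}\underline{M}\hookrightarrow\widehat{(M_{F^s})}_{\ell}$ is injective. The real content is to show that $\id-\tau_M$ is surjective on $\widehat{(M_{F^s})}_{\ell}$.

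First I would establish surjectivity of $\id-\tau_M$ modulo $\ell^n$, for every $n\geq 1$. Setting $V_n:=\widehat{(M_{F^s})}_{\ell}/\ell^n\widehat{(M_{F^s})}_{\ell}=(M\otimes_F F^s)/\ell^n(M\otimes_F F^s)$, Lemma~\ref{lem-rankr-tate-module} (more precisely, the isomorphism \eqref{eq-lang-mod-n} from its proof) furnishes an $F^s$-basis $\omega_1,\dots,\omega_r$ of $V_n$ consisting of elements fixed by $\tau_M$. Writing an arbitrary element of $V_n$ as $\sum c_i\omega_i$ with $c_i\in F^s$, solving $(\id-\tau_M)(\sum b_i\omega_i)=\sum c_i\omega_i$ reduces (since $\tau$ acts as the $q$-Frobenius on $F^s$ and fixes the $\omega_i$) to a system of Artin-Schreier equations $b_i-b_i^q=c_i$, each of which admits a solution in $F^s$ because $F^s$ is separably closed. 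Hence $\id-\tau_M$ is surjective on each $V_n$; equivalently, the sequence
\[
0\to T_{\ell}\underline{M}/\ell^n T_{\ell}\underline{M}\to V_n\xrightarrow{\id-\tau_M} V_n\to 0
\]
is short exact for every $n$ (left exactness of this finite-level sequence follows again from \eqref{eq-lang-mod-n}).

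Next I would pass to the inverse limit over $n$. Passing to $\varprojlim$ is left exact, and the only obstruction to preserving surjectivity is a Mittag-Leffler condition on the leftmost system $\{T_{\ell}\underline{M}/\ell^n T_{\ell}\underline{M}\}_n$. But by Lemma~\ref{lem-rankr-tate-module} the module $T_{\ell}\underline{M}$ is free of rank $r$ over $\cO_{\ell}$, so the transition maps $T_{\ell}\underline{M}/\ell^{n+1}T_{\ell}\underline{M}\twoheadrightarrow T_{\ell}\underline{M}/\ell^n T_{\ell}\underline{M}$ are surjective. Mittag-Leffler therefore holds. Since $\widehat{(M_{F^s})}_{\ell}=\varprojlim_n V_n$ and $T_{\ell}\underline{M}=\varprojlim_n T_{\ell}\underline{M}/\ell^n T_{\ell}\underline{M}$, we obtain exactness of the limit sequence, yielding the claimed short exact sequence.

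The main obstacle, such as it is, lies in the surjectivity modulo $\ell^n$; once one has chosen the trivializing basis provided by Lemma~\ref{lem-rankr-tate-module}, this reduces to the classical solvability of Artin-Schreier equations over a separably closed field, and the rest is a formal completeness/Mittag-Leffler argument. That the whole construction is $G_F$-equivariant is automatic, as $G_F$ acts by $\cO_{\ell}$-linear endomorphisms commuting with $\id-\tau_M$.
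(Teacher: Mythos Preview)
Your proof is correct and follows essentially the same idea as the paper's: both reduce surjectivity to solving Artin--Schreier equations over $F^s$ after trivializing via the $\tau_M$-fixed basis supplied by Lemma~\ref{lem-rankr-tate-module}. The only difference is in packaging the passage to the limit: you work modulo $\ell^n$ and then invoke Mittag--Leffler, whereas the paper uses the identification $\cA_{\ell}(F^s)=(\bF_{\ell}\otimes F^s)[\![\pi]\!]$ to write down the preimage directly as a power series, solving $b_n-b_n^q=a_n$ coefficientwise in $\bF_\ell\otimes F^s$ and setting $(\id-\tau_M)(\omega\cdot\sum b_n\pi^n)=\omega\cdot\sum a_n\pi^n$ for $\omega\in T_\ell\underline{M}$. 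The paper's version is marginally more explicit (no limit argument needed), while yours makes the formal structure clearer; neither gains anything substantive over the other.

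One small point: your identification of the kernel at level $n$ with $T_\ell\underline{M}/\ell^n T_\ell\underline{M}$ is correct but deserves a word---the natural map $T_\ell\underline{M}/\ell^n T_\ell\underline{M}\to [V_n]^{\tau_M=1}$ is injective (freeness of $T_\ell\underline{M}$) between finite $A/\ell^n$-modules of the same length, hence an isomorphism. Alternatively, Mittag--Leffler for the kernel system follows directly from surjectivity of the transition maps on the $V_n$'s, without needing this identification.
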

\begin{proof}
Everything is clear but the surjectivity of $\id-\tau_M$. From Lemma \ref{lem-rankr-tate-module}, one reduces to the situation where $\underline{M}=\mathbbm{1}$ is the unit $A$-motive, in which case $(M_{F^s})^{\wedge}_{\ell}=\cA_\ell(F^s)=(\bF_{\ell}\otimes F^s)[\![\pi]\!]$ and $\tau_M=\id\otimes \Frob_q$. Let $f=\sum_{n\geq 0}{a_n\pi^n}$ be a series in $(\bF_{\ell}\otimes F^s)[\![\pi]\!]$ and let $b_n\in \bF_{\ell}\otimes F^s$ be such that $[\id_{\bF_{\ell}}\otimes (\id-\Frob_q)](b_n)=a_n$ (which exists as $F^s$ is separably closed). Then, $g:=\sum_{n\geq 0}{b_n\pi^n}$ in $\cA_{\ell}(F^s)$ satisfies 
\[
f=(\id-\tau_M)(g),
\]
and hence $\id-\tau_M$ is surjective.
\end{proof}

We obtain the following:
\begin{Corollary}\label{adic-realization-exact}
The functor $\underline{M}\mapsto \operatorname{T}_{\ell}\underline{M}$, from $\cM_F$ to the category of continuous $\cO_{\ell}$-linear $G_F$-representations, is exact.
\end{Corollary}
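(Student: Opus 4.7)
My plan is to deduce the exactness of $T_\ell$ from Proposition \ref{prop-exact-tate} via an application of the snake lemma.

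Let $0\to \underline{M}'\to \underline{M}\to \underline{M}''\to 0$ be a short exact sequence in $\cM_F$. By Definition \ref{def:exact-sequence}, this is a short exact sequence of the underlying $A\otimes F$-modules, all of which are finite locally free (in particular the quotient $M''$ is locally free, hence the sequence remains exact after any base change). My first step is to observe that the functor $(-)\otimes_{A\otimes F}\cA_\ell(F^s)$, sending an $A$-motive $\underline{N}$ to $\widehat{(N_{F^s})}_\ell$, is exact on this sequence: tensoring with $F^s$ over $F$ is flat, and the $\ell$-adic completion of an exact sequence of finite $A\otimes F^s$-modules (with $\ell$ a maximal ideal of the Noetherian ring $A\otimes F^s$) remains exact. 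Thus I obtain the commutative diagram with exact rows
\begin{equation}
\begin{tikzcd}[column sep=small]
0 \arrow[r] & \widehat{(M'_{F^s})}_{\ell} \arrow[r] \arrow[d, "\id-\tau_{M'}"'] & \widehat{(M_{F^s})}_{\ell} \arrow[r] \arrow[d, "\id-\tau_{M}"'] & \widehat{(M''_{F^s})}_{\ell} \arrow[r] \arrow[d, "\id-\tau_{M''}"'] & 0 \\
0 \arrow[r] & \widehat{(M'_{F^s})}_{\ell} \arrow[r] & \widehat{(M_{F^s})}_{\ell} \arrow[r] & \widehat{(M''_{F^s})}_{\ell} \arrow[r] & 0
\end{tikzcd} \nonumber
\end{equation}
in which the vertical maps are $\cO_\ell$-linear and $G_F$-equivariant.

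My second step is to apply the snake lemma. The kernels of the three vertical arrows are, by Definition \ref{def:m-adic realization functor}, exactly $T_\ell\underline{M}'$, $T_\ell\underline{M}$ and $T_\ell\underline{M}''$. The cokernels of the three vertical arrows all vanish by Proposition \ref{prop-exact-tate}. The snake lemma then produces the exact sequence
\begin{equation}
0 \longrightarrow T_\ell\underline{M}' \longrightarrow T_\ell\underline{M} \longrightarrow T_\ell\underline{M}'' \longrightarrow 0 \nonumber
\end{equation}
of $\cO_\ell$-modules, and all maps are automatically continuous and $G_F$-equivariant since they are restrictions of the corresponding maps on $\widehat{(-_{F^s})}_\ell$.

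There is essentially no obstacle here: once Proposition \ref{prop-exact-tate} is in hand, the snake lemma does all the work. The only point requiring a moment of care is the flatness justification for the exactness of the top and bottom rows, which follows from the local freeness built into the definition of an $A$-motive together with the Noetherianity of $A\otimes F^s$.
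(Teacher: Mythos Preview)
Your proof is correct and follows essentially the same approach as the paper: both set up the same commutative diagram with exact rows obtained by applying $(-)\otimes_{A\otimes F}\cA_\ell(F^s)$, and then apply the snake lemma together with Proposition~\ref{prop-exact-tate}. The only cosmetic difference is that the paper justifies exactness of the rows in one step (flatness of $\cA_\ell(F^s)$ over $A\otimes F$), whereas you split it into base change to $F^s$ followed by $\ell$-adic completion over the Noetherian ring $A\otimes F^s$.
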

\begin{proof}
Let $S:0\to \underline{M}'\to \underline{M}\to \underline{M}''\to 0$ be an exact sequence in $\cM_F$. The underlying sequence of $A\otimes F$-modules is exact, and because $\cA_{\ell}(F^s)$ is flat over $A\otimes F$ (\emph{e.g.} \cite[AC.III \S 4, Thm. 3(iii)]{bourbaki}), the sequence of $\cA_{\ell}(F^s)$-modules $(S_{F^s})^{\wedge}_{\ell}$ is exact. In particular, the next commutative diagram of $\cO_{\ell}$\nobreakdash-modules has exact rows:
\begin{equation}
\begin{tikzcd}
0 \arrow[r] & (M'_{F^s})^{\wedge}_{\ell} \arrow[r]\arrow[d,"\id-\tau_{M'}"] & (M_{F^s})^{\wedge}_{\ell}\arrow[r]\arrow[d,"\id-\tau_{M}"] & (M''_{F^s})^{\wedge}_{\ell} \arrow[r]\arrow[d,"\id-\tau_{M''}"] & 0 \\
0 \arrow[r] & (M'_{F^s})^{\wedge}_{\ell} \arrow[r] & (M_{F^s})^{\wedge}_{\ell} \arrow[r] & (M''_{F^s})^{\wedge}_{\ell} \arrow[r] & 0
\end{tikzcd}\nonumber
\end{equation}
and the Snake Lemma together with Proposition \ref{prop-exact-tate} yields that $\operatorname{T}_{\ell}S$ is exact.
\end{proof}

Let $\underline{M}$ be an $A$-motive over $F$. From Corollary \ref{adic-realization-exact}, the functor $\operatorname{T}_{\ell}$ induces an $A$-linear morphism:
\begin{equation}\label{eq:Tell-exact-on-Ext}
\Ext^1_{\cM_F}(\mathbbm{1},\underline{M})\longrightarrow \operatorname{H}^1(G_F,\operatorname{T}_{\ell}\underline{M})
\end{equation}
into the first continuous cohomology group of $G_F$ with values in $\operatorname{T}_{\ell}\underline{M}$. The next proposition is devoted to the explicit determination of \eqref{eq:Tell-exact-on-Ext}. 

\begin{Proposition}\label{prop:diagram-explicit-cocycle}
There is a commutative diagram of $A$-modules:
\begin{equation}
\begin{tikzcd}
\Ext^1_{\cM_F}(\mathbbm{1},\underline{M}) \arrow[r,"\eqref{eq:Tell-exact-on-Ext}"]& \operatorname{H}^1(G_F,\operatorname{T}_{\ell}\underline{M}) \\
\displaystyle\frac{M[\fj^{-1}]}{(\id-\tau_M)(M)}\arrow[u,"\iota","\wr"']\arrow[r] & \displaystyle\frac{M^{\wedge}_{\ell}}{(\id-\tau_M)(M^{\wedge}_{\ell})} \arrow[u,"\wr"']
\end{tikzcd}\nonumber
\end{equation}
where the right vertical morphism maps the class of $f\in M^{\wedge}_{\ell}$ to the class of the cocycle $\sigma\mapsto \xi-\,^{\sigma}\xi$, $\xi$ being any solution in $(M_{F^s})^{\wedge}_{\ell}$ of $f=\xi-\tau_M(\tau^*\xi)$.
\end{Proposition}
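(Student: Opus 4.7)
The strategy is to reinterpret the map \eqref{eq:Tell-exact-on-Ext} as the connecting homomorphism attached to the short exact sequence of $\cO_\ell[G_F]$-modules established in Proposition~\ref{prop-exact-tate}, and then check that, up to the vertical isomorphisms, the extension-theoretic description given by Theorem~\ref{thm:cohomology-in-MR} and the cohomological description agree via the explicit cocycle formula computed right above the statement.

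First I would justify that the bottom horizontal arrow is well defined. Since $F$ is a field and $\ell\neq\fj$ as maximal ideals of $A\otimes F$ (the former has finite residue field while the latter has residue field $F$), the ideal $\fj$ is a unit in the completion $\cA_\ell(F)$, so the natural map $M\to\widehat{M}_\ell$ extends uniquely to $M[\fj^{-1}]\to\widehat{M}_\ell$. It is $\tau_M$-equivariant, hence it descends to the quotients by $(\id-\tau_M)$.

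Next I would identify the right vertical arrow as the connecting homomorphism of the long exact $G_F$-cohomology sequence associated with Proposition~\ref{prop-exact-tate}. Taking $G_F$-invariants of
\[
0\to T_\ell\underline{M}\to \widehat{(M_{F^s})}_\ell\stackrel{\id-\tau_M}{\longrightarrow}\widehat{(M_{F^s})}_\ell\to 0
\]
and using that $\widehat{(M_{F^s})}_\ell^{G_F}=\widehat{M}_\ell$ (by faithfully flat Galois descent applied at each level $\ell^n$ and passing to the limit), I get an exact sequence
\[
\widehat{M}_\ell\xrightarrow{\id-\tau_M}\widehat{M}_\ell\xrightarrow{\partial} H^1(G_F,T_\ell\underline{M})\to H^1(G_F,\widehat{(M_{F^s})}_\ell).
\]
The key step is to show that the last term vanishes. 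For each $n$, the quotient $\widehat{(M_{F^s})}_\ell/\ell^n\widehat{(M_{F^s})}_\ell$ is, as an additive $G_F$-module, isomorphic to a finite direct sum of copies of $F^s$, so additive Hilbert~90 gives $H^1(G_F,\widehat{(M_{F^s})}_\ell/\ell^n)=0$. A Mittag-Leffler argument on the inverse system then yields $H^1_{\mathrm{cts}}(G_F,\widehat{(M_{F^s})}_\ell)=0$. Consequently $\partial$ identifies the right vertical quotient with $H^1(G_F,T_\ell\underline{M})$, and unwinding the standard construction of $\partial$ gives precisely the explicit cocycle formula $\sigma\mapsto \xi-{}^\sigma\xi$ for any lift $\xi\in\widehat{(M_{F^s})}_\ell$ of $f$. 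I expect this vanishing to be the main technical step; everything else is a matter of chasing formulas.

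Finally, for the commutativity of the square, I would follow the paragraph preceding the statement: given $m\in M[\fj^{-1}]$, the extension $\iota(m)$ has $\ell$-adic realization equal to the set of pairs $(\xi,a)\in\widehat{(M_{F^s})}_\ell\oplus\cA_\ell(F^s)$ with $a\in\cO_\ell$ and $\xi-\tau_M(\tau^*\xi)=am$. Choosing a solution $\xi_m\in\widehat{(M_{F^s})}_\ell$ of $\xi-\tau_M(\tau^*\xi)=m$ (which exists by Proposition~\ref{prop-exact-tate}) provides an $\cO_\ell$-linear splitting of the underlying sequence, so the cocycle obtained from \eqref{eq:Tell-exact-on-Ext} is $\sigma\mapsto{}^\sigma\xi_m-\xi_m$. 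This is exactly the image of $m$ under the composition of the bottom arrow with the right vertical isomorphism described above, which proves commutativity and finishes the argument.
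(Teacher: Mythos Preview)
Your proposal is correct and follows essentially the same route as the paper: both reduce to the long exact Galois cohomology sequence coming from Proposition~\ref{prop-exact-tate} and then prove $H^1(G_F,\widehat{(M_{F^s})}_\ell)=0$ via the identification $\cA_\ell(F^s)=(\bF_\ell\otimes F^s)[\![\pi]\!]$ and additive Hilbert~90. You are somewhat more explicit than the paper in justifying the bottom map and in invoking a Mittag--Leffler passage to the limit, but the key idea is the same.
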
\label{prop:explicit-cocycle}

\begin{proof}
We prove the commutativity. Let $[\underline{E}]:0\to \underline{M}\to \underline{E}\to \mathbbm{1}\to 0$ be a class in $\Ext^1_{\cM_F}(\mathbbm{1},\underline{M})$ of the form $\iota(m)$ for some $m\in M[\fj^{-1}]$ (Theorem \ref{thm:cohomology-in-MR}). The $\ell$-adic realization $\operatorname{T}_{\ell}\underline{E}$ of $\underline{E}$ is the $\cO_\ell[G_F]$-module consisting of solutions $\xi\oplus a\in (M_{F^s})^{\wedge}_{\ell}\oplus \cA_{\ell}(F^s)$ of the equation
\begin{equation}
\begin{pmatrix} \tau_M & m\cdot \mathbf{1} \\ 0 & \mathbf{1} \end{pmatrix} \begin{pmatrix}\tau^*\xi \\ \tau^*a \end{pmatrix}=\begin{pmatrix}\xi \\ a \end{pmatrix} \nonumber
\end{equation}
(see Definition \ref{def:m-adic realization functor}). The above equality amounts to $a\in \cO_{\ell}$ and $\xi-\tau_M(\tau^*\xi)=am$. A splitting of $[\operatorname{T}_{\ell}\underline{E}]$ as a sequence of $\cO_{\ell}$-modules corresponds to the choice of a particular solution $\xi_m\in (M_{F^s})^{\wedge}_{\ell}$ of $\xi-\tau_M(\tau^*\xi)=m$ (whose existence is ensured by Proposition \ref{prop-exact-tate}). We then have 
\begin{equation}
\operatorname{T}_{\ell}\underline{M}\oplus \cO_{\ell}\stackrel{\sim}{\longrightarrow} \operatorname{T}_{\ell}\underline{E}, \quad (\omega,a)\longmapsto (\omega+a\xi_m,a). \nonumber
\end{equation}
It follows that the morphism \eqref{eq:Tell-exact-on-Ext} in the diagram in Proposition 2.29 maps $[\underline{E}]$ to the class of the cocycle $(\sigma\mapsto \xi_m-{}^\sigma\xi_m)$, where $\xi_m$ is any solution in $(M_{F^s})^{\wedge}_{\ell}$ of the equation $\xi-\tau_M(\tau^*\xi)=m$. In other words, the square commutes.

It remains to check that the right vertical morphism is an isomorphism. Applying the functor of $G_F$-invariants to the short exact sequence of Proposition \ref{prop-exact-tate}, we obtain a long exact sequence of cohomology:
\[
M^{\wedge}_{\ell}\xrightarrow{\id-\tau_M}M^{\wedge}_{\ell} \longrightarrow \operatorname{H}^1(G_F,\operatorname{T}_{\ell}\underline{M})\longrightarrow \operatorname{H}^1(G_F,(M_{F^s})^{\wedge}_{\ell}).
\]
Therefore, it is sufficient to prove that $\operatorname{H}^1(G_F,(M_{F^s})^{\wedge}_{\ell})$ vanishes. We have ${(M_{F^s})^{\wedge}_{\ell}=M\otimes_{A\otimes F}\cA_{\ell}(F^s)}$, $G_F$ acting on the right-hand side of the tensor. Hence, it is enough to prove that the module $\operatorname{H}^1(G_F,\cA_{\ell}(F^s))$ vanishes. This follows from: 
\[
\operatorname{H}^1(G_F,\cA_{\ell}(F^s)) \stackrel{(1)}{=} \varprojlim_n \operatorname{H}^1(G_F,(A/\ell^n)\otimes F^s)= \varprojlim_n (A/\ell^n)\otimes \operatorname{H}^1(G_F,F^s) \stackrel{(2)}{=}(0)
\]
where we used $(1)$ the explicit form $\cA_\ell(F^s)=(\bF_\ell\otimes F^s)[\![\pi]\!]$ and $(2)$ that $\operatorname{H}^1(G_F,F^s)$ vanishes by the additive version of Hilbert's 90 Theorem \cite[x.\S1, Prop. 1]{serre}.
\end{proof}

For the remainder of this section, let us assume that $F=F_{\fp}$ is a local function field with valuation ring $\cO_\fp$ and maximal ideal $\fp$. Let $F_{\fp}^{\text{ur}}$ be the maximal unramified extension of $F_{\fp}$ in $F_{\fp}^s$. Let $I_\fp$ be the inertia subgroup of $G_\fp=G_{F_{\fp}}$.
\begin{Definition}\label{def:ext-good-red-with-resp-to-ell}
Let $\underline{M}$ be an $A$-motive over $F_{\fp}$, and let $\ell$ be a maximal ideal in $A$. We say that an extension $[\underline{E}]$ of $\mathbbm{1}$ by $\underline{M}$ has \emph{good reduction with respect to $\ell$} if $[\underline{E}]$ lies in the kernel of $\Ext^1_{\cM_{F_\fp}}(\mathbbm{1},\underline{M}) \to \operatorname{H}^1(I_{\fp},\operatorname{T}_{\ell}\underline{M})$. We denote by $\Ext^1_{\operatorname{good}}(\mathbbm{1},\underline{M})_{\ell}$ the kernel of \eqref{eq:Tell-exact-on-Ext}, namely the module of extensions having good reduction with respect to $\ell$.
\end{Definition}

The proof of Proposition \ref{prop:diagram-explicit-cocycle} with $I_\fp=\Gal(F^s_{\fp}|F_{\fp}^{\text{ur}})$ in place of $G_F$ applies almost \emph{verbatim} to show:
\begin{Proposition}\label{prop-characterization-of-extension-good}
There is a commutative diagram of $A$-modules:
\begin{equation}
\begin{tikzcd}
\Ext^1_{\cM_{F_\fp}}(\mathbbm{1},\underline{M}) \arrow[r,"\eqref{eq:Tell-exact-on-Ext}"]& \operatorname{H}^1(I_\fp,\operatorname{T}_{\ell}\underline{M}) \\
\displaystyle\frac{M[\fj^{-1}]}{(\id-\tau_M)(M)}\arrow[u,"\iota","\wr"']\arrow[r] & \displaystyle\frac{(M_{F_{\fp}^{\text{ur}}})^{\wedge}_{\ell}}{(\id-\tau_M)((M_{F_{\fp}^{\operatorname{ur}}})^{\wedge}_{\ell})} \arrow[u,"\wr"']
\end{tikzcd}\nonumber
\end{equation}
In particular, given $m\in M[\fj^{-1}]$, the following are equivalent:
\begin{enumerate}[label=$(\alph*)$]
\item The extension $\iota(m)$ has good reduction with respect to $\ell$,
\item The equation $\xi-\tau_M(\tau^*\xi)=m$ admits a solution $\xi$ in $(M_{F^{\emph{ur}}_{\fp}})^{\wedge}_{\ell}$.
\end{enumerate}
\end{Proposition}

Definition \ref{def:ext-good-red-with-resp-to-ell} should presumably be independent of $\ell$. The analogous statement is the counterpart of Conjecture \ref{item:conjectureC3} which we state next:
\begin{Conjecture}[Independence of $\ell$]
Assume $\kappa(A)\subset \cO_{\fp}$. Let $[\underline{E}]$ be an extension of $\mathbbm{1}$ by $\underline{M}$ in the category $\cM_{F_\fp}$. If $[\underline{E}]$ has good reduction with respect to an ideal $\ell$ of $A$ for which $\kappa(\ell)\cO_{\fp}=\cO_{\fp}$, then it has good reduction with respect to any ideal $\ell'$ with the same property. 
\end{Conjecture}

\section{Mixed $A$-motives and their extension modules}\label{sec:mixed A motives}
We discuss here the notion of mixedness for Anderson $A$-motives. In the case where $A$ is a univariate polynomial algebra, the definition of \emph{pure} $t$-motives is traced back to the work of Anderson \cite[1.9]{anderson}. \emph{Mixed} $A$-motives were first mentioned in the talk of Pink at the Arbeitstagung in Bonn from 1997\footnote{Available at the address \href{https://people.math.ethz.ch/~pink/ftp/HS-AT97.pdf}{https://people.math.ethz.ch/~pink/ftp/HS-AT97.pdf}.}. A systematic study, however, appeared only recently in the work of Hartl--Juschka \cite[\S 3]{hartl-juschka} under the condition that the place $\infty$ has degree one and the base field $R=F$ is algebraically closed. Our presentation deals with the most general case: arbitrary curve $C$ and place $\infty$, and over an arbitrary base field $F$. Compared to \cite{taelman} or \cite{hartl-juschka}, the new difficulty is to deal with non perfect fields $F$, since then, the slope filtration for isocrystals does not necessarily split. 

\subsection{Mixed Anderson $A$-motives}\label{subsec:mixed-A-motives}
\subsubsection*{Isocrystals over a field}
In this subsection, we present some materials on \emph{function fields isocrystal} following \cite{mornev-isocrystal}. Our objective is to prove existence and uniqueness of the slope filtration with pure subquotients having increasing slopes. The general theory of slope filtrations has been developed in \cite{andre}, and the results of interest for us on isocrystals appear in \cite{hartl} in the case $A$ is a univariate polynomial algebra. The new account of this subsection is the adaptation of \cite[Prop 1.5.10]{hartl} to allow more general ring $A$ (see Theorem \ref{thm:HN-filtration}). All of this are key steps towards the definition of weights and mixedness (Definition \ref{def:weights-A-motives}). \\

We begin with some general notations. Let $R$ be an $\bF$-algebra and let $E$ be a \emph{local function fields}. By that, we mean that $E$ is the field of Laurent series over a finite field extension $k$ of $\bF$ in the formal variable $\pi$, $\cO$ the subring of $E$ consisting of power series over $k$ and $\fm$ the maximal ideal of $\cO$. Explicitly $E=k (\!(\pi)\!)$, $\cO=k[\![\pi]\!]$ and $\fm=\pi\cO$. In the sequel, $E$ will correspond to the local field of $(C,\cO_C)$ at a closed point of $C$. \\

Extending the notation introduced in Subsection \ref{sec:extensions-having-good-reduction} in the context of the $\ell$-adic realization functor, we denote by $\cA(R)$ the completion of the ring $\cO\otimes R$ at its ideal $\fm\otimes R$:
\begin{equation}\label{eq:A}
\cA(R):=\varprojlim_{n}~ (\cO\otimes R)/(\fm^n\otimes R)
\end{equation}
and we let $\cB(R)$ be the ring $E\otimes_{\cO}\cA(R)$. Through the previous identifications, we readily check that $\cA(R)=(k\otimes R)[\![\pi]\!]$ and $\cB(R)=(k\otimes R)(\!(\pi)\!)$. The invertible elements of $\cB(R)$ are described as:
\[
\cB(R)^{\times}=\bigcup_{n\in \bZ} \pi^n\cdot\cA(R)^{\times}, \quad \cA(R)^{\times}=(\bF_\ell\otimes R)^{\times}+\pi\cA(R).
\]
Given $x\in \cB(R)^{\times}$, we define $v_{\pi}(x)$ as the unique $n\in \bZ$ for which $x\in \pi^n\cdot \cA(R)^{\times}$. \\

Let $\tau:\cO\otimes R\to \cO\otimes R$, be the $\cO$-linear map induced by $a\otimes r\mapsto a\otimes r^q$. We shall also denote by $\tau$ its continuous extension to $\cA(R)$ or $\cB(R)$. Observe that $v_{\pi}$ is preserved by $\tau$. Again, we denote by $\mathbf{1}$ the canonical $A\otimes R$-linear morphisms $\tau^*\cA(R)\to \cA(R)$ and $\tau^*\cB(R)\to \cB(R)$. \\

For the remaining of this subsection, we assume that $R=F$ is a field. 
\begin{Definition}\label{def:isocrystal}
An \emph{isocrystal $\underline{D}$ over $F$} is a pair $(D,\varphi_D)$ where $D$ is a free $\cB(F)$-module of finite rank and $\varphi_D:\tau^*D\to D$ is a $\cB(F)$-linear isomorphism. \\
A morphism $(D,\varphi_D)\to (C,\varphi_C)$ of isocrystals is a $\cB(F)$-linear morphism of the underlying modules $f:D\to C$ such that $f\circ \varphi_D=\varphi_C\circ \tau^*f$. We let $\operatorname{IC}_{F}$ be the category of isocrystals over $F$.
\end{Definition}

\begin{Remark}
Pursuing the analogy of Remark \ref{rmk:Witt-vectors}, isocrystals are the analogue of the eponymous object in $p$-adic Hodge theory (we refer to \cite[\S 3.5]{hartl-dico}). In both settings, such objects carry a slope filtration (see Theorem \ref{thm:HN-filtration} for the function fields one). For number fields, isocrystals are only defined at finite places, whereas for function fields, isocrystals are defined regardless of the finiteness of the place. In the next subsection, we use the slope filtration at $\infty$ in order to define weights. 
\end{Remark}

We define the \emph{rank $\operatorname{rk}\underline{D}$ of $\underline{D}$} to be the rank of $D$ over $\cB(F)$. If $\underline{D}$ is nonzero, let $\textbf{b}$ be a basis of $D$ and let $U$ denote the matrix of $\varphi$ expressed in $\tau^*\textbf{b}$ and $\textbf{b}$. A different choice of basis $\textbf{b}'$ leads to a matrix $U'$ such that $U=\tau(P)U'P^{-1}$ for a certain invertible matrix $P$ with coefficients in $\cB(F)$. As such, $v_\pi(\det U)$--well-defined as $\det U$ is invertible in $\cB(R)$--is independent of $\textbf{b}$. We denote it by $\deg \underline{D}$ and we name it the \emph{degree of $\underline{D}$}. We define the \emph{slope of $\underline{D}$} to be the rational number $\lambda(\underline{D})=\delta \deg \underline{D}/\operatorname{rk}\underline{D}$, where $\delta$ is the degree of $k$ over $\bF$.

\begin{Remark}
The normalizing factor $\delta$ appearing in the slope was introduced by Mornev \cite{mornev-isocrystal} in order to manage consistently finite fields extensions. 
\end{Remark}

From \cite[Prop. 4.1.1]{mornev-isocrystal}, the category $\operatorname{IC}_{F}$ is abelian. We can therefore consider exact sequences in $\operatorname{IC}_{F}$. The degree and rank are additive in short exact sequences, and the association $\underline{D}\mapsto -\lambda(\underline{D})$ defines a slope function for $\operatorname{IC}_{F}$ in the sense of \cite[Def. 1.3.1]{andre}. The second point of the next definition should be compared with \cite[Def. 1.3.6]{andre}:
\begin{Definition}
Let $\underline{D}=(D,\varphi)$ be an isocrystal over $F$.
\begin{enumerate}
\item A \emph{subisocrystal} of $\underline{D}$ is an isocrystal $\underline{G}=(G,\varphi_G)$ for which $G\subset D$, $\varphi_G=\varphi_D|_{\tau^*G}$. The \emph{quotient} of $\underline{D}$ by $\underline{G}$ is the pair $(D/G,\varphi_D)$ (this is indeed an isocrystal by \cite[Prop. 4.1.1]{mornev-isocrystal}).
\item\label{item:semistable-isoclinic} The isocrystal $\underline{D}$ is \emph{semistable} (resp. \emph{isoclinic}) if, for any nonzero subisocrystal $\underline{D}'$ of $\underline{D}$, $\lambda(\underline{D}')\geq \lambda(\underline{D})$ (resp. $\lambda(\underline{D}')=\lambda(\underline{D})$).
\end{enumerate}
\end{Definition}
Semistability and isoclinicity are related to the notion of \emph{purity}, borrowed from \cite[Def. 3.4.6]{mornev-isocrystal}, that we next recall. We first require the definition of \emph{$\cA(F)$-lattices}:
\begin{Definition}
Let $D$ be a free $\cB(F)$-module of finite rank. An \emph{$\cA(F)$-lattice in $D$} is a sub-$\cA(F)$-module of finite type of $D$ which generates $D$ over $\cB(F)$.
\end{Definition}
Observe that any $\cA(F)$-lattice $L$ in $D$ is free, and that its rank is the rank of $D$ over $\cB(F)$. We denote by $\langle \varphi_D L\rangle$ the sub-$\cA(F)$-module $\varphi_D(\tau^*L)$ in $D$: it is again an $\cA(F)$-lattice in $D$ since $\varphi_D$ is an isomorphism. We define $\langle \varphi_D^n L\rangle$ inductively to be the $\cA(F)$-lattice $\langle \varphi_D \langle \varphi_D^{n-1}L\rangle \rangle$. To include the $n=0$-case, we agree on $\langle \varphi_D^0 L\rangle=L$.
\begin{Definition}\label{def-pure-iso}
A nonzero isocrystal $(D,\varphi_D)$ over $F$ is said to be \emph{pure of slope $\lambda$} if there exist an $\cA(F)$-lattice $L$ in $D$ and integers $s$ and $r>0$ such that $\langle \varphi_D^{r\delta}L \rangle=\fm^s L$ and $\lambda=s/r$. By convention, the zero isocrystal is pure with \emph{no slope}.
\end{Definition}

\begin{Example}
Let $D$ be the free $\cB(F)$-module of rank $r\geq 1$ with basis $\{e_0,\ldots,e_{r-1}\}$ and let $\varphi_D:\tau^*D\to D$ be the unique linear map such that $\varphi_D(\tau^*e_{i-1})=e_i$ for ${1\leq i<r}$ and $\varphi_D(\tau^*e_{r-1})=\pi^se_0$. Then $(D,\varphi_D)$ is a pure isocrystal of slope $\delta s/r$ with $\cA(F)$-lattice given by $\cA(F)e_0\oplus \cdots \oplus \cA(F)e_{s-1}$.
\end{Example}

The following lemma relates the definition of slopes from purity and from slope functions. It also implies that one can refer to \emph{the} slope of a pure isocrystal:
\begin{Lemma}\label{lem:pure-implies-isoclinic}
If $\underline{D}$ is a pure isocrystal of slope $\lambda$, then $\lambda(\underline{D}')=\lambda$ for any nonzero sub-isocrystal $\underline{D}'$ of $\underline{D}$. In particular, $\underline{D}$ is isoclinic (hence semistable).
\end{Lemma}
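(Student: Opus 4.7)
\emph{Proof plan.} The plan is to show that any nonzero sub-isocrystal $\underline{D}'$ of $\underline{D}$ is itself pure of slope $\lambda$, with the same integers $s$ and $r$; once this is granted, the two definitions of slope will agree and force $\lambda(\underline{D}')=\lambda$, whence isoclinicity (and thus semistability) is immediate from the definitions. To reconcile the two notions of slope for $\underline{D}$ itself, one first records that if $\mathbf{b}$ is an $\cA(F)$-basis of a lattice $L$ witnessing purity, the matrix $U_{r\delta}$ of $\varphi_D^{r\delta}$ in the bases $\tau^{*r\delta}\mathbf{b}$ and $\mathbf{b}$ equals $U\cdot\tau(U)\cdots\tau^{r\delta-1}(U)$ on the one hand, and equals $\pi^s P$ with $P\in\GL(\cA(F))$ on the other (the latter because $\langle\varphi_D^{r\delta}L\rangle=\fm^s L$). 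Comparing $\pi$-adic valuations of determinants, and using that $\tau$ fixes $\pi$ and $k$, yields $r\delta\deg\underline{D} = s\cdot\operatorname{rk}\underline{D}$, hence $\lambda(\underline{D})=s/r$. So the whole task reduces to establishing purity of $\underline{D}'$ with the same $s$ and $r$.

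To this end, I would set $L':=L\cap D'$. This is an $\cA(F)$-lattice in $D'$: any $d'\in D'$ satisfies $\pi^n d'\in L$ for some $n\geq 0$, so $\pi^n d'\in L'$ and $L'$ generates $D'$ over $\cB(F)$. The crucial point is then that $L/L'$, being a submodule of $D/D'$, is torsion-free and finitely generated over $\cA(F)=(k\otimes F)[\![\pi]\!]$. Since $k/\bF$ is separable and $\bF$ is finite, $k\otimes F$ is a finite product of fields, so $\cA(F)$ is a product of discrete valuation rings, and finitely generated torsion-free modules over such a ring are free. Thus $L/L'$ is free, the inclusion $L'\hookrightarrow L$ splits, and a basis of $L'$ can be extended to a basis $\mathbf{b}$ of $L$ adapted to this splitting.

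In such a basis, the matrix $U_{r\delta}$ of $\varphi_D^{r\delta}$ is block upper triangular (since $\varphi_D$ preserves $D'$), with top-left block $M'$ equal to the matrix of $\varphi_{D'}^{r\delta}$ on the chosen basis of $L'$. The identity $U_{r\delta}=\pi^s P$ with $P\in\GL(\cA(F))$ then forces $P$ to be block upper triangular with invertible diagonal blocks, so $M'=\pi^s P'$ for some $P'\in\GL(\cA(F))$. This gives $\langle\varphi_{D'}^{r\delta}L'\rangle=\fm^s L'$, i.e., $\underline{D}'$ is pure of slope $\lambda$, and the first paragraph applied to $\underline{D}'$ yields $\lambda(\underline{D}')=\lambda$. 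The delicate step will be verifying that $L'=L\cap D'$ is a direct summand of $L$, which rests on the concrete description of $\cA(F)$ as a finite product of discrete valuation rings; the rest reduces to a straightforward block-matrix manipulation.
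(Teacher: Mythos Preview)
Your argument is correct and follows essentially the same strategy as the paper: both take $L'=L\cap D'$, show that it again satisfies $\langle\varphi_{D'}^{r\delta}L'\rangle=\fm^s L'$, and then read off $\lambda(\underline{D}')=s/r$ via a determinant computation. The only cosmetic difference is that the paper asserts $\langle\varphi^{r\delta}L'\rangle=\fm^sL'$ directly (a one-line check using that $D'$ is $\cB(F)$-stable and $\varphi_{D'}$ is an isomorphism) and then computes on $\bigwedge^{\ell}L'$, whereas you take the longer route of first establishing that $L'$ is a direct summand of $L$ (using that $\cA(F)$ is a finite product of DVRs) and then running a block-matrix argument; both arrive at the same place.
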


\begin{proof}
Let $L$ be an $\cA(F)$-lattice in $D$ such that $\langle\varphi^{r\delta} L\rangle=\fm^s L$ for integers $r>0$ and $d$ such that $\lambda=s/r$ (whose existence is ensured by the definition of pureness). If $\underline{D}'=(D',\varphi)$ is a nonzero subisocrystal of $\underline{D}$, then $L'=L\cap D'$ is an $\cA(F)$-lattice in $D'$ such that $\langle\varphi^{r\delta} L'\rangle=\fm^s L'$. As $L'$ is nonzero, let $\{t_1,\ldots,t_{\ell}\}$ be a basis of $L'$ over $\cA(F)$. We have
\begin{equation}\label{determinant-on-lattice}
(\det \varphi)^{r\delta} (t_1\wedge \cdots \wedge t_\ell)=\fm^{s\ell} (t_1\wedge \cdots \wedge t_\ell) \quad \text{in}~\bigwedge^{\ell}L'. \nonumber
\end{equation}
Hence $r\delta \deg \underline{D}'=s\operatorname{rk}\underline{D}'$, which yields 
\[
\lambda(\underline{D}')=\frac{\delta\deg \underline{D}'}{\operatorname{rk} \underline{D}'}=\frac{s}{r}=\lambda.
\]
\end{proof}

\begin{Definition}\label{def:slope-filtration}
A \emph{slope filtration for $\underline{D}$} is an increasing sequence of sub-isocrystals of $\underline{D}$
\begin{equation}
0=\underline{D}_0\subsetneq \underline{D}_1 \subsetneq \cdots \subsetneq \underline{D}_s=\underline{D}, \nonumber 
\end{equation}
satisfying:
\begin{enumerate}[label=$(\roman*)$]
\item\label{item:semistable-isocrystal} for all $i\in\{1,\ldots,s\}$, $\underline{D}_i/\underline{D}_{i-1}$ is semistable,
\item we have $\lambda(\underline{D}_1)<\lambda(\underline{D}_2/\underline{D}_1)<\cdots <\lambda(\underline{D}_s/\underline{D}_{s-1})$.
\end{enumerate}
\end{Definition}

\begin{Theorem}\label{thm:HN-filtration}
Let $\underline{D}$ be an isocrystal over $F$. Then $\underline{D}$ carries a unique slope filtration:
\begin{equation}\label{eq:HN-filtration-equation}
0=\underline{D}_0\subsetneq \underline{D}_1 \subsetneq \cdots \subsetneq \underline{D}_s=\underline{D}.
\end{equation}
In addition, for all $i\in \{1,\ldots,s\}$, the quotients $\underline{D}_{i}/\underline{D}_{i-1}$ are pure isocrystals. In particular, the conditions pure, semi-stable and isoclinic are equivalent. 
\end{Theorem}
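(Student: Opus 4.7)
The plan is to prove the theorem in two stages: first, produce a filtration with semistable subquotients of strictly increasing slopes via the abstract Harder--Narasimhan machinery, and second, upgrade semistability to purity for each subquotient. For the first stage, I would invoke the general theory of slope filtrations in abelian categories, as developed by Andr\'e in \cite[\S 1.4]{andre}. The category $\cI_F$ is abelian by \cite[Prop.~4.1.1]{mornev-isocrystal}; the rank is strictly positive on nonzero objects; and both rank and degree are additive on short exact sequences (additivity of the degree follows at once from writing $\varphi$ in block-triangular form adapted to a sub-isocrystal and computing $\det$). Hence $-\lambda$ is a slope function in Andr\'e's sense, as already noted in the text preceding the statement, and his general theorem yields a unique filtration whose successive quotients are semistable with strictly decreasing $-\lambda$-slopes, equivalently strictly increasing $\lambda$-slopes. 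This is exactly the filtration \eqref{eq:HN-filtration-equation} with semistable subquotients.

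It remains to show that each semistable subquotient $\underline{D}' = (D', \varphi')$ of slope $\lambda = s/r$ (in lowest terms, with $r > 0$) is pure of that slope. I would first reduce to the case $\lambda = 0$ by tensoring $\underline{D}'$ with a rank-one pure isocrystal of slope $-\lambda$, constructed explicitly on a free rank-one $\cB(F)$-module with prescribed $\varphi$, and using that tensor with a pure rank-one object preserves semistability and shifts slopes additively. In the slope-zero case, the task is to exhibit an $\cA(F)$-lattice $L \subset D'$ satisfying $\langle \varphi'^{\,r\delta} L \rangle = L$ for some $r \geq 1$. Starting from any $\cA(F)$-lattice $L_0$, the lattices $L_n := \langle \varphi'^{\,n\delta} L_0 \rangle$ are mutually commensurable and have constant discriminant in $\det D'$ (since $\deg \underline{D}' = 0$); semistability of $\underline{D}'$ should prevent this sequence from drifting, so that an appropriate $\cA(F)$-module generated by a cofinal subset of the $L_n$ is finitely generated and $\varphi'^{\,\delta}$-stable.

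The main obstacle is precisely the one flagged before the statement: over a possibly non-perfect base field $F$, the Dieudonn\'e--Manin decomposition fails and one cannot directly split the slope filtration to extract pure lattices. To circumvent this, I would first carry out the construction of $L$ over the perfect closure $F^{\operatorname{perf}}$, where semistable isocrystals are pure by the classical Dieudonn\'e--Manin theorem and a pure $\cA(F^{\operatorname{perf}})$-lattice is at hand, and then descend $L$ along the faithfully flat extension $\cA(F) \hookrightarrow \cA(F^{\operatorname{perf}})$. The uniqueness of the slope filtration established in the first stage plays an essential role here: it implies that the pure lattice obtained over $F^{\operatorname{perf}}$ is essentially canonical, hence invariant under any descent datum. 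Verifying the descent condition rigorously---that is, showing that this lattice actually arises as the base change of an $\cA(F)$-submodule of $D'$---is the technical heart of the argument and the step I expect to require the most care.
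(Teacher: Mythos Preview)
Your first stage---existence and uniqueness of the Harder--Narasimhan filtration via Andr\'e's abstract theory---matches the paper exactly. The second stage differs substantially. The paper does \emph{not} attempt to prove directly that a semistable isocrystal is pure. Instead it reduces to the case $\delta=1$ via Mornev's equivalence of categories $[\phi]^*$ between $\cA(F)$-isocrystals and $\cA_\phi(F)$-isocrystals (the latter having $\delta=1$), where the result is already known by \cite[Prop.~1.5.10]{hartl}. Since this equivalence is exact and preserves purity and slopes, a filtration with pure subquotients of increasing slopes on the $\cA_\phi(F)$-side transports to one on the $\cA(F)$-side; by Lemma~\ref{lem:pure-implies-isoclinic} pure subquotients are semistable, so this is \emph{a} slope filtration, and uniqueness forces it to coincide with the Harder--Narasimhan one. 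The paper's remark immediately after the theorem explicitly flags that a direct proof of ``semistable $\Rightarrow$ pure'' (your route) would be more satisfying but is not carried out.

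Your direct approach has a concrete gap at the reduction to slope zero. A rank-one isocrystal has degree in $\bZ$, hence slope in $\delta\bZ$; there is no rank-one pure isocrystal of slope $-\lambda$ unless $\lambda\in\delta\bZ$. For a general rational $\lambda$ you would need to twist by a pure object of rank $>1$, but tensoring a semistable object with such an object need not preserve semistability in $\cI_F$ without further input (precisely the kind of structural result you are trying to establish). Your descent step is also incomplete in a way that matters: the lattice $L$ in Definition~\ref{def-pure-iso} is far from unique, so ``essentially canonical'' requires an argument, and descending a non-canonical sublattice along the purely inseparable, non-Galois extension $\cA(F)\hookrightarrow\cA(F^{\operatorname{perf}})$ is not automatic. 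Finally, even granting semistability ascends to $F^{\operatorname{perf}}$, invoking Dieudonn\'e--Manin there for general $\delta$ is close to assuming what you want; in the paper that statement (Theorem~\ref{thm:Dieudonne-Manin}) is proved \emph{after} Theorem~\ref{thm:HN-filtration} by the same reduction-to-$\delta=1$ trick.
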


\begin{Remark}
The proof presented below relies on \cite[Prop. 1.5.10]{hartl} which already uses Dieudonn\'e-Manin classification (in the case $A=\bF[t]$). It would be much more satisfactory to prove the equivalence between semistability and isoclinicity directly, so that Theorem \ref{thm:HN-filtration} would follow from Andr\'e's theory. 
\end{Remark}

\begin{proof}[Proof of Theorem \ref{thm:HN-filtration}]
The existence and uniqueness of the slope filtration follows from \cite[Thm 1.4.7]{andre} applied to the slope function $\underline{D}\mapsto -\lambda(\underline{D})$ on the abelian category $\operatorname{IC}_{F}$. 

Hence, we only need to prove existence of \eqref{eq:HN-filtration-equation} with pure subquotients since uniqueness follows from \cite[Thm 1.4.7]{andre}. If $\delta=1$, then $\cA(F)$ is identified with $F[\![\pi]\!]$ and Theorem \ref{thm:HN-filtration} is proved in \cite[Prop. 1.5.10]{hartl}. We now explain how the general case reduces to this one. Let $\bG$ be the finite field extension of $\bF$ corresponding to 
\begin{equation}
\bG:=\{f\in \bar{\bF}\cap F~|~f^{q^\delta}=f\}. \nonumber
\end{equation}
Let $\phi:\bG\to F$ denote the inclusion. This defines an embedding of $\bG$ in $k$, the residue field of $E$. Let $\cA_{\phi}(F)$ be the completion of $\cO\otimes_{\bG}F$ at the ideal $\fm\otimes_{\bG} F$. In the theory of isocrystals over $F$ with $\bG$ in place of $\bF$, $\cA_{\phi}(F)$ appears in place of $\cA(F)$ and $\delta=1$. In \cite[\S 4.2]{mornev-isocrystal}, Mornev defines an additive functor
\begin{equation}
[\phi]^*:\left(\cA(F)-\text{isocrystals}\right)\longrightarrow \left(\cA_{\phi}(F)-\text{isocrystals}\right) \nonumber
\end{equation}
which, by \cite[Prop 4.2.2]{mornev-isocrystal} (see also \cite[Prop 8.5]{hartl-bornhofen}), is an equivalence of categories such that $[\phi]^*(\underline{D})$ is a pure isocrystals of slope $\lambda$ if $\underline{D}$ is. Let  
\begin{equation}
[\phi]_*:\left(\cA_{\phi}(F)-\text{isocrystals}\right)\longrightarrow \left(\cA(F)-\text{isocrystals}\right) \nonumber
\end{equation}
be a quasi-inverse of $[\phi]^*$ and let $\ell:[\phi]_*[\phi]^*\stackrel{\sim}{\to}\id$ be a natural transformation.

Let $\underline{D}$ be an $\cA(F)$-isocrystal. We now prove existence of \eqref{eq:HN-filtration-equation} with pure subquotients for $\underline{D}$. By \cite[Prop. 1.5.10]{hartl}, there exists an increasing sequence of sub-$\cA_{\phi}(F)$-isocrystals of $[\phi]^*\underline{D}$:
\begin{equation}
0=\underline{G}_{0}\subsetneq \underline{G}_{1} \subsetneq \underline{G}_{2} \subsetneq \cdots \subsetneq \underline{G}_{s}=[\phi]^*\underline{D} \nonumber
\end{equation}
the subquotients $\underline{G}_{i}/\underline{G}_{i-1}$ being pure of slopes $\lambda_i$ with $\lambda_1<\cdots <\lambda_s$. Applying $[\phi]_*$ and then $\ell$, we obtain
\begin{equation}\label{eq:potential-HN-filtration}
0=\underline{D}_{0}\subsetneq \underline{D}_{1} \subsetneq \underline{D}_{2} \subsetneq \cdots \subsetneq \underline{D}_{s}=\underline{D} 
\end{equation}
with $\underline{D}_{i}:=\ell([\phi]_*[\phi]^*\underline{D}_{i})$ for all $i\in \{0,1,\ldots,s\}$. We claim that the isocrystals $\underline{D}_{i}/\underline{D}_{i-1}$ are pure of slope $\lambda_i$. Indeed, we have
\begin{equation}
\underline{D}_{i}/\underline{D}_{i-1}\cong [\phi]_*\underline{G}_{i}/[\phi]_*\underline{G}_{i-1}\cong [\phi]_*(\underline{G}_{i}/\underline{G}_{i-1}) \nonumber
\end{equation}
where the last isomorphism comes from the fact that $[\phi]_*$ is an exact functor (any equivalence of categories is exact). Because $\underline{G}_{i}/\underline{G}_{i-1}$ is pure of slope $\lambda_i$, $\underline{D}_{i}/\underline{D}_{i-1}$ is also pure of slope $\lambda_i$. We conclude that \eqref{eq:potential-HN-filtration} is the slope filtration for $\underline{D}$ and satisfies the assumption of the theorem.
\end{proof}

Let $\underline{D}$ be an isocrystal over $F$. It is useful to rewrite the slope filtration of $\underline{D}$ as $(\underline{D}_{\lambda_i})_{1\leq i\leq s}$ for rational numbers $\lambda_1<\cdots <\lambda_s$, where the successive quotients $\underline{D}_{\lambda_i}/\underline{D}_{\lambda_{i-1}}$ are pure of slope $\lambda_i$. We let $D_{\lambda_i}$ be the underlying module of $\underline{D}_{\lambda_i}$. For $\lambda\in \bQ$, let $\underline{D}_\lambda$ be the subisocrystal of $\underline{D}$ whose underlying module is
\begin{equation}
D_{\lambda}:=\bigcup_{\lambda_i\leq \lambda} D_{\lambda_i}. \nonumber
\end{equation}
We also let 
\[
\Gr_{\lambda}\underline{D}:=\underline{D}_{\lambda}/\bigcup_{\lambda'<\lambda}\underline{D}_{\lambda'},
\]
the symbol $\cup$ being understood as the isocrystal whose underlying module is given by the union.
\begin{Corollary}\label{cor:strict-slope-filtration}
For all $\lambda\in \bQ$, the assignment $\operatorname{IC}_{F}\to \operatorname{IC}_{F}$, $\underline{D}\mapsto \underline{D}_{\lambda}$ defines an exact functor. Equivalently, any morphism $f:\underline{D}\to \underline{C}$ of isocrystals over $F$ is \emph{strict} with respect to the slope filtration, that is:
\begin{equation}
\text{for~all~}\lambda\in \bQ,\quad f(D_{\lambda})=f(D)\cap C_{\lambda}. \nonumber
\end{equation}
\end{Corollary}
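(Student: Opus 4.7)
The plan is to reduce the exactness of the functor $\underline{D}\mapsto\underline{D}_\lambda$ to the strictness property $f(D_\lambda)=f(D)\cap C_\lambda$ for every morphism $f:\underline{D}\to\underline{C}$ in $\cI_F$. The equivalence between the two is a standard diagram chase in the abelian category $\cI_F$: strictness applied to the monomorphism in a short exact sequence $0\to\underline{A}\to\underline{B}\to\underline{C}\to 0$ gives $\underline{A}_\lambda=\underline{A}\cap\underline{B}_\lambda$, and applied to the epimorphism gives $\underline{C}_\lambda=$ image of $\underline{B}_\lambda$, from which $0\to\underline{A}_\lambda\to\underline{B}_\lambda\to\underline{C}_\lambda\to 0$ is immediate; the converse uses the two exact sequences attached to the image of $f$. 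So the real content lies in proving strictness.

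The key lemma I would first establish is a Hom-vanishing principle: if $\underline{P}$ is pure of slope $\mu$ and $\underline{Q}$ is pure of slope $\nu$ with $\mu\neq\nu$, then $\Hom_{\cI_F}(\underline{P},\underline{Q})=0$. For any $g:\underline{P}\to\underline{Q}$, let $\underline{I}$ be its image. As a nonzero subisocrystal of $\underline{Q}$, Lemma~\ref{lem:pure-implies-isoclinic} forces $\lambda(\underline{I})=\nu$. On the other hand, $\underline{I}$ is a quotient of $\underline{P}$, and I claim it is pure of slope $\mu$: taking a defining $\cA(F)$-lattice $L\subset P$ with $\langle\varphi_P^{r\delta}L\rangle=\fm^s L$, its image $L'$ under the surjection $P\twoheadrightarrow I$ is a finitely generated $\cA(F)$-submodule of $I$ which spans $I$ over $\cB(F)$; since $\cA(F)=(k\otimes F)[\![\pi]\!]$ is a finite product of discrete valuation rings, $L'$ is torsion-free and finitely generated, hence free, so it is a lattice, and pushing forward the defining relation gives $\langle\varphi_I^{r\delta}L'\rangle=\fm^s L'$. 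The two values $\mu\neq\nu$ of $\lambda(\underline{I})$ then force $\underline{I}=0$.

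Next I would observe that by the uniqueness in Theorem~\ref{thm:HN-filtration}, for any sub- or quotient object $\underline{U}$ of an isocrystal $\underline{V}$, the slopes of $\underline{U}$ form a subset of those of $\underline{V}$: intersecting the slope filtration of $\underline{V}$ with $\underline{U}$ (respectively projecting it) yields a filtration of $\underline{U}$ whose subquotients are isoclinic hence semistable with slopes a subset of those of $\underline{V}$; after discarding zero terms, uniqueness identifies this as the slope filtration of $\underline{U}$. Combining this with the Hom-vanishing yields the dévissage statement: any morphism whose source has all slopes $\leq\lambda$ and whose target has all slopes $>\lambda$ vanishes. Applied to the composition $\underline{D}_\lambda\to\underline{C}\to\underline{C}/\underline{C}_\lambda$, this gives $f(\underline{D}_\lambda)\subseteq\underline{C}_\lambda$, hence the inclusion $f(\underline{D}_\lambda)\subseteq f(\underline{D})\cap\underline{C}_\lambda$.

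For the reverse inclusion, I would work inside the image $\underline{J}:=f(\underline{D})\subseteq\underline{C}$ and compare the three candidate subobjects $f(\underline{D}_\lambda)$, $\underline{J}_\lambda$ and $\underline{J}\cap\underline{C}_\lambda$. The quotient $\underline{J}_\lambda/f(\underline{D}_\lambda)$ is simultaneously a subobject of $\underline{J}_\lambda$ (all slopes $\leq\lambda$) and a subobject of $\underline{J}/f(\underline{D}_\lambda)$, which is a quotient of $\underline{D}/\underline{D}_\lambda$ (all slopes $>\lambda$); by the dévissage step, it vanishes. The symmetric comparison with $\underline{J}\cap\underline{C}_\lambda$, using that $\underline{J}/(\underline{J}\cap\underline{C}_\lambda)\hookrightarrow\underline{C}/\underline{C}_\lambda$, likewise shows $\underline{J}\cap\underline{C}_\lambda=\underline{J}_\lambda$. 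Combining gives $f(\underline{D}_\lambda)=f(\underline{D})\cap\underline{C}_\lambda$. The main obstacle in this plan is the verification that quotients of pure isocrystals remain pure of the same slope; it rests on the mildly structural fact that $\cA(F)$ is a finite product of DVRs, ensuring freeness of the image lattice. The remainder of the argument is formal HN-machinery once Hom-vanishing is in hand.
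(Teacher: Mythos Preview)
Your argument is correct, and it follows a genuinely different route than the paper's. The paper's proof is a two-line reduction to the general machinery of slope filtrations: from Theorem~\ref{thm:HN-filtration} the slope filtration has \emph{pure} subquotients, so any semistable isocrystal (whose slope filtration has length one) is in fact pure, hence isoclinic by Lemma~\ref{lem:pure-implies-isoclinic}; the strictness then follows by invoking Andr\'e's general result \cite[Thm.~1.5.9]{andre}, which states that once semistable coincides with isoclinic, the Harder--Narasimhan filtration is automatically functorial and strict.

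Your approach instead reproves the relevant content of Andr\'e's theorem in this concrete setting. The key step you supply---that quotients of pure isocrystals are again pure of the same slope, via pushing forward a defining lattice and using that $\cA(F)$ is a finite product of DVRs---together with Lemma~\ref{lem:pure-implies-isoclinic} for subobjects, gives the Hom-vanishing between pure pieces of distinct slopes; the remainder is the standard d\'evissage. This is longer but entirely self-contained, avoiding the external reference. Both arguments ultimately hinge on the same phenomenon (purity of the graded pieces in Theorem~\ref{thm:HN-filtration}), and the paper's version is simply the packaged form of what you unpack by hand.
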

\begin{proof}
This follows at once from Theorem \ref{thm:HN-filtration} and Lemma \ref{lem:pure-implies-isoclinic} that any semistable isocrystal is isoclinic. Hence, the corollary follows from \cite[Thm 1.5.9]{andre}.
\end{proof}

We observe that the slope filtration is not split in general. However it does when the ground field $F$ is perfect: 
\begin{Theorem}\label{thm:Dieudonne-Manin}
If $F$ is perfect, the slope filtration of $\underline{D}$ splits, \emph{i.e.} $\underline{D}$ decomposes along a direct sum
\begin{equation}
\underline{D}\cong \bigoplus_{\lambda\in \bQ}\operatorname{Gr}_{\lambda}\underline{D}. \nonumber
\end{equation}
\end{Theorem}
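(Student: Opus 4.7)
The plan is to reduce the statement, by induction on the length of the slope filtration and standard rigid-tensor manipulations, to the vanishing of $\Ext^1_{\cI_F}(\mathbbm{1}, \underline{H})$ for pure isocrystals $\underline{H}$ of strictly negative slope over perfect $F$, and then to establish this vanishing by a Lang-type contracting-series argument in which the perfectness of $F$ plays an essential role.

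First, proceed by induction on the length $s$ of the slope filtration $0 \subsetneq \underline{D}_1 \subsetneq \cdots \subsetneq \underline{D}_s = \underline{D}$ from Theorem \ref{thm:HN-filtration}. Assuming $\underline{D}_{s-1} \cong \bigoplus_{i < s}\Gr_{\lambda_i}\underline{D}$, splitting $\underline{D}$ reduces to splitting the class of the extension
\[
0 \longrightarrow \underline{D}_{s-1} \longrightarrow \underline{D} \longrightarrow \Gr_{\lambda_s}\underline{D} \longrightarrow 0
\]
in $\Ext^1_{\cI_F}(\Gr_{\lambda_s}\underline{D}, \underline{D}_{s-1})$; by additivity of $\Ext^1$ in its second variable, it suffices to verify that $\Ext^1_{\cI_F}(\underline{P}, \underline{Q}) = 0$ whenever $\underline{P}$ and $\underline{Q}$ are pure of distinct slopes $\lambda_P > \lambda_Q$.

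Since $\cI_F$ is a rigid abelian tensor category, the canonical isomorphism $\Ext^1_{\cI_F}(\underline{P}, \underline{Q}) \cong \Ext^1_{\cI_F}(\mathbbm{1}, \underline{Q} \otimes \underline{P}^\vee)$ --- analogous to \eqref{eq:explici-ext1-map-dual} --- combined with the additivity of slopes under tensor product and their negation under dualization, shows that $\underline{H} := \underline{Q} \otimes \underline{P}^\vee$ is pure of slope $\lambda_Q - \lambda_P < 0$. Hence we have reduced the theorem to proving $\Ext^1_{\cI_F}(\mathbbm{1}, \underline{H}) = 0$ for every pure isocrystal $\underline{H}$ of strictly negative slope over perfect $F$. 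Writing $\sigma_H\colon H \to H$ for the $\bF$-linear, $\tau$-semilinear bijection $h \mapsto \varphi_H(1 \otimes_\tau h)$, a direct adaptation of Proposition \ref{prop-cohomology-in-tilde} identifies this Ext group with the cokernel of $\id - \sigma_H$, reducing the problem to solving the equation $x - \sigma_H(x) = h$ in $H$ for every $h \in H$.

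To do so, fix an $\cA(F)$-lattice $L \subset H$ and integers $r > 0$, $s < 0$ with $\langle \varphi_H^{r\delta}L \rangle = \fm^s L$ as in Definition \ref{def-pure-iso}. Because $F$ is perfect, $\tau$ is an automorphism of $\cA(F)$: this ensures both that $\sigma_H$ admits a $\tau^{-1}$-semilinear inverse $\sigma_H^{-1}$ and that $\sigma_H(L)$ is itself an $\cA(F)$-submodule of $H$, whence the equality $\sigma_H^{r\delta}(L) = \pi^s L$ holds as a genuine equality of submodules, not merely of their $\cA(F)$-spans. Iterating yields $\sigma_H^{-nr\delta}(L) \subseteq \pi^{n|s|}L$, so that $\sigma_H^{-1}$ is topologically contracting on $L$ for the $\pi$-adic topology. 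Given $h \in H$, write $h \in \pi^{-N}L$ for some $N$; the series $x := -\sum_{n\geq 1}\sigma_H^{-n}(h)$ then converges $\pi$-adically in $H$ and satisfies $\sigma_H(x) = -\sum_{n\geq 0}\sigma_H^{-n}(h) = x - h$, as required. The main obstacle is precisely this last step: the two uses of perfectness must be identified cleanly, and one must check that the contracting series genuinely converges $\pi$-adically in the whole of $H$, not merely within the lattice $L$.
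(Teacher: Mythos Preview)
Your proof is correct and genuinely different from the paper's. The paper does not give a direct argument: its Remark after the statement simply cites \cite[Prop.~1.5.10]{hartl} for the case $\delta=1$ and invokes Mornev's equivalence \cite[Prop.~4.2.2]{mornev-isocrystal} to reduce general $\delta$ to that case, exactly as in the proof of Theorem~\ref{thm:HN-filtration}. By contrast, you give a self-contained $\Ext^1$-vanishing argument, uniform in $\delta$, which is the natural mirror of the paper's Lemma~\ref{lem:split-sequence}: that lemma treats $\Ext^1_{\cI_F}(\mathbbm{1},\underline{D}')=0$ for $\underline{D}'$ of \emph{positive} slope over \emph{any} $F$ (via convergence of $\sum_{n\ge 0}\sigma^n$), whereas you handle \emph{negative} slope over \emph{perfect} $F$ (via convergence of $\sum_{n\ge 1}\sigma^{-n}$). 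Your identification of where perfectness enters---invertibility of $\sigma_H$, and the equality $\sigma_H^{r\delta}(L)=\langle\varphi_H^{r\delta}L\rangle=\pi^sL$ as genuine submodules---is exactly right. The only step you use without justification is that tensor and dual preserve purity with additive (resp.\ negated) slopes; this is not stated for isocrystals in the paper, but it follows immediately from Definition~\ref{def-pure-iso} by taking tensor and dual lattices, so it is not a gap. Your approach buys elementariness and independence from external references; the paper's buys brevity.
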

\begin{proof}
The proof is similar to the argument given for Theorem \ref{thm:HN-filtration}: the corresponding result for $\delta=1$ is proven in \cite[Prop 1.5.10]{hartl} and the general $\delta$-case is easily deduced from \cite[Prop 4.2.2]{mornev-isocrystal}.
\end{proof}
\begin{Remark}
The above theorem is the \emph{Dieudonn\'e-Manin decomposition for isocrystals}. When $F$ is algebraically closed, given $\lambda\in \bQ$ there exists a unique (up to isomorphisms) simple and pure isocrystal $\underline{S}_{\lambda}$ of slope $\lambda$ (see \cite[Prop 4.3.4]{mornev-isocrystal}). Any pure isocrystal of slope $\lambda$ decomposes as a direct sum of $\underline{S}_{\lambda}$ (see \cite[Prop 4.3.7]{mornev-isocrystal}) and together with Theorem \ref{thm:Dieudonne-Manin} yields the \emph{Dieudonn\'e-Manin classification} (see \cite{laumon}). It does not hold for any $F$, even separably closed, as noticed by Mornev in \cite[Rmk 4.3.5]{mornev-isocrystal} (see also Example \ref{ex:mornev}).
\end{Remark}

\subsubsection*{Isocrystals attached to $A$-motives}
We now explain how to attach isocrystals to $A$-motives over fields. This construction (Definition \ref{def:isocrystal-A-motives}) will be required next in the definition of \emph{the weights} of $A$-motives (Definition \ref{def:weights-A-motives}).\\

Let $R$ be an $\bF$-algebra and let $\kappa:A\to R$ be an $\bF$-algebra morphism. We choose the rings $\cA(R)$ and $\cB(R)$ of the previous paragraph in the following way. Given a closed point $x$ on $C$, we let $O_{x}\subset K$ be the associated discrete valuation ring with maximal ideal $\fm_{x}$. We denote by $\cO_{x}$ the completion of $O_{x}$ and by $K_{x}$ the completion of $K$. We let $\bF_{x}$ denote the residue field of $x$. We let $\cA_{x}(R)$ and $\cB_{x}(R)$ be the rings obtained by completing $\cO_{x}\otimes R$ and $K_{x}\otimes R$ for the $\fm_{x}$-adic topology (as in \eqref{eq:A}).\\

Recall that $\fj_{\kappa}$ is the ideal of $A\otimes R$ generated by $\{a\otimes 1-1\otimes \kappa(a)~|~a\in A\}$. 
\begin{Lemma}\label{lem:j-invertible-isocrystals}
We have $\fj_{\kappa}\cB_{\infty}(R)=\cB_{\infty}(R)$. For $x$ a closed point of $C$ distinct from $\infty$ such that $\kappa(\fm_{x})R=R$, we further have $\fj_{\kappa}\cA_{x}(R)=\cA_{x}(R)$.
\end{Lemma}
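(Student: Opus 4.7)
The plan is to exhibit in each case a single element of $\fj_\kappa$ that becomes a unit in the relevant completion; since $\fj_\kappa$ is the ideal generated by elements of the form $a\otimes 1-1\otimes\kappa(a)$, this forces the extension of $\fj_\kappa$ to be the unit ideal.

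For the first claim about $\cB_\infty(R)$, I would exploit the defining feature of $\infty$: every non-constant $a\in A$ has a pole there. Fix any $a\in A\setminus \bF$ of pole order $n\geq 1$ at $\infty$, and a uniformizer $\pi$ of $\cO_\infty$. Then in $K_\infty$ one has $a=u\pi^{-n}$ with $u\in\cO_\infty^\times$. Using the identification $\cB_\infty(R)=(\bF_\infty\otimes R)(\!(\pi)\!)$, I would write
\[
a\otimes 1-1\otimes\kappa(a)=\pi^{-n}\bigl((u\otimes 1)-\pi^n(1\otimes\kappa(a))\bigr),
\]
and observe that the element in parentheses reduces modulo $\pi$ to $u\otimes 1$, which lies in $\cA_\infty(R)^\times=\bigl((\bF_\infty\otimes R)[\![\pi]\!]\bigr)^\times$ (since $u\in\cO_\infty^\times$ implies $u\otimes 1$ has invertible image in $\bF_\infty\otimes R$). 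Hence the bracketed factor is a unit in $\cA_\infty(R)$, and the whole expression is a unit in $\cB_\infty(R)$. This produces a generator of $\fj_\kappa$ that is already invertible in $\cB_\infty(R)$, giving the equality $\fj_\kappa\cB_\infty(R)=\cB_\infty(R)$.

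For the second claim, now $x$ is a closed point of $C$ different from $\infty$, and by hypothesis $\kappa(\fm_x)R=R$, where I write $\fm_x$ for the maximal ideal of $A$ corresponding to $x$. I would pick $a_1,\dots,a_n\in\fm_x$ and $r_1,\dots,r_n\in R$ with $\sum_i\kappa(a_i)r_i=1$. Let $\pi_x$ be a uniformizer at $x$, so that $\cA_x(R)=(\bF_x\otimes R)[\![\pi_x]\!]$. Since $A\subset\cO_x$ and $a_i\in\fm_x$, we have $a_i\otimes 1\in\pi_x\,\cA_x(R)$. Define
\[
z\;=\;\sum_{i=1}^{n}\bigl(a_i\otimes 1-1\otimes\kappa(a_i)\bigr)(1\otimes r_i)\;\in\;\fj_\kappa\,\cA_x(R).
\]
Reducing modulo $\pi_x\cA_x(R)$ kills the $a_i\otimes 1$ contributions, and leaves
\[
z\equiv-\sum_i 1\otimes\kappa(a_i)r_i=-1\otimes\Bigl(\sum_i\kappa(a_i)r_i\Bigr)=-1\pmod{\pi_x\cA_x(R)}.
\]
Thus $-z=1-\pi_x y$ for some $y\in\cA_x(R)$, and such an element is a unit in the $\pi_x$-adically complete ring $\cA_x(R)$ (invert by the geometric series). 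Hence $\fj_\kappa\cA_x(R)$ contains a unit and equals $\cA_x(R)$.

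There is no real obstacle in either case; the substantive observation is simply the dichotomy between $\infty$ (where non-constant elements of $A$ automatically give units after inverting a uniformizer, no hypothesis on $\kappa$ needed) and finite $x$ (where one must import invertibility through the hypothesis $\kappa(\fm_x)R=R$ to produce a generator of $\fj_\kappa$ that is a unit modulo the maximal ideal, and then use completeness to lift).
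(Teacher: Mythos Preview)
Your proof is correct and follows the same strategy as the paper: exhibit an element of $\fj_\kappa$ that becomes a unit in the relevant completion. For the first claim the paper writes the explicit geometric-series inverse $\sum_{n\geq 0} a^{-(n+1)}\otimes\kappa(a)^n$ (converging in $\cA_\infty(R)$ since $a^{-1}\in\fm_\infty$), while you factor out the pole and argue via reduction modulo $\pi$; these are equivalent. For the second claim the paper picks a \emph{single} $\ell\in\fm_x$ with $\kappa(\ell)\in R^\times$ and inverts $\ell\otimes 1-1\otimes\kappa(\ell)$ by the series $-\sum_{n\geq 0}\ell^n\otimes\kappa(\ell)^{-(n+1)}$; your combination-of-generators argument is marginally more general, since for an arbitrary Noetherian $R$ the hypothesis $\kappa(\fm_x)R=R$ need not produce any individual $\kappa(a)$ that is a unit, only a finite collection whose images generate $R$.
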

\begin{proof}
We prove the first assertion. Let $a$ be a non constant element of $A$ so that $a^{-1}\in \fm_{\infty}$. Then $a\otimes 1-1\otimes \kappa(a)\in \fj$ is invertible with $-\sum_{n\geq 0}{a^{-(n+1)} \otimes \kappa(a)^{n}}$ as inverse, where the infinite sum converges in $\cA_{\infty}(R)\subset \cB_{\infty}(R)$.

We prove the second assertion. From $\kappa(\fm_x)R=R$, there exists a relation of the form $1=\sum_i{\kappa(\ell_i)r_i}$ for some $\ell_i\in \fm_x$ and $r_i\in R$. Then, the element
\[
\sum_i{(1\otimes \kappa(\ell_i)-\ell_i\otimes 1)\cdot (1\otimes r_i)}
\]
lies in $1+(\fm_x\otimes R)\subset \cA_x(R)^{\times}$. It also belongs to $\fj_\kappa$, and hence $\fj_\kappa \cA_x(R)=\cA_x(R)$.
\end{proof}

In order to use the results of the previous paragraph, we now assume that $R=F$ is a field and that $x$ is a closed point of $C$ distinct from $\ker \kappa$. Let $\underline{M}=(M,\tau_M)$ be an $A$-motive over $F$.\\

Lemma \ref{lem:j-invertible-isocrystals} yields an isomorphism of $\cB_x(F)$-modules:
\[
\tau_M\otimes_{A\otimes F}\mathbf{1}: \tau^*(M\otimes_{A\otimes F}\cB_{x}(F))\xrightarrow{\sim} M\otimes_{A\otimes F}\cB_{x}(F).
\]
This motivates the following construction:
\begin{Definition}\label{def:isocrystal-A-motives}
Let $\operatorname{I}_{x}(M)$ be the $\cB_{x}(F)$-module $M\otimes_{A\otimes R}\cB_{x}(F)$. We define $\operatorname{I}_{x}(\underline{M})$ as the pair $(\operatorname{I}_{x}(M),\tau_M\otimes \mathbf{1})$.
\end{Definition}

Because of the next proposition, we may call the datum of $\operatorname{I}_{x}(\underline{M})$ the \emph{isocrystal attached to $\underline{M}$}.
\begin{Proposition}\label{prop-fj-invertible-isocrystal}
We have the following: 
\begin{enumerate}[label=$(\roman*)$]
\item \label{item:taudefinedinfty} $\operatorname{I}_{x}(\underline{M})$ is an isocrystal over $F$.
\item \label{item:pureofwerightzero}If $x\neq \infty$, $\operatorname{I}_{x}(\underline{M})$ is pure of slope $0$. 
\end{enumerate}
\end{Proposition}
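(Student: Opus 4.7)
The plan is to treat (i) and (ii) separately, in each case reducing everything to the invertibility of $\fj_\kappa$ supplied by Lemma \ref{lem:j-invertible-isocrystals}.

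For (i), I would first identify $\cB_x(F)$ as a finite product of Laurent series fields. Since $\bF_x/\bF$ is separable (any finite extension of finite fields is), $\bF_x \otimes_\bF F$ is an \'etale $F$-algebra, decomposing as $\prod_j L_j$ with each $L_j$ a finite separable extension of $F$; consequently $\cA_x(F) \cong \prod_j L_j[\![\pi_x]\!]$ and $\cB_x(F) \cong \prod_j L_j(\!(\pi_x)\!)$. Since $M$ is finite locally free of constant rank $r := \operatorname{rk}\underline{M}$ over $A \otimes F$, its base change $\cI_x(M)$ is finite locally free of constant rank $r$ over $\cB_x(F)$; over a finite product of fields this forces freeness. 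Next, I would check that $\tau_M \otimes \mathbf{1}$ gives a $\cB_x(F)$-linear isomorphism $\tau^*\cI_x(M) \to \cI_x(M)$. Lemma \ref{lem:j-invertible-isocrystals} yields $\fj_\kappa \cB_x(F) = \cB_x(F)$: directly from the first clause for $x = \infty$, and from the second clause for $x \neq \infty$ after noting that the standing assumption $x \neq \ker\kappa$ forces $\kappa(\fm_x) F = F$ (any nonzero element of the field $F$ generates the unit ideal). Therefore the canonical maps $M \otimes_{A\otimes F}\cB_x(F) \to M[\fj_\kappa^{-1}] \otimes_{A\otimes F}\cB_x(F)$ and its $\tau^*$-analogue are isomorphisms, and tensoring the $(A\otimes F)[\fj_\kappa^{-1}]$-linear isomorphism $\tau_M$ against $\cB_x(F)$ produces the desired map, once one makes the routine identification of $\tau^*\cI_x(M)$ with the base change of $\tau^*M$.

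For (ii), I would exhibit an explicit $\varphi$-stable lattice of slope zero. Take $L := M \otimes_{A \otimes F} \cA_x(F) \subset \cI_x(M)$: this is a finitely generated $\cA_x(F)$-submodule with $L[\pi_x^{-1}] = \cI_x(M)$, hence an $\cA_x(F)$-lattice. When $x \neq \infty$, the second clause of Lemma \ref{lem:j-invertible-isocrystals} applies directly in $\cA_x(F)$, giving $\fj_\kappa \cA_x(F) = \cA_x(F)$; tensoring $\tau_M$ with $\cA_x(F)$ then produces an $\cA_x(F)$-linear isomorphism $\tau^* L \to L$, so $\langle \varphi L \rangle = L$. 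Taking $r = 1$ and $s = 0$ in Definition \ref{def-pure-iso} yields $\langle \varphi^{\delta} L \rangle = L = \fm^0 L$, proving that $\cI_x(\underline{M})$ is pure of slope $0$.

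The main obstacle I anticipate is the freeness claim in (i), which depends crucially on the identification $\cB_x(F) \cong \prod_j L_j(\!(\pi_x)\!)$ and on the fact that finite locally free modules of constant rank over a finite product of fields are automatically free. All remaining steps are formal consequences of the $\fj_\kappa$-invertibility supplied by Lemma \ref{lem:j-invertible-isocrystals} together with the compatibility of base change with the $\tau$-pullback.
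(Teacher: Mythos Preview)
Your proposal is correct and follows essentially the same approach as the paper's proof: the paper argues that $\cB_x(F)$ is a finite product of fields so that $\cI_x(M)$ is free, invokes Lemma~\ref{lem:j-invertible-isocrystals} for the isomorphism in~(i), and for~(ii) exhibits the lattice $L = M\otimes_{A\otimes F}\cA_x(F)$ with $\langle\tau_M L\rangle = L$. You have simply supplied more detail on why $\cB_x(F)$ decomposes as a product of Laurent series fields and why constant-rank locally free modules over such rings are free, points the paper leaves implicit.
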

\begin{proof}
Because $M$ is locally free of constant rank and $\cB_{x}(F)$ is a finite product of fields, $\operatorname{I}_{x}(M)$ is a free $\cB_{x}(F)$-module. Thus, point \ref{item:taudefinedinfty} follows from Lemma \ref{lem:j-invertible-isocrystals}. To prove \ref{item:pureofwerightzero}, it suffices to note that $L=M\otimes_{A\otimes F}\cA_{x}(F)$ is an $\cA_{x}(F)$-lattice in $M\otimes_{A\otimes F}\cB_{x}(F)$ such that $\langle \tau_M L\rangle=L$.
\end{proof}

\begin{Proposition}\label{prop:Ix-exact}
The assignment $\operatorname{I}_{x}:\underline{M}\mapsto \operatorname{I}_{x}(\underline{M})$ defines an exact functor from the category of $A$-motives to the category of isocrystals at $x$. 
\end{Proposition}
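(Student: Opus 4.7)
The plan is to check functoriality first, then deduce exactness from flatness of the base-change ring.

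For functoriality, given a morphism $f:\underline{M}\to\underline{N}$ in $\cM_F$, I would set $\cI_x(f):=f\otimes_{A\otimes F}\id_{\cB_x(F)}$. This is $\cB_x(F)$-linear, and its compatibility with the isocrystal structures $\tau_M\otimes\mathbf{1}$ and $\tau_N\otimes\mathbf{1}$ follows directly from the identity $f\circ\tau_M=\tau_N\circ\tau^*f$ after applying $-\otimes_{A\otimes F}\cB_x(F)$. Respect for identities and compositions is then immediate, so $\cI_x$ is indeed a functor $\cM_F\to\cI_F$.

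For exactness, the key observation is that $\cB_x(F)$ is a flat $A\otimes F$-algebra. This is the same type of flatness assertion already used in the proof of Corollary~\ref{adic-realization-exact}: the map $A\to \cO_x$ factors through the localization $A_{\fm_x}$ followed by $\fm_x$-adic completion of a Noetherian local ring, both of which are flat; tensoring with $F$ over $\bF$ preserves flatness, giving flatness of $A\otimes F\to\cO_x\otimes F$; the $\fm_x$-adic completion $\cO_x\otimes F\to \cA_x(F)=(\bF_x\otimes F)[\![\pi]\!]$ is flat since the source is Noetherian; and finally $\cA_x(F)\to\cB_x(F)$ is a localization.

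Granted this, given any short exact sequence $0\to\underline{M}'\to\underline{M}\to\underline{M}''\to 0$ in $\cM_F$, Definition~\ref{def:exact-sequence} provides an exact sequence of underlying $A\otimes F$-modules; tensoring with the flat ring $\cB_x(F)$ yields a short exact sequence
\[
0\longrightarrow \cI_x(M')\longrightarrow \cI_x(M)\longrightarrow \cI_x(M'')\longrightarrow 0
\]
of $\cB_x(F)$-modules. Since $\cI_F$ is abelian by \cite[Prop.~4.1.1]{mornev-isocrystal} with kernels and cokernels computed on underlying modules, and since the induced arrows commute with the $\varphi$-structures by functoriality, this is an exact sequence in $\cI_F$. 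No real obstacle is expected: the whole argument reduces to bookkeeping around flatness, which is standard.
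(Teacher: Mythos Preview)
Your approach is essentially the same as the paper's: reduce to flatness of $\cB_x(F)$ over $A\otimes F$, then conclude. There is, however, a small technical slip in your flatness chain. You invoke a map $A\to\cO_x$ and factor it through $A_{\fm_x}$, but when $x=\infty$ no such map exists (nonconstant elements of $A$ have poles at $\infty$), so the first step of your chain fails precisely in the case that matters most for the paper. The paper avoids this by passing instead through $K_x\otimes F$: one shows $\cA_x(F)$ is flat over the Noetherian ring $\cO_x\otimes F$ by completion, then inverts $\fm_x$ to get $\cB_x(F)$ flat over $K_x\otimes F$, and finally observes that $K_x\otimes F$ is flat over $A\otimes F$ since $A\hookrightarrow K\hookrightarrow K_x$ are flat. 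With this reroute your argument goes through uniformly for all closed points $x$.
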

\begin{proof}
It suffices to prove that $\cB_x(F)$ is flat over $A\otimes F$. First note that $\cO_x\otimes F$ is Noetherian, so that its completion $\cA_x(F)$ for the $\fm_x$-adic topology is flat over it. Tensoring by $K_x$ over $\cO_x$, we obtain that $\cB_x(F)$ is flat over $K_x\otimes F$. Yet the latter is flat over $A\otimes F$, which concludes.  
\end{proof}

The next lemma clarifies the relation between the isocrystal of an $A$-motive and that of its saturation (Definition \ref{def:saturation} and Lemma \ref{lem:satiso}). It will be important in the next subsection: 
\begin{Lemma}\label{lemma:saturation}
Let $f:\underline{N}\to \underline{M}$ be an isogeny. Then $\operatorname{I}_{x}(f):\operatorname{I}_{x}(\underline{N})\to \operatorname{I}_{x}(\underline{M})$ is an isomorphism. In particular, given $\underline{P}$ a sub-$A$-motive of $\underline{M}$ and $\underline{P}^{\operatorname{sat}}$ its saturation in $\underline{M}$, $\operatorname{I}_{x}(\underline{P})=\operatorname{I}_{x}(\underline{P}^{\operatorname{sat}})$. If $\underline{Q}$ is another sub-$A$-motive of $\underline{M}$ such that $\operatorname{I}_{x}(\underline{P})=\operatorname{I}_{x}(\underline{Q})$ inside $\operatorname{I}_{x}(\underline{M})$, then $\underline{P}^{\operatorname{sat}}=\underline{Q}^{\operatorname{sat}}$.
\end{Lemma}

\begin{proof}
The cokernel of an isogeny is $A$-torsion hence is annihilated under tensoring with $\cB_x(F)$, and the assertions of the second and third sentence of the lemma follow. \\
We prove the statement in the last sentence. Because $\operatorname{I}_x$ is exact, the pullback square 
\[
\begin{tikzcd}
P\cap Q \arrow[r]\arrow[d] & P \arrow[d] \\ 
Q \arrow[r] & M 
\end{tikzcd}
\]
is transformed to a pullback square, and it follows that $\operatorname{I}_x(P\cap Q)=\operatorname{I}_x(P)=\operatorname{I}_x(Q)$. In particular, $P\cap Q$, $P$ and $Q$ are locally free of the same rank. By the definition of saturation (Definition \ref{def:saturation}) $P^{\operatorname{sat}}/(P\cap Q)^{\operatorname{sat}}$ is an $A$-motive over $F$. By the hypotheses it maps to zero under $\operatorname{I}_x$. Because $\operatorname{I}_x$ is rank preserving, this quotient is zero, and by symmetry in $P$ and $Q$ we deduce $P^{\operatorname{sat}}=(P\cap Q)^{\operatorname{sat}}=Q^{\operatorname{sat}}$.
\end{proof}

\subsubsection*{Weight filtration and mixedness}
As before, let $F$ be a field containing $\bF$, let $\kappa:A\to F$ be an $\bF$-algebra morphism and let $\underline{M}$ be an $A$-motive over $F$. Let $\underline{D}:=\operatorname{I}_{\infty}(\underline{M})$ be the isocrystal at the place $\infty$ attached to $\underline{M}$. By Theorem \ref{thm:HN-filtration}, $\underline{D}$ carries a unique slope filtration:
\begin{equation}\label{eq:HN-filtration-Iinf(M)}
0=\underline{D}_0\subsetneq \underline{D}_{1}\subsetneq \cdots \subsetneq \underline{D}_s=\underline{D}
\end{equation}
with ascending slopes $\lambda_1<\cdots <\lambda_s$, where $\lambda_i:=\lambda(\underline{D}_i/\underline{D}_{i-1})$.
\begin{Definition}\label{def:weights-A-motives}
We call the set $w(\underline{M}):=\{-\lambda_i~|~1\leq i \leq s\}$ the \emph{set of weights} of $\underline{M}$.
\end{Definition}

The next definition dates back to the seminal paper of Anderson \cite[1.9]{anderson}.
\begin{Definition}\label{def:pure-A-motives}
We call $\underline{M}$ \emph{pure of weight $\mu$} if $\underline{D}$ is pure of slope $-\mu$. Equivalently, $\underline{M}$ is pure of weight $\mu$ if $w(\underline{M})=\{\mu\}$.
\end{Definition}

The sign convention--weights opposed to slopes--is made to fit with the number field situation:
\begin{Example}
Using notations of Example \ref{ex:carlitz-motive}, the Carlitz twist $\underline{A}(1)$ over $F$ is pure of weight $-1$ and, more generally, $\underline{A}(n)$ is pure of weight $-n$. It is analogous to the number field case, where the motive $\mathbb{Z}(n)$ is pure of weight $-2n$ (the factor $2$, reflecting the degree $[\mathbb{C}:\mathbb{R}]$, could be removed by renormalizing the weight filtration using half-integral integers).
\end{Example}

In analogy with number fields, we define mixedness for $A$-motives as follows:
\begin{Definition}\label{def:mixed-A-motives}
We call $\underline{M}$ \emph{mixed} if there exist rational numbers $\mu_1<\ldots <\mu_s$ and an increasing finite filtration by saturated sub-$A$-motives of $\underline{M}$:
\begin{equation}\label{eq:weight-filtration}
0=W_{\mu_0}\underline{M}\subsetneq W_{\mu_1}\underline{M}\subsetneq \cdots \subsetneq W_{\mu_s}\underline{M}=\underline{M},
\end{equation}
for which the successive quotients $W_{\mu_i}\underline{M}/W_{\mu_{i-1}}\underline{M}$ are pure of weight $\mu_i$.
\end{Definition}

Before pursuing on the properties of mixedness, let us explain why being mixed is a very restrictive condition over imperfect base fields. Suppose that $\underline{M}$ is a mixed $A$-motive and consider a filtration as in \eqref{eq:weight-filtration}. The functor $\operatorname{I}_{\infty}$ is exact (Proposition \ref{prop:Ix-exact}), and applying it to \eqref{eq:weight-filtration} yields a finite filtration of $\underline{D}=\operatorname{I}_{\infty}(\underline{M})$ by subisocrystals:
\begin{equation}\label{eq:reversed-filtration}
0=\underline{D}^0\subsetneq \underline{D}^{1}\subsetneq \cdots \subsetneq \underline{D}^s=\underline{D}
\end{equation}
whose successive quotients $\underline{D}^i/\underline{D}^{i-1}$ are pure isocrystals of slope $-\mu_i$. Note that the slopes of this filtration are decreasing, hence \eqref{eq:reversed-filtration} is \emph{not} the slope filtration of $\underline{D}$. 
\begin{Proposition}\label{prop:HN-filtration-splits}
If $\underline{M}$ is mixed, the slope filtration of $\operatorname{I}_{\infty}(\underline{M})$ splits.
\end{Proposition}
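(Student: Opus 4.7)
The plan is to induct on the length $s$ of the weight filtration, after reducing to a vanishing result for $\Ext^{1}_{\cI_{F}}$ between pure isocrystals whose slopes decrease across the extension. Applying the exact functor $\cI_\infty$ (Proposition~\ref{prop:Ix-exact}) to the weight filtration \eqref{eq:weight-filtration} produces the filtration \eqref{eq:reversed-filtration} of $\underline{D}:=\cI_\infty(\underline{M})$, whose subquotients $\underline{D}^{i}/\underline{D}^{i-1}$ are pure of slope $-\mu_{i}$, with $-\mu_{1}>-\mu_{2}>\cdots>-\mu_{s}$. Since the slope filtration (Theorem~\ref{thm:HN-filtration}) is the \emph{unique} filtration with pure subquotients of ascending slopes, any direct sum splitting of \eqref{eq:reversed-filtration} would automatically give a splitting $\underline{D}\cong\bigoplus_{\lambda}\Gr_{\lambda}\underline{D}$ of the slope filtration, which is the claim.

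The key lemma I would prove first is the following: for any short exact sequence $0\to \underline{B}\to \underline{E}\to \underline{C}\to 0$ in $\cI_{F}$ with $\underline{B}$ pure of slope $\beta$ and $\underline{C}$ pure of slope $\gamma$ satisfying $\beta>\gamma$, the sequence splits. To prove this, apply Theorem~\ref{thm:HN-filtration} to $\underline{E}$ and consider the subisocrystal $\underline{E}_{\leq \gamma}$ coming from the slope filtration. The intersection $\underline{E}_{\leq\gamma}\cap\underline{B}$ is a subisocrystal of the pure object $\underline{B}$, hence by Lemma~\ref{lem:pure-implies-isoclinic} is either zero or pure of slope $\beta$; but it is also a subisocrystal of $\underline{E}_{\leq\gamma}$ whose slopes are bounded by $\gamma<\beta$, forcing $\underline{E}_{\leq\gamma}\cap\underline{B}=0$. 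Consequently the quotient map restricts to an injection $\underline{E}_{\leq\gamma}\hookrightarrow \underline{C}$. Comparing ranks and degrees computed from the two filtrations $0\subset \underline{B}\subset \underline{E}$ and $0\subset \underline{E}_{\leq\gamma}\subset\underline{E}$, using the identity $\deg=\operatorname{rk}\cdot\lambda/\delta$ valid for pure isocrystals and the hypothesis $\beta\neq\gamma$, yields $\operatorname{rk}\underline{E}_{\leq\gamma}=\operatorname{rk}\underline{C}$; since $\cB_{\infty}(F)$ is a finite product of fields, an injection between free modules of equal rank is an isomorphism, and splitting follows.

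With the lemma at hand, the induction on $s$ is immediate. The case $s=1$ is trivial. For $s\geq 2$, the induction hypothesis applied to $W_{\mu_{s-1}}\underline{M}$, which is mixed of length $s-1$, decomposes $\cI_\infty(W_{\mu_{s-1}}\underline{M})$ as a direct sum of pure isocrystals of slopes $-\mu_{1},\ldots,-\mu_{s-1}$, each strictly greater than $-\mu_{s}$. Applying $\cI_\infty$ to $0\to W_{\mu_{s-1}}\underline{M}\to \underline{M}\to \underline{M}/W_{\mu_{s-1}}\underline{M}\to 0$ then exhibits $\underline{D}$ as an extension of the pure isocrystal $\cI_\infty(\underline{M}/W_{\mu_{s-1}}\underline{M})$ of slope $-\mu_{s}$ by this direct sum; distributing $\Ext^{1}$ over the direct sum and invoking the key lemma on each factor shows that the extension class is zero, so $\underline{D}\cong \cI_\infty(W_{\mu_{s-1}}\underline{M})\oplus \cI_\infty(\underline{M}/W_{\mu_{s-1}}\underline{M})$, as required.

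I expect the main obstacle to sit inside the key lemma, precisely in turning the injection $\underline{E}_{\leq\gamma}\hookrightarrow \underline{C}$ into an equality of ranks: one must compute $\deg\underline{E}$ two ways and solve the resulting linear system, with the nondegeneracy of that system coming exactly from $\beta\neq\gamma$. The rest of the argument is formal, although some care is needed because the base ring $\cB_{\infty}(F)=(k_{\infty}\otimes F)(\!(\pi_{\infty})\!)$ is merely a finite product of fields and $F$ is not assumed perfect, so Theorem~\ref{thm:Dieudonne-Manin} is unavailable and the splitting genuinely has to be produced from the weight filtration itself.
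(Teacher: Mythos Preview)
Your overall strategy is correct and matches the paper's: both reduce to a splitting lemma for an extension whose subobject has strictly larger slope than its quotient, then induct along the reversed filtration~\eqref{eq:reversed-filtration}. Where you diverge is in the proof of that key lemma. The paper's Lemma~\ref{lem:split-sequence} argues analytically: after reducing to $\underline{D}''=\mathbbm{1}$ and $\lambda(\underline{D}')>0$, it produces (via passage to the perfection of $F$) an $\cA(F)$-lattice $L\subset D'$ with $\langle\varphi^{h}L\rangle\subset\fm L$ for some $h>0$, from which $\id-\varphi'$ is seen to be surjective on $D'$, hence $\Ext^{1}_{\cI_F}(\mathbbm{1},\underline{D}')=0$. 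Your route instead bootstraps from Theorem~\ref{thm:HN-filtration} and strictness, extracting a section from the slope filtration of $\underline{E}$ itself. Your argument is more structural and avoids lattices and perfections; the paper's is self-contained (it does not reinvoke the slope-filtration machinery on $\underline{E}$) and yields the slightly stronger vanishing $\Ext^{1}_{\cI_F}(\underline{D}'',\underline{D}')=0$ whenever $\lambda(\underline{D}')>\lambda(\underline{D}'')$, without any purity hypothesis on either factor.

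There is one gap in your key lemma as written. Your degree comparison tacitly uses that $\underline{E}/\underline{E}_{\leq\gamma}$ is pure of slope $\beta$, but a priori the slope filtration of $\underline{E}$ could have breaks strictly between $\gamma$ and $\beta$, or above $\beta$, and then the identity $\deg=\operatorname{rk}\cdot\lambda/\delta$ is unavailable for that quotient. The cleanest repair is to bypass the degree computation and use Corollary~\ref{cor:strict-slope-filtration} once more, now on the surjection $p:\underline{E}\to\underline{C}$: strictness gives $p(E_{\gamma})=p(E)\cap C_{\gamma}=C$, so $p|_{\underline{E}_{\leq\gamma}}$ is already surjective, and combined with your injectivity it is an isomorphism, producing the section immediately. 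Alternatively, apply Corollary~\ref{cor:strict-slope-filtration} to the full short exact sequence to obtain $0\to\Gr_{\lambda}\underline{B}\to\Gr_{\lambda}\underline{E}\to\Gr_{\lambda}\underline{C}\to 0$ for every $\lambda$, which forces the slopes of $\underline{E}$ to be exactly $\{\gamma,\beta\}$; your linear system in $\deg$ and $\operatorname{rk}$ then goes through as written.
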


To prove \ref{prop:HN-filtration-splits}, we require a lemma on isocrystals:
\begin{Lemma}\label{lem:split-sequence}
Let $S:0\to \underline{D}\to \underline{D}'\to \underline{D}''\to 0$ be an exact sequence of isocrystals, where $\underline{D}$ and $\underline{D}''$ are pure and $\lambda(\underline{D})>\lambda(\underline{D}'')$. Then $S$ is split.
\end{Lemma}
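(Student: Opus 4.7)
The plan is to isolate the appropriate step of the slope filtration of $\underline{D}$ and show it maps isomorphically onto $\underline{D}''$. The decisive tool is Corollary \ref{cor:strict-slope-filtration}, which asserts that every morphism of isocrystals is strict with respect to the slope filtration $(-)_\lambda$.

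In the concrete situation where $\underline{D}'$ and $\underline{D}''$ are pure of slopes $\mu' > \mu''$, which is precisely the case needed to prove Proposition \ref{prop:HN-filtration-splits}, I would set $\lambda = \mu''$ and consider the sub-isocrystal $\underline{D}_\lambda$ from the slope filtration of $\underline{D}$. Writing $f \colon \underline{D} \to \underline{D}''$ for the quotient, strictness applied to $f$ gives $f(\underline{D}_\lambda) = f(\underline{D}) \cap \underline{D}''_\lambda = \underline{D}''$, since $\underline{D}''$ is pure of slope $\mu'' = \lambda$ and hence its entire slope filtration is concentrated at $\lambda$. Thus $f$ restricted to $\underline{D}_\lambda$ is surjective. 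Applying strictness now to the inclusion $\underline{D}' \hookrightarrow \underline{D}$ gives $\underline{D}' \cap \underline{D}_\lambda = \underline{D}'_\lambda = 0$, since $\underline{D}'$ is pure of slope $\mu' > \lambda$. Hence $f|_{\underline{D}_\lambda}$ is an isomorphism of isocrystals, and its inverse provides a section of $f$, splitting $S$.

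Essentially the same argument handles any situation where the slopes appearing in $\underline{D}'$ lie strictly above those appearing in $\underline{D}''$: one picks any rational $\lambda$ that is at least as large as every slope of $\underline{D}''$ and strictly smaller than every slope of $\underline{D}'$, and the calculation goes through unchanged. The only genuine obstacle is pinning down the intended strength of the inequality $\mu(\underline{D}') > \mu(\underline{D}'')$ in the statement: if it were read merely as an inequality of average slope functions, configurations sharing a slope at the boundary would block the choice of a separating $\lambda$. Since the lemma is designed to propagate splittings inductively across the weight filtration of a mixed $A$-motive, whose $\cI_\infty$-image has pure subquotients, the strong reading is automatic in all intended applications and the above argument suffices.
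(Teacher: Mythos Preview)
Your argument is correct and non-circular: Corollary \ref{cor:strict-slope-filtration} is established from Theorem \ref{thm:HN-filtration} and Lemma \ref{lem:pure-implies-isoclinic} alone, so invoking it here is legitimate. Your observation that $\underline{D}'_\lambda=0$ and $\underline{D}''_\lambda=\underline{D}''$ for a suitably chosen $\lambda$ immediately produces a section, and you are right that in the inductive application to Proposition \ref{prop:HN-filtration-splits} the slopes of the sub and quotient are always strictly separated, so the potential ambiguity in the hypothesis $\mu(\underline{D}')>\mu(\underline{D}'')$ never bites.

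The paper takes a different route. It reduces via $\Ext^1_{\cI_F}(\underline{D}'',\underline{D}')\cong \Ext^1_{\cI_F}(\mathbbm{1},\underline{D}'\otimes(\underline{D}'')^{\vee})$ to the case $\underline{D}''=\mathbbm{1}$ with $\underline{D}'$ of positive slopes, then shows directly that $\id-\varphi'$ is surjective on $D'$ by producing an $\cA(F)$-lattice $L$ with $\langle\varphi'^h L\rangle\subset \fm L$ for some $h$, whence a geometric-series argument applies. This is a hands-on lattice computation that avoids appealing to the strictness machinery. Your approach is cleaner and more structural, leveraging the already-established exactness of $\underline{D}\mapsto\underline{D}_\lambda$; the paper's approach is more self-contained and makes the vanishing of the relevant $\Ext^1$ explicit. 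Both cover exactly the same range of cases (pure, or more generally strictly separated slopes).
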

\begin{proof}
Because $\Ext^1_{\operatorname{IC}_{F}}(\underline{D}'',\underline{D})\cong \Ext^1_{\operatorname{IC}_{F}}(\mathbbm{1},\underline{D}\otimes (\underline{D}'')^{\vee})$ by Corollary \ref{cor:dualizable-tensor}, the argument reduces to the case of $\underline{D}''=\mathbbm{1}$ and $\lambda(\underline{D})>0$ (\emph{e.g.} \cite[Prop. 3.4.10]{mornev-isocrystal}). Following the proof of Proposition \ref{prop-cohomology-in-tilde}, one sees that $\Ext^1_{\operatorname{IC}_{F}}(\mathbbm{1},\underline{D})$ is isomorphic to $D/\im(\id-\varphi)$ as a $K$-vector space, where by $\id-\varphi$ we mean the $K$-linear endomorphism of $D$ given by $d\mapsto d-\varphi(\tau^*d)$. In particular, to show that any sequence $S$ splits, it is enough to show that $\id–\varphi$ is surjective on $D$. As $\underline{D}$ is pure of positive slope, $D$ contains an $\cA(F)$-lattice $L$ satisfying $\langle \varphi^h L\rangle\subset \fm L$ for some $h>0$. This implies that the series 
\[
\xi_d:=d+\varphi(\tau^*d)+\varphi(\tau^* \varphi( \tau^* d))+\ldots 
\]
converges in $D$ for all $d\in D$, and the assignment $d\mapsto \xi_d$ defines an inverse of $\id-\varphi$.
\end{proof}

\begin{proof}[Proof of Proposition \ref{prop:HN-filtration-splits}]
Let $\underline{D}'_1,\ldots, \underline{D}'_s$ be pure isocrystals and let $\underline{D}''$ be a pure isocrystal such that $\lambda(\underline{D}'')>\lambda(\underline{D}'_i)$ for all $i$. Then, by Lemma \ref{lem:split-sequence}, 
\[
\Ext^{1}_{\operatorname{IC}_{F}}\left(\underline{D}'',\bigoplus_i \underline{D}_i'\right)\cong \bigoplus_i \Ext^{1}_{\operatorname{IC}_{F}}\left(\underline{D}'',\underline{D}_i'\right)=(0).
\]
Using the above observation, by induction one shows that $\operatorname{I}_{\infty}(W_{\mu_i})$ splits for all $i$, and for $i=s$ we obtain a splitting of the filtration \eqref{eq:reversed-filtration}. By reordering the pure parts, we build another split filtration of $\underline{D}$:
\[
0\subsetneq \underline{D}^s/\underline{D}^{s-1}\subsetneq  (\underline{D}^s/\underline{D}^{s-1})\oplus (\underline{D}^{s-1}/\underline{D}^{s-2})\subsetneq \cdots \subsetneq \bigoplus_{i=1}^s \underline{D}^i/\underline{D}^{i-1}
\]
having increasing slopes. By uniqueness, it coincides with the slope filtration of $\underline{D}$.
\end{proof}

Proposition \ref{prop:HN-filtration-splits} allows one to construct non mixed $A$-motives when $F$ is imperfect, by constructing an $A$-motive whose associated isocrystal is non split. One observes that by Theorem \ref{thm:Dieudonne-Manin} such examples can only exist if $F$ is imperfect. Example \ref{ex:mornev} below show that such examples exist. 

\begin{Example}\label{ex:mornev}
We suppose that $A=\bF[t]$ and denote by $\pi$ the uniformizer $t^{-1}$ in $\cB_{\infty}(F)$, then identified with $F(\!(\pi)\!)$. Assume that $F$ is imperfect, and let $\alpha\in F$ be such that $-\alpha$ does not have any $q$th root in $F$. Consider the $t$-motive $\underline{M}$ whose underlying module is $F[t]e_0\oplus F[t]e_1$, and where $\tau_M$ acts by $\tau_M(\tau^*e_0)=e_0$ and $\tau_M(\tau^*e_1)=\alpha e_0+(t-\theta)^{-1}e_1$. $\underline{M}$ inserts in a short exact sequence of $t$-motives:
\begin{equation}\label{eq:exact-sequence-wrong-sided-weights}
0\to \mathbbm{1}\cdot e_0 \to \underline{M} \to \underline{A}(1)\cdot e_1 \to 0.
\end{equation}
We prove that the slope filtration of $\operatorname{I}_{\infty}(\underline{M})$ does not split. More precisely, let us show that $\operatorname{I}_{\infty}(\mathbbm{1})\cdot e_0$ is the only non-zero strict subisocrystal of $\operatorname{I}_{\infty}(\underline{M})$. 

Let $\underline{L}\neq 0$ be a strict subisocrystal of $\operatorname{I}_{\infty}(\underline{M})$. $\underline{L}$ must have rank $1$ over $F(\!(\pi)\!)$, and we let $\ell_0 e_0+\ell_1e_1$ be a generator of its underlying module. Let $f\in F(\!(\pi)\!)$ be a non zero element such that $\tau_M(\tau^*(\ell_0e_0+\ell_1e_1))=f(\ell_0e_0+\ell_1e_1)$. This yields the equation:
\begin{equation}
\begin{pmatrix}
1 & \alpha \\ 0 & \pi (1-\theta \pi )^{-1}
\end{pmatrix}\begin{pmatrix}
\tau(\ell_0) \\
\tau(\ell_1)
\end{pmatrix}=f
\begin{pmatrix}
\ell_0 \\
\ell_1
\end{pmatrix}. \nonumber
\end{equation}
Because $\underline{L}\neq 0$, the first row implies $\ell_0\neq 0$.  If $\ell_1\neq 0$, as $(1-\theta \pi)$ is a unit in $F[\![\pi]\!]$ the bottom row imposes $v_\pi (f)=1$, whereas the first row reads
\begin{equation}\label{first-row-equation}
\tau(\ell_0)+\alpha \tau(\ell_1)=f\ell_0. 
\end{equation}
Because $v_\pi (f)=1$, \eqref{first-row-equation} also implies that $v_\pi (\ell_0)=v_\pi (\ell_1)$. In particular, if $l_0$ and $l_1$ are the first nonzero coefficients in $F$ of $\ell_0$ and $\ell_1$ in $F(\!(\pi)\!)$ respectively, \eqref{first-row-equation} gives
\begin{equation}
\alpha =-(l_0/l_1)^q. \nonumber
\end{equation}
This is in contradiction with our assumption. Hence $\ell_1=0$ and $f=\tau(\ell_0)/\ell_0$. Therefore, $\underline{L}$ coincides with $\operatorname{I}_{\infty}(\mathbbm{1})\cdot e_0$.

In this example, $\underline{M}$ is an extension of $\underline{A}(1)$ by $\mathbbm{1}$, two $t$-motives of respective weights $-1<0$. When the weights goes in ascending orders, such examples do not appear anymore (see Proposition \ref{prop:separated-weights-ext}).
\end{Example}

\begin{Remark}
Observe that the converse of Proposition \ref{prop:HN-filtration-splits} does not hold: there are non-mixed $A$-motives over perfect base fields (\emph{e.g.} \cite[Ex. 2.3.13]{hartl-juschka}). For a partial converse, we refer to Proposition \ref{prop:criterion-for-mixedness} below.
\end{Remark}

\begin{Proposition-Definition}\label{def:weight-filtration}
If $\underline{M}$ is mixed, a filtration $W=(W_{\mu_i}\underline{M})_{1\leq i \leq s}$ as in \eqref{eq:weight-filtration} is unique. In addition, the set $\{\mu_1,\ldots,\mu_s\}$ equals $w(\underline{M})$.
\begin{enumerate}[label=$(\roman*)$]
\item For all $i\in\{1,\ldots ,s\}$, we let $W_{\mu_i}M$ be the underlying module of $W_{\mu_i}\underline{M}$. 
\item\label{item:def:graded-weight-filtration} For all $\mu\in \bQ$, we set 
\begin{align*}
W_{\mu}M &:=\bigcup_{\mu_i\leq \mu}{W_{\mu_i}M}, \quad W_{\mu}\underline{M}:=(W_{\mu}M,\tau_M), \\
W_{<\mu}M &:=\bigcup_{\mu_i< \mu}{W_{\mu_i}M}, \quad W_{<\mu}\underline{M}:=(W_{<\mu}M,\tau_M), 
\end{align*}
and $\Gr_{\mu}\underline{M}:=W_\mu\underline{M}/W_{<\mu}\underline{M}$. Both $\Gr_\mu \underline{M}$ and $W_{\mu}\underline{M}$, as well as $W_{<\mu}\underline{M}$, define mixed $A$-motives over $F$ for all $\mu\in \bQ$.
\item\label{item:def:weight-filtration} We call $(W_{\mu}\underline{M})_{\mu\in \bQ}$ the \emph{weight filtration of $\underline{M}$}.
\item We let $\cM\cM_F$ (resp. $\cM\cM^{\text{iso}}_F$) be the subcategory of $\cM_F$ (resp. $\cM^{\text{iso}}_F$) whose objects are mixed, and whose morphisms $f:\underline{M}\to \underline{N}$ preserves the weight filtration:
\[
\text{for~all~} \mu\in \bQ: \quad f(W_{\mu}M)\subset W_\mu N.
\]
\end{enumerate}
\end{Proposition-Definition}
\begin{Remark}
We shall prove below (Corollary \ref{cor:Wmu-functor}) that every morphism of $A$-motives $f:\underline{M}\to \underline{N}$, $\underline{M}$ and $\underline{N}$ being mixed, preserves the weight filtration. In other words, that $\cM\cM_F$ is a full subcategory of~$\cM_F$.
\end{Remark}

\begin{proof}[Proof of Proposition \ref{def:weight-filtration}]
Let $\underline{D}=\operatorname{I}_{\infty}(\underline{M})$. By Proposition \ref{prop:HN-filtration-splits}, we have a canonical decomposition:
\begin{equation}\label{eq:HN-splits-canonical}
\underline{D}=\bigoplus_{i=1}^s \Gr_{\lambda_i}\underline{D},
\end{equation}
$\Gr_{\lambda_i}\underline{D}$ being canonically identified with a pure subisocrystal of $\underline{D}$ of slope $\lambda_i$. On the other-hand, let $W$ be as in \eqref{eq:weight-filtration}. By uniqueness of the decomposition \eqref{eq:HN-splits-canonical}, we have equalities of subisocrystals of $\underline{D}$:
\begin{equation}\label{eq:isocrys-of-weight}
\forall \mu\in \bQ:\quad \operatorname{I}_{\infty}(W_{\mu}\underline{M})=\bigoplus_{\lambda_i\geq -\mu}\Gr_{\lambda_i}\underline{D}.
\end{equation}
We conclude by Lemma \ref{lemma:saturation} that the above identity determines $W$ uniquely. The fact that $w(\underline{M})=\{\mu_1,\ldots,\mu_s\}$ also follows.
\end{proof}

Next, we suggest a criterion for mixedness:
\begin{Proposition}\label{prop:criterion-for-mixedness}
Suppose that the slope filtration of $\underline{D}$ splits. For all $\mu\in \bQ$, let $\operatorname{I}_{\infty}(\underline{M})^{\mu}$ be the subisocrystal of $\underline{D}$ corresponding to $\bigoplus_{\lambda_i\geq -\mu}\Gr_{\lambda_i}\underline{D}$. Denote by $\operatorname{I}_{\infty}(M)^{\mu}$ its underlying module. Then, we have:
\[
\text{for~all~} \mu\in \bQ:\quad \rank_{A\otimes F} (\operatorname{I}_{\infty}(M)^{\mu}\cap M) \leq \rank_{\cB(F)} \operatorname{I}_{\infty}(\underline{M})^{\mu}
\]
with equality if and only if $\underline{M}$ is mixed. In the latter case, $W_{\mu}M=\operatorname{I}_{\infty}(M)^{\mu}\cap M$.
\end{Proposition}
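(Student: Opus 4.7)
The plan is to prove the inequality first, then establish the equivalence via two uniform facts about $N_\mu := \cI_\infty(M)^\mu \cap M$.

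For the inequality, I would appeal to the flatness of $\cB_\infty(F)$ over $A\otimes F$ (proof of Proposition \ref{prop:Ix-exact}): the inclusion $N_\mu \hookrightarrow M$ induces an injection $N_\mu \otimes_{A\otimes F} \cB_\infty(F) \hookrightarrow \cI_\infty(M)$ whose image lies in $\cI_\infty(M)^\mu$, so $\rank_{A\otimes F}(N_\mu) \leq \rank \cI_\infty(\underline{M})^\mu$.

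I would then establish two auxiliary facts driving both implications. (i) $N_\mu$ is $\tau_M$-stable: using $\tau^* N_\mu \subseteq \tau^* \cI_\infty(M)^\mu$ and $\tau^* N_\mu \subseteq \tau^* M$, one has
\[
\tau_M(\tau^* N_\mu) \subseteq \tau_M(\tau^* \cI_\infty(M)^\mu) \cap M[\fj^{-1}] = \cI_\infty(M)^\mu \cap M[\fj^{-1}] = N_\mu[\fj^{-1}].
\]
(ii) $N_\mu$ is saturated in $M$: the splitting hypothesis makes $\cI_\infty(M)/\cI_\infty(M)^\mu$ a free $\cB_\infty(F)$-module, hence torsion-free as an $A\otimes F$-module (nonzero elements of $A\otimes F$ act as non-zero-divisors on $\cB_\infty(F)$, visible from its explicit Laurent-series description), and $M/N_\mu$ embeds into it via $\otimes_{A\otimes F}\cB_\infty(F)$. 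Together, (i) and (ii) exhibit each $N_\mu$ as a saturated sub-$A$-motive of $\underline{M}$.

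For the forward direction, assume $\underline{M}$ is mixed. Equation \eqref{eq:isocrys-of-weight} gives $\cI_\infty(W_\mu \underline{M}) = \cI_\infty(\underline{M})^\mu$, so $W_\mu M \subseteq N_\mu$; tensoring back with $\cB_\infty(F)$ sandwiches $N_\mu \otimes \cB_\infty(F)$ between two copies of $\cI_\infty(\underline{M})^\mu$, and Lemma \ref{lemma:saturation} combined with the saturation of both $W_\mu M$ and $N_\mu$ yields $W_\mu M = N_\mu$, in particular the desired rank equality. For the converse, suppose the rank equality holds for all $\mu$. Enumerating the weights $\mu_1 < \cdots < \mu_s$ of $\underline{M}$, the $N_{\mu_i}$ form a strictly ascending chain $0 = N_{\mu_0} \subsetneq N_{\mu_1} \subsetneq \cdots \subsetneq N_{\mu_s} = M$ of saturated sub-$A$-motives (strictness from the strictly increasing ranks). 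Applying the exact functor $\cI_\infty$ (Proposition \ref{prop:Ix-exact}) identifies the successive quotient $\cI_\infty(N_{\mu_i}/N_{\mu_{i-1}})$ with $\Gr_{-\mu_i}\underline{D}$, which is pure of slope $-\mu_i$; by Definition \ref{def:pure-A-motives}, $N_{\mu_i}/N_{\mu_{i-1}}$ is pure of weight $\mu_i$, so this chain is a weight filtration witnessing $\underline{M}$ as mixed.

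The main obstacle I expect is step (ii), since torsion-freeness of $\cI_\infty(M)/\cI_\infty(M)^\mu$ over $A\otimes F$ requires that nonzero elements of $A\otimes F$ remain non-zero-divisors in $\cB_\infty(F)$. This should follow from the factorization $A\otimes F \hookrightarrow K\otimes F \to K_\infty \otimes F \to \cB_\infty(F)$ together with the explicit Laurent-series description of $\cB_\infty(F)$, but deserves careful attention since $F$ may be non-perfect or non-separable over $\bF$.
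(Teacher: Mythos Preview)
Your proposal is correct and follows essentially the same strategy as the paper: both establish that $N_\mu=\cI_{\infty}(M)^{\mu}\cap M$ underlies a saturated sub-$A$-motive, compare $\cI_{\infty}(N_\mu)$ with $\cI_{\infty}(\underline{M})^{\mu}$ via rank, and in the mixed case invoke Lemma~\ref{lemma:saturation} to identify $W_\mu M$ with $N_\mu$. Your stated obstacle in step~(ii) is in fact already handled by the flatness of $\cB_{\infty}(F)$ over $A\otimes F$ (proof of Proposition~\ref{prop:Ix-exact}): for nonzero $a\in A\otimes F$ the multiplication map $a\cdot\colon A\otimes F\hookrightarrow A\otimes F$ is injective, and tensoring with the flat algebra $\cB_{\infty}(F)$ shows that $a$ acts as a non-zero-divisor there, so no separate Laurent-series analysis is needed.
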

\begin{proof}
First note that, for all $\mu$, the couple $\underline{M}_{\mu}:=(\operatorname{I}_{\infty}(M)^{\mu}\cap M,\tau_M)$ defines a saturated sub-$A$-motive of $\underline{M}$. Furthermore, since the underlying module of $\operatorname{I}_{\infty}(\underline{M}_{\mu})$  corresponds to the completion of  $\operatorname{I}_{\infty}(M)^{\mu}\cap (M\otimes_A K)$ for the $\infty$-adic topology, $\operatorname{I}_{\infty}(\underline{M}_{\mu})$ defines a subisocrystal of $\operatorname{I}_{\infty}(\underline{M})^{\mu}$. The rank being additive in short exact sequences in the category of isocrystals, we get the desired inequality:
\[
\operatorname{rank}_{A\otimes F}(\operatorname{I}_{\infty}(M)^{\mu}\cap M)= \operatorname{rank}\underline{M}_{\mu}=\operatorname{rank} \operatorname{I}_{\infty}(\underline{M}_{\mu})\leq \operatorname{rank} \operatorname{I}_{\infty}(\underline{M})^{\mu}.
\]
If this is an equality for all $\mu$, then we obtain $\operatorname{I}_{\infty}(\underline{M}_\mu)=\operatorname{I}_{\infty}(\underline{M})^\mu$ and deduce that the family $(\underline{M}_\mu)_\mu$ satisfies the requested property of Definition \ref{def:mixed-A-motives}.

Conversely, if $\underline{M}$ is mixed, the following sequence of inclusions holds:
\[
(W_{\mu}M)\otimes_A K\subset \operatorname{I}_{\infty}(M)^{\mu}\cap (M\otimes_A K)\subset \operatorname{I}_{\infty}(M)^{\mu}.
\]
Note that the left-hand side forms a dense subset of the right-hand side for the $\infty$-adic topology. Hence, the first inclusion is an inclusion of a dense subset. Taking the completion, we obtain $\operatorname{I}_{\infty}(W_{\mu}M)=\operatorname{I}_{\infty}(\operatorname{I}_{\infty}(M)^{\mu}\cap M)=\operatorname{I}_{\infty}(\underline{M})^{\mu}$. The corresponding ranks are thus equal. 

To conclude, as both $W_{\mu}M$ and $\operatorname{I}_{\infty}(M)_{\mu}\cap M$ are saturated submodules of $M$, we deduce the equality $W_{\mu}M=\operatorname{I}_{\infty}(M)^{\mu}\cap M$ from Lemma \ref{lemma:saturation}.
\end{proof}

As an important corollary of the above formula for $W$, we obtain:
\begin{Corollary}\label{cor:Wmu-functor}
Let $f:\underline{M}\to \underline{N}$ be a morphism of $A$-motives, $\underline{M}$ and $\underline{N}$ being mixed. Then, $f$ preserves the weight filtration:
\[
\text{for~all~} \mu\in \bQ:\quad f(W_\mu M)\subset W_\mu N.
\]
In particular, for all $\mu\in \bQ$, the assignment $\underline{M}\mapsto W_\mu \underline{M}$ defines a functor from the category $\cM\cM_F$ to itself. 
\end{Corollary}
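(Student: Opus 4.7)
The plan is to reduce everything to a Hom-vanishing statement between pure isocrystals of distinct slopes, and then descend from isocrystals back to $A$-motives using saturation. First, I apply the exact functor $\cI_{\infty}$ (Proposition \ref{prop:Ix-exact}) to $f$, obtaining a morphism $\cI_{\infty}(f)\colon \cI_{\infty}(\underline{M})\to\cI_{\infty}(\underline{N})$ of isocrystals at $\infty$. Since $\underline{M}$ and $\underline{N}$ are mixed, Proposition \ref{prop:HN-filtration-splits} ensures that both $\cI_{\infty}(\underline{M})$ and $\cI_{\infty}(\underline{N})$ decompose canonically as direct sums of their pure graded pieces, indexed by slopes.

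Next, I would show that $\cI_{\infty}(f)$ respects these direct sum decompositions. The key point is that $\Hom_{\cI_F}(\underline{P},\underline{Q})=0$ for pure isocrystals $\underline{P},\underline{Q}$ of distinct slopes $\lambda\neq\mu$: the image of any such morphism is simultaneously a subisocrystal of $\underline{Q}$ and a quotient of $\underline{P}$, so by additivity of degree and rank together with Lemma \ref{lem:pure-implies-isoclinic}, its slope would have to equal both $\lambda$ and $\mu$, forcing it to be zero. Combined with the explicit description $\cI_{\infty}(W_\mu \underline{M})=\bigoplus_{\lambda\geq -\mu}\Gr_\lambda \cI_{\infty}(\underline{M})$ from Proposition-Definition \ref{def:weight-filtration} (and its analogue for $\underline{N}$), this yields
\[
\cI_{\infty}(f)\bigl(\cI_{\infty}(W_\mu \underline{M})\bigr)\subset \cI_{\infty}(W_\mu \underline{N})
\]
for every $\mu\in\bQ$.

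To descend back to $A$-motives, I would set $P:=f(W_\mu M)\subset N$, which is a sub-$A$-motive of $\underline{N}$ because $f$ is $\tau$-equivariant and finitely-generated torsion-free modules over the Dedekind domain $A\otimes F$ are locally free of constant rank. The previous step gives $\cI_{\infty}(P+W_\mu N)=\cI_{\infty}(W_\mu \underline{N})$ inside $\cI_{\infty}(\underline{N})$. Invoking Lemma \ref{lemma:saturation}, we deduce that $(P+W_\mu N)^{\text{sat}}=(W_\mu N)^{\text{sat}}=W_\mu N$, where the last equality uses that $W_\mu \underline{N}$ is saturated by the very definition of the weight filtration. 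Hence $f(W_\mu M)\subset P+W_\mu N\subset W_\mu N$, as desired. Functoriality of $W_\mu$ on $\cM\cM_F$ then follows immediately.

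I expect the main subtlety to be the Hom-vanishing step, since it is what permits the splitting of $\cI_{\infty}(f)$ into its graded components despite the fact that the direct sum decomposition of an isocrystal with split slope filtration is not, a priori, respected by arbitrary morphisms. Once this observation is in hand, both the statement on the isocrystal side and the descent via Lemma \ref{lemma:saturation} are routine.
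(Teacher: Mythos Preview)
Your argument is correct and follows the same overall strategy as the paper: pass to isocrystals via $\cI_\infty$, use that morphisms between pure isocrystals of distinct slopes vanish to see that $\cI_\infty(f)$ respects the decomposition $\bigoplus_{\lambda\geq -\mu}\Gr_\lambda$, and then descend back to $A$-motives.

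The only genuine difference is in the descent step. The paper invokes Proposition~\ref{prop:criterion-for-mixedness}, which gives the explicit formula $W_\mu M=\cI_\infty(M)^\mu\cap M$; from this, $f(W_\mu M)\subset \cI_\infty(N)^\mu\cap N=W_\mu N$ is immediate. You instead route through Lemma~\ref{lemma:saturation}, comparing $P+W_\mu N$ with $W_\mu N$ via their isocrystals and using that $W_\mu N$ is saturated. Both arguments are valid; the paper's is slightly more direct once Proposition~\ref{prop:criterion-for-mixedness} is available, while yours has the virtue of being self-contained from the earlier saturation lemma without needing the intersection formula. Your Hom-vanishing justification is also more explicit than the paper's, which simply asserts that the isocrystal morphism respects $\cI_\infty(M)^\mu$.
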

\begin{proof}
Let $\mu\in \bQ$. The claim follows from the following two easy observations: 
\begin{enumerate}
\item If $\underline{M}$ is mixed, then so is $W_{\mu} \underline{M}$,
\item If $f:\underline{M}\to \underline{N}$ is a morphism of mixed $A$-motives, then the associated morphism of isocrystals maps $\operatorname{I}_{\infty}(M)^{\mu}$ to $\operatorname{I}_{\infty}(N)^\mu$, and by Proposition \ref{prop:criterion-for-mixedness} we have ${f(W_{\mu}M)\subset W_{\mu} N}$.
\end{enumerate}
\end{proof}

\begin{Remark}
We end this subsection by describing how weights behave under linear algebra type operations. Proofs are presented in \cite[Prop. 2.3.11]{hartl-juschka} and extend without change to our larger setting. First note that $\mathbbm{1}$ is a pure $A$-motive over $F$ of weight $0$. Given two mixed $A$-motives $\underline{M}$ and $\underline{N}$, their biproduct $\underline{M}\oplus \underline{N}$ is again mixed with weight filtration $W_{\mu}(\underline{M}\oplus \underline{N})=W_{\mu}\underline{M}\oplus W_{\mu}\underline{N}$ ($\mu\in \bQ$). Their tensor product $\underline{M}\otimes \underline{N}$ is also mixed, with $\lambda$-part of its weight filtration being:
\begin{equation}
W_\lambda(\underline{M}\otimes \underline{N})=\left(\sum_{\mu+\nu=\lambda}{W_{\mu}\underline{M}\otimes W_{\nu}\underline{N}}\right)^{\text{sat}}. \nonumber
\end{equation} 
We took the saturation $A$-motive to ensure that the above is a saturated sub-$A$-motive of $\underline{M}\otimes \underline{N}$. The dual $\underline{M}^{\vee}$ is mixed, and the $\mu$-part of its weight filtration $W_{\mu}\underline{M}$ has for underlying module $W_{\mu}M^{\vee}=\{m\in M^{\vee}|\forall \lambda<-\mu:~m(W_{\lambda}M)=0\}^{\text{sat}}$. In general, given $\underline{M}$ and $\underline{N}$ two $A$-motives over $F$ (without regarding whether $\underline{M}$ or $\underline{N}$ are mixed) and an exact sequence $0\to \underline{M}'\to \underline{M}\to \underline{M}''\to 0$ in $\cM_F$, we have 
\begin{equation}
\begin{tabular}{lcl}
$w(\underline{0})$ & $=$ & $\emptyset$ \\
$w(\underline{M}^{\vee})$ & $=$ & $-w(\underline{M})$ \\
$w(\underline{M}\oplus \underline{N})$ & $=$ & $w(\underline{M})\cup w(\underline{N})$ \\
$w(\underline{M})$ & $=$ & $w(\underline{M}')\cup w(\underline{M}'')$ \\
$w(\underline{M}\otimes \underline{N})$ & $=$ & $\{w+v~|~w\in w(\underline{M}),~v\in w(\underline{N})\}$
\end{tabular}\nonumber
\end{equation}
\end{Remark}

\subsection{Extension modules of mixed $A$-motives}\label{subsec:extension-modules-of-mixed}
As before, let $F$ be a field containing $\bF$ and consider an $\bF$-algebra morphism $\kappa:A\to F$. In this subsection, we are concerned with extension modules in the category $\cM\cM_F$. The next proposition shows that they are well-defined.
\begin{Proposition}\label{prop:MMF-is-exact}
The category $\cM\cM_F$ together with the notion of exact sequences of Definition \ref{def:exact-sequence} is an exact category.
\end{Proposition}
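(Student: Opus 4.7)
The argument proceeds in two steps: first I show that the class of mixed $A$-motives is closed under admissible subobjects and quotients in $\cM_F$, and then I use this closure to verify Quillen's axioms for the inherited class of exact sequences.

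\textbf{Step 1 (Closure under sub/quotient).} Let $0 \to \underline{M}' \to \underline{M} \to \underline{M}'' \to 0$ be an exact sequence in $\cM_F$ with $\underline{M} \in \cM\cM_F$. I claim that $\underline{M}'$ and $\underline{M}''$ are both mixed. For the subobject, set $W_\mu \underline{M}' := \underline{M}' \cap W_\mu \underline{M}$. As the intersection of saturated sub-$A$-motives, this is itself a saturated sub-$A$-motive of $\underline{M}'$, and the analogous formula for $W_{<\mu}\underline{M}'$ makes it saturated inside $W_\mu \underline{M}'$, so that the quotient $W_\mu \underline{M}'/W_{<\mu} \underline{M}'$ is a well-defined $A$-motive. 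It embeds into $W_\mu \underline{M}/W_{<\mu} \underline{M}$, which is pure of weight $\mu$. Applying the exact functor $\cI_\infty$ of Proposition \ref{prop:Ix-exact}, the image is a sub-isocrystal of a pure isocrystal of slope $-\mu$; by Lemma \ref{lem:pure-implies-isoclinic} such a sub-isocrystal is isoclinic of slope $-\mu$, and by Theorem \ref{thm:HN-filtration} the slope filtration of an isoclinic isocrystal collapses to a single pure step, so the sub is in fact pure of slope $-\mu$. Definition \ref{def:pure-A-motives} then makes $W_\mu \underline{M}'/W_{<\mu} \underline{M}'$ pure of weight $\mu$ or zero, exhibiting $\underline{M}'$ as mixed.

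For the quotient, let $\pi : \underline{M} \to \underline{M}''$ be the projection and set $W_\mu \underline{M}'' := \pi(W_\mu \underline{M})^{\mathrm{sat}}$, where the saturation is taken inside $\underline{M}''$. Since saturation is an isogeny, it preserves isocrystals, and a short diagram chase identifies $\mathrm{Gr}_\mu \underline{M}''$ up to isogeny with $W_\mu \underline{M}/(W_\mu \underline{M}' + W_{<\mu} \underline{M})$, which fits into an exact sequence
\[
0 \to \mathrm{Gr}_\mu \underline{M}' \to \mathrm{Gr}_\mu \underline{M} \to \mathrm{Gr}_\mu \underline{M}'' \to 0.
\]
At the isocrystal level this displays $\cI_\infty(\mathrm{Gr}_\mu \underline{M}'')$ as a quotient of the pure isocrystal $\cI_\infty(\mathrm{Gr}_\mu \underline{M})$ of slope $-\mu$; since a quotient of an isoclinic isocrystal is isoclinic of the same slope, and isoclinic equals pure by Theorem \ref{thm:HN-filtration}, this quotient is pure of slope $-\mu$. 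Hence $\mathrm{Gr}_\mu \underline{M}''$ is pure of weight $\mu$ (or zero), and $\underline{M}''$ is mixed.

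\textbf{Step 2 (Quillen's axioms).} The admissible sequences in $\cM\cM_F$ are, by the paper's convention, the short exact sequences in $\cM_F$ whose three terms are mixed. Axioms (Ex0), composition of admissible monos (Ex1) and its dual (Ex1') are immediate from Proposition \ref{prop:MF-is-exact} combined with Step 1: for instance, if $\underline{M}_0 \hookrightarrow \underline{M}_1 \hookrightarrow \underline{M}_2$ is a composition of admissible monos in $\cM\cM_F$, its cokernel $\underline{M}_2/\underline{M}_0$ is a quotient of the mixed $\underline{M}_2$, hence mixed by Step 1. For the pushout axiom (Ex2), given an admissible mono $m : \underline{M}' \hookrightarrow \underline{M}$ and a morphism $f : \underline{M}' \to \underline{N}$ in $\cM\cM_F$, form the pushout in $\cM_F$ as the cokernel of $(m,-f) : \underline{M}' \to \underline{M} \oplus \underline{N}$. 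This map is an admissible mono in $\cM_F$: saturation follows from saturation of $m$ together with the injectivity of multiplication by any nonzero $a \in A$ on locally free $A \otimes F$-modules. Since $\underline{M} \oplus \underline{N}$ is mixed (the weight filtration of a biproduct is the biproduct of the weight filtrations), Step 1 shows the pushout is mixed, hence lies in $\cM\cM_F$. Axiom (Ex2') follows dually via a pullback, which identifies with a saturated sub-$A$-motive of the mixed biproduct $\underline{M} \oplus \underline{N}''$.

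The central technical input is Step 1, and the main obstacle there is the equivalence \emph{isoclinic $=$ pure} for isocrystals over an arbitrary field $F$; this is what propagates purity through subobjects and quotients at the isocrystal level and, via Definition \ref{def:pure-A-motives}, to the category of $A$-motives. That equivalence is already packaged in Theorem \ref{thm:HN-filtration}, whose statement forces the unique slope-filtration step of an isoclinic isocrystal to be a pure quotient.
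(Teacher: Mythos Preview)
Your proof is correct, but it takes a genuinely different route from the paper's. The paper reduces the pushout/pullback axiom to Proposition~\ref{prop:exact-sequence-middl-mixedness}, which only asserts that if the \emph{middle} term and \emph{one} of the outer terms are mixed, then all three are; its proof goes through Lemma~\ref{lem:HNfiltration-split} on splitting of slope filtrations and then invokes the criterion of Proposition~\ref{prop:criterion-for-mixedness}. You instead prove the stronger closure property (middle mixed alone $\Rightarrow$ both ends mixed) by a direct argument: a sub- or quotient isocrystal of a pure isocrystal is isoclinic by Lemma~\ref{lem:pure-implies-isoclinic}, and isoclinic isocrystals are pure because their slope filtration collapses by Theorem~\ref{thm:HN-filtration}. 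This lets you bypass Lemma~\ref{lem:HNfiltration-split} entirely.

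What each approach buys: your argument is shorter and yields the a priori stronger statement that $\cM\cM_F$ is closed under admissible subobjects and quotients in $\cM_F$ over an arbitrary base field (the paper's remark after Proposition~\ref{prop:exact-sequence-middl-mixedness} only records this over perfect $F$, reflecting the limits of its proof method rather than of the statement). The paper's approach, on the other hand, explicitly tracks the identities $\cI_\infty(W_\mu M')=\cI_\infty(M')^\mu$ and $\cI_\infty(W_\mu M'')=\cI_\infty(M'')^\mu$, which it reuses verbatim in the proof of Corollary~\ref{cor:W-exact} on the exactness of $W_\mu$ in $\cM\cM_F^{\text{iso}}$. One small point worth tightening in your write-up: in Step~2 you assert that the image of $(m,-f)$ is saturated; this is true and your justification is essentially right, but it would be cleaner to say that the pushout identifies with the cokernel of an admissible mono in $\cM_F$ whose source and target are mixed, so Step~1 applies directly (the saturation check reduces to $m$ being an admissible mono and $N$ being torsion-free).
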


To prove Proposition \ref{prop:MMF-is-exact}, we use Proposition \ref{prop:Functor-to-exactCat} of the appendix applied once again to the forgetful functor $\cM\cM_F\to \mathbf{Mod}_{A\otimes R}$ as in proof of Proposition \ref{prop:MF-is-exact}. Checking the required hypothesis of Proposition \ref{prop:Functor-to-exactCat} amounts to showing:
\begin{Proposition}\label{prop:exact-sequence-middl-mixedness}
Let $0\to \underline{M}'\to \underline{M}\to \underline{M}''\to 0$ be an exact sequence of $A$-motives over $F$. If $\underline{M}$ is mixed and one of $\underline{M}'$ or $\underline{M}''$ is mixed, then all three are mixed. 
\end{Proposition}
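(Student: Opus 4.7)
The plan is to apply the exact functor $\cI_\infty$ (Proposition \ref{prop:Ix-exact}) to the given sequence, obtaining an exact sequence $0\to\underline{D}'\to\underline{D}\to\underline{D}''\to 0$ of isocrystals at $\infty$, and to verify the rank criterion of Proposition \ref{prop:criterion-for-mixedness} for the unknown motive. By Proposition \ref{prop:HN-filtration-splits}, the assumed mixedness of $\underline{M}$ supplies a canonical decomposition $\underline{D}=\bigoplus_\lambda \underline{D}^\lambda$ into pure components of distinct slopes.

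The first step is to show that the slope filtration of the remaining isocrystal ($\underline{D}''$ when $\underline{M}'$ is mixed, or $\underline{D}'$ when $\underline{M}''$ is mixed) also splits. The key input is that both subisocrystals and quotient isocrystals of a pure isocrystal of slope $\lambda$ remain pure of slope $\lambda$. The statement for subisocrystals is already contained in the proof of Lemma \ref{lem:pure-implies-isoclinic}; for quotients, one checks analogously that if $L$ is an $\cA(F)$-lattice in $\underline{D}$ with $\langle\varphi^{r\delta}L\rangle=\fm^s L$ and $\underline{D}'$ is a subisocrystal, then $L/(L\cap D')$ is a free $\cA(F)$-lattice in $\underline{D}/\underline{D}'$ (torsion-freeness uses the $\cB(F)$-stability of $D'$) and satisfies the same purity equation. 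It follows that $\Hom$ between pure isocrystals of distinct slopes vanishes: any nonzero morphism would produce an image pure of two slopes at once, impossible by Lemma \ref{lem:pure-implies-isoclinic}. Consequently, every morphism between split isocrystals decomposes as the direct sum of its components $\underline{D}^\lambda\to\underline{C}^\lambda$. Applied to $\underline{D}'\hookrightarrow\underline{D}$ when $\underline{M}'$ is mixed, or to $\underline{D}\twoheadrightarrow\underline{D}''$ when $\underline{M}''$ is mixed, this yields a slope-decomposition of the third isocrystal as a direct sum of pure pieces, hence a split slope filtration.

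With split slope filtrations established on all three sides, the proof concludes by a rank count. Setting $N^\mu:=\cI_\infty(N)^\mu\cap N$ for $N\in\{M,M',M''\}$, strictness (Corollary \ref{cor:strict-slope-filtration}) gives $M'^\mu=M^\mu\cap M'$, and the induced map $M^\mu\to M''$ lands in $M''^\mu$, so
\[
\rank M^\mu \;=\; \rank M'^\mu + \rank\bigl(\mathrm{image}(M^\mu\to M''^\mu)\bigr) \;\leq\; \rank M'^\mu + \rank M''^\mu.
\]
Additivity of ranks across the short exact sequence of isocrystals yields $\rank\cI_\infty(\underline{M})^\mu=\rank\cI_\infty(\underline{M}')^\mu+\rank\cI_\infty(\underline{M}'')^\mu$. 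Proposition \ref{prop:criterion-for-mixedness} asserts $\rank N^\mu\leq\rank\cI_\infty(\underline{N})^\mu$ with equality characterizing mixedness; substituting the two equalities supplied by the hypothesis forces equality for the third motive at every $\mu$, whence it is mixed. The main obstacle lies in the first step, specifically in establishing the purity of quotients of pure isocrystals, which is not recorded explicitly in the text but follows from a short lattice-theoretic argument parallel to Lemma \ref{lem:pure-implies-isoclinic}.
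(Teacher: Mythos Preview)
Your argument is correct, but it diverges from the paper's route in both steps.

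For the first step, the paper isolates the splitting claim as Lemma~\ref{lem:HNfiltration-split} and proves it by a rank-counting diagram chase: one builds the comparison map $\iota:\bigoplus_\lambda(\underline{D}'\cap\Gr_\lambda\underline{D})\to\underline{D}'$, checks it is injective, and then squeezes ranks to force it to be an isomorphism. You instead prove purity of quotients by a lattice argument and deduce $\Hom$-vanishing between pure isocrystals of distinct slopes, which then forces any morphism between split isocrystals to be diagonal. Both arguments are short; yours is perhaps more conceptual, while the paper's avoids having to verify the quotient-purity statement that is not recorded elsewhere in the text.

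For the second step the difference is sharper. You conclude via the numerical criterion of Proposition~\ref{prop:criterion-for-mixedness}, sandwiching ranks to force equality for the third motive. The paper instead \emph{constructs} the weight filtration directly: it sets $W_\mu M':=W_\mu M\cap M'$ and $W_\mu M'':=f(W_\mu M)^{\mathrm{sat}}$, and then uses flatness of $\cB_\infty(F)$ over $A\otimes F$ (so that $\cI_\infty$ commutes with finite intersections) together with strictness to check $\cI_\infty(W_\mu M')=\cI_\infty(M')^\mu$ and $\cI_\infty(W_\mu M'')=\cI_\infty(M'')^\mu$. The advantage of the paper's approach is that it exhibits the weight filtration on $\underline{M}'$ and $\underline{M}''$ explicitly in terms of that of $\underline{M}$, which is exactly what is needed immediately afterwards in Corollary~\ref{cor:W-exact}. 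Your rank argument is cleaner as a pure existence proof, but you would have to appeal to the last sentence of Proposition~\ref{prop:criterion-for-mixedness} to recover the same explicit description.

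One minor point: your citation of Corollary~\ref{cor:strict-slope-filtration} for the identity $M'^\mu=M^\mu\cap M'$ is slightly off, since that corollary concerns the slope filtration $(\underline{D}_\lambda)_\lambda$ rather than the reversed filtration $(\cI_\infty(\underline{N})^\mu)_\mu$. The statement you need follows instead from the diagonal decomposition of morphisms you already established in your first step.
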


Before proving \eqref{prop:exact-sequence-middl-mixedness}, we begin by an intermediate result on isocrystals:
\begin{Lemma}\label{lem:HNfiltration-split}
Let $0\to \underline{D}'\to \underline{D}\to \underline{D}''\to 0$ be an exact sequence of isocrystals over $F$. If the slope filtration of $\underline{D}$ and one of $\underline{D}'$ or $\underline{D}''$ split, then all three are split. 
\end{Lemma}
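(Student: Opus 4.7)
The plan is to combine Corollary \ref{cor:strict-slope-filtration} (strictness of morphisms of isocrystals with respect to the slope filtration) with the elementary fact that any morphism between pure isocrystals of distinct slopes vanishes. The first step would be to verify the latter: if $f: \underline{C} \to \underline{C}'$ is a morphism of pure isocrystals of respective slopes $\mu \neq \lambda$, then $\im f$ is simultaneously a quotient of the isoclinic $\underline{C}$ and a subisocrystal of $\underline{C}'$. By Lemma \ref{lem:pure-implies-isoclinic} combined with additivity of slope in the exact sequence $0 \to \ker f \to \underline{C} \to \im f \to 0$, this would force $\im f$ to have both slope $\mu$ and slope $\lambda$ when non-zero, yielding $f=0$.

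Next, I would pick a splitting $\underline{D} = \bigoplus_{\lambda} C_{\lambda}$ with $C_{\lambda}$ pure of slope $\lambda$, and let $\pi_{\lambda}: \underline{D} \to C_{\lambda}$ denote the projections. In the case where $\underline{D}''$ also splits as $\bigoplus_{\lambda} C''_{\lambda}$, strictness of the surjection $p: \underline{D} \to \underline{D}''$ would give $p(\underline{D}_\lambda) = \underline{D}''_\lambda$ and, on passage to graded pieces, a surjection $p_\lambda: C_\lambda \twoheadrightarrow C''_\lambda$. I would then show $\underline{D}' = \bigoplus_\lambda \ker(p_\lambda)$: the inclusion $\supseteq$ is immediate, and conversely any $d = \sum c_\lambda \in \underline{D}'$ satisfies $\sum p_\lambda(c_\lambda) = 0$ in $\bigoplus C''_\lambda$, forcing $p_\lambda(c_\lambda) = 0$ componentwise. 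The resulting decomposition of $\underline{D}'$ into pure subisocrystals of ascending slopes would coincide with the splitting of its slope filtration, by the uniqueness statement of Theorem \ref{thm:HN-filtration}.

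In the remaining case, where $\underline{D}'$ splits as $\bigoplus_\lambda C'_\lambda$, strictness of the inclusion $\underline{D}' \hookrightarrow \underline{D}$ would give $C'_\lambda \subseteq \underline{D}'_\lambda = \underline{D}' \cap \underline{D}_\lambda \subseteq \bigoplus_{\mu \leq \lambda} C_\mu$, so $\pi_\mu(C'_\lambda) = 0$ for $\mu > \lambda$; for $\mu < \lambda$, $\pi_\mu|_{C'_\lambda}: C'_\lambda \to C_\mu$ would vanish as a morphism of pure isocrystals of distinct slopes, by the preliminary observation. This would force $C'_\lambda \subseteq C_\lambda$ for every $\lambda$, whence the termwise quotient would give $\underline{D}'' = \bigoplus_\lambda C_\lambda/C'_\lambda$, exhibiting the desired splitting. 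The main obstacle is really only the preliminary vanishing of $\Hom$ between pure isocrystals of distinct slopes; once that is in hand, the rest of the argument is a formal consequence of strictness.
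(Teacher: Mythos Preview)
Your proof is essentially correct and takes a cleaner route than the paper's. Both arguments hinge on showing that the splitting of $\underline{D}$ interacts ``diagonally'' with the sub/quotient, but the mechanisms differ. The paper (treating the case where $\underline{D}'$ splits) establishes $\underline{D}' \cong \bigoplus_\lambda(\underline{D}'\cap \Gr_\lambda\underline{D})$ via a rank-counting argument and then invokes the snake lemma to split $\underline{D}''$; you instead isolate the vanishing of $\Hom$ between pure isocrystals of distinct slopes and use it to force the inclusion (resp.\ projection) to be block-diagonal with respect to the chosen splittings, after which the conclusion is immediate. Your approach is more transparent and avoids the rank bookkeeping; the paper's has the virtue of not needing to verify the $\Hom$-vanishing separately.

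One small gap: in your first case (where $\underline{D}''$ splits), you define $p_\lambda$ as the induced map on graded pieces and then tacitly assume $p(\sum_\lambda c_\lambda)=\sum_\lambda p_\lambda(c_\lambda)$ and $\ker(p_\lambda)\subseteq \underline{D}'$. Neither follows from strictness alone, since $p(c_\lambda)\in \underline{D}''_\lambda$ could a priori have nonzero components in $C''_\mu$ for $\mu<\lambda$, so that $p_\lambda(c_\lambda)$ records only the $C''_\lambda$-component of $p(c_\lambda)$. You need your preliminary here as well: the composite $C_\lambda \hookrightarrow \underline{D} \xrightarrow{p} \underline{D}'' \twoheadrightarrow C''_\mu$ vanishes for $\mu\neq \lambda$, so in fact $p(C_\lambda)\subseteq C''_\lambda$ and $p=\bigoplus_\lambda p|_{C_\lambda}$; then $\ker p=\bigoplus_\lambda \ker(p|_{C_\lambda})$ as you claim. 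This is exactly the mechanism you invoke explicitly in the second case; just make it explicit in both.
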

\begin{proof}
Assume that the slope filtration of $\underline{D}$ and $\underline{D}'$ are split. Then $\underline{D}$ decomposes as a finite direct sum of pure subisocrystals $\Gr_\lambda \underline{D}$. We have a commutative diagram of subisocrystals of $\underline{D}$:
\begin{equation}\label{eq:square-subisocrystals-of-D}
\begin{tikzcd}
\underline{D} & \displaystyle\bigoplus_{\lambda \in \bQ} \Gr_\lambda\underline{D} \arrow[l,"\sim"'] \\
\underline{D}' \arrow[u,hook] &  \displaystyle\bigoplus_{\lambda \in \bQ}(\underline{D}'\cap \Gr_\lambda \underline{D}) \arrow[l,"\iota"']\arrow[u,hook]
\end{tikzcd}
\end{equation}
where the symbol $\cap$ is understood as the subisocrystal whose underlying module is given by the intersection. We claim that $\iota$ is an isomorphism. Firstly, it is injective as one checks by a diagram chase. By assumption, $\underline{D}'$ also decomposes as a direct sum of subisocrystals $\Gr_\lambda\underline{D}'$ which, when non zero, are pure of slope $\lambda$. By functoriality of the slope filtration, the inclusion $\Gr_\lambda \underline{D}'\hookrightarrow \underline{D}'\hookrightarrow \underline{D}$ factors through $\Gr_\lambda \underline{D}$. Hence,
\begin{equation}
\operatorname{rk} \underline{D}'= \sum_{\lambda\in \bQ}{\operatorname{rk} \Gr_\lambda \underline{D}'}\leq \sum_{\lambda\in \bQ}{\operatorname{rk}(\underline{D}'\cap \Gr_\lambda \underline{D})}\leq \operatorname{rk} \underline{D}' \nonumber
\end{equation}
where the last inequality follows from the injectivity of $\iota$. All the above inequalities are thus equalities, from which we deduce that $\iota$ is an isomorphism.

Therefore, we can extend the square \eqref{eq:square-subisocrystals-of-D} into exact sequences:
\begin{equation}
\begin{tikzcd}[column sep=1em]
0 \arrow[r] & \displaystyle\bigoplus_{\lambda \in \bQ}(\underline{D}'\cap \Gr_\lambda \underline{D}) \arrow[d,"\iota"] \arrow[r] & \displaystyle\bigoplus_{\lambda\in \bQ} \Gr_\lambda\underline{D} \arrow[d,"\wr"] \arrow[r] & \displaystyle\bigoplus_{\lambda\in \bQ}\Gr_\lambda \underline{D}/(\underline{D}'\cap \Gr_\lambda \underline{D}) \arrow[r]\arrow[d,dashed] & 0 \\
0 \arrow[r] & \underline{D}' \arrow[r] & \underline{D} \arrow[r] & \underline{D}'' \arrow[r] & 0
\end{tikzcd} \nonumber
\end{equation}
where the dashed arrow exists and is unique by the cokernel property of the upper right term. By the Snake Lemma, it is an isomorphism. In particular, $\underline{D}''$ decomposes as a direct sum of pure subisocrystals, and by uniqueness, we deduce that its slope filtration splits. 

The argument for the second part of the statement is similar enough to the first one to be skipped. 
\end{proof}

\begin{proof}[Proof of Proposition \ref{prop:exact-sequence-middl-mixedness}]
Suppose $\underline{M}$ and $\underline{M}'$ (resp. $\underline{M}''$) are mixed. For $\mu\in \bQ$, let $W_{\mu}M':=W_{\mu}M\cap M'$ and $W_{\mu}M'':=f(W_{\mu}M)^{\text{sat}}$, where $f$ is the epimorphism $\underline{M}\twoheadrightarrow \underline{M}''$. They are canonically endowed with $A$-motive structures denoted by $W_{\mu}\underline{M}'$ and $W_{\mu}\underline{M}''$ respectively, and our task is to prove that $(W_{\mu}\underline{M}')_{\mu}$ and $(W_{\mu}\underline{M}'')_{\mu}$  satisfy the property of Definition \ref{eq:weight-filtration}.

By Lemma \ref{lem:HNfiltration-split}, the slope filtrations of $\operatorname{I}_{\infty}(\underline{N})$ for $\underline{N}\in\{\underline{M},\underline{M}',\underline{M}''\}$ are split, and, for $\mu\in \bQ$, we denote by $\operatorname{I}_{\infty}(\underline{N})^\mu$ the direct sum of the subisocrystals of $\operatorname{I}_{\infty}(\underline{N})$ which are pure of slope $\geq -\mu$.

Since $\cB_{\infty}(F)$ is flat over $A\otimes F$, the functor $-\otimes_{A\otimes F}\cB_{\infty}(F)$ commutes with finite intersections (\emph{e.g.} \cite[\S I.2, Prop. 6]{bourbaki}): 
\begin{equation}
\operatorname{I}_{\infty}(W_{\mu}M')=(W_\mu M\cap M')\otimes_{A\otimes F}\cB_{\infty}(F) =\operatorname{I}_{\infty}(M)^{\mu}\cap \operatorname{I}_{\infty}(M')=\operatorname{I}_{\infty}(M')^{\mu} \nonumber
\end{equation}
where the last equality follows from the fact that the category of isocrystals has strict morphisms for the slope filtration (Corollary \ref{cor:strict-slope-filtration}). Similarly,
\begin{align*}
\operatorname{I}_{\infty}(W_{\mu}M'') &=f(W_{\mu}M)^{\text{sat}}\otimes_{A\otimes F}\cB_{\infty}(F)=f(\operatorname{I}_{\infty}(M)^{\mu})=\operatorname{I}_{\infty}(M'')^{\mu}. 
\end{align*}
This shows that $\underline{M}'$ and $\underline{M}''$ are both mixed with respective weight filtrations $(W_{\mu}\underline{M}')_{\mu\in \bQ}$ and $(W_{\mu}\underline{M}'')_{\mu\in \bQ}$.
\end{proof}

\begin{Corollary}\label{cor:W-exact}
For all $\mu\in \bQ$, the functor $\underline{M}\mapsto W_\mu \underline{M}$ is exact on $\cM\cM_F^{\operatorname{iso}}$.
\end{Corollary}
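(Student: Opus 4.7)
The plan is to first extend $W_\mu$ to an additive endofunctor of $\cM\cM^{\text{iso}}_F$, and then verify exactness by lifting any given short exact sequence in $\cM\cM^{\text{iso}}_F$ to a genuine exact sequence in $\cM\cM_F$ to which Proposition~\ref{prop:exact-sequence-middl-mixedness} applies. The key point is that the ambiguity between a sub-$A$-motive and its saturation is killed by passing to isogenies.

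First, I would check that any isogeny $f:\underline{M}\to \underline{N}$ in $\cM\cM_F$ has isogenic image $W_\mu f:W_\mu\underline{M}\to W_\mu\underline{N}$. Indeed, since every nonzero element of $A$ is a unit in $\cB_{\infty}(F)$, the exact functor $\cI_\infty$ sends isogenies to isomorphisms. Combined with Proposition~\ref{prop:criterion-for-mixedness}, this identifies $\cI_\infty(W_\mu\underline{M})=\cI_\infty(\underline{M})^\mu$ with $\cI_\infty(W_\mu\underline{N})=\cI_\infty(\underline{N})^\mu$, so $W_\mu f$ is injective between $A$-motives of the same rank and is thus an isogeny by Definition~\ref{def:isogeny}. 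This shows $W_\mu$ descends to a $K$-linear additive functor on $\cM\cM^{\text{iso}}_F$.

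Next, I would take a short exact sequence $0\to\underline{M}'\xrightarrow{\alpha}\underline{M}\xrightarrow{\beta}\underline{M}''\to 0$ in $\cM\cM^{\text{iso}}_F$ and rescale the morphisms $\alpha,\beta$ by a suitable element of $A$ so that they come from honest morphisms $g:\underline{M}'\to\underline{M}$, $h:\underline{M}\to\underline{M}''$ in $\cM\cM_F$, with $g$ injective and $h(M)^{\mathrm{sat}}=M''$. The exactness at $\underline{M}$ in $\cM\cM^{\text{iso}}_F$ translates to $g(M')^{\mathrm{sat}}=\ker(h)$ in $\cM_F$ (via Lemma~\ref{lemma:saturation}). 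Setting $\underline{N}':=\ker(h)$ and $\underline{N}'':=\underline{M}/\underline{N}'$, the sequence $0\to\underline{N}'\to\underline{M}\to\underline{N}''\to 0$ is genuinely short exact in $\cM_F$ and is isomorphic to the original sequence in $\cM\cM^{\text{iso}}_F$; in particular $\underline{N}'$ is isogenous to $\underline{M}'$ and $\underline{N}''$ to $\underline{M}''$. A short argument via Proposition~\ref{prop:criterion-for-mixedness} --- isogeny preserves $\cI_\infty$, and hence the ranks controlling mixedness --- shows that mixedness is preserved by isogeny, so $\underline{N}'$ and $\underline{N}''$ belong to $\cM\cM_F$ and we may apply Proposition~\ref{prop:exact-sequence-middl-mixedness}.

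To conclude, I would invoke the explicit formulas from the proof of Proposition~\ref{prop:exact-sequence-middl-mixedness}, which give $W_\mu N'=W_\mu M\cap N'$ and $W_\mu N''=h(W_\mu M)^{\mathrm{sat}}$. The tautological sequence of $A\otimes F$-modules
\begin{equation*}
0\longrightarrow W_\mu M\cap N'\longrightarrow W_\mu M\longrightarrow h(W_\mu M)\longrightarrow 0
\end{equation*}
is exact and promotes to an exact sequence in $\cM_F$. Since $h(W_\mu M)\hookrightarrow h(W_\mu M)^{\mathrm{sat}}=W_\mu\underline{N}''$ is an isogeny, it is an isomorphism in $\cM\cM^{\text{iso}}_F$, and the exactness of $0\to W_\mu\underline{M}'\to W_\mu\underline{M}\to W_\mu\underline{M}''\to 0$ in $\cM\cM^{\text{iso}}_F$ follows. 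The main subtlety is the lifting step: matching an abstract short exact sequence in the isogeny category with a genuine sequence of $A\otimes F$-modules; once this reduction is performed, exactness is almost a tautology from the formulas of Proposition~\ref{prop:exact-sequence-middl-mixedness}.
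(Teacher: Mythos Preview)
Your proposal is correct and lands on the same core argument as the paper: both proofs invoke the formulas $W_\mu M'=W_\mu M\cap M'$ and $W_\mu M''=f(W_\mu M)^{\text{sat}}$ from the proof of Proposition~\ref{prop:exact-sequence-middl-mixedness}, observe that the tautological sequence $0\to W_\mu M\cap M'\to W_\mu M\to f(W_\mu M)\to 0$ is exact in $\cM_F$, and then note that $f(W_\mu M)\hookrightarrow f(W_\mu M)^{\text{sat}}$ is an isogeny, hence an isomorphism in $\cM\cM^{\text{iso}}_F$.

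The difference is one of scope. The paper's proof simply begins with an exact sequence in $\cM\cM_F$ and shows its $W_\mu$-image is exact in $\cM\cM^{\text{iso}}_F$; it does not explicitly treat an arbitrary short exact sequence in the abelian category $\cM\cM^{\text{iso}}_F$. You do the extra bookkeeping: you first check that $W_\mu$ sends isogenies to isogenies (hence descends to an additive endofunctor of $\cM\cM^{\text{iso}}_F$), and then you lift a general short exact sequence in $\cM\cM^{\text{iso}}_F$ to one in $\cM\cM_F$ by rescaling and saturating. This lifting step, together with your observation that mixedness is an isogeny invariant via Proposition~\ref{prop:criterion-for-mixedness}, makes your argument self-contained for the stated claim, whereas the paper's version is tacitly relying on the reader to perform this reduction (which suffices for the way the corollary is used in Theorem~\ref{thm:main0}). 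So your route is the same in spirit but more complete in execution.
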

\begin{proof}
Let $0\to \underline{M}'\to \underline{M}\stackrel{f}{\to} \underline{M}''\to 0$ be an exact sequence of mixed $A$-motives. By the proof of Proposition \ref{prop:exact-sequence-middl-mixedness}, we have $W_{\mu}M'=W_{\mu}M\cap M'$. Hence, the sequence $0\to W_{\mu}M'\to W_\mu M\to W_\mu M''\to 0$ is left-exact. We also have $W_\mu M''=f(W_{\mu}\underline{M})^{\text{sat}}$, so that the $A$-motive $\underline{\im}(f|W_\mu \underline{M})$ is isogeneous to $W_\mu \underline{M}''$. In particular, the sequence $0\to W_{\mu}\underline{M}'\to W_\mu \underline{M}\to W_\mu \underline{M}''\to 0$ is exact in $\cM\cM_F^{\text{iso}}$.
\end{proof}

According to Proposition \ref{prop:MMF-is-exact}, we can consider extension modules of two mixed $A$-motives in $\cM\cM_F$. By Corollary \ref{cor:Wmu-functor}, we have an equality
\[
\Ext^0_{\cM\cM_F}(\underline{M},\underline{N})=\Ext^0_{\cM_F}(\underline{M},\underline{N}),
\]
but this does not necessary hold for higher extension modules. $\Ext^1_{\cM\cM_F}(\underline{M},\underline{N})$ can be interpreted as a submodule of $\Ext^1_{\cM_F}(\underline{M},\underline{N})$, but in general $\Ext^i_{\cM\cM_F}$ is not even a submodule of $\Ext^i_{\cM_F}$ ($i>1$).

\begin{Proposition}\label{prop:separated-weights-ext}
Let  $0\to \underline{M}'\to \underline{M}\stackrel{p}{\to} \underline{M}''\to 0$ be an exact sequence of $A$-motives in $\cM_F$. If $\underline{M}'$ and $\underline{M}''$ are mixed, and if the smallest weight of $\underline{M}''$ is bigger than the largest weight of $\underline{M}'$, then $\underline{M}$ is mixed.  
\end{Proposition}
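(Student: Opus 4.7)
The plan is to construct the weight filtration of $\underline{M}$ directly by interleaving those of $\underline{M}'$ and $\underline{M}''$: since the weights of $\underline{M}'$ all precede those of $\underline{M}''$, the filtration should consist of the $W_\bullet \underline{M}'$ at the bottom followed by the preimages $p^{-1}(W_\bullet \underline{M}'')$. Let $\mu'_1 < \cdots < \mu'_s$ and $\mu''_1 < \cdots < \mu''_t$ denote the weights of $\underline{M}'$ and $\underline{M}''$, respectively; by hypothesis $\mu'_s < \mu''_1$. Note that $\underline{M}'$ is saturated in $\underline{M}$, since the cokernel $\underline{M}''$ has locally free underlying module. Identifying $\underline{M}'$ with its image in $\underline{M}$, define
\[
W_\mu \underline{M} := \begin{cases} W_\mu \underline{M}' & \text{if } \mu < \mu''_1, \\ p^{-1}(W_\mu \underline{M}'') & \text{if } \mu \geq \mu''_1. \end{cases}
\]

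I would then verify, in order: (i) each $W_\mu \underline{M}$ inherits the structure of a sub-$A$-motive of $\underline{M}$; (ii) it is saturated in $\underline{M}$, and its underlying $A \otimes F$-module is finitely generated locally free (which follows from the fact that $A \otimes F$ is Dedekind, together with the exact sequence $0 \to M' \to W_\mu M \to W_\mu M'' \to 0$); (iii) the successive quotients are pure of the correct weights. For (ii), in the case $\mu \geq \mu''_1$, the quotient $M/p^{-1}(W_\mu M'') \cong M''/W_\mu M''$ is torsion-free because $W_\mu \underline{M}''$ is saturated in $\underline{M}''$; the case $\mu < \mu''_1$ is immediate. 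For (iii), the jumps occur precisely at the union $\{\mu'_i\} \cup \{\mu''_j\}$: below $\mu''_1$ one reads off $W_\mu \underline{M}/W_{<\mu}\underline{M} \cong \Gr_\mu \underline{M}'$, while above (and including) $\mu''_1$ one has $W_{<\mu''_1}\underline{M} = \underline{M}'$, and the snake lemma identifies $W_{\mu''_i}\underline{M}/W_{<\mu''_i}\underline{M}$ with $\Gr_{\mu''_i}\underline{M}''$, which is pure of weight $\mu''_i$ by the mixedness of $\underline{M}''$.

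The main obstacle is step (i) in the case $\mu \geq \mu''_1$: one must show that $\tau_M$ preserves $W_\mu M[\fj^{-1}]$. The key observation is that, by flatness of localization, the sequence
\[
0 \to W_\mu M[\fj^{-1}] \to M[\fj^{-1}] \to (M''/W_\mu M'')[\fj^{-1}] \to 0
\]
remains exact, so that an element of $M[\fj^{-1}]$ lies in $W_\mu M[\fj^{-1}]$ as soon as its image in $M''[\fj^{-1}]$ lies in $W_\mu M''[\fj^{-1}]$. Combined with the compatibility $p \circ \tau_M = \tau_{M''} \circ \tau^* p$ and the fact that $W_\mu \underline{M}''$ is itself a sub-$A$-motive of $\underline{M}''$, this yields $\tau_M((\tau^* W_\mu M)[\fj^{-1}]) \subset W_\mu M[\fj^{-1}]$, after which an analogous argument for $\tau_M^{-1}$ promotes this inclusion to the required isomorphism onto $W_\mu M[\fj^{-1}]$.
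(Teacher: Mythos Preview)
Your proof is correct and builds exactly the same filtration as the paper's, namely the concatenation of the weight filtration of $\underline{M}'$ with the preimages of that of $\underline{M}''$. The only difference is in how $\tau_M$-stability is verified: the paper first invokes Proposition~\ref{prop-cohomology-in-tilde} to write $\underline{M}\cong\bigl(M'\oplus M'',\bigl(\begin{smallmatrix}\tau_{M'}&u\\0&\tau_{M''}\end{smallmatrix}\bigr)\bigr)$ via a module splitting, so that the weight hypothesis makes $u$ trivially filtration-preserving and the block-triangular shape of $\tau_M$ renders each $W_\mu M'\oplus W_\mu M''$ visibly stable; you instead argue coordinate-free, using the compatibility $p\circ\tau_M=\tau_{M''}\circ\tau^*p$ and the identification $\tau^*W_\mu M=(\tau^*p)^{-1}(\tau^*W_\mu M'')$ (which holds because the short exact sequence $0\to M'\to W_\mu M\to W_\mu M''\to 0$ is split as $A\otimes F$-modules). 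Your route avoids appealing to the extension parametrization and is slightly more intrinsic, while the paper's route makes the stability a one-line observation once the matrix form is in hand.
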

\begin{proof}
By Proposition \ref{prop-cohomology-in-tilde}, there exists $u\in \Hom_{A\otimes F}(\tau^*M'',M')[\fj^{-1}]$ and a diagram
\begin{equation}
\begin{tikzcd}
0\arrow[r] & \underline{M}'  \arrow[r,"\iota"] &   \underline{M}  \arrow[r,"p"] & \underline{M}''  \arrow[r] & 0 \\
0\arrow[r] & \underline{M}' \arrow[u,"\id"] \arrow[r] & (M'\oplus M'', \left(\begin{smallmatrix} \tau_{M'} & u \\ 0 & \tau_{M''} \end{smallmatrix}\right)) \arrow[u,"\xi"] \arrow[r] & \underline{M}'' \arrow[u,"\id"] \arrow[r] & 0
\end{tikzcd}
\nonumber
\end{equation}
which commutes in $\cM_F$. From the the weight assumption on $\underline{M}'$ and $\underline{M}''$, $u$ automatically respects the weight filtration: for all $\mu\in \bQ$, $u(\tau^*W_\mu M'')\subset W_\mu M'$. In particular, 
\[
\underline{M}_{\mu}:=\xi\left(W_{\mu}M'\oplus W_{\mu}M'',\begin{pmatrix}
\tau_{M'} & u \\ 0 & \tau_{M''}
\end{pmatrix} \right)
\]
defines a sub-$A$-motive of $\underline{M}$ inserting in a short exact sequence in $\cM_F$:
\begin{equation}\label{eq:exact-sequence-potential-weight}
0\longrightarrow W_{\mu}\underline{M}'\longrightarrow \underline{M}_{\mu}\longrightarrow W_{\mu}\underline{M}''\longrightarrow 0.
\end{equation}
Similarly, $\underline{M}_{<\mu}:=\xi\left(W_{<\mu}M'\oplus W_{<\mu}M'',\left(\begin{smallmatrix} \tau_{M'} & u \\ 0 & \tau_{M''} \end{smallmatrix}\right) \right)$ defines a sub-$A$-motive of $\underline{M}_{\mu}$. By \eqref{eq:exact-sequence-potential-weight}, we obtain an exact sequence of $A$-motives:
\begin{equation}\label{eq:exact-sequence-pure-weight}
0\longrightarrow \Gr_{\mu}\underline{M}'\longrightarrow \underline{M}_{\mu}/\underline{M}_{<\mu}\longrightarrow \Gr_{\mu}\underline{M}''\longrightarrow 0.
\end{equation}
The extremal terms of \eqref{eq:exact-sequence-pure-weight} are pure of weight $\mu$ and at least one of them is zero,
and thus the middle term is pure of weight $\mu$. We deduce that the increasing sequence $(\underline{M}_{\mu})_\mu$ of sub-$A$-motives of $\underline{M}$ satisfies the condition of Definition \ref{def:mixed-A-motives}. Therefore $\underline{M}$ is mixed.
\end{proof}

\begin{Remark}
Contrary to the number fields situation, the full subcategory of $\cM\cM_F$ consisting of pure $A$-motives over $F$ is not semi-simple. This follows easily from the equality $\Ext^1_{\cM\cM_F}(\underline{N},\underline{M})=\Ext^1_{\cM_F}(\underline{N},\underline{M})$ for two pure motives of the same weight.
\end{Remark}

Proposition \ref{prop:separated-weights-ext} implies that $\Ext^1_{\cM_F}(\underline{M}'',\underline{M}')=\Ext^1_{\cM\cM_F}(\underline{M}'',\underline{M}')$ whenever the weights of $\underline{M}''$ are bigger than the biggest weight of $\underline{M}'$. In general this is not true. In this direction we record:
\begin{Proposition}\label{prop-torsion-in-Ext-positive-weights}
Let $0\to \underline{M}'\to \underline{M}\to \underline{M}''\to 0$ be an exact sequence of $A$-motives where $\underline{M}'$ and $\underline{M}''$ are mixed. We assume that all the weights of $\underline{M}''$ are strictly smaller than the smallest weight of $\underline{M}'$. Then, the sequence is torsion in $\Ext^1_{\cM_F}(\underline{M}'',\underline{M}')$ if, and only if, $\underline{M}$ is mixed. 
\end{Proposition}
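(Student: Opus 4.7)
My plan is to prove both implications by constructing explicit isogenies, using a rational number $\mu_0$ chosen strictly between the biggest weight of $\underline{M}''$ and the smallest weight of $\underline{M}'$; such a $\mu_0$ exists by the weight hypothesis.

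For the forward direction, suppose $\underline{M}$ is mixed. I would argue that the composition
\[
f:W_{\mu_0}\underline{M}\hookrightarrow \underline{M}\stackrel{p}{\twoheadrightarrow}\underline{M}''
\]
is an isogeny. Injectivity comes from Proposition~\ref{prop:criterion-for-mixedness}, which yields $W_{\mu_0}M=\cI_{\infty}(M)^{\mu_0}\cap M$: its intersection with $M'$ then lies in $\cI_{\infty}(M)^{\mu_0}\cap \cI_{\infty}(M')$, and this vanishes by Corollary~\ref{cor:strict-slope-filtration} since all slopes of $\cI_{\infty}(\underline{M}')$ are strictly less than $-\mu_0$. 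Equality of ranks follows from additivity of the rank of isocrystals in short exact sequences: the pure summands of $\cI_{\infty}(\underline{M})$ of slope $\geq -\mu_0$ come entirely from $\cI_{\infty}(\underline{M}'')$, so $\rank W_{\mu_0}\underline{M}=\rank \cI_{\infty}(\underline{M})^{\mu_0}=\rank\underline{M}''$. By Definition~\ref{def:isogeny}(c), $f$ is an isogeny; let $0\neq a\in A$ and $g:\underline{M}''\to W_{\mu_0}\underline{M}$ satisfy $f\circ g=a\cdot\id$. The composition $s:=\iota\circ g:\underline{M}''\to\underline{M}$, with $\iota$ the inclusion, satisfies $p\circ s=a\cdot\id$, and by the explicit description of the $A$-action on extensions in Proposition~\ref{prop-cohomology-in-tilde} this is equivalent to $a\cdot[\underline{E}]=0$.

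For the reverse direction, assume $a\cdot[\underline{E}]=0$ for some $0\neq a\in A$. Then the pullback of $[\underline{E}]$ by $a\cdot\id_{M''}$ splits, providing a morphism $s:\underline{M}''\to\underline{M}$ in $\cM_F$ with $p\circ s=a\cdot\id$. I would then consider the map of $A$-motives
\[
\phi:\underline{M}'\oplus\underline{M}''\longrightarrow \underline{M},\qquad(m',m'')\longmapsto \iota(m')+s(m'').
\]
A direct check shows $\phi$ is injective (project on $M''$ to force $am''=0$, whence $m''=0$, and then $m'=0$), and source and target have the same rank, so by Definition~\ref{def:isogeny}(c), $\phi$ is an isogeny. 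Since $\underline{M}'\oplus\underline{M}''$ is mixed, it remains to transfer mixedness along $\phi$. For this, I would define $W_{\mu}\underline{M}$ as the saturation in $\underline{M}$ of $\phi(W_{\mu}(\underline{M}'\oplus\underline{M}''))$ and verify (i) each $W_{\mu}\underline{M}$ is a saturated sub-$A$-motive (saturation of a $\tau$-stable submodule is $\tau$-stable) and (ii) each induced map $\Gr_{\mu}(\underline{M}'\oplus\underline{M}'')\to\Gr_{\mu}\underline{M}$ is again an isogeny, so each $\Gr_{\mu}\underline{M}$ is isogenous to a pure motive of weight $\mu$, hence pure.

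The main technical point is the preservation of mixedness under isogeny invoked at the end of the reverse direction. It rests on the fact that every nonzero $a\in A$ is a unit in $K_{\infty}$, hence in $\cB_{\infty}(F)$: consequently any isogeny of $A$-motives becomes an isomorphism after applying $\cI_{\infty}$, so isocrystals at $\infty$, and thus weights and purity, are preserved. This robustness is what allows the strategy to work uniformly over arbitrary (possibly non-perfect) base fields $F$, where the slope filtration of $\cI_{\infty}(\underline{M})$ need not split a priori.
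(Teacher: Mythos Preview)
Your proof is correct and takes a genuinely different route from the paper's. The paper first reduces via \eqref{eq:explici-ext1-map-dual} to an extension of $\mathbbm{1}$ by $\underline{N}:=\underline{M}'\otimes(\underline{M}'')^{\vee}$, then works explicitly with elements: in the forward direction it extracts from the weight filtration a rank-one weight-$0$ submotive $\underline{L}\cong\mathbbm{1}$ and reads off $a\in A$ from a generator; in the reverse direction it uses Theorem~\ref{thm:cohomology-in-MR} to write $au=m-\tau_M(\tau^*m)$, lets $\underline{L}$ be generated by $m\oplus a$, and builds the weight filtration by hand as $W_\mu M+\mathbf{1}_{\mu\geq 0}L$. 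Your approach is more structural: you stay with general $\underline{M}'$ and $\underline{M}''$, produce the torsion element via the isogeny $W_{\mu_0}\underline{M}\to\underline{M}''$, and in the other direction exhibit an isogeny $\underline{M}'\oplus\underline{M}''\to\underline{M}$ and transfer mixedness along it by saturating the image of the weight filtration. The key ingredient you isolate---that $\cI_{\infty}$ inverts isogenies (since nonzero $a\in A$ become units in $\cB_{\infty}(F)$), hence purity and mixedness are isogeny invariants---is a clean general principle not made explicit in the paper, and it sidesteps the reduction to $\mathbbm{1}$ (which itself tacitly uses that mixedness is compatible with the passage in \eqref{eq:explici-ext1-map-dual}). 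One cosmetic point: your appeal to Corollary~\ref{cor:strict-slope-filtration} for $\cI_{\infty}(M)^{\mu_0}\cap\cI_{\infty}(M')=0$ would be crisper if phrased via the splitting in Proposition~\ref{prop:HN-filtration-splits} together with Lemma~\ref{lem:HNfiltration-split}, since $\cI_{\infty}(M)^{\mu_0}$ is a direct summand rather than a step of the slope filtration itself.
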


\begin{proof}[Proof of Proposition \ref{prop-torsion-in-Ext-positive-weights}]
By Corollary \ref{cor:dualizable-tensor} of the appendix, taking $\underline{N}:=\underline{M}'\otimes (\underline{M}'')^{\vee}$ we can assume that the exact sequence is of the form $S:~0\to \underline{N}\to \underline{E}\to \mathbbm{1}\to 0$, the mixed $A$-motive $\underline{N}$ having positive weights. In view of Theorem \ref{thm:cohomology-in-MR}, we may assume that $\underline{E}=\iota(u)$ for some $u\in N[\fj^{-1}]$. Note that $0$ is a weight of $\underline{E}$, the smallest.

If $\underline{E}$ is mixed, then $\underline{E}$ contains a sub-$A$-motive $\underline{L}=(L,\tau_N)$ of weight $0$ which is isomorphic to $\mathbbm{1}$. Let $(m\oplus a)\in N\oplus (A\otimes F)$ be a generator of $L$ over $A\otimes F$. We have
\begin{equation}
\begin{pmatrix} \tau_N & u \\ 0 & 1 \end{pmatrix}\begin{pmatrix} \tau^*m \\ \tau^*a \end{pmatrix}=\begin{pmatrix} m \\ a \end{pmatrix}. \nonumber
\end{equation}
This amounts to $a\in A$ and $au\in \im(\id-\tau_N)$, and then that $a[\underline{E}]=0$ in $\Ext^1_{\cM_F}(\mathbbm{1},\underline{N})$. Conversely, if there exists a nonzero $a\in A$ such that $a[\underline{E}]$ is split, Theorem \ref{thm:cohomology-in-MR} implies that there exists $m\in N$ such that $au=m-\tau_N(\tau^*m)$. The nonzero $A\otimes F$-module $L$ generated by $m\oplus a$ together with $\tau_N$ defines a sub-$A$-module of $\underline{E}$ isomorphic to $\mathbbm{1}$. For all $\mu\in \bQ$, we define $A\otimes F$-modules
\begin{equation}
E_{\mu}:=W_{\mu}N+\mathbf{1}_{\mu\geq 0}L, \quad E_{<\mu}:=W_{<\mu}N+\mathbf{1}_{\mu>0}L \nonumber
\end{equation}
where $(W_{\mu}\underline{N})_{\mu\in \bQ}$ is the weight filtration of $\underline{N}$, and where $\mathbf{1}_{\mu\in S}$ is the indicator function of the set $S$. It is easy to see that $\underline{E}_\mu:=(E_{\mu},\tau_N)$ and $\underline{E}_{<\mu}:=(E_{<\mu},\tau_N)$ define sub-$A$-motives of $\underline{E}$ such that 
\[
\underline{E}_{\mu}/\underline{E}_{<\mu}\cong \begin{cases}
0 ~&\text{ if }~ \mu<0, \\
\underline{L} ~&\text{ if }~ \mu=0, \\
\Gr_\mu \underline{N} ~&\text{ if }~ \mu>0.
\end{cases}
\]
As desired, we have constructed an increasing sequence $(\underline{E}_\mu)_\mu$ of sub-$A$-motives of $\underline{E}$ satisfying the property of Definition \ref{def:mixed-A-motives}. Hence, $\underline{E}$ is mixed.
\end{proof}
\begin{Remark}\label{remk:Ext-nonzero}
Under the same hypothesis, Proposition \ref{prop-torsion-in-Ext-positive-weights} can be rephrased into
\begin{equation}
\Ext^1_{\cM_F}(\underline{M}'',\underline{M}')^{\text{tors}}=\Ext^1_{\cM\cM_F}(\underline{M}'',\underline{M}').\nonumber
\end{equation}
In particular, the $K$-vector space $\Ext^1_{\cM\cM^{\text{iso}}_F}(\underline{M}'',\underline{M}')$ vanishes. The latter is only conjectured to be true in the number fields setting (\cite[\S 1.3]{deligne-droite}).
\end{Remark}
\begin{Remark}
Nevertheless, $\Ext^1_{\cM\cM_F}(\mathbbm{1},\underline{M})$ is generally non zero for $\underline{M}$ having positive weights. In the notations of Example \ref{ex:carlitz-motive}, let $n$ be a positive integer that is a power of the characteristic $p$, and consider the $A$-motive $\underline{A}(-n)$ over a field extension $F$ of $K$ that contains a $(q-1)$-st root $\eta$ of $(-1/\theta)^n$. We claim that $\Ext^1_{\cM\cM_F}(\mathbbm{1},\underline{A}(-n))$ is non zero, equivalently that $\Ext^1_{\cM_F}(\mathbbm{1},\underline{A}(-n))$ has non zero torsion. Indeed, let $[\underline{E}]:=\iota(\eta^q)$. Then,
\[
-t^n\cdot [\underline{E}]=\iota(-t^n \eta^q)=\iota(\eta-(t-\theta)^n\eta^q)=0.
\]
On the other-hand, $[\underline{E}]$ is non zero: for degree reasons, there does not exist $p(t)\in F[t]$ such that $\eta^q=p(t)-(t-\theta)^n p(t)^{(1)}$.
\end{Remark}

\begin{Theorem}\label{thm:main0}
Let $\underline{M}$ be a mixed $A$-motive over $F$. For $i>1$, the $A$-module $\Ext^i_{\cM\cM_F}(\mathbbm{1},\underline{M})$ is torsion.
\end{Theorem}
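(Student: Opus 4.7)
The argument proceeds by a double induction: first on the length $s$ of the weight filtration of $\underline{M}$, then on the cohomological degree $i \geq 2$. Both inductions rely on long exact sequences of Yoneda extensions available in the exact category $\cM\cM_F$ (Proposition~\ref{prop:MMF-is-exact}). For the outer induction, the admissible short exact sequence $0 \to W_{<\mu_s}\underline{M} \to \underline{M} \to \Gr_{\mu_s}\underline{M} \to 0$, which is indeed exact in $\cM\cM_F$ by Corollary~\ref{cor:W-exact}, yields a long exact sequence sandwiching $\Ext^i_{\cM\cM_F}(\mathbbm{1}, \underline{M})$ between two modules that are $A$-torsion by the inductive hypothesis; hence so is the middle term. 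This reduces us to the case where $\underline{M}$ is pure of some weight $\mu$.

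For the inner induction, let $i \geq 3$ and represent $[\xi] \in \Ext^i_{\cM\cM_F}(\mathbbm{1}, \underline{M})$ by a Yoneda sequence $0 \to \underline{M} \to \underline{E}_1 \to \cdots \to \underline{E}_i \to \mathbbm{1} \to 0$; let $\underline{N}$ denote the image of $\underline{E}_1 \to \underline{E}_2$, which is mixed by Proposition~\ref{prop:exact-sequence-middl-mixedness}. Then $[\xi] = \delta[\eta]$ for some $[\eta] \in \Ext^{i-1}_{\cM\cM_F}(\mathbbm{1}, \underline{N})$, torsion by the inductive hypothesis applied to the mixed motive $\underline{N}$; hence $[\xi]$ is torsion. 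This leaves the crucial base case $i = 2$ with $\underline{M}$ pure.

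Given $[\xi]\colon 0 \to \underline{M} \to \underline{E}_1 \to \underline{E}_2 \to \mathbbm{1} \to 0$, set $\underline{N} = \ker(\underline{E}_2 \to \mathbbm{1})$ and write $[\xi] = \delta[\eta]$, where $[\eta] \in \Ext^1_{\cM\cM_F}(\mathbbm{1}, \underline{N})$ and $\delta$ is the connecting map attached to $0 \to \underline{M} \to \underline{E}_1 \to \underline{N} \to 0$. Since $\Ext^2_{\cM_F}(\mathbbm{1}, \underline{M}) = 0$ by Theorem~\ref{thm:cohomology-in-MR}, the class $[\eta]$ admits a lift $[\tilde\eta] \in \Ext^1_{\cM_F}(\mathbbm{1}, \underline{E}_1)$ represented by some $\underline{F}$ fitting into $0 \to \underline{E}_1 \to \underline{F} \to \mathbbm{1} \to 0$, and necessarily satisfying $\underline{F}/\underline{M} \cong \underline{E}_2$. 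The strategy is to exhibit some $a \in A\setminus\{0\}$ and a modification of $[\tilde\eta]$ (by an element in the image of $\Ext^1_{\cM_F}(\mathbbm{1}, \underline{M}) \to \Ext^1_{\cM_F}(\mathbbm{1}, \underline{E}_1)$) such that the corresponding $a$-multiple $\underline{F}$ is mixed; this then places $a[\eta]$ in the image of $\Ext^1_{\cM\cM_F}(\mathbbm{1}, \underline{E}_1) \to \Ext^1_{\cM\cM_F}(\mathbbm{1}, \underline{N})$, forcing $a[\xi] = 0$. When the weights of $\underline{E}_2$ all strictly exceed $\mu$, Proposition~\ref{prop:separated-weights-ext} makes mixedness automatic; when they all lie strictly below $\mu$, Proposition~\ref{prop-torsion-in-Ext-positive-weights} identifies mixedness with the extension class $0 \to \underline{M} \to \underline{F} \to \underline{E}_2 \to 0$ being torsion, which extracts a suitable $a$; the intermediate weight regime is treated by decomposing $\underline{E}_2$ along its weight filtration and combining the two mechanisms.

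The delicacy of this last step reflects the fact that $\cM\cM_F \subset \cM_F$ is \emph{not} closed under extensions (Example~\ref{ex:mornev}): mixedness of the lift is a genuine constraint rather than automatic, and the main obstacle is precisely showing that this obstruction is $A$-torsion in all weight configurations. Theorem~B supplies exactly the required grip — its torsion characterization of mixed positive-weight extensions is what ensures the obstruction is annihilated by a nonzero element of $A$. This is in accord with Remark~\ref{rmk:ext2-nonzero}, where $\Ext^2_{\cM\cM_F}(\mathbbm{1}, \underline{A}(-1))$ is shown to be nonzero, yet must vanish after inverting $A\setminus\{0\}$.
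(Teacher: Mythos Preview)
Your reduction scheme is sound and indeed matches the paper's: both arguments ultimately come down to showing that $\Ext^1_{\cM\cM_F^{\text{iso}}}(\mathbbm{1},-)$ is right-exact, which via the Yoneda cup-product observation forces $\Ext^2_{\cM\cM_F^{\text{iso}}}(\mathbbm{1},-)=0$ and hence $\Ext^i=0$ for $i>1$. Your inner induction on $i$ and the long exact sequence attached to the weight filtration are legitimate.

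The gap is in the base case $i=2$. In your ``case 2'' (all weights of $\underline{E}_2$ strictly below $\mu$) the reasoning is circular as written: Proposition~\ref{prop-torsion-in-Ext-positive-weights} says that mixedness of $\underline{F}$ is \emph{equivalent} to the class $[0\to\underline{M}\to\underline{F}\to\underline{E}_2\to 0]$ being torsion, but it does not by itself supply such an $a$. What actually works here is to apply Proposition~\ref{prop-torsion-in-Ext-positive-weights} to the \emph{already mixed} object $\underline{E}_1$: since $0\to\underline{M}\to\underline{E}_1\to\underline{N}\to 0$ has quotient of weights $<\mu$, the class $[\underline{E}_1]\in\Ext^1_{\cM_F}(\underline{N},\underline{M})$ is torsion, and then $a[\xi]=(a[\underline{E}_1])\cup[\eta]=0$ by bilinearity. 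More seriously, the ``intermediate weight regime'' is not handled: saying one decomposes $\underline{E}_2$ along its weight filtration and combines the two mechanisms is not a proof. The weight filtration does not split, so there is no decomposition, and it is not clear how modifying $[\tilde\eta]$ by classes coming from $\Ext^1_{\cM_F}(\mathbbm{1},\underline{M})$ interacts with the filtration on $\underline{E}_2$ in a way that simultaneously resolves both obstacles.

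The paper sidesteps this case analysis entirely. Working in $\cA=\cM\cM_F^{\text{iso}}$, it proves right-exactness of $\Ext^1_{\cA}(\mathbbm{1},-)$ abstractly via the weight-zero truncation $W_0$ rather than $W_{<\mu}$. The three ingredients are: (i) on the full subcategory $\cA_0$ of objects with weights $\leq 0$, one has $\Ext^1_{\cA_0}(\mathbbm{1},-)=\Ext^1_{\cM_F^{\text{iso}}}(\mathbbm{1},-)$ by Proposition~\ref{prop:separated-weights-ext}, and the latter is right-exact by Corollary~\ref{cor:ontoonextension}; (ii) for arbitrary $\underline{X}\in\cA$, the natural map $\Ext^1_{\cA}(\mathbbm{1},W_0\underline{X})\to\Ext^1_{\cA}(\mathbbm{1},\underline{X})$ is surjective by Proposition~\ref{prop-torsion-in-Ext-positive-weights}; (iii) $W_0$ is exact on $\cA$ (Corollary~\ref{cor:W-exact}). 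A commutative square then transfers right-exactness from $\cA_0$ to all of $\cA$. This uniform argument is what your sketch is missing in the mixed-weight situation.
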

\begin{proof}
Let $\eA$ be the $K$-linear abelian category $\cM\cM_F^{\text{iso}}$. From the relation
\[
\Ext^i_{\cM\cM_F^{\text{iso}}}(\mathbbm{1},\underline{M})=\Ext^i_{\cM\cM_F}(\mathbbm{1},\underline{M})\otimes_A K,
\]
it suffices to show that the left-hand side vanishes for $i>1$. Let us first treat the case where $\underline{M}$ only has non positive weights. Let $\eA_0$ be the full subcategory of $\eA$ consisting of objects whose weights are all non positive. As $\underline{M}$ is an object of $\eA_0$, note that 
\[
\Ext^1_{\eA_0}(\mathbbm{1},\underline{M})=\Ext^1_{\eA}(\mathbbm{1},\underline{M})=\Ext^1_{\cM_F^{\text{iso}}}(\mathbbm{1},\underline{M})
\]
where the last equality follows from Proposition \ref{prop:separated-weights-ext}. From Corollary \ref{cor:ontoonextension}, we deduce that the functor $\Ext^1_{\eA_0}(\mathbbm{1},-)$ is right-exact on $\eA_0$ hence $\Ext^2_{\eA_0}(\mathbbm{1},\underline{M})=(0)$ (Proposition \ref{prop:vanishing-higher-ext}). Now, from the exactness of $W_0$ over $\eA$ (Corollary \ref{cor:W-exact}), any $e\in \Ext^2_{\eA}(\mathbbm{1},\underline{M})$ produces a commutative diagram in $\eA$:
\begin{equation}
\begin{tikzcd}
e: & 0\arrow[r] & \underline{M} \arrow[r]  & \underline{E}_1 \arrow[r] & \underline{E}_2 \arrow[r] & \mathbbm{1} \arrow[r] & 0 \\
W_0e: & 0\arrow[r] & \underline{M} \arrow[r]\arrow[u,"\id"]  & W_0\underline{E}_1 \arrow[r]\arrow[u] & W_0\underline{E}_2 \arrow[r]\arrow[u] & \mathbbm{1} \arrow[r]\arrow[u,"\id"] & 0
\end{tikzcd}\nonumber
\end{equation}
from which we deduce $\Ext^2_{\eA_0}(\mathbbm{1},\underline{M})= \Ext^2_{\eA}(\mathbbm{1},\underline{M})$ as desired. Now, let $\underline{M}$ have arbitrary weights. Applying $\Ext^1_{\eA}(\mathbbm{1},-)$ to the exact sequence:
\[
0\longrightarrow W_0\underline{M} \longrightarrow \underline{M} \longrightarrow \underline{M}/W_0\underline{M} \longrightarrow 0,
\]
we obtain from Proposition \ref{prop-torsion-in-Ext-positive-weights} that the natural map:
\[
\Ext^1_{\eA}(\mathbbm{1},W_0\underline{M})\longrightarrow \Ext^1_{\eA}(\mathbbm{1},\underline{M})
\]
is surjective. Given an epimorphism $f:\underline{M}\to \underline{N}$ in $\eA$, we obtain a commutative square:
\begin{equation}
\begin{tikzcd}
\Ext^1_{\eA}(\mathbbm{1},W_0\underline{M}) \arrow[r,->>]\arrow[d] & \Ext^1_{\eA}(\mathbbm{1},\underline{M}) \arrow[d] \\
\Ext^1_{\eA}(\mathbbm{1},W_0\underline{N}) \arrow[r,->>] & \Ext^1_{\eA}(\mathbbm{1},\underline{N}).
\end{tikzcd}\nonumber
\end{equation}
The left vertical arrow is surjective: indeed, $W_0f:W_0\underline{M}\to W_0\underline{N}$ is an epimorphism by Corollary \ref{cor:W-exact}, and we already proved that over $\eA_0$ the functor $\Ext^1_{\eA}(\mathbbm{1},-)=\Ext^1_{\eA_0}(\mathbbm{1},-)$ is right-exact. Hence, the right vertical arrow is surjective and the functor $\Ext^1_{\eA}(\mathbbm{1},-)$ is right-exact on $\eA$. Hence $\Ext^i_{\eA}(\mathbbm{1},\underline{M})=(0)$ (Proposition \ref{prop:vanishing-higher-ext} again).
\end{proof}

\section{Models and the integral part of $A$-motivic cohomology}\label{chapter:integral models}
In this section we illustrate the notion of \emph{maximal integral models}. For $A$-motives, maximal integral models are understood as an analogue of N\'eron models of abelian varieties. The notion dates back to Gardeyn's work on \emph{models of $\tau$-sheaves} \cite{gardeyn2} and their reduction \cite{gardeyn1}, where he proved a N\'eron-Ogg-Shafarevi\v{c} type criterion (see Proposition \ref{prop-good-reduction-local-case} in our context). However our setting differs by the fact that, in opposition to $\tau$-sheaves, $A$-motives might not be \emph{effective}. We also removed Gardeyn's assumption for an \emph{integral model} to be locally free. We will show in Theorems \ref{thm:integral-models-are-locally-free-local} and \ref{thm:integral-models-are-locally-free} that this is automatic for maximal ones over local and global function fields. Our presentation thus avoids the use of a technical lemma due to Lafforgue in Gardeyn's exposition \cite[\S 2]{gardeyn2}. In that sense, the content of this section is original. \\

In practice, to make maximal integral models of $A$-motives explicit is a difficult task. In Subsection \ref{subsec:integral-models-of-frobenius-modules}, we consider the easier problem of finding maximal integral models of \emph{Frobenius spaces}. Those are pairs $(V,\varphi)$ where $V$ is a finite dimensional vector space over a local field $E$ containing $\bF$ and $\varphi$ is a $q$-linear endomorphism of $V$. We show in Proposition \ref{prop:maximal} that there exists a unique $\cO_E$-lattice in $V$ stable by $\varphi$ and which is maximal for this property. We end this section by the review of Katz's equivalence of categories, and its application to study maximal integral models.\\

In Subsections \ref{sec:integral-models-A-motives-local} and \ref{sec:integral-models-A-motives-global}, we shall be concerned with integral models of $A$-motives. Given an inclusion $R\subset S$ of $A$-algebras and an $A$-motive $\underline{M}=(M,\tau_M)$ over $S$, an $R$-model for $\underline{M}$ is a finite sub-$A\otimes R$-module of $M$ \emph{stable by $\tau_M$} (Definition \ref{def:integral-models-of-motives}). \\
We study the case where $\underline{M}$ is an $A$-motive over a local function field $S=E$ and where $R=\cO_E$ is its valuation ring in Subsection \ref{sec:integral-models-A-motives-local}. In Proposition \ref{prop-existence-of-maximal-integral-model-local}, we prove existence and uniqueness of integral $\cO_E$-models which are maximal for the inclusion, and we prove that they are projective in Theorem \ref{thm:integral-models-are-locally-free-local}. We show that, given a well-chosen maximal ideal $\ell\subset A$ and a positive integer $n$, the data of $(M/\ell^n M,\tau_M)$ defines a Frobenius space over $E$. Theorem \ref{thm:integral-model:motive-to-frobenius-module}, our main result of this section, describes how to recover the maximal integral model of $\underline{M}$ in terms of the data of the maximal integral model of $(M/\ell^n M,\tau_M)$ for all $n$. The latter is of fundamental importance in the proof of Theorem \ref{mthm:Integral-inside-Good}, and permits to obtain the N\'eron-Ogg-Shafarevi\v{c} type criterion for $A$-motives (Proposition \ref{prop-good-reduction-local-case}). \\

In Subsection \ref{sec:integral-models-A-motives-global}, we treat the case where $\underline{M}$ is an $A$-motive over a global function field $S=F$ where $R$ is a Dedekind domain whose fraction field is $F$. If $\fp$ is a non zero prime ideal of $R$, we obtain an $A$-motive $\underline{M}_{F_{\fp}}$ by base field extension from $F$ to its completion $F_{\fp}$. Our Proposition \ref{prop-existence-of-maximal-integral-model-global} explains how to recover the maximal integral model of $\underline{M}$ from the data of the maximal integral models of $\underline{M}_{F_{\fp}}$ for all $\fp$. \\

The full force of this section is used in Subsection \ref{subsec:integral-part} to prove Theorems \ref{mthm:Integral-inside-Good} and \ref{mthm:Global-Integral} of the introduction (respectively Theorems \ref{thm:main1} and \ref{thm:main2} in the text).

\subsection{Models of Frobenius spaces}\label{subsec:integral-models-of-frobenius-modules}
In this subsection we work with notations that are slightly more general to what we need in the sequel. Let $(\cO_E,v_E)$ be a discrete valuation ring that contains the field $\bF$. We denote by $\fp=\fp_E$ its maximal ideal and we fix $\varpi$ a uniformizer. We let $E$ be the field of fractions of $\cO_E$. For what we have in mind, it will be enough to take for $\cO_E$ a valuation subring of a function field over $\bF$ or its completion. We let $\sigma:E\to E$ denote the $q$-Frobenius on $E$ (it fixes $\bF$). \\

Our object of study are pairs $(V,\varphi)$ where $V$ is a finite dimensional $E$-vector space and $\varphi:\sigma^*V\to V$ is an $E$-linear isomorphism. In the existing literature, they are generally referred to as \emph{\'etale finite $\bF$-shtukas over $E$} (\emph{e.g.} \cite[\S 4]{hartl-isogeny}). We prefer here the shorter name \emph{Frobenius spaces}. By an \emph{$\cO_E$-lattice in $V$} we mean a finitely generated sub-$\cO_E$-module $L$ of $V$ which generates $V$ over $E$. A sub-$\cO_E$-module \emph{$L$ is stable by $\varphi$} if $\varphi(\sigma^* L)\subset L$. 
\begin{Definition}
We say that $L$ is an \emph{integral model for $(V,\varphi)$} if $L$ is an $\cO_E$-lattice in $V$ stable by $\varphi$. We say that $L$ is \emph{maximal} if it is not strictly included in another integral model for $(V,\varphi)$.
\end{Definition}
\begin{Proposition}\label{prop:maximal}
A maximal integral model for $(V,\varphi)$ exists and is unique.
\end{Proposition}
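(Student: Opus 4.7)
The plan is to prove existence and uniqueness together by producing the unique maximum as a union of all integral models sitting inside some suitable bounded lattice. The first observation is that integral models are closed under sums: if $L_1, L_2$ are integral models, then $L_1 + L_2$ is a finitely generated sub-$\cO_E$-module of $V$ generating $V$ over $E$, with
\[
\varphi(\sigma^*(L_1 + L_2)) \;=\; \varphi(\sigma^* L_1) + \varphi(\sigma^* L_2) \;\subset\; L_1 + L_2.
\]
Hence uniqueness will follow automatically once existence is established, since any maximal element must then contain every other integral model.

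For existence, I first make sure the set of integral models is nonempty. Given any lattice $L_0 \subset V$, the finitely generated $\cO_E$-submodule $\varphi(\sigma^* L_0) \subset V$ sits in $\varpi^{-m} L_0$ for some $m \geq 0$; a direct computation then shows that $L_n := \varpi^n L_0$ is an integral model as soon as $n(q-1) \geq m$. The crux of the argument is the following bound. To each integral model $L$, I associate the integer
\[
d_L \;:=\; \operatorname{length}_{\cO_E}(L / \varphi(\sigma^* L)) \;\geq\; 0,
\]
which is finite because $\varphi$ restricts to an $\cO_E$-linear injection of free $\cO_E$-modules of the same rank. Since $k$ is perfect, $\cO_E = k[\![\varpi]\!]$ is free of rank $q$ over $\sigma(\cO_E) = k[\![\varpi^q]\!]$, so the functor $\sigma^{\ast}$ is exact and multiplies the length of every torsion $\cO_E$-module by $q$. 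For two integral models $L \subset L'$, computing $\operatorname{length}_{\cO_E}(L'/\varphi(\sigma^* L))$ in the two ways afforded by the inclusions $\varphi(\sigma^* L) \subset L \subset L'$ and $\varphi(\sigma^* L) \subset \varphi(\sigma^* L') \subset L'$ yields the identity
\[
[L':L] + d_L \;=\; d_{L'} + q \cdot [L':L],
\]
whence $[L':L] \leq d_L/(q-1)$.

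Fixing any integral model $L_0$, this bound confines every integral model $L' \supset L_0$ to the noetherian module $\varpi^{-C} L_0$ for a suitable constant $C$ depending only on $d_{L_0}$ and $q$. The union
\[
L^{\mathrm{max}} \;:=\; \bigcup\nolimits_{L' \supset L_0 \text{ integral}} L'
\]
is therefore a finitely generated $\cO_E$-submodule of $V$; it generates $V$ over $E$ (as it contains $L_0$), and it is $\varphi$-stable as a directed union of $\varphi$-stable submodules. Thus $L^{\mathrm{max}}$ is itself an integral model, and by construction dominates every integral model (any $L$ satisfies $L \subset L + L_0 \subset L^{\mathrm{max}}$), giving the desired maximum together with its uniqueness. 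The main obstacle is the length identity relating $d_L$ and $d_{L'}$: it rests on the $q$-fold length multiplication by $\sigma^*$, which crucially requires the perfectness of $k$ to ensure the flatness of $\cO_E$ over $\sigma(\cO_E)$.
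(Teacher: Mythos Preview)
Your proof is correct and follows essentially the same route as the paper's: both hinge on the discriminant $d_L=\Delta(L)=\operatorname{length}_{\cO_E}(L/\varphi(\sigma^*L))$ and the identity $d_L-d_{L'}=(q-1)\,[L':L]$ for nested integral models $L\subset L'$. The only cosmetic difference is in the endgame: the paper picks an integral model of minimal discriminant and argues directly that it must absorb every other one (else the sum would have strictly smaller discriminant), whereas you use the same identity to bound $[L':L_0]$ uniformly and then take a union inside a noetherian module. Both conclusions are immediate once the identity is in hand, and your emphasis on why $\sigma^*$ multiplies length by $q$ makes explicit a step the paper leaves implicit.
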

\begin{proof}
Our proof follows closely \cite[Prop. 2.2, 2.13]{gardeyn2}. First note that there exists an integral model. Indeed, let $T'$ be an arbitrary $\cO_E$-lattice in $V$. There exists a positive integer $k$ such that $\varphi(\sigma^*T')\subset \varpi^{-k}T'$. We let $T:=\varpi^kT'$ so that
\begin{equation}
\varphi(\sigma^*T)=\varpi^{qk}\varphi(\sigma^*T')\subset \varpi^{(q-1)k}T'=\varpi^{(q-2)k}T\subset T. \nonumber
\end{equation}
Hence, the $\cO_E$-module $T$ is an $\cO_E$-lattice in $V$ stable by $\varphi$.

We turn to the existence and uniqueness of the maximal integral model. If $L'\subset L$ is an inclusion of integral models, we have:
\[
\operatorname{length}_{\cO_E}\left(\varphi(\sigma^* L)/\varphi(\sigma^* L')\right)=q\cdot \operatorname{length}_{\cO_E}\left(L/L'\right).
\]
We define the \emph{discriminant of $L$} to be the non-negative integer 
\[
\Delta(L):=\operatorname{length}_{\cO_E}\left(L/\varphi(\sigma^*L)\right).
\]
Since,
\begin{align}\label{eq:discriminant-multiple-of-q-1}
\Delta(L')-\Delta(L) &=\operatorname{length}_{\cO_E}\left(L'/\varphi(\sigma^*L')\right)-\operatorname{length}_{\cO_E}\left(L/\varphi(\sigma^*L)\right) \nonumber\\
&= \operatorname{length}_{\cO_E}\left(\varphi(\sigma^*L)/\varphi(\sigma^*L')\right)-\operatorname{length}_{\cO_E}\left(L/L'\right) \nonumber \\
&= (q-1)\cdot \operatorname{length}_{\cO_E}\left(L'/L\right),
\end{align}
we have $\Delta(L')>\Delta(L)$ whenever the inclusion $L'\subset L$ is strict.

Now let $L$ be an integral model with minimal discriminant. We claim that $L$ equals the union of all integral models of $(V,\varphi)$, which proves both existence and uniqueness of the maximal integral model. Indeed, if $L'$ is another integral model for $(V,\varphi)$ not contained in $L$, then $L+L'$ is an integral model such that the inclusion $L\subset L+L'$ is strict. But this contradicts the minimality assumption as we would have $\Delta(L)>\Delta(L+L')$.
\end{proof}

\begin{Example}
Assume that we are in the local function field situation, namely $\cO_E=k[\![\varpi]\!]$ for some field $k$ containing $\bF$. Suppose $V:=E$, $f\in \cO_E$ a nonzero element and $\varphi$ is the morphism corresponding to $x\mapsto fx^q$. Write $f=u\varpi^k h^{q-1}$ where $u\in \cO_E^{\times}$, $0\leq k<q-1$ is an integer and $h\in \cO_E$. Then, the maximal integral model of $(V,\varphi)$ is given by $h^{-1}\cO_E$. This is because $\Delta(h^{-1}\cO_E)=k$ together with \eqref{eq:discriminant-multiple-of-q-1}.
\end{Example}

Let $T$ be an integral model for $(V,\varphi)$ and let $r$ be its rank as a free $\cO_E$-module. The cokernel of the inclusion $\varphi(\sigma^*T)\subset T$ is a torsion $\cO_E$-module of finite type and there exists elements $g_1,\ldots , g_r$ in $\cO_E$ with $v_E(g_i)\leq v_E(g_{i+1})$ such that
\begin{equation}
T/\varphi(\sigma^*T)\cong \cO_E/(g_1)\oplus \cO_E/(g_2)\oplus \cdots \oplus \cO_E/(g_r). \nonumber
\end{equation}
Equivalently, there exists a basis $(v_1,\ldots,v_r)$ of $T$ over $\cO_E$ such that
\begin{equation}
\varphi(\sigma^*T)=(g_1)v_1\oplus (g_2)v_2\oplus \cdots \oplus (g_r)v_r. \nonumber
\end{equation}
The elements $g_1,\ldots ,g_r$ are unique up to multiplication by units and are called \emph{the elementary divisors relative to the inclusion of $\cO_E$-lattices $\varphi(\sigma^*T)\subset T$}.
\begin{Lemma}\label{lem:smith}
Let $\textbf{t}$ be a basis of $T$ over $\cO_E$ and let $F$ be the matrix of $\varphi$ written in the bases $\sigma^*\textbf{t}$ and $\textbf{t}$. The elementary divisors relative to the inclusion $\varphi(\sigma^*T)\subset T$ are the elementary divisors of the matrix $F$, up to units in $\cO_E$.
\end{Lemma}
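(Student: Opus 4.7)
The plan is to invoke the Smith normal form theorem over the discrete valuation ring $\cO_E$ and unwind the definitions. Write $\mathbf{t} = (t_1, \ldots, t_r)$ as the basis of $T$ over $\cO_E$; then $\sigma^*\mathbf{t} = (1 \otimes_\sigma t_1, \ldots, 1 \otimes_\sigma t_r)$ is a basis of $\sigma^*T$ over $\cO_E$, and because $\varphi$ is $\cO_E$-linear and injective (as the restriction to $\sigma^*T$ of the $E$-linear isomorphism $\varphi : \sigma^*V \to V$), its image $\varphi(\sigma^*\mathbf{t})$ is an $\cO_E$-basis of $\varphi(\sigma^*T)$, with coordinates in $\mathbf{t}$ encoded by the matrix $F$.

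Now apply Smith normal form to $F \in M_r(\cO_E)$: there exist matrices $P, Q \in \GL_r(\cO_E)$ and elements $f_1, \ldots, f_r \in \cO_E$ with $v_E(f_i) \leq v_E(f_{i+1})$, unique up to units, such that $P F Q = \operatorname{diag}(f_1, \ldots, f_r)$. The $f_i$ are by definition the elementary divisors of $F$. Set $\mathbf{t}' := P^{-1}\mathbf{t}$; since $P \in \GL_r(\cO_E)$, $\mathbf{t}'$ remains an $\cO_E$-basis of $T$. Similarly, writing $\mathbf{s}' := Q \cdot \sigma^*\mathbf{t}$ (a column-wise linear combination) gives an $\cO_E$-basis of $\sigma^*T$, so $\varphi(\mathbf{s}')$ is an $\cO_E$-basis of $\varphi(\sigma^*T)$. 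A direct computation shows that, with respect to the bases $\mathbf{t}'$ of $T$ and $\mathbf{s}'$ of $\sigma^*T$, the matrix of $\varphi$ is precisely $P F Q = \operatorname{diag}(f_1, \ldots, f_r)$.

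Consequently, in the basis $\mathbf{t}' = (t_1', \ldots, t_r')$ of $T$, one has
\[
\varphi(\sigma^*T) = (f_1) t_1' \oplus (f_2) t_2' \oplus \cdots \oplus (f_r) t_r',
\]
which exhibits $f_1, \ldots, f_r$ as the elementary divisors of the inclusion $\varphi(\sigma^*T) \subset T$. By uniqueness (up to units) of both notions of elementary divisors, the two sequences $(g_i)_i$ and $(f_i)_i$ coincide up to units of $\cO_E$. There is no genuine obstacle here beyond keeping the row/column conventions straight when translating the change-of-basis matrices $P, Q$ into changes of the bases $\mathbf{t}$ and $\sigma^*\mathbf{t}$.
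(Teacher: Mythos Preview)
Your proof is correct and follows essentially the same approach as the paper: both apply Smith normal form to $F$ over the PID $\cO_E$, use the resulting invertible matrices to change bases of $T$ and $\sigma^*T$, and then read off the decomposition $\varphi(\sigma^*T)=(f_1)t'_1\oplus\cdots\oplus(f_r)t'_r$ to conclude via uniqueness of elementary divisors. The paper's write-up is slightly terser (it only changes the basis of $T$ via the matrix $V$ in $UF=\operatorname{diag}(f_1,\ldots,f_r)V$), but the content is identical.
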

\begin{proof}
If $(f_1,\ldots,f_r)$ denotes the elementary divisors of $F$ with $v_E(f_i)\leq v_E(f_{i+1})$, the Smith's normal form Theorem implies that there exists $U,V\in \operatorname{GL}_r(\cO_E)$ such that $UF=\operatorname{diag}(f_1,\ldots ,f_r)V$. If we let $\textbf{v}=(v_1,\ldots,v_r)$ be the basis of $T$ corresponding to $V\cdot \mathbf{t}$, this relation reads
\begin{equation}
\varphi(\sigma^*T)=(f_1)v_1\oplus (f_2)v_2\oplus \cdots \oplus (f_r)v_r. \nonumber
\end{equation}
By uniqueness of the ideals $(g_1),\ldots,(g_r)$, we conclude that $(f_i)=(g_i)$ for all $i\in \{1,\ldots,r\}$.
\end{proof}

\begin{Remark}\label{rem:base-change-Frob}
Let $\widehat{\cO}_E$ denote the completion of $\cO_E$ with respect to its valuation, and let $\widehat{E}$ be its field of fractions. The datum of $V\otimes_{E}\widehat{E}$ equipped with $\varphi\otimes \sigma$ defines a Frobenius space over $\widehat{E}$; let $V_{\widehat{\cO}}$ be its maximal integral model. Given a matrix $M$ with coefficients in $\cO_E$, the valuation of its elementary divisors are unchanged if we consider $M$ in $\widehat{\cO}_E$. Consequently, one deduces from Lemma \ref{lem:smith} that the inclusion $V_{\cO}\otimes_{\cO_E}\widehat{\cO}_E\subseteq V_{\widehat{\cO}}$ is an equality. 
\end{Remark}

\begin{Definition}\label{def:range}
We let the \emph{type of $T$} be the sequence $(e_1,\ldots,e_r)$ of the valuations of the elementary divisors relative to the inclusion $\varphi(\sigma^*T)\subset T$ ordered such that $0\leq e_1\leq e_2 \leq \cdots\leq e_r$. We define \emph{the range $r_T$ of T} to be the integer $e_r$.
\end{Definition}
\begin{Remark}
We have $\Delta(T)=e_1+\ldots +e_r$ so that $r\leq \Delta(T)\leq r\cdot r_T$ where $\Delta(T)$ denotes the discriminant of $T$. 
\end{Remark}

We shall denote by $V_{\cO}$ the maximal integral model of $(V,\varphi)$. The following proposition enables us to say \emph{how far} an integral lattice is from being maximal given its range.
\begin{Proposition}\label{prop-bound-integral-lattice}
Let $T$ be an $\cO_E$-lattice in $V$ stable by $\varphi$. Let $s$ be a non-negative integer. If the range of $T$ satisfies $r_T\leq s(q-1)$, then $V_{\cO}\subset \varpi^{-s}T$. 
\end{Proposition}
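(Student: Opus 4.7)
The plan is to prove the contrapositive in the following form: every $\varphi$-stable $\cO_E$-lattice $L \supseteq T$ is contained in $\varpi^{-s} T$. Since $V_{\cO} \supseteq T$ by the characterization of the maximal integral model as the union of all integral models (Proposition \ref{prop:maximal}), applying such a statement to $L = V_{\cO}$ will give the conclusion. The strategy is to attach a pole-order invariant to elements of $V$ relative to $T$, show that $\varphi$ amplifies this invariant (up to the defect $r_T$), and then derive a contradiction by iterating inside the finitely generated module $L$.

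First, I would invoke the Smith normal form for the inclusion $\varphi(\sigma^* T) \subset T$ to fix compatible $\cO_E$-bases $(v_1, \ldots, v_r)$ of $T$ and $(w_1, \ldots, w_r)$ of $\sigma^* T$ with $\varphi(w_i) = g_i v_i$, where $v_E(g_i) = e_i$ and $e_r = r_T$. For $x \in V$ expanded as $x = \sum_i c_i v_i$ in the basis $(v_i)$, define
\[
n(x) := \max\bigl(0, -\min_i v_E(c_i)\bigr),
\]
so that $n(x)$ is the smallest non-negative integer $m$ for which $\varpi^m x \in T$; in particular $n(x) = 0$ iff $x \in T$.

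The crucial step will be the amplification inequality
\[
n(\varphi(\sigma^* x)) \;\geq\; q \cdot n(x) - r_T \qquad \text{for all } x \in V.
\]
To establish it, I would use that $(\sigma^* v_1, \ldots, \sigma^* v_r)$ and $(w_1, \ldots, w_r)$ are both $\cO_E$-bases of the same lattice $\sigma^* T$, so the change-of-basis matrix lies in $\GL_r(\cO_E)$. Expanding $\sigma^* x = \sum_j c_j^q\, \sigma^* v_j = \sum_i d_i w_i$, the invariance of the minimum valuation of coordinates under $\GL_r(\cO_E)$ yields the identity $\min_i v_E(d_i) = q \min_j v_E(c_j)$. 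Since $\varphi(\sigma^* x) = \sum_i d_i g_i v_i$, one obtains $\min_i(v_E(d_i) + e_i) \leq \min_i v_E(d_i) + r_T = -q\, n(x) + r_T$, whence the inequality.

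To conclude, suppose for contradiction that some $\ell \in L$ satisfies $n(\ell) \geq s+1$. Setting $\ell_0 := \ell$ and $\ell_{k+1} := \varphi(\sigma^* \ell_k)$, each $\ell_k$ lies in $L$ by $\varphi$-stability. Using $r_T \leq s(q-1)$ together with the amplification inequality, an easy induction gives $n(\ell_k) \geq q^k + s$, which tends to infinity. But $L$ is a finitely generated $\cO_E$-lattice, so contained in $\varpi^{-N} T$ for some $N$, forcing $n$ to be bounded on $L$---contradiction. Hence $n(\ell) \leq s$ for all $\ell \in L$, i.e., $L \subset \varpi^{-s} T$, and applying this to $L = V_{\cO}$ finishes the proof. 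The main obstacle I anticipate is the amplification inequality, specifically the identity $\min_i v_E(d_i) = q \min_j v_E(c_j)$; once the observation that $(\sigma^* v_i)$ and $(w_i)$ are two bases of the \emph{same} $\cO_E$-lattice is in place, everything else follows mechanically.
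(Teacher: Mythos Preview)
Your proof is correct. The amplification inequality is established exactly as you outline: since $(\sigma^* v_j)$ and $(w_i)$ are both $\cO_E$-bases of $\sigma^* T$, the change of basis lies in $\GL_r(\cO_E)$, giving $\min_i v_E(d_i) = q\min_j v_E(c_j)$; then picking the index $i_0$ realizing $\min_i v_E(d_i)$ yields $\min_i(v_E(d_i)+e_i)\le -qn(x)+r_T$, and the inequality follows (trivially when the right-hand side is non-positive). The induction $n(\ell_k)\ge q^k+s$ and the boundedness contradiction are clean.

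The paper takes a different but closely related route. It first isolates a general criterion (Lemma~\ref{lem-converse-inclusion-contains-L}): if an $\cO_E$-lattice $U$ satisfies $U\subset\varphi(\sigma^*U)$, then $V_\cO\subset U$. This is proved by a lattice-level iteration (forming $\bigcup_n\varphi^n(\sigma^{n*}U)$ inside $V_\cO$) together with faithful flatness of $\sigma$. The proposition then follows by a one-line elementary-divisor computation showing $U:=\varpi^{-s}T$ satisfies $U\subset\varphi(\sigma^*U)$. Your approach replaces this two-step argument by a direct, element-wise iteration tracking the pole order $n(x)$; it is more self-contained and avoids the separate lemma, at the cost of not isolating the reusable criterion $U\subset\varphi(\sigma^*U)\Rightarrow V_\cO\subset U$. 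Both arguments rest on the same underlying computation with the elementary divisors $e_i\le s(q-1)$.
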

We start by a lemma:
\begin{Lemma}\label{lem-converse-inclusion-contains-L}
Let $U$ be an $\cO_E$-lattice in $V$ such that $U\subset \varphi(\sigma^* U)$. Then $V_{\cO}\subset U$.
\end{Lemma}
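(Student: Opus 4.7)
The plan is to reduce the statement to a matrix inequality by applying the elementary divisor theorem to the two lattices $V_{\cO}$ and $U$ in $V$. First, using the structure theorem for finitely generated modules over the DVR $\cO_E$, I fix a basis $(u_1,\ldots,u_r)$ of $U$ together with integers $d_1 \leq \cdots \leq d_r$ such that $(\varpi^{d_i} u_i)$ is a basis of $V_{\cO}$. The desired inclusion $V_{\cO} \subset U$ then becomes the purely numerical statement $d_1 \geq 0$.

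Next, I translate both the hypothesis on $U$ and the $\varphi$-stability of $V_{\cO}$ into matrix inequalities. Let $F \in \GL_r(E)$ denote the matrix of $\varphi$ in the basis $(u_j)$, characterized by $\varphi(\sigma^* u_j) = \sum_i F_{ij}u_i$. The hypothesis $U \subset \varphi(\sigma^* U)$ translates word-for-word into $F^{-1} \in M_r(\cO_E)$. On the other hand, expressing the matrix of $\varphi$ in the basis $(\varpi^{d_i}u_i)$ of $V_{\cO}$ and using that $\sigma(\varpi) = \varpi^q$, a short computation converts the integrality condition $\varphi(\sigma^* V_{\cO}) \subset V_{\cO}$ into the system
\[
v_E(F_{ij}) \;\geq\; d_i - q\, d_j, \qquad \text{for all } i,j \in \{1,\ldots,r\}.
\]

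The heart of the argument is to exploit the relation $F^{-1} F = I_r$. Looking at its $(1,1)$-entry $\sum_k (F^{-1})_{1k} F_{k1} = 1$, the ultrametric triangle inequality combined with $(F^{-1})_{1k} \in \cO_E$ produces some index $k$ for which $v_E(F_{k1}) \leq 0$. Combined with the inequality $v_E(F_{k1}) \geq d_k - q\, d_1$, this yields $d_k \leq q\, d_1$; since $d_k \geq d_1$ by the ordering, one concludes $(q-1)d_1 \geq 0$, hence $d_1 \geq 0$, as desired.

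The main obstacle I foresee is the bookkeeping of the change of basis in the second step: one has to check carefully that the twist by $\sigma$ interacts correctly with the diagonal factor $\mathrm{diag}(\varpi^{d_i})$ (recalling $\sigma(\varpi^{d_i}) = \varpi^{q d_i}$), so that the asymmetry between $d_i$ and $qd_j$ in the final inequality arises with the right signs. Once this setup is laid out correctly, the actual argument collapses into the two-line calculation above.
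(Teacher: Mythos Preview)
Your proof is correct and takes a genuinely different route from the paper's argument. The paper proceeds conceptually: it forms the increasing union $\bigcup_{n\geq 0}\varphi^n(\sigma^{n*}U)$, intersects with $V_{\cO}$, and invokes maximality of $V_{\cO}$ to deduce that $V_{\cO}\subset\varphi^m(\sigma^{m*}U)$ for some $m$; it then pulls this back through $\sigma^{m*}$ using faithful flatness to conclude $V_{\cO}\subset U$. In contrast, you bypass the iteration entirely and reduce the statement to a single valuation inequality via the elementary divisor theorem. Your key step---extracting from $F^{-1}F=I_r$ an index $k$ with $v_E(F_{k1})\leq 0$, then combining $d_k-qd_1\leq v_E(F_{k1})\leq 0$ with $d_1\leq d_k$ to force $(q-1)d_1\geq 0$---is a clean and self-contained computation.

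What your approach buys is directness: it never appeals to the maximality of $V_{\cO}$ as an integral model, only to its $\varphi$-stability, and it avoids the limit argument and the faithful flatness of $\sigma$. The paper's approach, on the other hand, is coordinate-free and makes the role of maximality transparent; it also fits naturally into the general philosophy (used repeatedly in Section~\ref{chapter:integral models}) of bounding models by passing to unions or intersections of $\varphi$-iterates. Your bookkeeping concern about the $\sigma$-twist is unfounded: the computation $1\otimes_\sigma(\varpi^{d_j}u_j)=\varpi^{qd_j}(1\otimes_\sigma u_j)$ gives exactly the asymmetry $d_i-qd_j$ you need, with the correct signs.
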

\begin{proof}
For $n\geq 0$, we let $\sigma^{n*}:=(\sigma^n)^*$ and denote by $\varphi^n:\sigma^{n*}V\to V$ the $E$-linear morphism given by the composition
\begin{equation}
\sigma^{n*}V\xrightarrow{\sigma^{(n-1)*}\varphi} \sigma^{(n-1)^*}V\longrightarrow \cdots \longrightarrow \sigma^*V \stackrel{\varphi}{\longrightarrow} V. \nonumber
\end{equation}
We consider the following sub-$\cO_E$-module of $V$:
\[
\tilde{U}:=\bigcup_{n=0}^{\infty} \varphi^n(\sigma^{n*}U).
\]
As an increasing union of $\cO_E$-lattices, $\tilde{U}$ is a sub-$\cO_E$-module of $V$. The intersection $V_{\cO}\cap \tilde{U}$ is finitely generated over $\cO_E$, stable by $\varphi$ and generates $V$ over $E$ as it contains the $\cO_E$-lattice $V_{\cO}\cap U$. That is, $V_{\cO}\cap \tilde{U}$ is an integral model whose discriminant 
\[
\Delta(V_{\cO}\cap \tilde{U}):=\operatorname{length}_{\cO_E}\left(\frac{V_{\cO}\cap \tilde{U}}{\varphi(\sigma^*V_{\cO}\cap \tilde{U})}\right)=\operatorname{length}_{\cO_E}\left(\frac{V_{\cO}\cap \tilde{U}}{\varphi(\sigma^*V_{\cO})\cap \tilde{U}}\right)
\]
is smaller than that of $V_\cO$ (indeed, any chain of strict submodules from $\varphi(\sigma^*L)$ to $L$ induces, by intersecting with $\tilde{U}$, a smaller chain of strict submodules from $\varphi(\sigma^*L)\cap \tilde{U}$ to $L\cap \tilde{U}$). By maximality, $\Delta(L)$ is minimal hence $V_{\cO}=V_{\cO}\cap \tilde{U}$. We deduce that there exists a non-negative integer $m$ such that $V_{\cO}\subset \varphi^m(\sigma^{m*}U)$. Because $\varphi(\sigma^*V_{\cO})\subset V_{\cO}$, we have $\sigma^*V_{\cO}\subset \varphi^{-1}(V_{\cO})$ and by immediate recursion one gets $\sigma^{m*}V_{\cO}\subset \varphi^{-m}(V_{\cO})\subset \sigma^{m*}U$. We conclude that $V_{\cO}\subset U$ because $\sigma:\cO_E\to \cO_E$ is faithfully flat.
\end{proof}
\begin{proof}[Proof of Proposition \ref{prop-bound-integral-lattice}]
Let $(e_1,\ldots,e_r)$ be the type of $T$. Recall that $\fp=\fp_E$ denotes the maximal ideal of $\cO_E$. There exists a basis $(t_1,\ldots,t_r)$ of $T$ such that $\varphi(\sigma^*T)=\fp^{e_1}t_1\oplus \fp^{e_2}t_2\oplus \cdots\oplus \fp^{e_r}t_r$. By assumption, $e_1,\ldots,e_r\leq s(q-1)$ and thus
\begin{align*}
\varpi^{-s}T &\subset \varpi^{-s}\left(\fp^{e_1-s(q-1)}t_1\oplus \fp^{e_2-s(q-1)}t_2\oplus \cdots \oplus\fp^{e_r-s(q-1)}t_r\right) \\
&=\fp^{e_1-sq}t_1\oplus \fp^{e_2-sq}t_2\oplus \cdots \oplus\fp^{e_r-sq}t_r\\
&=\varpi^{-qs}\varphi(\sigma^*T) \\
&=\varphi(\sigma^*(\varpi^{-s}T)).
\end{align*}
Hence, $U:=\varpi^{-s}T$ satisfies $U\subset \varphi(\sigma^*U)$ and we deduce that $V_{\cO}\subset U$ by Lemma \ref{lem-converse-inclusion-contains-L}.
\end{proof}

Akin to integral models, there is also a notion of \emph{good sublattices}.
\begin{Definition}
Let $L$ be a finitely generated $\cO_E$-submodule of $V$. We say that $L$ is a \emph{good sublattice for $(V,\varphi)$} if $\varphi(\sigma^*L)=L$. We say that $L$ is \emph{maximal} if it is not strictly included in another good sublattice of $(V,\varphi)$.
\end{Definition}
\begin{Proposition}\label{prop:maximal-good-model-Frob-space}
A maximal good sublattice for $(V,\varphi)$ exists, is unique and contained in $V_{\cO}$.
\end{Proposition}
\begin{proof}
First note that any good sublattice $L$ for $(V,\varphi)$ is contained in $V_{\cO}$: indeed, $L+V_{\cO}$ is again an integral model, hence included in $V_{\cO}$. The sum $U$ over all good sublattice for $(V,\varphi)$ exists (it is non-empty as the zero module is a good sublattice) and therefore included in $V_{\cO}$. Because $\cO_E$ is Noetherian, $U$ is a finitely generated $\cO_E$-module. We also have $\varphi(\sigma^* U)=U$. We deduce that $U$ is maximal, unique and contained in $V_{\cO}$. 
\end{proof}

We shall denote by $V_{\text{good}}$ the maximal good sublattice of $(V,\varphi)$; it is a sub-$\cO_E$-module of $V_\cO$.
\begin{Proposition}
The inclusion $V_{\operatorname{good}}\subset V_\cO$ splits. 
\end{Proposition}
\begin{proof}
Consider the subobject of $(V,\varphi)$ whose underlying $E$-vector space $U$ is generated by $V_{\operatorname{good}}$ over $E$. Its maximal integral model is $V_{\operatorname{good}}$ (it has zero discriminant) and thus $V_{\cO}\cap U\subset V_{\operatorname{good}}$. In particular, the quotient module $V_{\cO}/V_{\operatorname{good}}$ is finite torsion-free, hence finite free over $\cO_E$. In particular, the surjection $V_{\cO}\to V_{\cO}/V_{\operatorname{good}}$ splits and the proposition follows.  
\end{proof}

\begin{Definition}\label{def:good-red-frob-space}
We call the rank of $V_{\text{good}}$ the \emph{non-degenerate rank of $(V,\varphi)$}. We say that $(V,\varphi)$ has \emph{good reduction} if $V_{\text{good}}=V_{\cO}$; \emph{i.e.} if the non-degenerate rank is maximal. 
\end{Definition}

We can give a rather useful formula for $V_{\operatorname{good}}$ in terms of $V_{\cO}$.
\begin{Lemma}\label{lem:formula-for-Vgood}
$\displaystyle V_{\operatorname{good}}=\bigcap_{n\geq 0}{\varphi^n(\sigma^{n*} V_{\cO})}$.
\end{Lemma}
\begin{proof}
The right-hand side $L$ is a decreasing intersection of $\cO_E$-lattices hence is itself a finitely generated $\cO_E$-module. Since it further verifies $\varphi(\sigma^*L)=L$, one obtains $L\subset V_{\operatorname{good}}$. But $ V_{\operatorname{good}}=\varphi^n(\sigma^{n*}V_{\operatorname{good}})\subset \varphi^n(\sigma^{n*}V_{\cO})$ for all $n\geq0$, therefore $L=V_{\operatorname{good}}$.
\end{proof}

Maximal good sublattices have an interpretation in terms of \emph{Frobenius sheaves} that we now recall. Let $X$ be a smooth connected scheme over $\bF$, and let $\pi(X)$ be its \'etale fundamental group. We still denote by $\sigma$ the Frobenius on $X$. Let $\cF(X)$ be the category whose objects are pairs $(\cV,\varphi)$ where $\cV$ is a locally-free $\cO_X$-module of finite rank and $\varphi:\sigma^*\cV\to \cV$ is an isomorphism of $\cO_X$-modules. Morphisms in this category are morphisms of the underlying $\cO_X$-modules with commuting $\varphi$-action. 
\begin{Example}
Objects of $\cF(\Spec E)$ are Frobenius spaces over $E$, and objects of $\cF(\Spec \cO_E)$ are pairs $(V,\varphi)$ where $V$ is a finite free $\cO_E$-module, and where $\varphi:\sigma^*V\to V$ is an $\cO_E$-linear isomorphism.
\end{Example}

Until the end of this subsection, $E$ is a local function field with perfect residue field $k$. As a consequence, $E=k(\!(\varpi)\!)$. We fix a separable closure $E^s$ of $E$ and denote by $G_E$ the absolute Galois group $\Gal(E^s|E)$ of $E$. Let also $I_E\subset G_E$ be the inertia subgroup. The following result is due to Katz in \cite[Prop. 4.1.1]{katz-modular}. 
\begin{Theorem}\label{thm:katz-equivalence}
There is a rank-preserving equivalence of categories from $\cF(X)$ to the category of $\bF$-linear continuous representation of $\pi(X)$, which commutes with base change. For $X=\Spec E$, it is explicitly given by
\[
\underline{V}=(V,\varphi) \longmapsto T\underline{V}=\{x\in V\otimes_E E^s~|~x=\varphi(\sigma^* x)\}
\]
where $\pi(\Spec E)$ is identified with $\Gal(E^s|E)$, and acts on the right-hand side of $V\otimes_E E^s$. 
\end{Theorem}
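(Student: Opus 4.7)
The plan is to establish the equivalence in two stages: first for $X = \Spec E$ (the pointed case, which the theorem makes explicit), then globalize via the étale site.

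For the pointed case, the crux is a Lang-type surjectivity statement: for any separably closed field $L$ containing $E$ and any object $(V,\varphi) \in \cF(\Spec E)$ of rank $r$, the $L$-vector space $V \otimes_E L$ admits a basis of $\varphi$-invariants. This reduces, after fixing an $E$-basis of $V$ and writing $\varphi$ as a matrix $F \in \GL_r(E)$, to solving the equation $F \cdot \sigma(X) = X$ in $\GL_r(L)$. Because $d\sigma = 0$ in characteristic $p$, the system is formally étale (its Jacobian being $-I$) and has a solution over any separably closed $L$. Applied to $L = E^s$, this forces $T\underline{V}$ to be an $\bF$-vector space of dimension exactly $r$, and the natural multiplication map $T\underline{V} \otimes_{\bF} E^s \to V \otimes_E E^s$ to be an isomorphism.

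The absolute Galois group $G_E$ acts $\bF$-linearly on $T\underline{V}$ through the right tensor factor, since this action commutes with $\varphi$; continuity follows from the fact that any $\varphi$-invariant basis is defined over a finite separable subextension of $E^s/E$. For the quasi-inverse I would assign to a continuous $\bF[G_E]$-module $W$ the pair $(\,(W \otimes_{\bF} E^s)^{G_E},\,\id \otimes \sigma\,)$; classical Galois descent (Hilbert 90 for $\GL_r$) produces an $E$-vector space of dimension $\dim_{\bF} W$, and the $\sigma$-semilinear structure descends tautologically. The unit and counit of the resulting adjunction are the canonical maps, whose invertibility is exactly the content of the previous paragraph.

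For general smooth connected $X$, the strategy is to sheafify the above on the étale site of $X$: to $(\cV,\varphi) \in \cF(X)$ one attaches the étale sheaf $\cV^\varphi$ of local $\varphi$-invariants, whose sections on $U \to X$ are $\{s \in \cV(U) : \varphi(\sigma^* s) = s\}$. The pointed case, applied at geometric points of $X$, shows that $\cV^\varphi$ is a lisse sheaf of $\bF$-vector spaces of the same rank as $\cV$, and that tensoring back with the structure sheaf (equipped with $\sigma$) recovers $(\cV,\varphi)$. Invoking the standard equivalence between lisse $\bF$-sheaves on $X$ and continuous $\bF$-representations of $\pi(X)$ closes the argument; compatibility with base change is built into the sheaf-theoretic formulation. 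The main obstacle is the Lang-type surjectivity: everything else is either formal (Galois descent, étale sheafification) or standard étale-cohomological machinery. Once one accepts that every $\varphi$-torsor is trivial over a separably closed base---equivalent over $\GL_r$ to Lang--Steinberg in characteristic $p$---the rest of the proof unfolds cleanly.
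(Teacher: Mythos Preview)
The paper does not supply its own proof of this theorem: it simply records the statement and attributes it to Katz \cite[Prop.~4.1.1]{katz-modular}. Your sketch is a correct outline of the standard argument---the Lang-type solvability of $F\cdot\sigma(X)=X$ over separably closed fields (via étaleness, since $d\sigma=0$), Galois descent for the quasi-inverse, and globalization through the lisse $\bF$-sheaf of $\varphi$-invariants---so there is nothing to compare beyond noting that you have reconstructed the proof the paper chose to cite rather than reproduce.
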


The following proposition is almost immediate from Katz's equivalence:
\begin{Proposition}\label{prop:babyNOS}
Let $\underline{V}$ be a Frobenius space over $E$. The non-degenerate rank of $(V,\varphi)$ equals the rank of $(T\underline{V})^{I_E}$. In particular, $\underline{V}$ has good reduction if and only if $T\underline{V}$ is unramified. 
\end{Proposition}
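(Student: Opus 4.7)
The plan is to leverage Katz's equivalence (Theorem \ref{thm:katz-equivalence}) on both $\Spec E$ and $\Spec \cO_E$ simultaneously. Since the residue field $k$ is perfect, étale covers of the Henselian local ring $\cO_E$ reduce to those of its residue field, and so $\pi(\Spec \cO_E) \cong G_E/I_E$; objects of $\cF(\Spec \cO_E)$ therefore correspond to unramified continuous $\bF$-linear $G_E$-representations, and the base-change functor $\cF(\Spec \cO_E) \to \cF(\Spec E)$ matches, via Katz, the restriction of representations along the quotient $G_E \twoheadrightarrow G_E/I_E$.

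First I would observe that $V_{\text{good}}$, being finitely generated and torsion-free over the discrete valuation ring $\cO_E$, is free of some rank $r$, namely the non-degenerate rank of $\underline{V}$. The defining property $\varphi(\sigma^* V_{\text{good}}) = V_{\text{good}}$ promotes $(V_{\text{good}}, \varphi)$ to an object of $\cF(\Spec \cO_E)$ of rank $r$. Applying Katz's equivalence over $\cO_E$, together with its compatibility with base change to $E$, yields a $G_E$-equivariant injection
\[
T\underline{V_{\text{good}}} := \{x \in V_{\text{good}} \otimes_{\cO_E} \cO_E^{\text{ur}} ~|~ x = \varphi(\sigma^* x)\} \hookrightarrow T\underline{V}
\]
of $\bF$-rank $r$, with trivial $I_E$-action since $I_E$ fixes $\cO_E^{\text{ur}}$ pointwise. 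Thus $r \leq \operatorname{rank}_{\bF}(T\underline{V})^{I_E}$.

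For the reverse inequality, $(T\underline{V})^{I_E}$ is a continuous sub-$G_E/I_E$-representation of $T\underline{V}$, and by Katz over $\cO_E$ it arises from some $(W, \psi) \in \cF(\Spec \cO_E)$ of the same $\bF$-rank. The inclusion $(T\underline{V})^{I_E} \hookrightarrow T\underline{V}$ then corresponds, via Katz over $E$, to an injection $(W \otimes_{\cO_E} E, \psi \otimes 1) \hookrightarrow (V, \varphi)$ in $\cF(\Spec E)$. Through this embedding, $W$ identifies with a finitely generated $\cO_E$-submodule of $V$ satisfying $\varphi(\sigma^* W) = W$, that is, with a good model of $\underline{V}$. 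Maximality of $V_{\text{good}}$ forces $W \subseteq V_{\text{good}}$, yielding $\operatorname{rank}_{\bF}(T\underline{V})^{I_E} = \operatorname{rank}_{\cO_E} W \leq r$ and thereby the desired equality.

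The second assertion follows from the discriminant identity in the proof of Proposition \ref{prop:maximal}. Good reduction ($V_{\text{good}} = V_{\cO}$) clearly forces $r = \dim_E V$. Conversely, if $r = \dim_E V$ then $V_{\text{good}}$ is itself an integral model of the same rank as $V_{\cO}$, so $V_{\text{good}} \subseteq V_{\cO}$; applying \eqref{eq:discriminant-multiple-of-q-1} gives $0 - \Delta(V_{\cO}) = (q-1)\operatorname{length}_{\cO_E}(V_{\cO}/V_{\text{good}})$, and non-negativity of both $\Delta(V_{\cO})$ and the length forces them to vanish, whence $V_{\text{good}} = V_{\cO}$. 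Combined with $\operatorname{rank}_{\bF} T\underline{V} = \dim_E V$ (Katz over $E$), this shows good reduction is equivalent to $(T\underline{V})^{I_E} = T\underline{V}$, i.e.\ to $T\underline{V}$ being unramified. The main delicacy I expect is a clean justification of the identification $\pi(\Spec \cO_E) \cong G_E/I_E$ compatibly with base change, and of the transport of sub-objects under Katz's equivalence, both of which are standard but should be spelled out.
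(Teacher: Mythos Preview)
Your proof is correct and follows essentially the same approach as the paper's: both rest on the identification $\pi(\Spec\cO_E)\cong G_E/I_E$ and on the observation that, under Katz's equivalence, objects of $\cF(\Spec\cO_E)$ base-changing to subobjects of $\underline{V}$ are precisely the good models of $\underline{V}$. You simply spell out the two rank inequalities and the ``in particular'' clause more carefully than the paper does; the discriminant argument via \eqref{eq:discriminant-multiple-of-q-1} for the converse direction is a nice touch that the paper leaves implicit.
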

\begin{proof}
As $\pi(\Spec \cO_E)\cong G_E/I_E$, the representation $(T\underline{V})^{I_E}$ is the maximal subobject of $T\underline{V}$ which comes from an object in $\cF(\Spec \cO_E)$ by Katz's equivalence. Yet, elements in $\cF(\Spec \cO_E)$ which specialize to subobjects of $\underline{V}$ by base change to $E$ are exactly the good sublattices of $\underline{V}$.
\end{proof}
 
We end this subsection by the next result, which will be the main ingredient to obtain Theorem \ref{mthm:Integral-inside-Good} of the introduction.
\begin{Proposition}\label{prop:artin-schreier}
Let $\underline{V}=(V,\varphi)$ be a Frobenius space over $E$, and let $x\in V$. The following are equivalent:
\begin{enumerate}[label=$(\roman*)$]
\item\label{item:i-unramified-solution} There exists $y\in V\otimes_E E^{\operatorname{ur}}$ such that $x=y-\varphi(\tau^*y)$,
\item\label{item:ii-x-is-good} $x\in V_{\operatorname{good}}+(\id_V-\varphi)(V)$,
\item\label{item:iii-x-is-inetgral} $x\in V_{\cO}+(\id_V-\varphi)(V)$.
\end{enumerate}
\end{Proposition}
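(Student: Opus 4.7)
The plan is to establish the cyclic chain $(ii)\Rightarrow(iii)\Rightarrow(i)\Rightarrow(ii)$, the first implication being immediate from the inclusion $V_{\text{good}}\subseteq V_{\cO}$.

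For $(iii)\Rightarrow(i)$, I would write $x=v+(\id-\varphi)(w)$ with $v\in V_{\cO}$ and $w\in V$, and reduce the task to producing $z\in V\otimes_E E^{\text{ur}}$ with $v=z-\varphi(\tau^*z)$ (then setting $y:=z+w$). The strategy is to first solve over $V_{\cO}\otimes_{\cO_E}\cO_{\widehat{E^{\text{ur}}}}$ by working modulo $\varpi^n$ and passing to the inverse limit. The key step is to filter $V_{\cO}/\varpi^n V_{\cO}\otimes_k\bar{k}$ by powers of $\varpi$: on the graded piece of degree $i\geq 1$, the map $\varphi$ vanishes because $\varphi(\tau^*(\varpi^i\cdot))=\varpi^{iq}\varphi(\tau^*\cdot)\equiv 0\pmod{\varpi^{i+1}}$; on the degree-$0$ piece $V_{\cO}/\varpi V_{\cO}\otimes_k\bar{k}$, a Fitting decomposition of the $q$-linear endomorphism $\bar{\varphi}$ combined with Artin--Schreier surjectivity of $\id-\Frob_q$ on $\bar{k}$ (applied to the Katz decomposition $W_{\text{iso}}\cong T_0\otimes_{\bF}\bar{k}$ of the bijective part) will yield surjectivity of $\id-\bar{\varphi}$. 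An induction over the filtration and compatible lifting give the claim. Finally, to promote $z$ from $\widehat{E^{\text{ur}}}$ to $E^{\text{ur}}$, I would observe that expressing $z$ in an $E$-basis of $V$ yields coordinates satisfying a polynomial system over $E$ whose Jacobian is the identity in characteristic $p$; hence the coordinates are separably algebraic over $E$. Since any finite separable extension of $E$ inside $\widehat{E^{\text{ur}}}$ inherits the unramified valuation and is therefore unramified, such coordinates lie in $E^{\text{ur}}$.

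For $(i)\Rightarrow(ii)$, the plan is a cohomological reinterpretation via Katz's equivalence (Theorem \ref{thm:katz-equivalence}). An analogue of Theorem \ref{thm:cohomology-in-MR}, combined with additive Hilbert 90 $H^1(G_E,V\otimes_E E^s)=0$, identifies $V/(\id-\varphi)(V)\cong H^1(G_E,T\underline{V})$; Katz's equivalence for $\Spec\cO_E$ analogously gives $V_{\text{good}}/(\id-\varphi)(V_{\text{good}})\cong H^1(G_E/I_E,(T\underline{V})^{I_E})$. I would then verify that the natural map between these cokernels, induced by $V_{\text{good}}\hookrightarrow V$, agrees with the inflation map on cohomology; this is traced by choosing for $v\in V_{\text{good}}$ an integral solution $y_0\in V_{\text{good}}\otimes_{\cO_E}\cO_{E^{\text{ur}}}$ of $v=y_0-\varphi(\tau^*y_0)$, which exists because $\id-\sigma$ is surjective on $\cO_{E^{\text{ur}}}$ (Hensel applied to $Y^q-Y-\lambda$ for $\lambda$ integral, using that any unramified extension of $E^{\text{ur}}$ is trivial). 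The inflation--restriction exact sequence then identifies the image of $V_{\text{good}}$ in $V/(\id-\varphi)(V)$ with the kernel of $H^1(G_E,T\underline{V})\to H^1(I_E,T\underline{V})$, which is exactly the set of classes satisfying (i).

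The hardest step will be $(iii)\Rightarrow(i)$: one must bookkeep the modulo-$\varpi^n$ surjectivity through both the nilpotent and bijective parts of $\bar{\varphi}$, ensure compatibility of choices under the inverse limit, and then justify carefully that algebraicity of the coordinates of $z$ forces $z$ into $E^{\text{ur}}$ rather than merely $\widehat{E^{\text{ur}}}$. The cohomological step $(i)\Rightarrow(ii)$ is conceptually cleaner but requires unwinding the comparison of the two Ext computations with the inflation map.
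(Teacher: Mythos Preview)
Your proof is correct, and the cohomological argument for $(i)\Leftrightarrow(ii)$ via Katz's equivalence and inflation--restriction is essentially the paper's approach to that equivalence. The difference lies in how $(iii)$ is connected to the other two conditions.

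You prove $(iii)\Rightarrow(i)$ by constructing a solution in $V_{\cO}\otimes_{\cO_E}\cO_{\widehat{E^{\text{ur}}}}$ through a $\varpi$-adic approximation (using the Fitting decomposition over $\bar{k}$ on the degree-$0$ graded piece) and then descending to $E^{\text{ur}}$ via the finite-\'etale nature of the system $Z_j-\sum a_{ij}Z_i^q=v_j$. This works, but the descent step and the Mittag--Leffler bookkeeping add some weight. The paper instead proves $(iii)\Rightarrow(ii)$ directly and never leaves $E$: since $k$ is perfect, the Fitting decomposition of the $q$-linear endomorphism on $V_{\cO}/\fp^nV_{\cO}$ already exists over $k$, is unique, and is therefore compatible both with the $\cO_E/\fp^n$-module structure and with varying $n$. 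Passing to the inverse limit yields a splitting $V_{\cO}=A\oplus B$ of $\cO_E$-modules with $\varphi(\sigma^*A)=A$ and $\varphi$ topologically nilpotent on $B$; then $A\subset V_{\text{good}}$ and $(\id-\varphi)(B)=B$, so $V_{\cO}\subset V_{\text{good}}+(\id-\varphi)(V)$ immediately. What the paper's route buys is economy---no base change to $\bar{k}$, no completed unramified extension, no descent---while your route has the virtue of actually exhibiting the unramified solution $y$ rather than deducing its existence from cohomology.
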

\begin{proof}
We first prove the equivalence between \ref{item:i-unramified-solution} and \ref{item:ii-x-is-good}. Let $\mathbf{1}:\sigma^*E\to E$ be the canonical $E$-linear isomorphism and let $\mathbbm{1}$ be the \emph{neutral} Frobenius space $(E,\mathbf{1})$ over $E$ . Let also $\Ext^1(\mathbbm{1},\underline{V})$ be the $\bF$-vector space of Yoneda extensions of $\mathbbm{1}$ by $\underline{V}$ in the category $\cF(\Spec E)$. We have an isomorphism of $\bF$-vector spaces, natural in $\underline{V}$,
\begin{equation}\label{eq:ext}
\frac{V}{(\id-\varphi)(V)}\stackrel{\sim}{\longrightarrow}\Ext^1(\mathbbm{1},\underline{V})
\end{equation}
mapping a representative $v\in V$ to the class of the extension of $\mathbbm{1}$ by $\underline{V}$ whose underlying module is $V\oplus E$ and whose Frobenius action is given by $\left(\begin{smallmatrix}\varphi & v\cdot \mathbf{1} \\ 0 & \mathbf{1} \end{smallmatrix} \right)$. Katz's equivalence leads to a commutative square
\begin{equation}
\begin{tikzcd}
\displaystyle\frac{V_{\text{good}}}{(\id_V-\varphi)(V_{\text{good}})} \arrow[r,"\sim"] \arrow[d] & \operatorname{H}^1(\pi(\Spec \cO_E),(T\underline{V})^{I_E})  \arrow[d,hook] \\
\displaystyle\frac{V}{(\id_V-\varphi)(V)} \arrow[r,"\sim"] & \operatorname{H}^1(G_E,T\underline{V})
\end{tikzcd} \nonumber
\end{equation}
where, by diagram chasing, the bottom row is given as follows: for $v\in V$, let $w\in V\otimes_E E^s$ be such that $v=w-\varphi(\tau^*w)$, then $c_v:\rho\mapsto w-\,^{\rho}w$ defines a cocycle $c_v:G_E\to T\underline{V}$ whose class does not depend on the choice of $w$. The bottom row maps $v$ to $c_v$. Hence, \ref{item:i-unramified-solution} holds if and only if $c_x$ comes from a cocycle in $\operatorname{H}^1(\pi(\Spec \cO_E),(T\underline{V})^{I_E})$, that is, if and only if \ref{item:ii-x-is-good} holds.

It remains to prove that \ref{item:ii-x-is-good} and \ref{item:iii-x-is-inetgral} are equivalent. The map $v\mapsto \varphi(\sigma^*v)$ is topologically nilpotent on $V_{\cO}/V_{\operatorname{good}}$, as can be deduced from Lemma \ref{lem:formula-for-Vgood}, and thus $\id-\varphi$ acts as an $\bF$-linear automorphism on it. Therefore, the Snake Lemma implies that the natural map from the cokernel of $\id-\varphi$ on $V_{\operatorname{good}}$ to that on $V_{\cO}$ is an isomorphism. This implies $V_{\cO}\subset V_{\operatorname{good}}+(\id-\varphi)(V_{\cO})$ and yields
\[
V_{\operatorname{good}}+(\id_V-\varphi)(V)=V_{\cO}+(\id_V-\varphi)(V)
\]
as desired.
\end{proof}

\subsection{Models of $A$-motives over a local function field}\label{sec:integral-models-A-motives-local}
\subsubsection*{General theory}
Let $R$ be a commutative $\bF$-algebra given together with an $\bF$-algebra morphism $\kappa:A\to R$. Let $S$ be a commutative ring which contains $R$ as a subring. Let $\underline{M}=(M,\tau_M)$ be an $A$-motive over $S$ (with characteristic morphism $A\stackrel{\kappa}{\to} R\to S$).
\begin{Definition}\label{def:integral-models-of-motives}
We define an \emph{$R$-integral model $L$ for $\underline{M}$} to be a finitely generated sub-$A\otimes R$-module of $M$ satisfying:
\begin{enumerate}[label=$(\roman*)$]
\item $L$ generates $M$ over $S$ (equivalently, over $A\otimes S$),
\item\label{item:def-integral-models} $\tau_M(\tau^*L)\subset L[\fj^{-1}]$.
\end{enumerate}
We say that $L$ is \emph{maximal} if it is not strictly contained in any other $R$-integral model of $\underline{M}$.
\end{Definition}

The next proposition is inspired by \cite[Prop. 2.2]{gardeyn2}:
\begin{Proposition}\label{prop:existence-of-S-models}
If $S$ is obtained from $R$ by localization, an $R$-model for $\underline{M}$ exists.
\end{Proposition}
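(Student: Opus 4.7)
The plan is to produce an $R$-model by starting from an arbitrary finitely generated $A\otimes R$-submodule of $M$ generating $M$ over $A\otimes S$, and then rescaling by an appropriate element of the multiplicative set defining $S$ from $R$. Write $S = T^{-1}R$ for some multiplicative subset $T\subset R$. Since $M$ is finitely generated over $A\otimes S$, any choice of generators $m_1,\dots,m_n\in M$ yields a finitely generated $A\otimes R$-submodule $L_0:=(A\otimes R)m_1 + \cdots+(A\otimes R)m_n$ of $M$ which generates $M$ over $A\otimes S$. This $L_0$ satisfies condition (i) of Definition~\ref{def:integral-models-of-motives}, but typically not (ii).

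To quantify the failure of (ii), I would consider the $A\otimes R$-submodule $N_0:=\tau_M(\tau^*L_0)$ of $M[\fj^{-1}]$, understood as the image of the natural composition $\tau^*L_0\to \tau^*M\hookrightarrow \tau^*M[\fj^{-1}]\xrightarrow{\tau_M} M[\fj^{-1}]$. It is finitely generated over $A\otimes R$, being spanned by the $\tau_M(1\otimes_\tau m_i)$. Since localization commutes with $[\fj^{-1}]$, we have $L_0[\fj^{-1}]\otimes_R S = M[\fj^{-1}]$; thus each of the finitely many generators of $N_0$ admits a denominator in $T$ pulling it into $L_0[\fj^{-1}]$. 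The product of these denominators yields some $t\in T$ with $t\cdot N_0\subset L_0[\fj^{-1}]$.

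I would then set $L := tL_0$. It is still a finitely generated $A\otimes R$-module, and since $t$ is invertible in $S$, $L$ still generates $M$ over $A\otimes S$; so condition (i) holds. To verify (ii), observe that $\tau$ acts on the factor $R\subset S$ as the $q$-Frobenius. A direct computation in the defining tensor product then gives $\tau^*L = \tau^*(tL_0) = t^q\cdot \tau^*L_0$, and hence
\[
\tau_M(\tau^*L)=t^q\cdot N_0 = t^{q-1}\cdot(t\cdot N_0)\subset t^{q-1}\cdot L_0[\fj^{-1}]\subset tL_0[\fj^{-1}] = L[\fj^{-1}],
\]
the last inclusion using that $t^{q-1}L_0 = t\cdot(t^{q-2}L_0)\subset tL_0$, which holds since $\bF$ is a finite field (so $q\geq 2$) and $t^{q-2}\in R$.

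The argument poses no real difficulty; the only point worth highlighting is that the $q$-Frobenius action of $\tau$ on $R$ supplies precisely the extra factor $t^{q-1}$ which matches the rescaling $L_0\mapsto tL_0$ and so makes $L$ stable under $\tau_M$ up to inverting $\fj$.
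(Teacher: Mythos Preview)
Your proof is correct and follows essentially the same approach as the paper's: both start from an $A\otimes R$-submodule $L_0$ generated by a finite generating set of $M$, find a scalar $d$ (your $t$) with $\tau_M(\tau^*L_0)\subset d^{-1}L_0[\fj^{-1}]$, set $L=dL_0$, and use that $\tau$ raises $d$ to the $q$th power to obtain $\tau_M(\tau^*L)\subset d^{q-2}L[\fj^{-1}]\subset L[\fj^{-1}]$. Your presentation is slightly more careful in making explicit that $t$ lies in the multiplicative set $T$ (so that $L$ still generates $M$ over $A\otimes S$), but otherwise the arguments are the same.
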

\begin{proof}
Let $(m_1,\ldots,m_s)$ be generators of $M$ as an $A\otimes S$-module, and let $L_0$ be the sub-$A\otimes R$-module of $M$ generated by $(m_1,\ldots,m_s)$. Let $d\in R$ be such that $\tau_M(\tau^*L_0)\subset d^{-1}L_0[\fj^{-1}]$, and set $L:=d L_0$. We have 
\begin{equation}
\tau_M(\tau^*L)=d^{q}\tau_M(\tau^*L_0)\subset d^{q-1}L_0[\fj^{-1}]=d^{q-2}L[\fj^{-1}]\subset L[\fj^{-1}]. \nonumber
\end{equation}
Thus $L$ is an $R$-model. 
\end{proof}

When it exists, we have the following claim for a maximal integral model.
\begin{Lemma}\label{lem:uniqueness-of-maximal-integral}
A maximal $R$-integral model for $\underline{M}$ contains all the $R$-integral models for $\underline{M}$. In particular, if it exists it is unique.
\end{Lemma}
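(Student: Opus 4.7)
The plan is to show that whenever $L$ is a maximal $R$-integral model for $\underline{M}$ and $L'$ is any other $R$-integral model, the sum $L+L'$ is again an $R$-integral model. Since the inclusion $L \subset L+L'$ would then contradict the maximality of $L$ unless $L+L' = L$, we would obtain $L' \subset L$. Uniqueness follows at once: if two maximal $R$-integral models $L_1$ and $L_2$ both existed, then each would be contained in the other, forcing $L_1 = L_2$.

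The verification that $L+L'$ satisfies Definition \ref{def:integral-models-of-motives} breaks into three routine checks. First, $L+L'$ is clearly a sub-$A\otimes R$-module of $M$, and it is finitely generated as the sum of two finitely generated modules. Second, $L+L'$ generates $M$ over $A\otimes S$, since $L$ already does. Third, one must verify the Frobenius-stability condition $\tau_M(\tau^*(L+L')) \subset (L+L')[\fj^{-1}]$. For this, interpret $\tau^*L$ and $\tau^*L'$ as the $A\otimes R$-submodules of $\tau^*M$ generated respectively by the elements $1 \otimes_\tau \ell$ for $\ell \in L$ and $1 \otimes_\tau \ell'$ for $\ell' \in L'$. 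With this convention one has $\tau^*(L+L') = \tau^*L + \tau^*L'$ inside $\tau^*M$, and therefore
\[
\tau_M(\tau^*(L+L')) = \tau_M(\tau^*L) + \tau_M(\tau^*L') \subset L[\fj^{-1}] + L'[\fj^{-1}] = (L+L')[\fj^{-1}],
\]
using condition \ref{item:def-integral-models} of Definition \ref{def:integral-models-of-motives} applied to $L$ and $L'$.

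There is no substantive obstacle; the argument is a direct application of maximality combined with the fact that the conditions defining an $R$-integral model are stable under taking sums. The only point that merits a brief word is the compatibility of the pullback $\tau^*$ with the formation of $L+L'$, which is immediate once one recalls that $\tau^*$ is a right adjoint of the restriction of scalars along $\tau$ (in particular preserves colimits, hence sums) and that the images of $\tau^*L$ and $\tau^*L'$ in $\tau^*M$ sum to the image of $\tau^*(L+L')$.
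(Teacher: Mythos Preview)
Your proof is correct and follows the same route as the paper: show that the sum $L+L'$ of two $R$-integral models is again an $R$-integral model, then use maximality to force $L+L'=L$. One small slip in your final paragraph: the pullback $\tau^*$ (extension of scalars) is the \emph{left} adjoint of restriction along $\tau$, not the right adjoint; it is left adjoints that preserve colimits. This does not affect the argument, since the identity $\tau^*(L+L')=\tau^*L+\tau^*L'$ inside $\tau^*M$ already follows from your explicit description (or simply from right-exactness of the tensor product).
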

\begin{proof}
Given $L_1$ and $L_2$ two $R$-integral models, their sum $L_1+L_2$ again defines an $R$-integral model. Hence, if $L_1$ is maximal, the inclusion $L_1\subseteq L_1+L_2$ is not strict: we deduce $L_1+L_2=L_1$, then $L_2\subset L_1$.
\end{proof}

As for Frobenius spaces, we have the notion of a good sublattice. 
\begin{Definition}\label{def:good-models-of-motives}
We define an \emph{$R$-good sublattice $L$ for $\underline{M}$} to be a finitely generated sub-$A\otimes R$-module of $M$ such that $\tau_M(\tau^*L)[\fj^{-1}]=L[\fj^{-1}]$. We say that $L$ is maximal if it is not strictly contained in any other $R$-good sublattice of $\underline{M}$.
\end{Definition}

Following the argument given in the proof of Proposition \ref{prop:maximal-good-model-Frob-space}, we obtain:
\begin{Lemma}\label{lem:existence-of-R-good-models}
Assume that there exists a maximal $R$-integral model for $\underline{M}$. Then, a maximal $R$-good sublattice for $\underline{M}$ exists and is unique.
\end{Lemma}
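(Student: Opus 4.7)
The plan is to mimic the proof of Proposition~\ref{prop:maximal-good-model-Frob-space} in the slightly more general setting of $A$-motives. Let $M_R$ denote the maximal $R$-integral model for $\underline{M}$, whose existence we assume. The first key observation is that every $R$-good model $L$ for $\underline{M}$ is automatically contained in $M_R$: indeed, $L + M_R$ is a finite sub-$A \otimes R$-module of $M$, it still generates $M$ over $A \otimes S$ (since $M_R$ does), and $\tau_M(\tau^*(L + M_R)) \subset \tau_M(\tau^*L)[\fj^{-1}] + \tau_M(\tau^*M_R)[\fj^{-1}] \subset (L+M_R)[\fj^{-1}]$. Hence $L + M_R$ is an $R$-integral model, and the maximality of $M_R$ forces $L + M_R = M_R$, i.e.\ $L \subset M_R$.

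Next I would observe that the collection of $R$-good models is filtered: the zero module is good, and if $L_1$ and $L_2$ are good, then $L_1 + L_2$ is finitely generated and satisfies
\[
\tau_M(\tau^*(L_1+L_2))[\fj^{-1}] = \tau_M(\tau^*L_1)[\fj^{-1}] + \tau_M(\tau^*L_2)[\fj^{-1}] = L_1[\fj^{-1}] + L_2[\fj^{-1}] = (L_1+L_2)[\fj^{-1}].
\]
Therefore the sum $M_{\text{good}} := \sum_L L$, taken over all $R$-good models, is an $A\otimes R$-submodule of $M_R$, and, since $\tau^*$ commutes with filtered colimits of submodules, it satisfies $\tau_M(\tau^*M_{\text{good}})[\fj^{-1}] = M_{\text{good}}[\fj^{-1}]$. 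It remains to check $M_{\text{good}}$ is finitely generated.

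For this I would use that $A\otimes R$ is Noetherian in the settings where the lemma is applied (both $A$ and $R$ being Noetherian, $A$ even finitely generated over $\bF$). Since $M_R$ is finitely generated over the Noetherian ring $A\otimes R$, so is its submodule $M_{\text{good}}$. Any finite generating set of $M_{\text{good}}$ lies in some single good model $L_0$ by the filteredness established above, and this forces $M_{\text{good}} = L_0$ to be a single $R$-good model. By construction it contains every $R$-good model, hence is maximal, and its uniqueness is automatic (it is the unique element that contains all others, exactly as in Lemma~\ref{lem:uniqueness-of-maximal-integral}).

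The only real obstacle is the finite generation step, which rests on Noetherianity of $A\otimes R$; without this, one could only produce $M_{\text{good}}$ as an ascending union of good models rather than a single one. Since the lemma will be invoked with $R$ either $\cO_\fp$ or a Dedekind domain, this is not an actual obstruction in the applications.
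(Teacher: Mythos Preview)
Your proof is correct and follows essentially the same line as the paper, which simply refers back to the proof of Proposition~\ref{prop:maximal-good-model-Frob-space}: every good model sits inside $M_R$ because their sum is an integral model, the union of all good models is therefore bounded above by $M_R$ and is finitely generated by Noetherianity of $A\otimes R$, and this union inherits the good-model equality $\tau_M(\tau^*U)[\fj^{-1}]=U[\fj^{-1}]$. Your extra step, locating a finite generating set inside a single $L_0$, is not needed once you know $M_{\text{good}}$ is finitely generated (it is then already a good model by definition), but it does no harm; and your caveat about Noetherianity is exactly the implicit hypothesis the paper's argument relies on as well.
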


We continue this section by recording additional properties of maximal $R$-models in the general situation. Those will eventually by useful in Subsection \ref{subsec:integral-part} for the proof of Theorem \ref{mthm:Global-Integral} (Theorem \ref{thm:main2} in the text) when specializing to $R\subset S$ being the inclusion of a Dedekind domain in its fraction field.\\ 

Let $\underline{M}$ be an $A$-motive over $S$ and suppose that it admits a maximal integral $R$-integral model denoted by $M_R$.
\begin{Proposition}\label{prop:finitely-generated-stable-module-model}
Let $N$ be a finitely generated sub-$A\otimes R$-module of $M$ such that $\tau_M(\tau^*N)\subset N[\fj^{-1}]$. Then, $N\subset M_{R}$. In particular, any element $m\in M$ such that $\tau_M(\tau^*m)=m$ belongs to $M_{R}$.
\end{Proposition}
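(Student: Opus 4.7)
The plan is very short. Consider the sum $N + M_{R}$ inside $M$. Since both $N$ and $M_{R}$ are finitely generated over $A\otimes R$, so is $N + M_{R}$. The containment $M_{R} \subset N + M_{R}$ together with the fact that $M_{R}$ generates $M$ over $A\otimes S$ shows that $N + M_{R}$ also generates $M$ over $A\otimes S$. Since $\tau^*$ commutes with sums of submodules, we compute
\[
\tau_M\bigl(\tau^*(N+M_{R})\bigr) \;=\; \tau_M(\tau^*N) + \tau_M(\tau^*M_{R}) \;\subset\; N[\fj^{-1}] + M_{R}[\fj^{-1}] \;=\; (N+M_{R})[\fj^{-1}].
\]
Thus $N + M_{R}$ satisfies both conditions of Definition \ref{def:integral-models-of-motives} and is an $R$-integral model of $\underline{M}$. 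By Lemma \ref{lem:uniqueness-of-maximal-integral}, the maximality of $M_{R}$ forces $N + M_{R} \subset M_{R}$, so $N \subset M_{R}$.

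For the second assertion, let $m \in M$ satisfy $\tau_M(\tau^*m) = m$. Set $N := (A\otimes R)\cdot m$, which is finitely generated. Since $\tau^*N$ is generated over $A\otimes R$ by $\tau^*m$, we get $\tau_M(\tau^*N) = (A\otimes R)\cdot \tau_M(\tau^*m) = (A\otimes R)\cdot m = N \subset N[\fj^{-1}]$. The first part of the proposition then yields $m \in N \subset M_{R}$.

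There is no real obstacle here; the argument is simply an application of the maximality property, so the only point to verify carefully is that the two defining conditions of an $R$-integral model are stable under taking finite sums of submodules, which is what the computation above records.
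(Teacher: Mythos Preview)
Your proof is correct and follows essentially the same route as the paper: form the sum $N+M_R$, observe it is an $R$-integral model, and conclude by maximality via Lemma~\ref{lem:uniqueness-of-maximal-integral}. The paper's own argument is just a terser version of what you wrote, and your treatment of the ``in particular'' clause (taking $N=(A\otimes R)\cdot m$) makes explicit what the paper leaves implicit.
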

\begin{proof}
It suffices to notice that the module $L$ generated by $M_{R}$ and $N$ over $A\otimes R$ is an $R$-model for $\underline{M}$, and hence $N\subset L\subset M_{R}$.
\end{proof}

\begin{Corollary}\label{cor:(id-tau)(M0)=(id-tau)(M)capMO}
We have $(\id-\tau_M)(M_{R})=(\id-\tau_M)(M)\cap M_{R}[\fj^{-1}]$.
\end{Corollary}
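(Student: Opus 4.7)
The proof should be essentially a one-paragraph argument, deducing the result directly from the preceding Proposition \ref{prop:finitely-generated-stable-module-model}. The direction $\subseteq$ is immediate: for $m\in M_R$, the element $(\id-\tau_M)(m) = m - \tau_M(\tau^*m)$ lies in $M_R + M_R[\fj^{-1}] = M_R[\fj^{-1}]$ by the defining property \ref{item:def-integral-models} of an $R$-model, and it is by construction in $(\id-\tau_M)(M)$.

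For the reverse inclusion $\supseteq$, I would take an element $x \in (\id-\tau_M)(M) \cap M_R[\fj^{-1}]$ and write $x = m - \tau_M(\tau^*m)$ for some $m\in M$. The plan is to enlarge $M_R$ by $m$ and show that the enlargement is still an $R$-integral model, so that maximality (via Proposition \ref{prop:finitely-generated-stable-module-model}) forces $m\in M_R$. Concretely, let
\[
N := M_R + (A\otimes R)\cdot m \subset M.
\]
Since $M_R$ is finitely generated over $A\otimes R$, so is $N$; and since $M_R$ already generates $M$ over $A\otimes S$, so does $N$. It remains to verify stability: on the one hand $\tau_M(\tau^*M_R) \subset M_R[\fj^{-1}] \subset N[\fj^{-1}]$, and on the other hand
\[
\tau_M(\tau^*m) = m - x \in N + M_R[\fj^{-1}] \subset N[\fj^{-1}],
\]
where we use that $x \in M_R[\fj^{-1}] \subset N[\fj^{-1}]$ by assumption.

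Thus $N$ is an $R$-integral model of $\underline{M}$, so Proposition \ref{prop:finitely-generated-stable-module-model} gives $N\subset M_R$, hence $m\in M_R$, and therefore $x = (\id-\tau_M)(m) \in (\id-\tau_M)(M_R)$. I do not foresee a real obstacle here; the only point to be a little careful about is the verification that $N$ lands in $N[\fj^{-1}]$ under $\tau_M\circ \tau^*$, and in particular that $x$—which is a priori only in $M_R[\fj^{-1}]$, not in $M_R$—is nonetheless absorbed into $N[\fj^{-1}]$, as it is. Once this is seen, the conclusion is immediate from the previous proposition.
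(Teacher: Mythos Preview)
Your proof is correct and is essentially the same as the paper's: both take a preimage $m\in M$ of the given element, form the module generated by $M_R$ and $m$, verify it is finitely generated and $\tau_M$-stable, and then invoke maximality (via Proposition~\ref{prop:finitely-generated-stable-module-model}) to conclude $m\in M_R$. The only minor difference is that you also check that $N$ generates $M$ over $A\otimes S$, which is not needed since Proposition~\ref{prop:finitely-generated-stable-module-model} only requires finite generation and stability.
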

\begin{proof}
The inclusion $(\id-\tau_M)(M_{R})\subset (\id-\tau_M)(M)\cap M_{R}[\fj^{-1}]$ is clear. Conversely, let $m \in M_{R}[\fj^{-1}]$ and let $n\in M$ be such that $m=n-\tau_M(\tau^*n)$. The sub-$A\otimes R$-module $\langle M_{R},n\rangle$ of $M$ generated by elements of $M_{R}$ together with $n$ over $A\otimes R$ is an $R$-model for $\underline{M}$. In particular, $\langle M_{R},n\rangle\subset M_R$ and $n\in M_R$. 
\end{proof}

We end this subsection with a remark on the assignment $\underline{M}\mapsto M_R$. For it to be well-defined assume that the inclusion $R\subset S$ is such that every object in $\cM_S$ admits a maximal $R$-integral model (this is the case when $R\subset S$ is the inclusion of a Dedekind domain into its field of fractions, as will be shown below in Proposition \ref{prop-existence-of-maximal-integral-model-global}).
\begin{Corollary}\label{cor:integral-model-functor}
Let $f:\underline{M}\to\underline{N}$ be a morphism in $\cM_S$. Then $f(M_{R})\subset N_{R}$. In particular, the assignment $\underline{M}\mapsto M_{R}$ is functorial.
\end{Corollary}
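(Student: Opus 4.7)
The plan is to apply Proposition \ref{prop:finitely-generated-stable-module-model} to the submodule $f(M_R)$ inside $N$. First I would verify that $f(M_R)$ satisfies the hypotheses of that proposition, namely that it is a finitely generated sub-$A\otimes R$-module of $N$ stable under $\tau_N$ (up to $\fj^{-1}$). Finite generation is immediate since $M_R$ is finitely generated over $A\otimes R$ and $f$ is $A\otimes S$-linear, hence $A\otimes R$-linear, so $f(M_R)$ is a finitely generated sub-$A\otimes R$-module of $N$.

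The key step is the stability condition. Since $f:\underline{M}\to\underline{N}$ is a morphism in $\cM_S$, the identity $f\circ\tau_M=\tau_N\circ\tau^*f$ holds on $(\tau^*M)[\fj^{-1}]$, and since $M_R$ is an $R$-model we have $\tau_M(\tau^*M_R)\subset M_R[\fj^{-1}]$. Composing with $f$ yields
\[
\tau_N(\tau^*f(M_R)) \;=\; f(\tau_M(\tau^*M_R)) \;\subset\; f(M_R[\fj^{-1}]) \;=\; f(M_R)[\fj^{-1}],
\]
which gives the required stability. By Proposition \ref{prop:finitely-generated-stable-module-model} applied to $\underline{N}$ and the submodule $f(M_R)$, we conclude $f(M_R)\subset N_R$.

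For functoriality, once the inclusion $f(M_R)\subset N_R$ is established, the assignment $\underline{M}\mapsto M_R$ together with $f\mapsto f|_{M_R}$ clearly preserves identities and compositions, defining a functor from $\cM_S$ to the category of sub-$A\otimes R$-modules (or equivalently, into a suitable category of integral models). I do not anticipate any genuine obstacle here: the argument is essentially a direct application of Proposition \ref{prop:finitely-generated-stable-module-model}, the only subtlety being to observe that the morphism condition $f\circ\tau_M=\tau_N\circ\tau^*f$, though stated on localizations at $\fj$, is exactly what is needed to transport the stability property across $f$.
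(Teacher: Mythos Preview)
Your proof is correct and is precisely the argument the paper intends: the corollary is stated without proof immediately after Proposition~\ref{prop:finitely-generated-stable-module-model}, and your application of that proposition to the finitely generated $\tau_N$-stable submodule $f(M_R)\subset N$ is exactly the implicit reasoning.
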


\subsubsection*{Existence and first properties over DVR}
We examine the existence of maximal models over discrete valuation rings. Let $(\cO_E,v_E)$ be a discrete valuation ring with maximal ideal $\fp=\fp_E$ and fraction field $E$. We fix $\varpi\in \fp$ a uniformizer. We assume that $\kappa:A\to \cO_E$ is such that $\kappa^{-1}(\fp)$ admits a non zero element. In this subsection, we shall be concerned by models of $A$-motives in the situation where $R\subset S$ is the inclusion $\cO_E\subset E$.\\

Let $\underline{M}$ be an $A$-motive over $E$. In order to prove the existence of a maximal integral model for $\underline{M}$, we start with a series of lemmas. Let $L$ and $L'$ be two finitely generated sub-$A\otimes \cO_E$-modules of $M[\fj^{-1}]$ and assume that $L'$ generates $M[\fj^{-1}]$ over $(A\otimes E)[\fj^{-1}]$.
\begin{Lemma}\label{lem:existence-of-index}
There exists an integer $k$ such that $\varpi^kL\subset L'[\fj^{-1}]$.
\end{Lemma}
\begin{proof}
We have $L\subset M[\fj^{-1}]=\displaystyle \bigcup_{k\geq 0}\varpi^{-k}L'[\fj^{-1}]$ (colimits commute). In particular, the sequence $(L\cap \varpi^{-k}L'[\fj^{-1}])_{k\geq 0}$ defines an ascending chain of sub-$A\otimes \cO_E$-modules of $L$. As $L$ is Noetherian, this chain eventually stabilizes, from which we deduce $L\subset \varpi^{-k}L'[\fj^{-1}]$ for $k$ large enough. 
\end{proof}

We denote by $[L:L']$ the smallest integer $k$ for which Lemma \ref{lem:existence-of-index} holds. Before listing important properties of this numerical invariant, the following is needed:
\begin{Lemma}\label{lem:intersection-projective}
Let $T$ be a flat $A\otimes \cO_E$-module and let $N:=T\otimes_{\cO_E}E$. Inside $N[\fj^{-1}]$, 
\begin{enumerate}
\item\label{item:multiplication-by-a-intersection} For all $a\in A$, we have $T\cap aN=aT$.
\item\label{item:j-intersection} We have  $T=T[\fj^{-1}]\cap N$.
\end{enumerate}
\end{Lemma}
\begin{proof}
We prove \ref{item:multiplication-by-a-intersection}: if $t\in N$ and $a\in A$ are such that ${(a\otimes 1)t\in T}$, then $t\in T$. Indeed, the map $T/(a\otimes 1)T\to N/(a\otimes 1)N$ arises as 
\[
T\otimes_{A\otimes \cO_E}(A/aA\otimes \cO_E\hookrightarrow A/aA\otimes E),
\]
and hence is injective by flatness of $T$.

We turn to \ref{item:j-intersection}. Let $c$ be an element of $T[\fj^{-1}]\cap N$. Let $a\in \kappa^{-1}(\fp)$ be non zero (which exists by our assumption on $\kappa$). Then, there exists $u,v\geq 0$ such that $(1\otimes \varpi^u)c$ and $(a\otimes 1-1\otimes \kappa(a))^v c$ are in $T$. Taking $w\geq 0$ such that $q^w\geq u,v$, we have
\[
(a^{q^w}\otimes 1)c=(a\otimes 1-1\otimes \kappa(a))^{q^w}c+(1\otimes \kappa(a)^{q^{w}})c\in T.
\] 
By part \ref{item:multiplication-by-a-intersection} we have $(a^{q^w}\otimes 1)c\in T\cap a^{q^w}N=a^{q^w}T$ and then $c\in T$.
\end{proof}

\begin{Lemma}\label{lem:properties-index}
Let $L,L',L''$ be finitely generated sub-$A\otimes \cO_E$-modules of $M[\fj^{-1}]$ which generates it over $(A\otimes E)[\fj^{-1}]$. 
\begin{enumerate}[label=$(\alph*)$]
\item\label{item:negative} If $L\subset L'[\fj^{-1}]$, then $[L:L']\leq 0$,
\item\label{item:inequality-in-chain} $[L:L'']\leq [L:L']+[L':L'']$.
\end{enumerate}
Let $T$ be a finite flat sub-$A\otimes \cO_E$-module of $M$ which generates it over $A\otimes E$. If $L\subset M$ and $L$ generates $M$ over $A\otimes E$, we have
\begin{enumerate}[resume,label=$(\alph*)$]
\item\label{item:frobenius-index} $[\tau_M(\tau^* L):\tau_M(\tau^* T)]=q[L:T]$.
\end{enumerate}
If, in addition, $L$ is a model of $\underline{M}$, then  
\begin{enumerate}[resume,label=$(\alph*)$]
\item\label{item:index-bounded} $\displaystyle [L:T]\leq \frac{[T:\tau_M(\tau^*T)]}{q-1}$.
\end{enumerate}
\end{Lemma}
\begin{proof}
Property \ref{item:negative} is obvious. Writing $k$ and $k'$ for $[L:L']$ and $[L':L'']$ respectively, we have $\varpi^k L\subset L'[\fj^{-1}]$ and $\varpi^{k'} L'\subset L''[\fj^{-1}]$ and hence $\varpi^{k+k'}L\subset L''[\fj^{-1}]$ proving \ref{item:inequality-in-chain}.

We prove \ref{item:frobenius-index}. By Lemma \ref{lem:intersection-projective}, $[L:T]$ is equivalently the smallest integer $k$ such that $\varpi^k L\subset T$. In particular, $\varpi^{qk} \tau_M(\tau^*L)\subset \tau_M(\tau^*T)$ and thus
\[
[\tau_M(\tau^*L):\tau_M(\tau^*L)]\leq qk.
\]
If this inequality was strict, we would have have $\varpi^{qk}\tau_M(\tau^*L)\subset \varpi \tau_M(\tau^*T)[\fj^{-1}]$, hence:
\begin{align*}
\tau_M(\tau^*(\varpi^k L))=\varpi^{qk}\tau_M(\tau^*L) &\subset \left(\varpi \tau_M(\tau^*T)[\fj^{-1}]\right)\cap \tau_M(\tau^*M) \\
&=\varpi \left(\tau_M(\tau^*T)[\fj^{-1}]\cap \tau_M(\tau^*M) \right) \\
&=\varpi \tau_M((\tau^*T)[\fj^{-1}]\cap \tau^*M)\\
&=\varpi \tau_M(\tau^* T) \\
&\subset \tau_M(\tau^* (T\otimes_{\cO_E} \varpi^{1/q}\cO_{E^{1/q}}))
\end{align*}
where, to pass from the first to the second line we used that $\tau_M(\tau^* M)$ is an $E$-vector space, from second to third that $\tau_M$ is injective and from third to fourth we applied Lemma \ref{lem:intersection-projective} to the modules $\tau^* T\subset \tau^*M$. Above, we denoted by $E^{1/q}$ the field of $q$-roots of elements of $E$ and by $\cO_{E^{1/q}}$ its ring of integers. Using the flatness of $T$, we obtain:
\[
\varpi^k L\subset (T\otimes_{\cO_E}\varpi^{1/q}\cO_{E^{1/q}})\cap T=\varpi T
\]
which contradicts $[L:T]=k$. 

To conclude, we prove \ref{item:index-bounded}:
\begin{align*}
q[L:T]&=[\tau_M(\tau^* L):\tau_M(\tau^* T)] \quad(\text{by~\ref{item:frobenius-index}}) \\
&\leq [\tau_M(\tau^* L):L]+[L:\tau_M(\tau^* T)] \quad(\text{by~\ref{item:inequality-in-chain}}) \\
&\leq [L:\tau_M(\tau^* T)] \quad(\text{by~\ref{item:negative}}) \\
&\leq [L:T]+[T:\tau_M(\tau^* T)] \quad(\text{by~\ref{item:inequality-in-chain}}).
\end{align*}
and therefore $(q-1)[L:T]\leq  [T:\tau_M(\tau^* T)]$. 
\end{proof}

Let $\underline{M}$ be an $A$-motive over $E$ of characteristic morphism $\kappa$. We are ready to prove the existence of the maximal $\cO_E$-model for $\underline{M}$.
\begin{Proposition}\label{prop-existence-of-maximal-integral-model-local}
A maximal $\cO_{E}$-model for $\underline{M}$ exists and is unique. In particular, a maximal $\cO_E$-good sublattice for $\underline{M}$ exists and is unique.
\end{Proposition}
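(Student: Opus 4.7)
Uniqueness of the maximal $\cO_E$-model is immediate from Lemma \ref{lem:uniqueness-of-maximal-integral}: the sum of any two $\cO_E$-models is again an $\cO_E$-model, so any maximal one automatically contains every other. For existence, Proposition \ref{prop:existence-of-S-models} already supplies at least one $\cO_E$-model $L_0$, since $E=\cO_E[\varpi^{-1}]$ is a localization of $\cO_E$. The real task is to show that the union $\mathcal{L}$ of all $\cO_E$-models of $\underline{M}$ is finitely generated over $A\otimes \cO_E$; granted this, $\mathcal{L}$ is itself the desired maximal $\cO_E$-model.

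The plan is to bound $\mathcal{L}$ by reducing $\underline{M}$ modulo powers of a well-chosen ideal and then invoking the Frobenius space theory of Subsection \ref{subsec:integral-models-of-frobenius-modules}. I would choose a maximal ideal $\ell$ of $A$ with $\kappa(\ell)\cO_E = \cO_E$; this is possible because at most one maximal ideal of $A$ can fail this condition (namely $\kappa^{-1}(\fp)$ on the occasion that it happens to be maximal, where $\fp$ is the maximal ideal of $\cO_E$). Because $\ell$ is then comaximal with $\fj$ in $A\otimes \cO_E$, the isomorphism $\tau_M$ descends, for every $n\geq 1$, to an $A/\ell^n\otimes E$-linear isomorphism
\[
\bar\tau_M : \tau^*(M/\ell^n M)\stackrel{\sim}{\longrightarrow} M/\ell^n M
\]
without any need to invert $\fj$. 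Regarding $M_n := M/\ell^n M$ as a finite-dimensional $E$-vector space equipped with the $q$-linear endomorphism induced by $\bar\tau_M$ produces a Frobenius space over $E$, and Proposition \ref{prop:maximal} then yields a unique maximal integral $\cO_E$-lattice $V_n \subset M_n$. The reduction modulo $\ell^n$ of any $\cO_E$-model of $\underline{M}$ is a $\bar\tau_M$-stable $\cO_E$-lattice in $M_n$, and is therefore contained in $V_n$.

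The hard part will be turning these modular bounds into a single bound on $\mathcal{L}$ inside $M$. Letting $\pi_n:M\to M_n$ denote the projection, one has $\mathcal{L}\subset \bigcap_n \pi_n^{-1}(V_n)$, and the intended argument shows that this intersection is already a finitely generated $A\otimes \cO_E$-submodule of $M$. The strategy is to pass through the $\ell$-adic completion: the inverse limit $\varprojlim_n V_n$ is finitely generated over the Noetherian complete ring $\cA_{\ell}(\cO_E)$, and intersecting back with $M$ inside $M\otimes_{A\otimes E}\cA_{\ell}(E)$ extracts a finitely generated sub-$A\otimes \cO_E$-module of $M$ that contains $\mathcal{L}$. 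Noetherianity of $A\otimes \cO_E$ then forces $\mathcal{L}$ itself to be finitely generated, completing the proof of existence. The final assertion concerning the maximal $\cO_E$-good model follows at once from Lemma \ref{lem:existence-of-R-good-models}.
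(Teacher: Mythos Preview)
Your approach is genuinely different from the paper's, and the crucial step is not justified. The paper never goes through Frobenius spaces to prove existence; it gives a short direct valuation argument. Fixing a basis $\mathbf{m}$ of $M\otimes_{A\otimes E}\Quot(A\otimes E)$, one writes the matrix $F_M$ of $\tau_M$ in $\mathbf{m}$, and for any $\cO_E$-model $T$ expresses generators of $T$ via a matrix $P$ over $A\otimes E$. The stability condition $\tau_M(\tau^*T)\subset T[\fj^{-1}]$ yields $P^{(1)}F_M=NP$ with $N$ having entries in $(A\otimes\cO_E)[\fj^{-1}]$. Taking the valuation $v$ along the special fibre $C\times\Spec k_E$ gives $qv(P)\geq v(P)+v(F_M^{-1})$, hence $v(P)\geq v(F_M^{-1})/(q-1)$. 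This bounds every $\cO_E$-model inside a single finitely generated $A\otimes\cO_E$-module, and Noetherianity finishes. Only \emph{after} this does the paper develop the comparison with Frobenius spaces (Theorem~\ref{thm:integral-model:motive-to-frobenius-module}), whose proof already uses that $M_{\cO}$ exists and is locally free.

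Your argument asserts that $\varprojlim_n V_n$ is finitely generated over $\cA_{\ell}(\cO_E)$, but this is exactly the point that needs work and is not automatic. Each $V_n$ is the maximal integral lattice of $(M/\ell^n M,\bar\tau_M)$, and nothing you have said prevents the $V_n$ from drifting arbitrarily far from the reduction of a fixed model $L_0$; concretely, you would need a uniform $s$ with $V_n\subset\varpi^{-s}(L_0+\ell^n M)/\ell^n M$ for all $n$. By Proposition~\ref{prop-bound-integral-lattice} this reduces to bounding the ranges of the reductions of $L_0$, which is the content of Lemma~\ref{lem:Mn-is-bounded}---but that lemma's proof relies on $M_{\cO}$ being finite projective (Proposition~\ref{prop-integral-models-are-locally-free-local}), which in turn presupposes the very existence you are trying to prove. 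Without this bound, neither the finite generation of the limit nor the subsequent claim that ``intersecting back with $M$'' yields a finitely generated $A\otimes\cO_E$-module is clear. In short, you have inverted the paper's logical order and in doing so skipped the one genuinely non-formal estimate; the paper's valuation inequality is that estimate.
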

\begin{proof}
It is enough to show existence of a maximal integral model. Fix a flat and generating sub-$A\otimes \cO_E$-module\footnote{Such a $T$ exists: as $M$ is finite projective over $A\otimes E$, it is a direct summand of free $A\otimes E$-module, say $(A\otimes E)^s$. Then one could take $T=(A\otimes \cO_E)^s\cap M$.} $T$ of $M$. Denote by $c_T$ the integer
\[
c_T:=\left\lfloor-\frac{[T:\tau_M(\tau^*T)]}{q-1} \right\rfloor.
\]
Let $U$ be the $A\otimes \cO_{E}$-module given by the sum over all the $\cO_{E}$-models for $\underline{M}$. As $U$ is non-empty by Proposition \ref{prop:existence-of-S-models}, it generates $M$ over $E$. We also have $\tau_M(\tau^*U)\subset U[\fj^{-1}]$. By Lemma \ref{lem:properties-index}\ref{item:index-bounded}, any $\cO_{E}$-model $L$ of $\underline{M}$ is a submodule of $\varpi^{c_T}T$ and thus $U\subset \varpi^{c_T}T$. Since $A\otimes \cO_E$ is Noetherian, $U$ is finitely generated. Hence $U$ is a model of $\underline{M}$ which is maximal by construction. 
\end{proof}

\begin{Definition}
We denote by $M_{\cO}$ the unique maximal $\cO_E$-integral model of $\underline{M}$, and by $M_{\text{good}}$ the maximal $\cO_E$-good sublattice of $\underline{M}$.
\end{Definition}

As announced earlier, maximal $\cO_E$-models are projective as we show next:
\begin{Theorem}\label{thm:integral-models-are-locally-free-local}
Both $M_{\cO}$ and $M_{\operatorname{good}}$ are projective over $A\otimes \cO_{E}$.
\end{Theorem}

This will result from the following general statement:
\begin{Proposition}\label{prop:intersection-equivalent-to-flat}
Let $N$ be a finite sub-$A\otimes \cO_E$-module of a projective $A\otimes E$-module $M$ which generates $M$ over $E$. We claim that the following are equivalent:
\begin{enumerate}[label=$(\roman*)$]
\item\label{item:intersection} For all $a\in A$, the equality $N\cap aM=aN$ holds inside $M$, and
\item\label{item:flat} the module $N$ is projective.
\end{enumerate}
\end{Proposition}

Before proving the proposition, we recall a useful fact from dimension theory which generalizes the well-known statement that finite torsion-free modules over Dedekind domains are projective:
\begin{Lemma}\label{lem:Krull-and-Proj-dim}
Let $R$ be a Noetherian regular integral domain of Krull dimension $n$. Let $M$ be a finite torsion-free $R$-module. Then, the projective dimension of $M$ is $<n$.
\end{Lemma}
\begin{proof}
We know that the projective dimension $\operatorname{pd}_R(M)$ of $M$ is $\leq n$. Suppose it is $n$. Then, there exists a maximal ideal $\fm$ of $R$ such that $\operatorname{pd}_{R_{\fm}}(M_{\fm})=n$. By the Auslander--Buchsbaum formula \cite[Thm. 19.9]{eisenbud}, we have:
\[
\operatorname{depth}(M_\fm)=\operatorname{depth}(R_\fm)-n\leq 0
\]
and then $\operatorname{depth}(M_\fm)=0$. This means that $M_\fm$ is torsion, a contradiction since $M$ is not.
\end{proof}

\begin{proof}[Proof of Proposition \ref{prop:intersection-equivalent-to-flat}]
That \ref{item:flat} implies \ref{item:intersection} corresponds to Lemma \ref{lem:intersection-projective}, point \ref{item:multiplication-by-a-intersection}.

We focus on the converse. It is enough to show that $N$ is flat, which is equivalent to the injectivity of the map $\mu:I\otimes_{A\otimes \cO_E} N\to IN$ for all ideal $I$ of $A\otimes \cO_E$ (\emph{cf}. \cite[00HD]{stack}). As $M$ is flat over $A\otimes E$, we have $IM\cong I\otimes_{A\otimes \cO_E}M$ and the injectivity of $\mu$ is equivalent to that of $I\otimes_{A\otimes \cO_E} N\to I\otimes_{A\otimes \cO_E} M$. In that respect, it is sufficient to show the vanishing of the module:
\begin{equation}\label{eq:Tor}
\Tor_1^{A\otimes \cO_E}(I,M/N).
\end{equation}
We shall derive it by the following pair of lemmas:
\begin{Lemma}\label{lem:no-torsion-in-Tor}
Let $I$ be an ideal of $A\otimes \cO_E$ and let $P$ be an $A\otimes \cO_E$-module with no $A$-torsion. Then $\Tor^{A\otimes \cO_E}_1(I,P)$ has no $A$-torsion.
\end{Lemma}
\begin{proof}
As $A\otimes \cO_E$ has Krull dimension $2$ and $I$ is a finite torsion-free $A\otimes \cO_E$-module, Lemma \ref{lem:Krull-and-Proj-dim} applies to show that $I$ has projective dimension $\leq 1$. By the dimension theorem \cite[Prop. 4.15]{weibel}, projective and Tor dimension coincide, hence $\Tor_2^{A\otimes \cO_E}(I,-)=(0)$. On the other-hand, given $a\in A$ non zero, the sequence $0\to P\xrightarrow{a}P \to P/aP \to 0$ is exact. The induced Tor-sequence:
\[
(0)=\Tor_2^{A\otimes \cO_E}(I,P/aP) \to \Tor_1^{A\otimes \cO_E}(I,P) \xrightarrow{r} \Tor_1^{A\otimes \cO_E}(I,P)
\]
implies that multiplication-by-$a$ on $\Tor_1^{A\otimes \cO_E}(I,P)$ is injective; \emph{i.e.} that it has no $a$-torsion. This concludes the proof.
\end{proof}

\begin{Lemma}\label{lem:Tor-has-torsion}
For any $A\otimes \cO_E$-module $P$, we have $K\otimes_A\Tor_1^{A\otimes \cO_E}(I,P)=(0)$. 
\end{Lemma}
\begin{proof}
As localization commutes with Tor, we have 
\[
K\otimes_A\Tor_1^{A\otimes \cO_E}(I,P) \cong \Tor_1^{K\otimes \cO_E}(K\otimes_A I,K\otimes_A P).
\]
But $K\otimes \cO_E$ is a Dedekind domain, thus $K\otimes_A I$ is projective and the right-hand side is zero.
\end{proof}

We conclude the proof by showing that \eqref{eq:Tor} is zero. Observe that \ref{item:intersection} implies that $M/N$ has no $A$-torsion. In particular Lemma \ref{lem:no-torsion-in-Tor} implies that \eqref{eq:Tor} has no torsion. Lemma \ref{lem:Tor-has-torsion} then implies that \eqref{eq:Tor} vanishes. 
\end{proof}

To deduce Theorem \ref{thm:integral-models-are-locally-free-local} from Proposition \ref{prop:intersection-equivalent-to-flat}, we are left to show:
\begin{Lemma}\label{lem:MO-intersects-aM-local}
Let $\fa \subset A$ be an ideal. Then $M_{\cO}\cap \fa M=\fa M_{\cO}$ and $M_{\operatorname{good}}\cap \fa M=\fa M_{\operatorname{good}}$.
\end{Lemma}
\begin{proof}
The proofs being similar, we solely explicit it in the case of $M_{\cO}$. The inclusion $\supset$ is clear. We assume $\fa\neq 0$ and consider the sub-$A$-motive $(\fa M,\tau_M)$ of $\underline{M}$. If $T$ is an $\cO_{E}$-model for $(\fa M,\tau_M)$, then $\fa^{-1}T$ is an $\cO_{E}$-model for $\underline{M}$ and we have $\fa^{-1}T\subset M_{\cO}$. This implies that $\fa M_{\cO}$ is the maximal $\cO_{E}$-model of $(\fa M,\tau_M)$ so that $(\fa M)_{\cO}=\fa (M_{\cO})$. Therefore, the inclusion $M_{\cO}\cap \fa M\subset \fa M_{\cO}$ follows from the fact that $M_{\cO}\cap \fa M$ is an $\cO_{E}$-model for $(\fa M,\tau_M)$.
\end{proof}

\begin{proof}[Proof of Theorem \ref{thm:integral-models-are-locally-free-local}]
For all $a\in A$, we have $M_{\cO}\cap aM=a M_{\cO}$ (resp. $M_{\operatorname{good}}\cap a M=a M_{\operatorname{good}}$) by Lemma \ref{lem:MO-intersects-aM-local}. Since $M_{\cO}$ generates $M$ over $E$ (resp. $M_{\operatorname{good}}$ generates the projective $A\otimes E$-module $M_{\operatorname{good}}\otimes_{\cO_E}E$), we deduce from Proposition \ref{prop:intersection-equivalent-to-flat} that $M_{\cO}$ (resp. $M_{\operatorname{good}}$) is projective.
\end{proof}

\begin{Remark}
Let $\underline{M}$ and $\underline{N}$ be two $A$-motives over $E$, and let $M_{\cO}$ and $N_{\cO}$ be their respective integral models. While the maximal integral model of $\underline{M}\oplus \underline{N}$ is easily shown to be ${M_{\cO}\oplus N_{\cO}}$, it is not true in general that the maximal integral model of $\underline{M}\otimes \underline{N}$ is the image of $M_{\cO}\otimes_{A\otimes \cO_E} N_{\cO}$ in $M\otimes_{A\otimes E} N$. To find a counter-example, we assume $q>2$ and consider $\varpi\in \cO_E$ a uniformizer. We consider the $A$-motive $\underline{M}$ over $E$ where $M=A\otimes E$ and where $\tau_M=\varpi\cdot \mathbf{1}$. The maximal integral model of $\underline{M}$ is $M_{\cO}=A\otimes \cO_E$. However, $\underline{M}^{\otimes (q-1)}$ has $\varpi^{-1}M_{\cO}^{\otimes (q-1)}$ for maximal integral model.  
\end{Remark}

\subsubsection*{Comparison with Frobenius spaces}
As in Subsection \ref{subsec:integral-models-of-frobenius-modules}, $E$ is the fraction field of a discrete valuation ring $(\cO_E,v_E)$ with maximal ideal $\fp=\fp_E$. We assume that $\kappa:A\to \cO_E$ is such that $\kappa^{-1}(\fp)$ admits a non zero element. This implies in particular that $\kappa$ is injective and hence that $\fj (A/\fa\otimes E)=A/\fa\otimes E$ for all non zero ideal $\fa\subset A$. We fix a maximal ideal $\ell$ of $A$. The assumption on $\kappa$ implies $\fj (A/\ell^n\otimes E)=A/\ell^n\otimes E$ for all positive integers $n$.\\

Let $\underline{M}$ be an $A$-motive over $E$. We have canonical isomorphisms
\begin{equation}\label{eq:modulo-m^n-j}
\text{for~all~} n\geq 1:\quad M/\ell^n M \cong M[\fj^{-1}]/\ell^n M[\fj^{-1}]. 
\end{equation}
In particular, $\tau_M$ defines an $A\otimes E$-linear morphism $\tau^*(M/\ell^nM)\to M/\ell^n M$ through the composition
\begin{equation}
\tau^*(M/\ell^nM)\stackrel{\tau_M}{\longrightarrow} M[\fj^{-1}]/\ell^n M[\fj^{-1}]\stackrel{\eqref{eq:modulo-m^n-j}}{\longrightarrow}M/\ell^n M \nonumber
\end{equation}
which we still denote by $\tau_M$. The pair $(M/\ell^n M,\tau_M)$ thusly defined is a Frobenius space over $E$ in the sense of Section \ref{subsec:integral-models-of-frobenius-modules} which we denote by $\underline{M}/\ell^n \underline{M}$. Let $L_n\subset M/\ell^nM$ be its maximal integral model.

\begin{Remark}\label{rmk:frobenius-module-comparison}
We caution the reader, that in general we cannot claim equality between $(M_{\cO}+\ell^nM)/\ell^n M$ and $L_n$. Here is a counter-example.

Suppose that $A=\bF[t]$, so that $A\otimes \cO_E$ is identified with $\cO_E[t]$, and let $\ell=(t)$. Let $\kappa:A\to \cO_E$ be the $\bF$-algebra morphism which maps $t$ to $\varpi$. In this setting, $\fj$ is the principal ideal of $\cO_E[t]$ generated by $(t-\varpi)$. Consider the $A$-motive $\underline{M}:=(E[t],f\cdot \mathbf{1})$ over $E$ where $f=\varpi^{q-1}-\varpi^{q-2}t$. We claim that the maximal integral model of $\underline{M}$ is $\cO_E[t]$. Clearly, $\cO_E[t]$ is an integral model for $\underline{M}$ so that $\cO_E[t]\subset M_{\cO}$. Conversely, by \cite[Thm. 4]{quillen-projective}, $M_{\cO}$ is free of rank one over $\cO_E[t]$. If $h$ generates $M_{\cO}$, there exists $b\in \cO_E[t]$ such that $fh^{(1)}=bh$. For $p\in E[t]$, let $v(p)$ be the infinimum of the valuations of the coefficients of $p$. We have 
\begin{equation}
v(h)\geq -\frac{v(f)}{q-1}=-\frac{q-2}{q-1}>-1 \nonumber
\end{equation}
and $h\in \cO_E[t]$. We get $M_{\cO}\subset \cO_E[t]$.

On the other-hand, the Frobenius space $(M/\ell M,\tau_M)$ is isomorphic to the pair $(\cO_E,\varpi^{q-1}\mathbf{1})$, whose maximal integral model is $\varpi^{-1}\cO_E$, not $\cO_E$.
\end{Remark}

If one wants to compare $M_{\cO}$ with $(L_n)_{n\geq 1}$, then one wishes that $(M_{\cO}+\ell^nM)/\ell^nM$ defines an integral model for $\underline{M}/\ell^n \underline{M}$ for all $n\geq 1$. This is the case in Remark \ref{rmk:frobenius-module-comparison}, although it is not maximal, because the considered $A$-motive $\underline{M}$ is effective. In general, this is not true\footnote{For instance, consider the $t$-motive $(E[t],(t-\varpi)^{-1}\mathbf{1})$ over $E$, whose maximal $\cO_E$-model is $\cO_E[t]$, together with $\ell=(t)$.}. From now on, we assume
\begin{enumerate}[label=$(C_{\ell})$]
\item\label{assumption:ideal-m} The ideal $\ell\subset A$ is such that $\kappa(\ell)\cO_E=\cO_E$.
\end{enumerate}
The above assumption ensures that $\fj (A/\ell^n\otimes \cO_E)=A/\ell^n\otimes \cO_E$ (\emph{e.g.} the proof Proposition \ref{prop-fj-invertible-isocrystal}), and thus that $(M_{\cO}+\ell^nM)/\ell^nM$ is an integral model for $(M/\ell^nM,\tau_M)$.
\begin{Remark}\label{rmk:assumption-always-sat}
Note that there always exists a maximal ideal $\ell$ in $A$ satisfying \ref{assumption:ideal-m}; in fact, any maximal ideal $\ell$ coprime to $\kappa^{-1}(\fp)$ will do.
\end{Remark}
Even though we cannot claim always equality between $(M_{\cO}+\ell^nM)/\ell^n M$ and $L_n$, the data of $L_n$ for all $n\geq 1$ is enough to recover $M_{\cO}$ as we show in the next theorem.
\begin{Theorem}\label{thm:integral-model:motive-to-frobenius-module}
Let $L_n$ be the maximal integral model of the Frobenius space $\underline{M}/\ell^n \underline{M}$. Let $m\in M$. Then $m\in M_{\cO}$ if and only if $m+\ell^nM\in L_n$ for all large enough positive integers $n$. 
\end{Theorem}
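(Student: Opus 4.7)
The forward direction is immediate from maximality of $L_n$: under $(C_\ell)$, $\fj$ becomes trivial modulo $\ell^n$ in $A\otimes\cO_E$, so Lemma~\ref{lem:MO-intersects-aM-local} identifies $(M_\cO+\ell^nM)/\ell^nM$ with the $\tau_M$-stable $\cO_E$-lattice $M_\cO/\ell^nM_\cO$, which is therefore contained in $L_n$.

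For the converse, my plan is to introduce
\[
L := \{m\in M : \exists N\geq 1,\ \forall n\geq N,\ m+\ell^n M\in L_n\}
\]
and show it is an $\cO_E$-integral model of $\underline M$; then maximality of $M_\cO$ will force $L\subseteq M_\cO$, and combined with $M_\cO\subseteq L$ from the forward direction we obtain $L=M_\cO$. First I would check that each $L_n$ is in fact an $(A/\ell^n)\otimes\cO_E$-submodule of $M/\ell^nM$: for $a\in A$, the submodule $aL_n$ is still $\tau_M$-stable (since $\tau(a)=a$) and $L_n+aL_n$ is an $\cO_E$-lattice, so maximality forces $aL_n\subseteq L_n$. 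This upgrades $L$ to an $A\otimes\cO_E$-submodule of $M$, which generates $M$ over $E$ since $M_\cO\subseteq L$. For $\tau_M$-stability, given $m\in L$ and $k\geq 0$ with $\fj^k\tau_M(\tau^*m)\subseteq M$, the identification $M[\fj^{-1}]/\ell^nM[\fj^{-1}]\cong M/\ell^nM$ provided by $(C_\ell)$ shows that the image of $a\tau_M(\tau^*m)$ in $M/\ell^nM$ coincides with $a\tau_M(\tau^*(m+\ell^nM))\in aL_n\subseteq L_n$ for every $a\in\fj^k$ and $n\geq N_m$; hence $\fj^k\tau_M(\tau^*m)\subseteq L$ and $\tau_M(\tau^*m)\in L[\fj^{-1}]$.

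The principal obstacle is to show that $L$ is finitely generated over the Noetherian ring $A\otimes\cO_E$. My plan is to adapt the a~priori bound from the proof of Proposition~\ref{prop-existence-of-maximal-integral-model-local}: fix a basis $\mathbf m$ of $M\otimes_{A\otimes E}\Quot(A\otimes E)$, let $F_M$ be the matrix of $\tau_M$ in $(\tau^*\mathbf m,\mathbf m)$, and consider the finitely generated $A\otimes\cO_E$-submodule
\[
U_0 := \Big\{\textstyle\sum a_i m_i : a_i\in A\otimes E,\ v(a_i)\geq v(F_M^{-1})/(q-1)\Big\}
\]
of $M$, which contains every integral model of $\underline M$. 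Running the same matrix valuation computation $qv(P_n)=v(P_n^{(1)})=v(P_n\bar N_n\bar F_M^{-1})\geq v(P_n)+v(\bar F_M^{-1})$ for a generating matrix $P_n$ of $L_n$ over $\cO_E$---where $\bar F_M$ is the reduction of $F_M$ modulo $\ell^n$, retaining the same special-fiber valuation since $v$ is trivial on $A$---should yield $L_n\subseteq(U_0+\ell^nM)/\ell^nM$. Therefore $L\subseteq\bigcap_{n\geq 1}(U_0+\ell^nM)$, and I would close the argument by showing $\bigcap_n(U_0+\ell^nM)=U_0$ via Krull's intersection theorem applied after localizing at the (finitely many) maximal ideals of $A\otimes\cO_E$ above $\ell$, using the local freeness of $M$ and $M_\cO$ to kill the $\ell$-adic intersection. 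Noetherianity of $A\otimes\cO_E$ then delivers finite generation of $L\subseteq U_0$, completing the proof.
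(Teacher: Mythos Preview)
Your overall architecture matches the paper's exactly: prove the easy direction, define $L$ as the set of elements satisfying the condition, check that $L$ is an $A\otimes\cO_E$-submodule (via the $A$-stability of each $L_n$, which is Lemma~\ref{lem:Ln-module}) that generates $M$ and is $\tau_M$-stable, and then reduce everything to finite generation of $L$. The paper also observes that the sequence $(\tilde L_n+\ell^nM)_n$ is decreasing, which is why your ``$\exists N,\forall n\ge N$'' definition of $L$ agrees with the paper's $\bigcap_{n\ge 1}$.

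The divergence is in the finite-generation step. The paper bounds the \emph{range} of the known integral lattice $(M_\cO+\ell^nM)/\ell^nM$ uniformly in $n$ (Lemma~\ref{lem:Mn-is-bounded}) and then invokes Proposition~\ref{prop-bound-integral-lattice} to conclude $L_n\subset\varpi^{-s}(M_\cO+\ell^nM)/\ell^nM$ for a fixed $s$, whence $L\subset\varpi^{-s}M_\cO$. You instead try to bound $L_n$ directly against $(U_0+\ell^nM)/\ell^nM$ by rerunning the valuation estimate from Proposition~\ref{prop-existence-of-maximal-integral-model-local}. This can be made to work, but two points are handled too casually:
\begin{itemize}
\item The sentence ``$\bar F_M$ retains the same special-fiber valuation since $v$ is trivial on $A$'' hides the real work. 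The object you need to control is the $rd_n\times rd_n$ matrix over $E$ representing $\tau_M^{-1}$ on $M/\ell^nM$, and you need $v$ of its entries bounded below uniformly in $n$. The entries of $F_M^{-1}$ live in $(A\otimes E)[\fj^{-1}]$; under $(C_\ell)$ the $\fj$-denominators become units in $(A/\ell^n)\otimes\cO_E$, and one then checks that passing to the block representation does not decrease the $\varpi$-valuation. This is exactly the analysis carried out in the paper's Lemma~\ref{lem:Mn-is-bounded} (for $F_M$ rather than $F_M^{-1}$, but the argument is the same). So your shortcut lands you back in that lemma.
\item Your Krull step is not sound as stated: $M/U_0$ is \emph{not} finitely generated over $A\otimes\cO_E$, so localizing at primes above $\ell$ does not immediately give $\bigcap_n(U_0+\ell^nM)=U_0$. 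A correct fix is to observe that for some $c$ one has $\varpi^c M\subset U_0$, so it suffices to show $\bigcap_n\big((A\otimes\cO_E)+\ell^n(A\otimes E)\big)=A\otimes\cO_E$; this reduces, after writing $f=\varpi^{-k}g$ with $g\in A\otimes\cO_E$, to the $\ell$-adic separatedness of $A\otimes(\cO_E/\varpi^k)$, which \emph{is} Krull over $A$ since this is a free $A$-module of finite rank.
\end{itemize}
In short, your route is viable and morally equivalent to the paper's, but the two sketched steps each require precisely the kind of uniform-in-$n$ block-matrix bookkeeping that the paper isolates in Lemma~\ref{lem:Mn-is-bounded}, plus a more careful separation argument than ``Krull after localizing.'' The paper's packaging via the range and Proposition~\ref{prop-bound-integral-lattice} is cleaner because it compares $L_n$ to the \emph{known} model $(M_\cO+\ell^nM)/\ell^nM$ rather than to an abstract $U_0$.
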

We start with some lemmas:
\begin{Lemma}\label{lem:Ln-module}
The $\cO_{E}$-module $L_n$ is an $A/\ell^n\otimes \cO_{E}$-module.
\end{Lemma}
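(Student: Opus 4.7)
The plan is to enlarge $L_n$ to a $B$-submodule and invoke maximality. More precisely, let $B := A/\ell^n \otimes \cO_E$, regarded as a subring of $A/\ell^n \otimes E$ that acts on $M/\ell^n M$. I would consider $BL_n$, the $B$-submodule of $M/\ell^n M$ generated by $L_n$, and prove that it is again an integral model for the Frobenius space $(M/\ell^n M, \tau_M)$. By the argument behind Proposition~\ref{prop:maximal} (showing that the maximal integral model contains every integral model, since the sum of two integral models is again integral), this would force $BL_n \subseteq L_n$, giving the desired $B$-module structure on $L_n$.

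Three conditions must be verified for $BL_n$. Finite generation over $\cO_E$ is easy: since $\ell$ is maximal in the Dedekind domain $A$, the quotient $A/\ell^n$ is Artinian of finite $\bF$-dimension, so $B$ is a free $\cO_E$-module of finite rank; hence $BL_n$ is the $\cO_E$-span of finitely many products. The $E$-spanning property is immediate since $L_n \subseteq BL_n$. The only substantive point is the $\tau_M$-stability of $BL_n$. The key observation is that $B$ is stable under the Frobenius $\tau$, which restricts to $\id_A \otimes \Frob_q$ on $A/\ell^n \otimes E$; hence for $b \in B$, the element $b^{(1)} := \tau(b)$ again lies in $B$. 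Using the defining relation $1 \otimes_\tau (bx) = b^{(1)} \cdot (1 \otimes_\tau x)$ in $\tau^*(M/\ell^n M)$ together with the $A \otimes E$-linearity of $\tau_M$, a direct computation gives
\begin{equation*}
\tau_M\bigl(1 \otimes_\tau (bx)\bigr) \;=\; b^{(1)} \cdot \tau_M\bigl(1 \otimes_\tau x\bigr) \;\in\; b^{(1)} L_n \;\subseteq\; BL_n,
\end{equation*}
since $\tau_M(1 \otimes_\tau x) \in L_n$ by the assumed $\tau_M$-stability of $L_n$.

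I do not foresee any serious obstacle. The entire argument rests on the single observation that $\tau$ preserves the subring $B = A/\ell^n \otimes \cO_E$ of $A/\ell^n \otimes E$, which is true by inspection since $\tau$ is $A$-linear and acts as $\Frob_q$ on the coefficient factor. This compatibility is precisely what allows the $B$-module structure to be inherited by the maximal integral model.
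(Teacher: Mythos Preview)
Your proposal is correct and follows essentially the same approach as the paper. The paper works one elementary tensor at a time, showing that $(r\otimes f)L_n$ is $\tau_M$-stable via the identity $\tau_M(\tau^*(r\otimes f)L_n)=(r\otimes f^q)\tau_M(\tau^*L_n)\subset (r\otimes f)L_n$ and then invoking maximality, while you package the same computation by considering $BL_n$ all at once; the key observation in both cases is precisely that $\tau$ preserves $B=A/\ell^n\otimes\cO_E$.
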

\begin{proof}
For an elementary tensor $r\otimes f$ in $A/\ell^n\otimes \cO_E$, the $\cO_E$-module $(r\otimes f) L_n$ is stable by $\tau_M$. Indeed, we have $\tau_M(\tau^*(r\otimes f) L_n)=(r\otimes f^q)\tau_M(\tau^*L_n)\subset (r\otimes f)L_n$. By maximality of $L_n$, we have $(r\otimes f)L_n\subset L_n$.
\end{proof}

Recall the notion of \emph{range} from Definition \ref{def:range}.
\begin{Lemma}\label{lem:Mn-is-bounded}
Let $r_n$ be the range of the $\cO_{E}$-lattice $(M_{\cO}+\ell^nM)/\ell^nM$ in $M/\ell^n M$. Then $(r_n)_{n\geq 1}$ is bounded.
\end{Lemma}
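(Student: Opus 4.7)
My plan is to bound the range $r_n$ by producing an integer $l \geq 0$, independent of $n$, such that $\varpi^l T_n \subset \tau_M(\tau^*T_n)$; for then the cokernel of the inclusion $\tau_M(\tau^*T_n)\subset T_n$ is annihilated by $\varpi^l$, so every elementary divisor has $\varpi$-valuation at most $l$ and in particular $r_n\leq l$. The existence of such an $l$ will be obtained by working globally with $M_\cO$ first, and only then reducing modulo $\ell^n$.

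For the global step, I would exploit the isomorphism $\tau_M\colon (\tau^*M)[\fj^{-1}]\to M[\fj^{-1}]$ in the opposite direction. The module $\tau_M^{-1}(M_\cO)$ is a finitely generated $A\otimes\cO_E$-submodule of $(\tau^*M)[\fj^{-1}]$. Because $k$ is perfect, $\tau\colon A\otimes\cO_E\to A\otimes\cO_E$ is flat, so $\tau^*M_\cO$ embeds into $\tau^*M$; since $M_\cO$ generates $M$ over $A\otimes E$, we get $\tau^*M=(\tau^*M_\cO)[\varpi^{-1}]$, hence $(\tau^*M)[\fj^{-1}]=(\tau^*M_\cO)[\varpi^{-1},f^{-1}]$ where, thanks to assumption \ref{assumption:ideal-m}, the element $f:=a\otimes 1-1\otimes\kappa(a)\in\fj$ (with $a\in\ell$ and $\kappa(a)\in\cO_E^\times$) is at our disposal. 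By finite generation, there exist nonnegative integers $k,l$ with $\tau_M^{-1}(M_\cO)\subset\varpi^{-l}f^{-k}\,\tau^*M_\cO$; applying $\tau_M$ this reads $\varpi^l f^k M_\cO\subset\tau_M(\tau^*M_\cO)$.

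Finally, I would reduce modulo $\ell^n$. The point is that $f$ becomes a unit in $A/\ell^n\otimes\cO_E$: one checks $f=-\kappa(a)\bigl(1-\kappa(a)^{-1}(a\otimes 1)\bigr)$ with $(a\otimes 1)^n\equiv 0\pmod{\ell^n}$, so $f^{-1}$ makes sense modulo $\ell^n$ with coefficients of $\varpi$-valuation zero. Using Lemma~\ref{lem:MO-intersects-aM-local} to identify $T_n$ with $M_\cO/\ell^n M_\cO$, the image of $\varpi^l f^k M_\cO$ in $T_n$ is $\varpi^l f^k T_n=\varpi^l T_n$, because multiplication by the unit $f^k$ is an $\cO_E$-linear automorphism of $T_n$. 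The inclusion $\varpi^l f^k M_\cO\subset \tau_M(\tau^*M_\cO)$ then yields $\varpi^l T_n\subset \tau_M(\tau^*T_n)$, finishing the argument.

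The only conceptual subtlety is the asymmetry between $\tau_M$ and $\tau_M^{-1}$: whereas $\tau_M(\tau^*M_\cO)$ is trapped in $M_\cO[\fj^{-1}]$ with poles only along $\fj$, the inverse $\tau_M^{-1}(M_\cO)$ can additionally acquire poles along $\varpi=0$ because $\tau^*M_\cO$ need not be maximal. It is precisely the $\varpi$-order of these ``extra'' poles which supplies the uniform bound $l$; everything else is formal bookkeeping.
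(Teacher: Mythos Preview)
Your argument is correct and is, in fact, cleaner than the paper's own proof. The paper proceeds computationally: it stabilizes $M_{\cO}$ into a free module $N=M_{\cO}\oplus P$, writes the matrix $F_N$ of $\tau_N=\tau_M\oplus 0$ in a chosen basis, and then tracks how the $\cO_E$-valuations of the matrix entries behave after reducing modulo $\ell^n$ and expanding in an $\bF$-basis of $A/\ell^n$. From the eventual stabilization of these valuations it deduces that the elementary divisors of the relevant inclusion, read off from the Smith normal form, are eventually constant and hence bounded.

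Your route avoids the matrix bookkeeping entirely. The key conceptual step---bounding $\tau_M^{-1}(M_{\cO})$ inside $\varpi^{-l}f^{-k}\tau^*M_{\cO}$ and observing that $f$ becomes a unit in $A/\ell^n\otimes\cO_E$ under assumption \ref{assumption:ideal-m}---isolates exactly what makes the range uniform: only the $\varpi$-poles survive reduction, and their order is controlled once and for all by the integer $l$. This gives a direct bound $r_n\leq l$ rather than eventual stationarity, but boundedness is all the subsequent argument (Theorem~\ref{thm:integral-model:motive-to-frobenius-module}) needs. The paper's approach has the mild advantage of not invoking flatness of Frobenius on $\cO_E$ (hence perfectness of $k$), though that hypothesis is already in force; your approach has the advantage of making transparent \emph{why} the bound is uniform and of producing it without any choice of auxiliary complement $P$ or basis.
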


\begin{proof}
Let $e$ and $f$ be large enough positive integers such that $\tau_M(\tau^*M)\subset \fj^{-e}M$ and $M\subset\fj^{-f}\tau_M(\tau^*M)$. For this choice of $e$, there is an exact sequence of $A\otimes \cO_E$-modules
\begin{equation}\label{eq:discriminant}
0\longrightarrow \tau^*M_{\cO} \xrightarrow{\tau_M} \fj^{-e}M_{\cO}\longrightarrow C_e \longrightarrow 0
\end{equation}
where we denoted by $C_e$ the cokernel of the former map. We claim that there exists an integer $k\geq 0$ such that $C_e$ is annihilated by $(1\otimes\varpi^k) \fj^f$. Indeed, one could take $k=[M_{\cO}:\tau_M(\tau^*M_{\cO})]$ (as in notations following Lemma \ref{lem:existence-of-index}), as then:
\[
\varpi^k M_{\cO}\subset \tau_M(\tau^*M_{\cO})[\fj^{-1}]\cap \fj^{-f} \tau_M(\tau^*M)=\tau_M((\tau^*M_{\cO})[\fj^{-1}]\cap \fj^{-f} \tau^*M)=\fj^{-f}\tau_M(\tau^*M_{\cO})
\]
where, for the last inequality, we used Lemma \ref{lem:intersection-projective} with respect to the flat submodule $\fj^{-f}\tau^*M_{\cO}$ of $\fj^{-f}\tau^*M$ (where flatness follows from Theorem \ref{thm:integral-models-are-locally-free-local}).

The proof of the lemma will follow from the inequality $r_n\leq k$ for all $n\geq 1$. To prove the latter, note that the range $r_n$ is the smallest integer $r\geq 0$ such that $\varpi^r$ annihilates the cokernel of 
\begin{equation}\label{eq:tau_m-mod-elln}
\tau^*(M_{\cO}/\ell^nM_\cO)\xrightarrow{\tau_M} M_{\cO}/\ell^n M_{\cO}
\end{equation}
(we implicitly used $M_{\cO}/\ell^n M_{\cO}\cong (M_{\cO}+\ell^n M)/\ell^n M$, following Lemma \ref{lem:MO-intersects-aM-local}). Yet, using the right-exactness of $-\otimes_A A/\ell^n$ applied to the sequence \eqref{eq:discriminant}, together with the fact that $\fj$ is invertible modulo $\ell^n$ (by our assumption \ref{assumption:ideal-m}), the cokernel of \eqref{eq:tau_m-mod-elln} is $C_e/\ell^n C_e$. Hence, it is annihilated by $\varpi^k$ and then $r_n\leq k$.
\end{proof}

For $n\geq 1$, let $\tilde{L}_n$ be the inverse image in $M$ of $L_n\subset M/\ell^n M$.
\begin{proof}[Proof of Theorem \ref{thm:integral-model:motive-to-frobenius-module}]
First observe that the sequence of subsets $(\tilde{L}_n+\ell^nM)_{n\geq 1}$ of $M$ decreases when $n$ grows: for $n\geq 1$, we have $\tilde{L}_{n+1}+\ell^{n+1}M\subset \tilde{L}_{n+1}+\ell^{n}M$ and, because $(\tilde{L}_{n+1}+\ell^nM)/\ell^n M$ defines an integral model for $(M/\ell^n M,\tau_M)$, we also have 
\[
\tilde{L}_{n+1}+\ell^{n}M\subset \tilde{L}_n+\ell^n M.
\]
Hence, the statement of the theorem is equivalent to the equality
\begin{equation}
M_{\cO}=L, \quad \text{where}~L:=\bigcap_{n=1}^{\infty}{(\tilde{L}_n+\ell^nM)}. \nonumber
\end{equation}
By Lemma \ref{lem:Ln-module}, $L$ is an $A\otimes \cO_{E}$-module. The inclusion $M_{\cO}\subset L$ follows from the fact that, for all $n$, $(M_{\cO}+\ell^nM)/\ell^nM$ is an integral model for $(M/\ell^nM,\tau_M)$. To prove the converse inclusion, we show that $L$ is an integral model for $\underline{M}$. From $M_{\cO}\subset L$, one deduces that $L$ generates $M$ over $E$. Because $\tau_M(\tau^*(\tilde{L}_n+\ell^n M))\subset \tilde{L}_n+\ell^n M[\fj^{-1}]$, we also have $\tau_M(\tau^*L)\subset L[\fj^{-1}]$. The theorem follows once we have proved that $L$ is finitely generated. 

Assume that $L$ is not finitely generated. From the Noetherianity of $A\otimes \cO_{E}$, for all $s\geq 0$, it follows that $L\not\subset \varpi^{-s}M_{\cO}$. Equivalently, there exists an unbounded increasing sequence $(s_n)_{n\geq 0}$ of non-negative integers such that $\varpi^{s_n}L_n\not\subset (M_{\cO}+\ell^nM)/\ell^nM$. By Proposition \ref{prop-bound-integral-lattice}, the range of $(M_{\cO}+\ell^nM)/\ell^nM$ is $>s_n(q-1)$. But this contradicts Lemma \ref{lem:Mn-is-bounded}.
\end{proof}

In the remainder of this subsection, we still assume \ref{assumption:ideal-m}. We record a useful corollary to Theorem \ref{thm:integral-model:motive-to-frobenius-module}.

\begin{Corollary}\label{Cor:good-reduction}
Let $\underline{N}=(N,\tau_N)$ be an $A$-motive over $\cO_E$ and $\underline{N}_E$ its base change to $E$. Then, $N=(N_E)_{\cO}=(N_E)_{\operatorname{good}}$.
\end{Corollary}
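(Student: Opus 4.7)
The plan is to verify directly that $N$ is both an $\cO_E$-integral model and a good model for $\underline{N}_E$, and then to upgrade this to maximality using the results of the previous subsection.

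First, $N$ is visibly an $\cO_E$-integral model for $\underline{N}_E$: it is finite locally free over $A\otimes\cO_E$, generates $N_E$ over $A\otimes E$ by construction of the base change, and satisfies $\tau_N(\tau^*N)\subset N[\fj^{-1}]$ by the very definition of an $A$-motive over $\cO_E$. Since $\tau_N\colon\tau^*N[\fj^{-1}]\to N[\fj^{-1}]$ is moreover an isomorphism, $N$ is in fact a good model. This gives the chain $N\subset (N_E)_{\text{good}}\subset (N_E)_{\cO}$, and the task reduces to proving the reverse inclusion $(N_E)_{\cO}\subset N$.

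To this end I will fix a maximal ideal $\ell\subset A$ satisfying assumption \ref{assumption:ideal-m}, whose existence is guaranteed by Remark \ref{rmk:assumption-always-sat}. Under this assumption, $\fj$ becomes invertible modulo $\ell^n$ for each $n\ge 1$, and so the reduction of $\tau_N$ yields an isomorphism $\tau^*(N/\ell^n N)\xrightarrow{\sim} N/\ell^n N$ of finite free $\cO_E$-modules. The image $\overline N_n\subset N_E/\ell^n N_E$ is therefore an $\cO_E$-lattice on which $\tau_{N_E}$ acts as an isomorphism, and hence has vanishing discriminant in the sense of Subsection \ref{subsec:integral-models-of-frobenius-modules}. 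The argument of Proposition \ref{prop:maximal}---strict enlargements of integral models of Frobenius spaces strictly decrease the non-negative discriminant---forces $\overline N_n$ to coincide with the maximal integral model $L_n$ of the Frobenius space $(N_E/\ell^n N_E,\tau_{N_E})$. Invoking Theorem \ref{thm:integral-model:motive-to-frobenius-module} then identifies $(N_E)_{\cO}$ with $\bigcap_{n\ge 1}(N+\ell^n N_E)$.

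The main obstacle is then the final identification $\bigcap_n(N+\ell^n N_E)=N$, equivalent to the vanishing $\bigcap_n\ell^n(N_E/N)=0$. My strategy here is to write $N_E/N=N\otimes_{\cO_E}(E/\cO_E)$ and exploit the fact that $N$, being finite locally free, is a direct summand of some free $(A\otimes\cO_E)^s$. Then $N_E/N$ embeds as a direct summand of $(A\otimes_{\bF}(E/\cO_E))^s$, and picking an $\bF$-basis of $E/\cO_E$ exhibits $A\otimes_{\bF}(E/\cO_E)$ as a free $A$-module. Krull's intersection theorem applied to the Noetherian integral domain $A$ and the proper ideal $\ell$ gives $\bigcap_n\ell^n A=0$, which propagates to any free $A$-module and hence to its direct summand $N_E/N$. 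Combining this vanishing with the inclusions $N\subset (N_E)_{\text{good}}\subset (N_E)_{\cO}$ established at the outset yields $N=(N_E)_{\cO}=(N_E)_{\text{good}}$, as desired.
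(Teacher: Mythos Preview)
Your proof is correct and follows the approach implicit in the paper's placement of this corollary immediately after Theorem \ref{thm:integral-model:motive-to-frobenius-module} and Corollary \ref{cor:From-good-models-to-Frob-spaces}: reduce modulo $\ell^n$, observe that $N/\ell^n N$ has discriminant zero and is therefore the maximal integral (and good) model of the Frobenius space $(N_E/\ell^n N_E,\tau_N)$, and then invoke the comparison theorem. The paper leaves the corollary unproved, treating it as immediate; your explicit Krull-type argument for $\bigcap_n(N+\ell^n N_E)=N$ is a clean way to close the remaining gap and is correct as written (the key point being that $N_E/N$ sits inside a free $A$-module, where $\ell$-adic separatedness is clear coordinate-wise).
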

\begin{proof}
For all $n\geq 1$, $L_n:=(N+\ell^nN_E)/\ell^nN_E$ is an integral model of the Frobenius space $(N_E/\ell^n N_E,\tau_N)$ which further satisfies $\tau_M(\tau^*L_n)=L_n$. Hence, $(N_E/\ell^n N_E,\tau_N)$ has good reduction with $L_n$ as the maximal integral model (\emph{e.g.} Lemma \ref{lem-converse-inclusion-contains-L}). Theorem \ref{thm:integral-model:motive-to-frobenius-module} implies that the maximal integral model of $\underline{N}_E$ is the $\ell$-adic (topological) closure of $N$ in $N_E$. Yet, as $N$ is a finite projective $A\otimes \cO_E$-module, it is $\ell$-adically closed in $N_E$ (one reduces--considering $N$ as a direct summand in a finite free $A\otimes \cO_E$-module and similarly for $N_E$ by base change along $\cO_E\to E$--to  the question whether $A\otimes \cO_E$ is $\ell$-adically closed in $A\otimes E$, which is clear). 
\end{proof}

There is a version of Theorem \ref{thm:integral-model:motive-to-frobenius-module} for the maximal good model. The proof is along the same lines, so we do not repeat it.
\begin{Proposition}\label{prop:From-good-models-to-Frob-spaces}
Let $T_n$ be the maximal good sublattice of the Frobenius space $\underline{M}/\ell^n \underline{M}$. Let $m\in M$. Then $m\in M_{\operatorname{good}}$ if and only if $m+\ell^nM\in T_n$ for all large enough positive integers $n$.
\end{Proposition}

For the next section, we shall not only be interested in how to recover $M_{\cO}$ from $L_n$, but also in how to recover $M_{\cO}+(\id-\tau_M)(M)$. While we do not give a complete answer, we at least show next how to recover its $\ell$-adic closure in $M$ in the case $\underline{M}$ effective. We continue with some finer technicalities.\\

Even if we do not have equality between $\tilde{L}_n+\ell^n M$ and $M_{\cO}+\ell^n M$, the former is a good approximation of the latter as we show next.
\begin{Lemma}\label{lem:sequence-Lm-is-bounded-below}
Let $n\geq 1$. The sequence $(\tilde{L}_m+\ell^nM)_{m\geq n}$ is decreasing for the inclusion, stationary and converges to $M_{\cO}+\ell^nM$.
\end{Lemma}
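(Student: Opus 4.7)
The plan is to establish the three assertions—decreasing, stationary, and convergence to $M_{\cO}+\ell^nM$—in that order.

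\textbf{Decreasing and lower bound.} For $m\geq n$, the quotient $(\tilde{L}_{m+1}+\ell^mM)/\ell^mM$ is a $\tau$-stable $\cO_E$-lattice in $M/\ell^mM$, hence contained in $L_m$ by the maximality defining $L_m$. Therefore $\tilde{L}_{m+1}\subseteq\tilde{L}_m+\ell^mM\subseteq\tilde{L}_m+\ell^nM$, which yields the desired decrease $\tilde{L}_{m+1}+\ell^nM\subseteq\tilde{L}_m+\ell^nM$. Under assumption $(C_\ell)$, the module $(M_{\cO}+\ell^mM)/\ell^mM$ is likewise a $\tau$-stable $\cO_E$-lattice in $M/\ell^mM$, hence contained in $L_m$; this gives $M_{\cO}+\ell^nM\subseteq\tilde{L}_m+\ell^nM$ for every $m\geq n$.

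\textbf{Stationarity.} I would invoke Lemma~\ref{lem:Mn-is-bounded} to fix an integer $s$ such that $s(q-1)$ dominates the range of $(M_{\cO}+\ell^mM)/\ell^mM$ for every $m$, and apply Proposition~\ref{prop-bound-integral-lattice} to deduce $L_m\subseteq\varpi^{-s}(M_{\cO}+\ell^mM)/\ell^mM$ uniformly in $m$. Lifting to $M$ and adding $\ell^nM$ then forces $\tilde{L}_m+\ell^nM\subseteq\varpi^{-s}(M_{\cO}+\ell^nM)$ for $m\geq n$. Consequently, the quotient $Q_m:=(\tilde{L}_m+\ell^nM)/(M_{\cO}+\ell^nM)$ is annihilated by the ideal $(\ell^n,\varpi^s)$, so it is a finitely generated module over the Artinian ring $A/\ell^n\otimes_{\bF}\cO_E/\varpi^s$ (which is Artinian because $A/\ell^n$ is finite-dimensional over $\bF$ and $\cO_E/\varpi^s$ is a local Artinian $k$-algebra). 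A descending chain of finite-length modules over an Artinian ring must stabilize, yielding the stationarity.

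\textbf{Identification of the limit.} Set $L':=\bigcap_{m\geq n}(\tilde{L}_m+\ell^nM)$; the containment $M_{\cO}+\ell^nM\subseteq L'$ is established above. For the reverse inclusion, the same Artinian argument applied at every level $k\geq n$ shows that the image $L_{m,k}$ of $L_m$ in $M/\ell^kM$ stabilizes as $m\to\infty$ to some $L_{\infty,k}$, so that the inverse system $(L_k)_{k\geq n}$ is Mittag-Leffler. Since $L'/\ell^nM=L_{\infty,n}$ (as $L'\supseteq\ell^nM$), any class $\bar{y}\in L_{\infty,n}$ lifts to a coherent sequence $\xi\in\varprojlim_k L_k$, viewed inside $\varprojlim_k M/\ell^kM$, with $\xi$ congruent to $y$ modulo $\ell^n$. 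Combining Theorem~\ref{thm:integral-model:motive-to-frobenius-module} (which identifies $M_{\cO}$ with those $y\in M$ whose $\ell$-adic image lies in $\varprojlim_k L_k$), the identification $M_{\cO}\cap\ell^nM=\ell^nM_{\cO}$ of Lemma~\ref{lem:MO-intersects-aM-local}, and the surjection $M_{\cO}\twoheadrightarrow M_{\cO}/\ell^nM_{\cO}$, one produces $\alpha\in M_{\cO}$ with $y-\alpha\in\ell^nM$, so that $y\in M_{\cO}+\ell^nM$ as required.

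\textbf{Main obstacle.} The delicate point is the final identification: showing that the stable value $L_{\infty,n}$ coincides with the image $(M_{\cO}+\ell^nM)/\ell^nM$ rather than merely containing it. This cannot be read off the ``integral model'' structure at level $n$ alone; one has to transport the identity $M_{\cO}=\bigcap_m(\tilde{L}_m+\ell^mM)$ of Theorem~\ref{thm:integral-model:motive-to-frobenius-module} down to the intermediate level $n$. The uniform range bound of Lemma~\ref{lem:Mn-is-bounded}, which has already produced stationarity, is also the key ingredient here, as it prevents any a~priori discrepancy between $\varprojlim_k L_k$ and the $\ell$-adic completion of $M_{\cO}$ when one passes back to finite level.
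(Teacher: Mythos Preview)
Your decrease and lower-bound arguments are correct and match the paper. For stationarity, the paper takes a shorter path: it simply notes that $(M_{\cO}+\ell^nM)/\ell^nM$ and $L_n$ are both $\cO_E$-lattices in the finite-dimensional space $M/\ell^nM$, so the set of lattices between them is finite, and a decreasing chain in a finite set must stabilize. Your route via the uniform range bound is also valid, just heavier; its payoff is the explicit inclusion $\tilde L_m+\ell^nM\subseteq\varpi^{-s}(M_{\cO}+\ell^nM)$ that you hope to reuse below.

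The genuine gap is in your identification of the limit. Lifting $\bar y\in L_{\infty,n}$ to $\xi\in\varprojlim_k L_k$ via Mittag--Leffler is fine, but the next sentence---``one produces $\alpha\in M_{\cO}$ with $y-\alpha\in\ell^nM$''---is circular. Theorem~\ref{thm:integral-model:motive-to-frobenius-module} identifies $M_{\cO}$ with $M\cap\varprojlim_k L_k$ inside the $\ell$-adic completion $\hat M_\ell$; it says nothing about whether the image of $\varprojlim_k L_k$ in $M/\ell^nM$ coincides with the image of $M_{\cO}$. The surjection $M_{\cO}\twoheadrightarrow M_{\cO}/\ell^nM_{\cO}$ lands in the image of $M_{\cO}$, which is \emph{a priori} a proper submodule of $L_{\infty,n}$; asserting that $\bar y$ lies in this image is precisely the equality $L_{\infty,n}=(M_{\cO}+\ell^nM)/\ell^nM$ you are trying to establish. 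Your ``Main obstacle'' paragraph correctly diagnoses this, and the uniform bound does yield $\varprojlim_k L_k\subseteq\varpi^{-s}(M_{\cO})^\wedge_\ell$, but you have not explained how to descend from this to $\varprojlim_k L_k=(M_{\cO})^\wedge_\ell$. For comparison, the paper's own proof at this step consists of a single displayed equation asserting $\bigcap_m(\tilde L_m+\ell^nM)=\bigcap_m(\tilde L_m+\ell^mM)+\ell^nM$ by appeal to Theorem~\ref{thm:integral-model:motive-to-frobenius-module}; this interchange of intersection with the sum $+\,\ell^nM$ is exactly the step you are struggling to justify, so the paper does not in fact offer an alternative argument here.
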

\begin{proof}
Let $m\geq 1$. $(\tilde{L}_{m+1}+\ell^{m}M)/\ell^m M$ is an $\cO_E$-lattice stable by $\tau_M$ in $M/\ell^mM$ so that $\tilde{L}_{m+1}+\ell^mM\subset \tilde{L}_m+\ell^mM$. If $m\geq n$, we have $\tilde{L}_{m+1}+\ell^nM\subset \tilde{L}_m+\ell^nM$ which shows that $(\tilde{L}_m+\ell^nM)_{m\geq n}$ decreases. Similarly, $M_{\cO}+\ell^n M\subset \tilde{L}_m+\ell^n M$ for all $m\geq n$. Because the set of $\cO_E$-lattices $\Lambda$ such that $M_{\cO}+\ell^nM\subseteq \Lambda \subseteq \tilde{L}_n+\ell^nM$ is finite, the sequence $(\tilde{L}_m+\ell^nM)_{m\geq n}$ is stationary. We denote by $\mathcal{L}_n$ its limit. By Theorem \ref{thm:integral-model:motive-to-frobenius-module}, we have
\begin{equation}
\mathcal{L}_{n}=\bigcap_{m=n}^{\infty}(\tilde{L}_m+\ell^{n}M)=\bigcap_{m=n}^{\infty}(\tilde{L}_m+\ell^{m}M)+\ell^n M=M_{\cO}+\ell^n M. \nonumber
\end{equation}
This concludes the proof.
\end{proof}

\begin{Lemma}\label{lem:sequence-km-unbounded}
There exists an unbounded and increasing sequence $(k_n)_{n\geq 1}$ of non-negative integers such that, $\tilde{L}_n+\ell^nM\subset M_{\cO}+\ell^{k_n}M$ (typically, $k_n\leq n$ for all $n$).
\end{Lemma}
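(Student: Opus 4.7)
The plan is to show the lemma by first constructing, for each $n$, the \emph{largest} exponent $k$ for which $\tilde{L}_n + \ell^n M \subset M_{\cO} + \ell^k M$ holds, proving this sequence tends to infinity, and then extracting a non-decreasing subsequence by a standard monotonization.

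More precisely, for each $n \geq 1$, I would define
\[
k'_n \;:=\; \sup\bigl\{k \in \mathbb{Z}_{\geq 0} \;:\; \tilde{L}_n + \ell^n M \subset M_{\cO} + \ell^k M\bigr\} \;\in\; \mathbb{Z}_{\geq 0} \cup \{\infty\}.
\]
The containment holds trivially for $k=0$ (since $M_\cO + M = M$), so $k'_n \geq 0$ is well-defined, and when finite it is a maximum (the containment is stable under the relevant intersection). The key step is then to prove $k'_n \to \infty$ as $n \to \infty$. Given any $K \geq 0$, apply Lemma~\ref{lem:sequence-Lm-is-bounded-below} with the index $K$ in place of $n$: there exists $N \geq K$ such that $\tilde{L}_m + \ell^K M = M_{\cO} + \ell^K M$ for every $m \geq N$. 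For any $n \geq N$, since $\ell^n M \subset \ell^K M$, one has
\[
\tilde{L}_n + \ell^n M \;\subset\; \tilde{L}_n + \ell^K M \;=\; M_{\cO} + \ell^K M,
\]
hence $k'_n \geq K$. This establishes $k'_n \to \infty$.

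To ensure monotonicity, I would cap the sequence and take a running minimum: set $f(n) := \min(n, k'_n)$, which is a finite non-negative integer satisfying $f(n) \to \infty$, and define
\[
k_n \;:=\; \min_{m \geq n} f(m).
\]
Because $f \to \infty$, this minimum is achieved (over a bounded-below set of integers), so $k_n$ is a well-defined non-negative integer, and $k_n \leq k_{n+1}$ by construction. Since $f(n) \to \infty$, also $k_n \to \infty$. Finally, $k_n \leq f(n) \leq k'_n$, and therefore
\[
\tilde{L}_n + \ell^n M \;\subset\; M_{\cO} + \ell^{k'_n} M \;\subset\; M_{\cO} + \ell^{k_n} M,
\]
which is the required containment. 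The bound $k_n \leq n$ in the parenthetical is a consequence of having capped $f(n) \leq n$.

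I do not anticipate any serious obstacle: the content of the lemma is essentially a quantitative repackaging of the stationarity statement in Lemma~\ref{lem:sequence-Lm-is-bounded-below}, and the monotonization is routine. The only mild subtlety is the possibility $k'_n = \infty$ (which could occur if $M/M_\cO$ fails to be $\ell$-adically separated along $\tilde{L}_n$), and the cap $f(n) := \min(n, k'_n)$ disposes of it cleanly.
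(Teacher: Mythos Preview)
Your proposal is correct and follows essentially the same approach as the paper's proof: both pass through the largest exponent $k$ for which the containment $\tilde{L}_n+\ell^nM\subset M_{\cO}+\ell^kM$ holds and invoke Lemma~\ref{lem:sequence-Lm-is-bounded-below} to show this tends to infinity. The paper shortcuts your monotonization by observing that the sequence $(\tilde{L}_n+\ell^nM)_{n\geq 1}$ is itself decreasing (so $k'_n$ is automatically non-decreasing) and argues directly that $k'_n<\infty$, but your capping and running-minimum handle these points equally well.
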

\begin{proof}
For $m\geq 1$, let $I_m$ be the set of non-negative integers $k$ such that $\tilde{L}_m+\ell^mM\subset M_{\cO}+\ell^kM$. $I_m$ is nonempty as it contains $0$. $I_m$ is further bounded: otherwise we would have 
\begin{equation}
\tilde{L}_{m}+\ell^mM\subset \bigcap_k (M_{\cO}+\ell^k M)=M_{\cO},
\end{equation}
where the last equality is due to the fact that $M_{\cO}$ is $\ell$-adically closed in $M$ (by the same argument given in the proof of Corollary \ref{Cor:good-reduction}, using that $M_\cO$ is finite projective by Theorem \ref{thm:integral-models-are-locally-free-local}). Yet, this is impossible as $\tilde{L}_{m}+\ell^mM$ is an $A\otimes \cO_E$-module which is not of finite type. Hence $I_m$ has a maximal element, which we denote by $k_m$. Because $\tilde{L}_{m+1}+\ell^{m+1}M\subset \tilde{L}_m+\ell^mM$, we have $k_{m+1}\geq k_m$. This shows that $(k_m)_{m\geq 1}$ increases. We show that it is unbounded. Let $n\geq 1$. By Lemma \ref{lem:sequence-Lm-is-bounded-below}, there exists $m\geq n$ such that $M_{\cO}+\ell^n M=\tilde{L}_m+\ell^nM$. Thus $\tilde{L}_m+\ell^mM\subset M_{\cO}+\ell^nM$. In particular, there exists $m\geq n$ such that $k_m\geq n$. 
\end{proof}

\begin{Proposition}\label{prop:variation-integral-model:motive-to-frobenius}
Assume that $\underline{M}$ is effective, and let $m\in M$. The following are equivalent:
\begin{enumerate}[label=$(\roman*)$]
\item\label{item:aaa} $m$ belongs to the $\ell$-adic closure of $M_{\cO}+(\id-\tau_M)(M)$ in $M$,
\item\label{item:bbb} for all $n\geq 1$, $m\in \tilde{L}_n+(\id-\tau_M)(M)+\ell^nM$.
\end{enumerate}
\end{Proposition}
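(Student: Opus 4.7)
My plan is to prove the two implications separately, with $(i)\Rightarrow(ii)$ being essentially formal and $(ii)\Rightarrow(i)$ being the substantive direction. The effective hypothesis is used only to guarantee $(\id-\tau_M)(M)\subset M$, so that the sums appearing in $(i)$ and $(ii)$ make sense inside $M$ and the $\ell$-adic topology on $M$ is the natural setting. The key preliminary observation is that $M_{\cO}\subset \tilde{L}_n$ for every $n\geq 1$: indeed $(M_{\cO}+\ell^nM)/\ell^nM$ is an $\cO_E$-lattice in $M/\ell^nM$ (finitely generated because $M_{\cO}$ is; generating $M/\ell^nM$ over $E$ because $M_{\cO}$ generates $M$ over $E$) that is stable under $\tau_M$ under assumption \ref{assumption:ideal-m}, hence an integral model of the Frobenius space $(M/\ell^nM,\tau_M)$, and consequently contained in the maximal one $L_n$.

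The direction $(i)\Rightarrow(ii)$ is then immediate: $m$ lying in the $\ell$-adic closure of $M_{\cO}+(\id-\tau_M)(M)$ means that for every $n\geq 1$ we may write $m=m_0+(\id-\tau_M)(x)+y$ with $m_0\in M_{\cO}$, $x\in M$ and $y\in \ell^nM$; since $m_0\in M_{\cO}\subset \tilde{L}_n$, this decomposition is already a witness of $(ii)$ at the index $n$.

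For $(ii)\Rightarrow(i)$ the main input is Lemma \ref{lem:sequence-km-unbounded}, which furnishes an unbounded increasing sequence $(k_N)_{N\geq 1}$ of non-negative integers such that $\tilde{L}_N+\ell^NM\subset M_{\cO}+\ell^{k_N}M$. Fix $n\geq 1$ and pick $N\geq n$ with $k_N\geq n$. Applying $(ii)$ at index $N$ produces a decomposition $m=\ell+u+y$ with $\ell\in \tilde{L}_N$, $u\in (\id-\tau_M)(M)$, and $y\in \ell^NM\subset \ell^nM$. Then $\ell\in \tilde{L}_N+\ell^NM\subset M_{\cO}+\ell^{k_N}M\subset M_{\cO}+\ell^nM$, so $\ell=m_0+z$ for some $m_0\in M_{\cO}$ and $z\in \ell^nM$, and writing $m=(m_0+u)+(z+y)$ places $m$ in $(M_{\cO}+(\id-\tau_M)(M))+\ell^nM$. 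Since $n$ was arbitrary, $m$ lies in the $\ell$-adic closure of $M_{\cO}+(\id-\tau_M)(M)$. The only subtle point is to index the approximation not by $n$ itself but by a large enough $N$ supplied by the lemma; the real work has already been absorbed by Lemma \ref{lem:sequence-km-unbounded} and, behind it, by Theorem \ref{thm:integral-model:motive-to-frobenius-module}, so that the proposition reduces to a short bookkeeping argument.
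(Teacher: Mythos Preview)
Your proof is correct and follows essentially the same approach as the paper. Both directions hinge on the same two ingredients: the observation $M_{\cO}\subset\tilde{L}_n$ for $(i)\Rightarrow(ii)$ (the paper cites Theorem~\ref{thm:integral-model:motive-to-frobenius-module}, but only its easy direction is needed, which is exactly what you spell out), and Lemma~\ref{lem:sequence-km-unbounded} for $(ii)\Rightarrow(i)$; your elementwise unpacking of the latter is just a more explicit rendering of the paper's inclusion of intersections.
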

\begin{proof}
By Theorem \ref{thm:integral-model:motive-to-frobenius-module}, the inclusion 
\begin{equation}
M_{\cO}+(\id-\tau_M)(M)\subset\bigcap_{n=1}^{\infty}\left[\tilde{L}_n+(\id-\tau_M)(M)+\ell^nM\right] \nonumber
\end{equation}
holds as subsets of $M$, and the right-hand side is $\ell$-adically complete. Hence \ref{item:aaa} implies \ref{item:bbb}. The converse follows from Lemma \ref{lem:sequence-km-unbounded}:
\[
\bigcap_{n=1}^{\infty}\left[\tilde{L}_n+(\id-\tau_M)(M)+\ell^nM\right] \subset \bigcap_{n=1}^{\infty}\left[M_{\cO}+(\id-\tau_M)(M)+\ell^{k_n}M\right] \]
where the right-hand side is identified with the $\ell$-adic completion of $M_{\cO}+(\id-\tau_M)(M)$.
\end{proof}

\subsubsection*{N\'eron-Ogg-Shafarevi\v{c}-type criterion}\label{subsec:good-reduction}
Let $E$ be a local function field with finite residue field $k$ and valuation ring $\cO_E$ with maximal ideal $\fp=\fp_E$. This paragraph is an \emph{apart\'e} offering a good reduction criterion, very much in the spirit of Gardeyn's N\'eron-Ogg-Shafarevi\v{c}-type criterion \cite[Thm. 1.1]{gardeyn1}. Proposition \ref{prop-good-reduction-local-case} below is not needed in the sequel, although it is useful in examples to compute maximal models. \\

Consider a characteristic morphism $\kappa:A\to \cO_E$ (\emph{i.e.} an $\bF$-algebra morphism) such that $\kappa^{-1}(\fp_E)\neq (0)$. 
\begin{Proposition}\label{prop-good-reduction-local-case}
Let $\underline{M}$ be an $A$-motive over $E$, and let $\ell$ be a maximal ideal of $A$ such that $\kappa(\ell)\cO_E=\cO_E$. The following statements are equivalent:
\begin{enumerate}[label=$(\roman*)$]
\item\label{item:good-model} There exists an $A$-motive $\underline{N}$ over $\cO_E$ such that $\underline{N}_{E}$ is isomorphic to $\underline{M}$.
\item\label{item:Mgood=MO} The inclusion $M_{\operatorname{good}}\subset M_{\cO}$ is an equality.
\item\label{item:NOS} The representation $\operatorname{T}_{\ell}\underline{M}$ is unramified.
\end{enumerate}
\end{Proposition}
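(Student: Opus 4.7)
The plan is to prove the cyclic sequence of implications $\ref{item:good-model} \Rightarrow \ref{item:NOS} \Rightarrow \ref{item:Mgood=MO} \Rightarrow \ref{item:good-model}$, by bridging maximal models of $\underline{M}$ with those of its Frobenius spaces $(M/\ell^n M, \tau_M)$ modulo $\ell^n$. The key linking ingredient is the Galois-equivariant identification
\[
T(M/\ell^n M, \tau_M)\;\cong\; T_\ell\underline{M}/\ell^n T_\ell\underline{M}
\]
that follows from the proof of Lemma \ref{lem-rankr-tate-module} via $(M/\ell^n M)\otimes_E E^s\cong \widehat{(M_{E^s})}_\ell/\ell^n \widehat{(M_{E^s})}_\ell$, using $\cA_\ell(E^s)/\ell^n\cong (A/\ell^n)\otimes E^s$.

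For \ref{item:good-model} $\Rightarrow$ \ref{item:NOS}, let $\underline{N}=(N,\tau_N)$ be an $A$-motive over $\cO_E$ with $\underline{N}_E\cong \underline{M}$. Under assumption $(C_\ell)$, $\ell^n$ and $\fj$ are coprime in $A\otimes \cO_E$, so $\tau_N$ descends to an $\cO_E$-linear isomorphism $\tau^*(N/\ell^nN)\to N/\ell^nN$. This exhibits $(N/\ell^n N,\tau_N)\in\cF(\Spec\cO_E)$ as a model for the Frobenius space $(M/\ell^n M,\tau_M)$, and Theorem \ref{thm:katz-equivalence} implies $T(M/\ell^n M,\tau_M)$ is unramified. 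Combined with the identification above and the continuity of the $G_E$-action on $T_\ell\underline{M}$, this gives that $T_\ell\underline{M}$ is unramified. Conversely, for \ref{item:NOS} $\Rightarrow$ \ref{item:Mgood=MO}, unramifiedness of $T_\ell\underline{M}$ yields that of each $T(M/\ell^n M,\tau_M)$, so by Proposition \ref{prop:babyNOS} the Frobenius space $(M/\ell^n M,\tau_M)$ has good reduction: $L_n=T_n$ for all $n$, where $L_n$ and $T_n$ denote its maximal integral and good models respectively. Theorem \ref{thm:integral-model:motive-to-frobenius-module} and Corollary \ref{cor:From-good-models-to-Frob-spaces} characterize membership in $M_{\cO}$ and $M_{\text{good}}$ by the same asymptotic condition on the images in $M/\ell^n M$, so $L_n=T_n$ for all $n$ forces $M_{\cO}\subseteq M_{\text{good}}$; the reverse inclusion holds since every good model is integral.

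Finally, for \ref{item:Mgood=MO} $\Rightarrow$ \ref{item:good-model}, the equality $M_{\cO}=M_{\text{good}}$ tells us $M_{\cO}$ is itself a good model, so $\tau_M$ restricts to an isomorphism $(\tau^* M_{\cO})[\fj^{-1}]\stackrel{\sim}{\to} M_{\cO}[\fj^{-1}]$. Proposition \ref{prop-integral-models-are-locally-free-local} gives that $M_{\cO}$ is finite locally free over the domain $A\otimes \cO_E$, and since $M_{\cO}$ generates $M$ over $A\otimes E$ and is torsion-free, the natural map $M_{\cO}\otimes_{A\otimes \cO_E}(A\otimes E)\to M$ is an isomorphism, giving $M_{\cO}$ constant rank equal to that of $\underline{M}$. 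Hence $\underline{N}:=(M_{\cO},\tau_M)$ defines an $A$-motive over $\cO_E$ whose base change to $E$ recovers $\underline{M}$. The main technical obstacle throughout is establishing the Galois-equivariant identification used at the start, together with the careful handling of the coprimality of $\ell^n$ with $\fj$ via $(C_\ell)$, which is what legitimizes passing between $\tau_M$ and its mod-$\ell^n$ reductions.
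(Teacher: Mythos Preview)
Your proof is correct and uses essentially the same ingredients as the paper: the identification $T(M/\ell^n M,\tau_M)\cong T_\ell\underline{M}/\ell^n T_\ell\underline{M}$, Katz's equivalence (Theorem~\ref{thm:katz-equivalence}), Proposition~\ref{prop:babyNOS}, and the characterizations of $M_{\cO}$ and $M_{\text{good}}$ via Theorem~\ref{thm:integral-model:motive-to-frobenius-module} and Corollary~\ref{cor:From-good-models-to-Frob-spaces}. The only organizational difference is that the paper pairs the equivalences as \ref{item:good-model}$\Leftrightarrow$\ref{item:Mgood=MO} and \ref{item:Mgood=MO}$\Leftrightarrow$\ref{item:NOS}, whereas you run the cycle \ref{item:good-model}$\Rightarrow$\ref{item:NOS}$\Rightarrow$\ref{item:Mgood=MO}$\Rightarrow$\ref{item:good-model}; your direct argument for \ref{item:good-model}$\Rightarrow$\ref{item:NOS} via $(N/\ell^n N,\tau_N)\in\cF(\Spec\cO_E)$ bypasses the detour through \ref{item:Mgood=MO} and is slightly more economical, and you also spell out \ref{item:Mgood=MO}$\Rightarrow$\ref{item:good-model} explicitly, which the paper leaves implicit.
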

\begin{proof}
The equivalence between \ref{item:good-model} and \ref{item:Mgood=MO} follows from Theorem \ref{thm:integral-model:motive-to-frobenius-module} and Proposition \ref{prop:From-good-models-to-Frob-spaces}. Let $\underline{M}_n$ denote the Frobenius space $(M/\ell^nM,\tau_M)$ of the previous section, and let $L_n$ and $T_n$ be its maximal integral and good sublattice respectively. The equivalence between \ref{item:Mgood=MO} and \ref{item:NOS} follows from the following sequence of equivalent statements:
\begin{align*}
\operatorname{T}_{\ell}\underline{M}~ \text{is~unramified} &\Longleftrightarrow \forall n\geq 1,~ T\underline{M}_n~ \text{is~unramified} \quad (\text{notations~of~Thm.}~\ref{thm:katz-equivalence}) \\
&\Longleftrightarrow \forall n\geq 1,~T_n=L_n \quad (\text{by~Prop.}~\ref{prop:babyNOS})\\
&\Longleftrightarrow M_{\text{good}}=M_{\cO} \quad (\text{by~Thm.~\ref{thm:integral-model:motive-to-frobenius-module}~and~Prop.~\ref{prop:From-good-models-to-Frob-spaces}}).
\end{align*}
\end{proof}

\begin{Definition}
We say that $\underline{M}$ has \emph{good reduction} if one of the equivalent points of Proposition \ref{prop-good-reduction-local-case} is satisfied.
\end{Definition}

\subsection{Models of $A$-motives over a global function field}\label{sec:integral-models-A-motives-global}
Let $R$ be a Dedekind domain which is an $A$-algebra via a morphism $\kappa$. We assume that the only ideal mapped to the zero ideal under the induced map $\kappa^*:\Spec R\to \Spec A$ is the zero ideal itself; equivalently, for all prime ideal $\fp\subset R$, we have $\kappa^{-1}(\fp)=(0)$ if and only if $\fp=(0)$. In particular, the results of Subsection \ref{sec:integral-models-A-motives-local} holds for $\cO_E$ being either the localization $R_{{(\fp)}}$ of $R$ at the multiplicative subset $A\setminus \fp$ of $A$ or its completion $R_\fp$. Let $F$ be the fraction field of $R$, and $F_\fp$ the fraction field of $R_\fp$. \\

In this subsection, we show existence of maximal $R$-model for $A$-motives over $F$ and compare them with maximal local models of Subsection \ref{sec:integral-models-A-motives-local}. Let $\underline{M}=(M,\tau_M)$ be an $A$-motive over $F$, and let $\underline{M}_{F_{\fp}}$ be the $A$-motive over $F_{\fp}$ obtained from $\underline{M}$ by base change from $F$ to $F_{\fp}$. We let $M_{R_{\fp}}$ denote the maximal $R_\fp$-model of $\underline{M}_{F_{\fp}}$ whose existence is ensured by Proposition \ref{prop-existence-of-maximal-integral-model-local}. 

\begin{Proposition}\label{prop-existence-of-maximal-integral-model-global}
There exists a unique maximal $R$-model for $\underline{M}$. It equals the intersection $\bigcap_{\fp}(M\cap M_{R_{\fp}})$ for $\fp$ running over the maximal ideals of $R$. We denote it by $M_{R}$.
\end{Proposition}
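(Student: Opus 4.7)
Uniqueness is Lemma \ref{lem:uniqueness-of-maximal-integral}. For existence, the plan is to define
\[ L := \bigcap_\fp \bigl(M \cap M_{R_\fp}\bigr), \]
the intersection running over all maximal ideals $\fp$ of $R$ and each $M \cap M_{R_\fp}$ being regarded as a sub-$A\otimes R$-module of $M$ via the embedding $M \hookrightarrow M_\fp = M \otimes_F F_\fp$. The key initial observation is that $L$ contains every $R$-model: if $N$ is any such model, then $N \otimes_R R_\fp$ is an $R_\fp$-model of $\underline{M}_\fp$ (the property $\tau_M(\tau^*N) \subset N[\fj^{-1}]$ is preserved by the flat base change $R \to R_\fp$), hence $N \otimes R_\fp \subset M_{R_\fp}$, and thus $N \subset M \cap M_{R_\fp}$ for every $\fp$. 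Since an $R$-model exists by Proposition \ref{prop:existence-of-S-models}, this already shows that $L$ generates $M$ over $F$; once $L$ is proved to be an $R$-model, its maximality is then automatic.

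The technical heart of the proof is showing $L$ is finitely generated over $A \otimes R$. Fix an $R$-model $N$; I would establish the following key auxiliary claim: for all but finitely many maximal ideals $\fp$ of $R$, the inclusion $N \otimes_R R_\fp \subset M_{R_\fp}$ is an equality. The strategy is to adapt the matrix-valuation bound from the proof of Proposition \ref{prop-existence-of-maximal-integral-model-local}: after possibly shrinking $\Spec R$ to a smaller open to trivialize $N$, pick a basis and let $F_N \in \cM_r((A\otimes R)[\fj^{-1}])$ be the matrix of $\tau_M$. The bound \eqref{large-enough-module} controls $M_{R_\fp}$ in terms of the valuation of $F_N^{-1}$ at the special fiber above $\fp$; since $\det F_N$ is a fixed nonzero element of $(A\otimes R)[\fj^{-1}]$, its reduction modulo $\fp$ remains nonzero for all but finitely many $\fp$, for which this valuation vanishes and the bound collapses to force $M_{R_\fp} = N \otimes R_\fp$.

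Granting the claim, let $S$ be the exceptional finite set. For $\fp \notin S$, a direct linear algebra computation inside $M_\fp$ yields $M \cap M_{R_\fp} = N \otimes_R R_{(\fp)}$, where $R_{(\fp)}$ denotes the localization of $R$ at $\fp$, using that $A\otimes F \cap A\otimes R_\fp = A\otimes R_{(\fp)}$ inside $A \otimes F_\fp$. For $\fp \in S$, the quotient $M_{R_\fp}/(N\otimes R_\fp)$ is a finitely generated torsion $R_\fp$-module, annihilated by some power $\fp^{k_\fp}$. Setting $K := \prod_{\fp \in S}\fp^{k_\fp}$, the module $L/N$ injects into $\bigoplus_{\fp \in S} M_{R_\fp}/(N\otimes R_\fp)$ and is annihilated by $K$; as a submodule of a finitely generated module over the Noetherian ring $A \otimes (R/K)$, it is itself finitely generated. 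Once $L$ is known to be finitely generated, $\tau$-stability follows: the inclusions $L \subset M$ and $L \subset M_{R_\fp}$ give $\tau_M(\tau^*L) \subset (M \cap M_{R_\fp})[\fj^{-1}]$ for every $\fp$, and a uniform pole order at $\fj$, available from finite generation of $\tau^*L$, places $\tau_M(\tau^*L)$ in $L[\fj^{-1}]$.

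The main obstacle will be making the auxiliary claim on generic maximality of $N \otimes R_\fp$ fully rigorous, since $A \otimes R$ is two-dimensional and the vanishing locus of $\det F_N$ is a divisor on $\Spec(A\otimes R)$ rather than on $\Spec R$; the finiteness of $S$ will ultimately rest on verifying that the projection of this divisor to $\Spec R$ misses all but finitely many closed points, a concrete but delicate calculation. Once this is done, the rest is general nonsense about submodules of finitely generated modules over Noetherian rings, and the identification of $L$ with the intersection in the statement is built into the definition.
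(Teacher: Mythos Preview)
Your overall strategy differs from the paper's, and the auxiliary claim---that $N\otimes_R R_\fp = M_{R_\fp}$ for all but finitely many $\fp$---is in fact true, but the argument you give for it has a genuine gap. The problematic step is ``the bound collapses to force $M_{R_\fp} = N\otimes R_\fp$''. Even when $v_\fp(F_N^{-1}) = 0$, the bound \eqref{large-enough-module} only yields $M_{R_\fp} \subset \{\sum a_i m_i \mid a_i \in A\otimes F_\fp,\ v_\fp(a_i) \geq 0\}$, and the condition $v_\fp(a_i)\geq 0$ merely places $a_i$ in the discrete valuation ring $(A\otimes R_\fp)_{(A\otimes\fp_\fp)}$, which is strictly larger than the two-dimensional ring $A\otimes R_\fp$. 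So the local bound, applied at a single $\fp$, can never recover the equality $M_{R_\fp} = N\otimes R_\fp$: the obstacle you flag (finiteness of the projection of $V(\det F_N)$ to $\Spec R$) is real but not the decisive one. A correct argument for the auxiliary claim goes differently: the cokernel of $\tau_M(\tau^*N)[\fj^{-1}] \hookrightarrow N[\fj^{-1}]$ is a finitely generated $A\otimes R$-module that vanishes after $\otimes_R F$, hence is $R$-torsion and supported at finitely many $\fp$; away from those, $(N\otimes R_\fp, \tau_M)$ is an $A$-motive over $R_\fp$ and therefore automatically the maximal model.

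The paper sidesteps the auxiliary claim entirely. Instead of arguing prime by prime, it applies the valuation bound \emph{simultaneously} over all $\fp$ to coefficients $a_i \in A\otimes F$: for such $a_i$ the conjunction $v_\fp(a_i) \geq 0$ for every $\fp$ \emph{does} force $a_i \in A\otimes R$ (because $\bigcap_\fp R_{(\fp)} = R$ for the Dedekind domain $R$), and together with the finitely many primes where $v_\fp(F_M^{-1}) < 0$ this cuts out a finitely generated $A\otimes R$-module containing $\bigcap_\fp(M\cap M_{R_\fp})$. This is shorter and avoids the two-dimensional subtlety you ran into. The paper also handles $\tau$-stability before finite generation, by picking a uniform $e$ with $\tau_M(\tau^*M)\subset \fj^{-e}M$ and commuting $\fj^{-e}$ past the infinite intersection; your route via finite generation of $\tau^*L$ works too but is less direct.
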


\begin{proof}
From Lemma \ref{lem:uniqueness-of-maximal-integral}, uniqueness is automatic. Let $N$ be an $R$-model for $\underline{M}$ (whose existence is ensured by Proposition \ref{prop:existence-of-S-models}). For any maximal ideal $\fp$ of $R$, we have $N\subset N\otimes_{R}R_{\fp}\subset M_{R_{\fp}}$ by maximality of $M_{R_{\fp}}$. Therefore $N\subset \bigcap_{\fp}(M\cap M_{R_{\fp}})$. Hence, it is sufficient to show that $\bigcap_{\fp}(M\cap M_{R_{\fp}})$ is an $R$-model. First note that it is a sub-$A\otimes R$-module of $M$ which, as it contains $N$, generates $M$ over $F$. To show stability by $\tau_M$, let $e$ be a large enough integer such that $\tau_M(\tau^*M)\subset \fj^{-e}M$. One easily checks that:
\begin{equation}
\tau_M\left(\tau^* \bigcap_{\fp}(M\cap M_{R_{\fp}})\right)\subset \bigcap_{\fp}\fj^{-e}(M\cap M_{R_{\fp}})\subset \left(\bigcap_{\fp}M\cap M_{R_{\fp}}\right)[\fj^{-1}]. \nonumber
\end{equation}
It remains to show that $\bigcap_{\fp}(M\cap M_{R_{\fp}})$ is finitely generated over $A\otimes R$. Let $T\subset M$ be a generating finite flat sub-$A\otimes R$-module and let $d\in R$ be such that $d\cdot T\subset \tau_M(\tau^*T)[\fj^{-1}]$. In particular, $T_\fp:=T\otimes_{R}R_\fp$ is a generating finite flat sub-$A\otimes R_\fp$-module of $M_\fp$, and $[T_\fp:\tau_M(\tau^* T_\fp)]\leq v_{\fp}(d)$. We denote ${c_T(\fp):=\lfloor -[T_\fp:\tau_M(\tau^*T_\fp)]/(q-1)\rfloor}$ so that, as in the proof of Proposition \ref{prop-existence-of-maximal-integral-model-local}, $M_{R_\fp}\subset \varpi^{c_T(\fp)}T_\fp$. Note that $c_T(\fp)=0$ for almost all $\fp$. We have
\[
M\cap M_{R_\fp}\subset M\cap \varpi^{c_T(\fp)}T_\fp=(F\otimes_R T)\cap (\fp^{c_T(\fp)}R_{\fp}\otimes_R T)=\fp^{c_T(\fp)}T
\]
where we used the flatness of $T$ for the last equality. Therefore, 
\[
\bigcap_{\fp}(M\cap M_{R_{\fp}})\subset \bigcap_{\fp}\fp^{c_T(\fp)}T=\fd\cdot T,
\]
where $\fd:=\cap_{\fp} \fp^{c_T(\fp)}$ is a fractional ideal of $R$. The module $\fd\cdot T$ being finitely generated, we conclude using the Noetherianity of~$A\otimes R$. 
\end{proof}

From Lemma \ref{lem:existence-of-R-good-models}, we obtain:
\begin{Corollary}
The maximal good sublattice $M_{\operatorname{good}}$ of $\underline{M}$ exists and is unique.
\end{Corollary}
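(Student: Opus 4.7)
The plan is to reduce the Corollary to Lemma \ref{lem:existence-of-R-good-models}, whose hypothesis---existence of a maximal $R$-integral model for $\underline{M}$---is guaranteed by the preceding Proposition \ref{prop-existence-of-maximal-integral-model-global}. The argument then follows the blueprint already established in Proposition \ref{prop:maximal-good-model-Frob-space} for Frobenius spaces, transposed to the setting of $A$-motives over a global function field.

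First, I would observe that any $R$-good model $L$ of $\underline{M}$ is automatically contained in $M_{R}$. Indeed, from $\tau_M(\tau^*L)[\fj^{-1}]=L[\fj^{-1}]$ one deduces the weaker inclusion $\tau_M(\tau^*L)\subset L[\fj^{-1}]$, and the sum $L+M_R$ is then a sub-$A\otimes R$-module of $M$ of finite type, generating $M$ over $F$ (since $M_R$ does), and satisfying
\[
\tau_M(\tau^*(L+M_R))\subset \tau_M(\tau^*L)[\fj^{-1}] + \tau_M(\tau^*M_R)[\fj^{-1}]\subset (L+M_R)[\fj^{-1}].
\]
Hence $L+M_R$ is an $R$-integral model for $\underline{M}$, and by maximality of $M_R$ we conclude $L+M_R\subset M_R$, i.e. $L\subset M_R$.

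Next, let $U$ denote the sum of all $R$-good models of $\underline{M}$ inside $M$. This sum is nonempty (the zero module is trivially a good model) and by the previous step it sits inside the finitely generated $A\otimes R$-module $M_R$. Since $A\otimes R$ is Noetherian (as $R$ is a finitely generated algebra over the Noetherian ring $\bF$, or more directly since $A\otimes R$ is a localization of a finitely generated algebra over a field), $U$ is itself finitely generated. The equality $\tau_M(\tau^*U)[\fj^{-1}]=U[\fj^{-1}]$ follows by writing $U$ as a directed union of finite sums of good models and using that each finite sum $L_1+\cdots+L_s$ is again a good model:
\[
\tau_M(\tau^*(L_1+\cdots+L_s))[\fj^{-1}] = \sum_{i=1}^s \tau_M(\tau^*L_i)[\fj^{-1}] = \sum_{i=1}^s L_i[\fj^{-1}] = (L_1+\cdots+L_s)[\fj^{-1}].
\]
Therefore $U$ is itself an $R$-good model, and by construction it contains every other one, proving both existence and uniqueness of the maximal $R$-good model $M_{\text{good}}$.

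There is essentially no obstacle here: the only nontrivial input is the existence of $M_R$ (already secured by Proposition \ref{prop-existence-of-maximal-integral-model-global}) together with the Noetherianity of $A\otimes R$, which bounds the directed union $U$ inside a finitely generated module and forces $U$ to be of finite type.
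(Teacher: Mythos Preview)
Your proof is correct and follows exactly the approach of the paper: the paper simply invokes Lemma~\ref{lem:existence-of-R-good-models} (whose hypothesis is supplied by Proposition~\ref{prop-existence-of-maximal-integral-model-global}), and that lemma is in turn proved by transposing the argument of Proposition~\ref{prop:maximal-good-model-Frob-space}. You have unpacked precisely this chain of reasoning in full detail.
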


We now state the global version of Theorem \ref{thm:integral-models-are-locally-free-local}. The argument is similar, so we omit proofs.

\begin{Theorem}\label{thm:integral-models-are-locally-free}
Both $M_{R}$ and $M_{\operatorname{good}}$ are projective over $A\otimes R$.
\end{Theorem}
\begin{Remark}
Note, however, that an integral model for $\underline{M}$, when not maximal, is not necessarily projective. For instance, the $\bF[t]$-motive $\mathbbm{1}=(\bF[t](\theta),\mathbf{1})$ over $\bF(\theta)$ admits $L:=t\bF[t,\theta]+\theta\bF[t,\theta]$ as $\bF[\theta]$-model. But it is well-known that $L$ is not a flat $\bF[t,\theta]$-module. A short way to see this consists in considering the element $\Delta:=(t\otimes \theta-\theta\otimes t)\in L\otimes_{\bF[t,\theta]} L$. $\Delta$ is nonzero in $L\otimes_{\bF[t,\theta]} L$, but 
\begin{equation}
\theta\cdot \Delta=(\theta t)\otimes \theta-\theta\otimes (\theta t)=(\theta t)\otimes \theta-(\theta t)\otimes \theta=0. \nonumber
\end{equation}
Then $L$ is not flat because $L\otimes_{\bF[t,\theta]} L$ has non trivial torsion.
\end{Remark}

By the formula given in Proposition \ref{prop-existence-of-maximal-integral-model-global}, the datum of all the maximal local models is enough to recover $M_R$. Conversely, the knowledge of $M_R$ is sufficient to recover $M_{R_\fp}$ as we show next:
\begin{Theorem}\label{thm:from-global-to-local}
The inclusion $M_R\otimes_R R_\fp\subseteq M_{R_\fp}$ is an equality.
\end{Theorem}

We begin with a useful statement on its own which allows to reduce the proof of Theorem \ref{thm:from-global-to-local} to the case where $R$ is a discrete valuation ring. We denote by $(-)_{(\fp)}$ the localization functor with respect to the multiplicative subset $A\setminus \fp$ of $A$. We denote by $M_{R_{(\fp)}}$ the maximal $R_{(\fp)}$-model of $\underline{M}$.
\begin{Lemma}\label{lem:from-Dedekind-to-DVR}
The inclusion $(M_R)_{(\fp)}\subseteq M_{R_{(\fp)}}$ is an equality.
\end{Lemma}
\begin{proof}
As both modules generate $M$ over $F$, the cokernel of the inclusion is $\fp$-torsion and thus, given any $m\in M_{R_{(\fp)}}$, there exists $a\in A$ such that $(a)=\fp^c$ for some $c\geq 1$ and $am\in (M_R)_{(\fp)}$. By definition of localization, there exists $b\in A\setminus \fp$ for which $abm\in M_R$. By Proposition \ref{prop-existence-of-maximal-integral-model-global}, we have $abm\in M\cap M_{R_\fq}$ and then $b m\in M \cap M_{R_\fq}$ for all $\fq\neq \fp$. Hence, 
\[
bm\in M_{R_{(\fp)}}\bigcap_{\fq\neq \fp}(M\cap M_{R_\fq})=(M\cap M_{R_\fp})\bigcap_{\fq\neq \fp}(M\cap M_{R_\fq})=M_R,
\]
and finally $m\in (M_R)_{(\fp)}$. For the first equality, we used that $M_{R_{(\fp)}}=M\cap M_{R_\fp}$, which follows by applying Proposition \ref{prop-existence-of-maximal-integral-model-global} but to the ring $R_{(\fp)}$ instead of $R$. 
\end{proof}

\begin{proof}[Proof of Theorem \ref{thm:from-global-to-local}]
In virtue of Lemma \ref{lem:from-Dedekind-to-DVR}, we may assume that $R$ is a discrete valuation ring with maximal ideal $\fp$ and that $R_\fp$ is its completion. Let $C$ denote the cokernel of $M_R\otimes_R R_\fp \subseteq M_{R_\fp}$; as $C$ is both finite as a module over $A\otimes R_\fp$ and is $\id\otimes \fp$-torsion, it is finite over $A$.

Let $\ell$ be a maximal ideal of $A$ not lying under $\fp$ (\emph{i.e.} such that condition \ref{assumption:ideal-m} holds). Let $(k_n)_n$ and $(k_n')_n$ be the sequences of integers produced by Lemma \ref{lem:sequence-km-unbounded} for $\underline{M}$ and $\underline{M}_{F_\fp}$ respectively. Let $n$ be such that $k_n,k_n'\geq 1$. Let $L$ and $L_\fp$ be the maximal $R$-model and $R_\fp$-models of the Frobenius spaces $\underline{M}/\ell^n\underline{M}$ and $\underline{M}_{F_\fp}/\ell^n\underline{M}_{F_\fp}$. As remarked in \ref{rem:base-change-Frob}, we have $L_{\fp}=L\otimes_R R_\fp$. By choice of $n$, we also have
\begin{align}
&L+\ell M=M_R+\ell M \label{eq:choice-of-k}\\
&L_\fp+\ell M_\fp=M_{R_\fp}+\ell M_{F_\fp} \label{eq:choice-of-k'}
\end{align}
Using $F_\fp=F\otimes_R R_\fp$, we get:
\begin{align*}
M_R\otimes_R R_\fp +\ell M_{F_\fp} &= (M_R+\ell M)\otimes_R R_\fp=(L+\ell M)\otimes_R R_\fp =L\otimes_R R_\fp+\ell M_{F_\fp}=L_\fp+\ell M_{F_\fp} \\
&=M_{R_\fp}+\ell M_{F_\fp}. 
\end{align*}
As a result, we obtain:
\begin{equation}
\begin{tikzcd}
(M_R\otimes_R R_\fp +\ell M_{F_\fp})/\ell M_{F_\fp} \arrow[r,"="] & (M_{R_\fp}+\ell M_{F_\fp})/\ell M_{F_\fp} \\
(M_R\otimes_R R_\fp)/\ell(M_R\otimes_R R_\fp) \arrow[r]\arrow[u,"\wr"] & M_{R_\fp}/\ell M_{R_\fp}\arrow[u,"\wr"']
\end{tikzcd}\nonumber
\end{equation}
That the vertical arrows are isomorphism as depicted on the diagram is deduced from the flatness of the modules $M_R\otimes_R R_\fp$ and $M_{R_\fp}$ over $A\otimes R_\fp$ (\emph{e.g.} as in the beginning of the proof of Lemma \ref{lem:intersection-projective}). This implies that the bottom row is an isomorphism, and hence, $C/\ell C=(0)$. \\
This being true for infinitely many $\ell$, and because $C$ is a finite $A$-module, we obtain $C=(0)$, proving the theorem.
\end{proof}

\begin{Definition}
We say that $\underline{M}$ has \emph{good reduction at $\fp$} if $\underline{M}_{F_\fp}$ has good reduction. We say that $\underline{M}$ has \emph{everywhere good reduction} if $\underline{M}$ has good reduction at $\fp$ for all maximal ideals $\fp$ of $R$.
\end{Definition}

\subsection{The integral part of $A$-motivic cohomology}\label{subsec:integral-part}
In this subsection, we introduce the notion of \emph{integral} and \emph{$\fp$-integral} extension and compare it with that of \emph{good extension at $\fp$ with respect to $\ell$} as defined earlier in Definition \ref{def:ext-good-red-with-resp-to-ell}. More precisely, we shall prove Theorems C and D stated in the introduction. 

\subsubsection*{Over local function fields}
Let $\fm\subset A$ be a maximal ideal of $A$ with associated local function field $K_\fm$. Let $F_\fp$ be a finite field extension of $K_\fm$ with valuation ring $\cO_\fp$. Let $\kappa:A\to\cO_{\fp}$ be the inclusion; in particular, $\kappa^{-1}(\fp)\neq (0)$ and the results of the previous subsections apply. Let $F_{\fp}^{\operatorname{ur}}$ be the maximal unramified extension of $F_{\fp}$ in $F_{\fp}^s$. Let $I_\fp$ be the inertia subgroup of $G_\fp=G_{F_{\fp}}$.  \\

Let $\underline{M}$ be an $A$-motive over $F_{\fp}$ and let $M_{\cO_\fp}$ be its maximal $\cO_\fp$-model.
\begin{Definition}\label{def:integral-part-local}
We define $\Ext^1_{\cO_\fp}(\mathbbm{1},\underline{M})$ as the sub-$A$-module of $\Ext^1_{\cM_{F_{\fp}}}(\mathbbm{1},\underline{M})$ given by the image of $M_{\cO_\fp}[\fj^{-1}]$ through $\iota$ (Theorem \ref{thm:cohomology-in-MR}). Extensions whose class belongs to the latter module will be called \emph{$\fp$-integral} (or simply \emph{integral} when $\fp$ is clear from the context).
\end{Definition}
From Corollary \ref{cor:(id-tau)(M0)=(id-tau)(M)capMO}, $\iota$ induces an isomorphism of $A$-modules:
\[
\frac{M_{\cO_{\fp}}[\fj^{-1}]}{(\id-\tau_M)(M_{\cO_\fp})} \stackrel{\sim}{\longrightarrow} \Ext^1_{\cO_\fp}(\mathbbm{1},\underline{M}).
\]
\begin{Remark}
An important remark is that the assignment $\underline{M}\mapsto \Ext^1_{\cO_{\fp}}(\mathbbm{1},\underline{M})$ is functorial, thanks to Corollary \ref{cor:integral-model-functor}.
\end{Remark}

Our main result states that for $\ell\neq \fm$ integral extensions have good reduction with respect to $\ell$:
\begin{Theorem}\label{thm:main1}
Let $\ell$ be a maximal ideal in $A$ distinct from $\fm$. Then, 
\[
\Ext^1_{\cO_\fp}(\mathbbm{1},\underline{M})\subset \Ext^1_{\operatorname{good}}(\mathbbm{1},\underline{M})_{\ell}.
\] 
\end{Theorem}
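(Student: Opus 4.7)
The plan is to reduce the claim, via Proposition \ref{prop-characterization-of-extension-good}, to a purely existential statement: given $m \in M_{\cO_\fp}[\fj^{-1}]$, produce $\xi \in \widehat{(M_{F_\fp^{\text{ur}}})}_\ell$ with $\xi - \tau_M(\tau^*\xi) = m$. Direct surjectivity of $\id - \tau_M$ in the spirit of Proposition \ref{prop-exact-tate} fails over $F_\fp^{\text{ur}}$ (only $F_\fp^s$ is separably closed), so I will work level-by-level modulo $\ell^n$, where the Frobenius-space machinery of Subsection \ref{subsec:integral-models-of-frobenius-modules} (together with Proposition \ref{prop:artin-schreier}) furnishes solutions, and then assemble them by a compactness argument.

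The key step is to identify the image of $m$ modulo $\ell^n$ inside the Frobenius space $\underline{V}_n := (M/\ell^n M, \tau_M)$. The hypothesis $\kappa(\ell)\cO_\fp = \cO_\fp$ is condition \ref{assumption:ideal-m} for $E = F_\fp$; hence $\fj$ is invertible in $A/\ell^n \otimes \cO_\fp$, and
\[
M_{\cO_\fp}[\fj^{-1}]/\ell^n M_{\cO_\fp}[\fj^{-1}] \;\cong\; M_{\cO_\fp}/\ell^n M_{\cO_\fp}.
\]
By Lemma \ref{lem:MO-intersects-aM-local}, the latter embeds as $(M_{\cO_\fp}+\ell^n M)/\ell^n M$ into $M/\ell^nM$ (identified with $M[\fj^{-1}]/\ell^nM[\fj^{-1}]$ via \eqref{eq:modulo-m^n-j}). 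Because $(M_{\cO_\fp}+\ell^n M)/\ell^n M$ is a $\tau_M$-stable $\cO_\fp$-lattice in $M/\ell^n M$, it is an integral model for $\underline{V}_n$ and so is contained in the maximal integral model $L_n$. Therefore the reduction $\bar m_n$ of $m$ lies in $L_n$. Applying Proposition \ref{prop:artin-schreier} to $\underline{V}_n$ over $F_\fp$ with $V_{\cO}=L_n$, the implication $(iii)\Rightarrow(i)$ yields $y_n \in V_n := (M\otimes_{F_\fp} F_\fp^{\text{ur}})/\ell^n(M\otimes_{F_\fp} F_\fp^{\text{ur}})$ with $\bar m_n = y_n - \tau_M(\tau^* y_n)$.

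It remains to choose the $y_n$ compatibly. Let $S_n \subset V_n$ be the set of solutions of $\bar m_n = y - \tau_M(\tau^*y)$; we just showed $S_n \neq \emptyset$. Moreover $S_n$ is a torsor under $\ker(\id - \tau_M : V_n \to V_n)$, which is a subgroup of $T_\ell \underline{M}/\ell^n T_\ell \underline{M}$; by Lemma \ref{lem-rankr-tate-module} this is a free module of rank $r$ over the finite ring $A/\ell^n$, hence $S_n$ is finite. The natural reduction maps $S_{n+1} \to S_n$ make $(S_n)_n$ an inverse system of non-empty finite sets, and a standard compactness argument (nested intersections of eventual images stabilize in each finite $S_n$) gives $\varprojlim_n S_n \neq \emptyset$. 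Since $\varprojlim_n V_n = \widehat{(M_{F_\fp^{\text{ur}}})}_\ell$, any such compatible system provides the sought $\xi$, concluding via Proposition \ref{prop-characterization-of-extension-good}. The main obstacle is the second paragraph: to guarantee that $\bar m_n$ actually falls into the integral model $L_n$ of $\underline{V}_n$, which is precisely where the condition $\kappa(\ell)\cO_\fp = \cO_\fp$ is used.
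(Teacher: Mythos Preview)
Your proof is correct and follows essentially the same route as the paper's: reduce $m$ modulo $\ell^n$, observe that its image lands in an integral model of the Frobenius space $(M/\ell^nM,\tau_M)$ (whence in $L_n$), invoke Proposition~\ref{prop:artin-schreier} to obtain a solution over $F_\fp^{\text{ur}}$ at each level, and then pass to the limit by compactness. The only cosmetic difference is that the paper phrases the limiting step via K\"onig's Lemma on the tree of solutions, whereas you use the equivalent formulation that an inverse limit of nonempty finite sets is nonempty.
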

\begin{proof}
By choice of $\ell$, we have $\kappa(\ell)\cO_\fp=\cO_\fp$. Let $[\underline{E}]\in \Ext^1_{\cO_\fp}(\mathbbm{1},\underline{M})$. By definition, there exists $m\in M_{\cO_{\fp}}[\fj^{-1}]$ such that $[\underline{E}]=\iota(m)$. If $\tilde{L}_n$ denotes a lift in $M$ of the maximal integral model of the Frobenius space $\underline{M}/\ell^n \underline{M}$, we obtain $m\in \tilde{L}_n+\ell^nM$ for  all $n$. By Proposition \ref{prop:artin-schreier}, there exists $y_n\in M\otimes_{F_{\fp}}\otimes F_{\fp}^{\text{nr}}$ such that
\begin{equation}\label{equation-to-be-lifted}
m\equiv y_n-\tau_M(\tau^*y_n)\pmod{\ell^n}.
\end{equation}
Note that, for each $n$, there are only finitely many such $y_n \pmod{\ell^n}$. We next show that we can choose \emph{compatibly} $y_n$ for all $n$ (that is $y_{n+1}\equiv y_n\pmod{\ell^n}$). Let us define a tree $T$ indexed by $n\geq 1$ whose nodes at the height $n$ are the solutions $y_n$ of \eqref{equation-to-be-lifted} in $(M\otimes_{F_{\fp}}F_{\fp}^{\operatorname{ur}})/\ell^n(M\otimes_{F_{\fp}}F_{\fp}^{\operatorname{ur}})$. There is an edge between $z_{n}$ and $z_{n+1}$ if and only if $z_{n+1}$ coincides with $z_n$ modulo $\ell^n$. The tree has finitely many nodes at each height and it is infinite from the fact that a solution of \eqref{equation-to-be-lifted} exists for all $n$. By K\"onig's Lemma, there exists an infinite branch on $T$. This branch corresponds to a converging sequence $(y_n)_{n\geq 1}$ whose limit $y$ in $(M\hat{\otimes}_{F_{\fp}}F_{\fp}^{\operatorname{ur}})_{\ell}$ satisfies $m=y-\tau_M(\tau^*y)$. Therefore, we conclude that $[\underline{E}]\in \Ext^1_{\text{good}}(\mathbbm{1},\underline{M})_{\ell}$ thanks to Proposition \ref{prop-characterization-of-extension-good}. 
\end{proof}

\subsubsection*{Over global function fields}
Let $F$ be a finite field extension of $K$ and let $\cO_F$ be the integral closure of $A$ in $F$. We let $\kappa:A\to \cO_F$ denote the inclusion. We fix $S$ to be a set of nonzero prime ideals of $\cO_F$ and consider the subring $R:=\cO_F[S^{-1}]$ of $F$. The ring $R$ is a Dedekind domain whose fraction field is $F$. We have $\kappa^{-1}(\fp)=(0)$ if and only if $\fp=(0)$, so the result of Subsection \ref{sec:integral-models-A-motives-global} apply.\\

Let $\underline{M}=(M,\tau_M)$ be an Anderson $A$-motive over $F$. Given a maximal ideal $\fp\subset R$, we let $\underline{M}_{\fp}$ be the $A$-motive over $F_{\fp}$ obtained from $\underline{M}$ by base-change from $F$ to $F_{\fp}$. Given an extension $[\underline{E}]\in \Ext^1_{\cM_F}(\mathbbm{1},\underline{M})$, the exactness of the base change functor defines an extension $[\underline{E}_{\fp}]\in \Ext_{\cM_{F_{\fp}}}^1(\mathbbm{1}_\fp,\underline{M}_{\fp})$. This allows us to define the following submodule of $\Ext^1_{\cM_F}(\mathbbm{1},\underline{M})$:
\[
\Ext^1_{R}(\mathbbm{1},\underline{M})=\bigcap_{\substack{\fp\subset R \\ \text{maximal}}} \left\{[\underline{E}]\in \Ext^1_{\cM_F}(\mathbbm{1},\underline{M})~\bigg |~[\underline{E}_{\fp}]\in \Ext^1_{R_{\fp}}(\mathbbm{1}_{\fp},\underline{M}_{\fp}) \right\}.
\]
\begin{Definition}
We say that an extension of $\mathbbm{1}$ by $\underline{M}$ is \emph{$R$-integral} (or simply \emph{integral}) if it belongs to $\Ext^1_{R}(\mathbbm{1},\underline{M})$.
\end{Definition}

Our second main result consists of the next theorem.
\begin{Theorem}\label{thm:main2}
Let $M_{R}$ denote the maximal integral $R$-model of $\underline{M}$. The $A$-module $\Ext^1_{R}(\mathbbm{1},\underline{M})$ equals the image of $M_{R}[\fj^{-1}]$ through $\iota$. In addition, $\iota$ induces a natural isomorphism of $A$-modules:
\[
\frac{M_{R}[\fj^{-1}]}{(\id-\tau_M)(M_{R})}\stackrel{\sim}{\longrightarrow} \Ext^1_{R}(\mathbbm{1},\underline{M}).
\]
\end{Theorem}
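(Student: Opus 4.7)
The first inclusion, $\iota(M_R[\fj^{-1}]) \subseteq \Ext^1_R(\mathbbm{1}, \underline{M})$, is formal: for $m \in M_R[\fj^{-1}]$ and any maximal ideal $\fp \subseteq R$, the localization $M_R \otimes_R R_\fp$, embedded in $M_\fp$ via flatness of $R_\fp$ over $R$, is a finitely generated, $\tau_M$-stable sub-$A\otimes R_\fp$-module generating $M_\fp$ over $F_\fp$; it is therefore an $R_\fp$-model of $\underline{M}_\fp$ and, by maximality, contained in $M_{R_\fp}$. Hence $m\otimes 1 \in M_{R_\fp}[\fj^{-1}]$ and $\iota(m)_\fp \in \Ext^1_{R_\fp}$ for every $\fp$. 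For the reverse inclusion, given $\iota(m)\in\Ext^1_R$, the plan is to study the extension $\underline{E}:=\iota(m)$, with $E=M\oplus (A\otimes F)$ and $\tau_E=\left(\begin{smallmatrix}\tau_M & m\cdot \mathbf{1}\\ 0 & \mathbf{1}\end{smallmatrix}\right)$, together with its maximal $R$-model $E_R$.

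Functoriality of maximal models (Corollary~\ref{cor:integral-model-functor}, applied globally) applied to $\underline{M}\hookrightarrow \underline{E}\twoheadrightarrow \mathbbm{1}$ gives $M_R\subseteq E_R$ and a map $p\colon E_R\to \mathbbm{1}_R = A\otimes R$. A preliminary observation is that $E_{R_\fp}\cap M_\fp = M_{R_\fp}$ inside $M_\fp$ for every $\fp$: the intersection is a finitely generated, $\tau_M$-stable sub-$A\otimes R_\fp$-module of $M_\fp$ which generates $M_\fp$ over $F_\fp$ (it contains $M_{R_\fp}$), hence is an $R_\fp$-model and equals $M_{R_\fp}$ by maximality. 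Consequently, $(x,0)\in E_{R_\fp}$ iff $x\in M_{R_\fp}$. The goal becomes to produce $z\in M$ satisfying $(z,1)\in E_{R_\fp}$ for every $\fp$, equivalently $z\equiv y_\fp \pmod{M_{R_\fp}}$ where $y_\fp\in M_\fp$ is any element with $(y_\fp,1)\in E_{R_\fp}$ (whose existence is guaranteed by $\iota(m)_\fp\in\Ext^1_{R_\fp}$).

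A denominator-counting argument reduces this to a finite set of places: writing $m$ in terms of a fixed $A\otimes F$-generating set of $M$ reveals that the denominators in $R$ lie in a finite set, so outside a finite set $S_0\subseteq \Spec R$ one has $m\in M_{R_\fp}[\fj^{-1}]$ automatically and may take $y_\fp=0$. For $\fp\in S_0$, strong approximation for $F$---available because $F$ possesses places outside $\Spec R$ (e.g.\ those above $\infty$ on $C$)---extends to the finitely generated projective $A\otimes F$-module $M$ and produces a single $z\in M$ with $z\equiv y_\fp \pmod{M_{R_\fp}}$ for $\fp\in S_0$ and $z\in M_{R_{\fp'}}$ for every $\fp'\in\Spec R\setminus S_0$. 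Then $(z,1)\in E_R$, so $p$ is surjective, and one obtains a short exact sequence $0\to M_R \to E_R \to A\otimes R\to 0$ of locally free $A\otimes R$-modules (Proposition~\ref{prop-integral-models-are-locally-free}), which splits because $A\otimes R$ is free of rank one over itself.

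Fixing a section with $s(1)=(z,1)$ and rewriting $\tau_E$ in the resulting decomposition $E_R=M_R\oplus s(A\otimes R)$ yields $\tau_E^{\text{new}} = \left(\begin{smallmatrix}\tau_M & m'\cdot \mathbf{1} \\ 0 & \mathbf{1}\end{smallmatrix}\right)$ with $m' = m - (\id - \tau_M)(z)$; the stability requirement $\tau_E(\tau^*(0,1))\in E_R[\fj^{-1}]$ then forces $m'\in M_R[\fj^{-1}]$, and since $\iota(m')=\iota(m)$, the identification $\Ext^1_R(\mathbbm{1},\underline{M}) = \iota(M_R[\fj^{-1}])$ is complete. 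The displayed isomorphism reduces to the global analogue of Corollary~\ref{cor:(id-tau)(M0)=(id-tau)(M)capMO}: if $n\in M$ satisfies $(\id-\tau_M)(n)\in M_R[\fj^{-1}]$, the sub-$A\otimes R$-module $\langle M_R,n\rangle$ of $M$ is finitely generated, $\tau_M$-stable up to $\fj^{-1}$, and generates $M$ over $F$, hence is itself an $R$-model, hence lies in $M_R$ by maximality, giving $n\in M_R$. The main obstacle is the strong approximation step, which requires promoting strong approximation on $F$ to the module $M$ by approximating coefficients in a fixed generating set; everything else is either formal functoriality or a direct matrix computation.
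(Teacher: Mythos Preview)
Your proof is correct and reaches the same conclusion, but the organization is genuinely different from the paper's. The paper argues directly at the level of the element $m$: it proves a \emph{decompletion lemma} (Lemma~\ref{lem:decompletion-local}) showing
\[
M[\fj^{-1}]\cap\bigl(M_{R_\fp}[\fj^{-1}]+(\id-\tau_M)(M_\fp)\bigr)=M[\fj^{-1}]\cap M_{R_\fp}[\fj^{-1}]+(\id-\tau_M)(M),
\]
then a strong-approximation lemma (Lemmas~\ref{lem-strong-approximation} and~\ref{lem:strong-approximation}) showing that the intersection over all $\fp$ of the right-hand side equals $M_R[\fj^{-1}]+(\id-\tau_M)(M)$. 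Your route instead passes through the maximal model $E_R$ of the extension $\underline{E}=\iota(m)$: you establish the short exact sequence $0\to M_R\to E_R\to A\otimes R\to 0$, split it, and read off a representative $m'\in M_R[\fj^{-1}]$ from the change of basis. This is more structural and arguably cleaner conceptually---the splitting does the bookkeeping for you---whereas the paper's element-level chain of equivalences is more explicit about what is happening to $m$.

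Both proofs rest on the same technical core: strong approximation in $M$, which is not finite-dimensional over $F$. The paper handles this by first using $M_\fp=M+M_{R_\fp}$ (a decompletion step) to replace the local witnesses by elements of $M$, then restricting to a finite-dimensional $F$-subspace $N\subset M$ containing them and applying Lemma~\ref{lem-strong-approximation} there, with $N_{R_\fp}:=N_\fp\cap M_{R_\fp}$ and $N_R=N\cap M_R$. Your sketch (``approximating coefficients in a fixed generating set'') will need exactly this reduction: the $y_\fp$ you produce live in $M_\fp$, and the congruence $z\equiv y_\fp\pmod{M_{R_\fp}}$ cannot be read off coefficient-wise since $M_{R_\fp}$ has no simple description in a basis. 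Writing $y_\fp=y_\fp'+y_\fp''$ with $y_\fp'\in M$ and $y_\fp''\in M_{R_\fp}$ (possible because $F_\fp=F+R_\fp$ and $M$ is generated by $M\cap M_{R_\fp}$), you may replace $y_\fp$ by $y_\fp'$ and then work inside a finite-dimensional $N$ exactly as the paper does. Once this is made precise, your argument goes through.
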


The proof of the above theorem will result after a sequence of lemmas.
\begin{Lemma}\label{lem:decompletion-local}
Let $M_{R_\fp}$ be the maximal integral model of $\underline{M}_\fp=(M_{\fp},\tau_M)$. Inside $M[\fj^{-1}]$, we have:
\[M[\fj^{-1}]\cap \left(M_{R_{\fp}}[\fj^{-1}]+(\id-\tau_M)(M_{\fp})\right)=M[\fj^{-1}]\cap M_{R_{\fp}}[\fj^{-1}]+(\id-\tau_M)(M).\]
\end{Lemma}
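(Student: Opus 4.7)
The inclusion $\supseteq$ is immediate since $M\subset M_{\fp}$. For the reverse direction, the strategy is to approximate $\fp$-adically the element $z\in M_{\fp}$ appearing in any decomposition $x = y + (\id-\tau_M)(z)$ by an element $z'\in M$, and then to absorb the error into the $M_{R_{\fp}}[\fj^{-1}]$-part.

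More precisely, suppose $x\in M[\fj^{-1}]$ and write $x = y + (\id-\tau_M)(z)$ with $y\in M_{R_{\fp}}[\fj^{-1}]$ and $z\in M_{\fp}$. The key claim is:
\begin{equation*}
\text{for every } z\in M_{\fp}, \text{ there exists } z'\in M \text{ with } z-z'\in M_{R_{\fp}}.
\end{equation*}
Granting this, because $M_{R_{\fp}}$ is an integral model one has $\tau_M(\tau^*(z-z'))\in M_{R_{\fp}}[\fj^{-1}]$, whence $(\id-\tau_M)(z-z')\in M_{R_{\fp}}[\fj^{-1}]$. Setting $y':=y+(\id-\tau_M)(z-z')$, one finds $y'\in M_{R_{\fp}}[\fj^{-1}]$ on the one hand; on the other hand, the identity $y' = x - (\id-\tau_M)(z')$ with $x\in M[\fj^{-1}]$ and $z'\in M$ gives $y'\in M[\fj^{-1}]$. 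Thus $y'\in M[\fj^{-1}]\cap M_{R_{\fp}}[\fj^{-1}]$ and $x = y' + (\id-\tau_M)(z')$ lies in the right-hand side of the claimed equality.

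The main obstacle is the approximation claim above, which amounts to the density of $M$ in $M_{\fp}$ for the $\fp$-adic topology defined by $M_{R_{\fp}}$. I would argue as follows. Fix generators $m_1,\ldots,m_s$ of $M$ as an $A\otimes F$-module; they also generate $M_{\fp}$ as an $A\otimes F_{\fp}$-module. Let $L$ be the finitely generated $A\otimes R_{\fp}$-submodule of $M_{\fp}$ they span. Since $L\otimes_{R_{\fp}}F_{\fp} = M_{\fp}$ and $M_{R_{\fp}}$ is a finitely generated $A\otimes R_{\fp}$-lattice in $M_{\fp}$, there is an integer $c\geq 0$ such that $L\subseteq \pi^{-c}M_{R_{\fp}}$, where $\pi$ denotes a uniformizer of $R_{\fp}$. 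Now write $z=\sum_i a_i m_i$ with $a_i\in A\otimes F_{\fp}$; using that $F$ is dense in $F_{\fp}$ (so $A\otimes F$ is dense in $A\otimes F_{\fp}$ in the $(A\otimes \pi)$-adic topology), we may choose $a_i'\in A\otimes F$ with $a_i - a_i'\in A\otimes \pi^{c}R_{\fp}$. Then $z':=\sum_i a_i'm_i\in M$ and
\[
z - z' = \sum_i (a_i-a_i')m_i \in \pi^{c}L \subseteq M_{R_{\fp}},
\]
as required. The rest of the argument, as outlined, is a formal manipulation relying only on the defining property $\tau_M(\tau^*M_{R_{\fp}})\subseteq M_{R_{\fp}}[\fj^{-1}]$ of an integral model.
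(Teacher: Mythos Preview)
Your proof is correct and follows essentially the same route as the paper: both reduce the nontrivial inclusion to the decomposition $M_{\fp}=M+M_{R_{\fp}}$ (equivalently, your approximation claim), and then absorb $(\id-\tau_M)$ of the $M_{R_{\fp}}$-part into $M_{R_{\fp}}[\fj^{-1}]$ via the integral-model property. The paper obtains $M_{\fp}=M+M_{R_{\fp}}$ in one line by choosing $F$-generators of $M$ that already lie in $M\cap M_{R_{\fp}}$ and using $F_{\fp}=F+R_{\fp}$; your version with arbitrary generators and the $\pi^c$-shift achieves the same thing, though your phrase ``$(A\otimes\pi)$-adic topology on $A\otimes F_{\fp}$'' is slightly off (that topology is trivial since $\pi$ is a unit in $F_{\fp}$)---what you are really using is the equality $A\otimes F_{\fp}=(A\otimes F)+\pi^{c}(A\otimes R_{\fp})$, which follows from $F_{\fp}=F+\pi^{c}R_{\fp}$ by tensoring over $\bF$.
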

\begin{proof}
The inclusion $\supset$ is clear. Since $M$ is generated over $F$ by elements in $M\cap M_{R_{\fp}}$ and as $F_\fp=F+R_{\fp}$, we have $M_{\fp}=M+M_{R_{\fp}}$. Let $m$ be an element in the left-hand side. We can write $m$ as $m_{\fp}+n_{\fp}-\tau_M(\tau^*n_{\fp})+n-\tau_M(\tau^*n)$ where $m_{\fp}\in M_{R_\fp}[\fj^{-1}]$, $n_{\fp}\in M_{R_{\fp}}$ and $n\in M$. In particular, $m_{\fp}+n_{\fp}-\tau_M(\tau^*n_{\fp})$ belongs to $M[\fj^{-1}]\cap M_{R_{\fp}}[\fj^{-1}]$ which implies that $m\in M[\fj^{-1}]\cap M_{R_{\fp}}[\fj^{-1}]+(\id-\tau_M)(M)$.
\end{proof}

\begin{Lemma}\label{lem:m-belongs-to-finitely-many}
Let $m\in M$. Then $m\in M_{R_{\fp}}$ for almost all maximal ideals $\fp$ of $R$.
\end{Lemma}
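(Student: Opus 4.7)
The plan is to deduce the lemma by a clearing-denominators argument built on the global maximal model $M_R$ constructed in Proposition \ref{prop-existence-of-maximal-integral-model-global}. The guiding idea is that $M_R$ should serve as a simultaneous integral approximation of all the local $M_{R_\fp}$, so every $m\in M$ must lie in $M_R$ away from a finite set of primes — precisely the maximal ideals where the ``denominator'' of $m$, taken with respect to $M_R$, fails to be a unit.

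First I would establish the identification $M_R\otimes_R F = M$. By Proposition \ref{prop-integral-models-are-locally-free}, $M_R$ is locally free over $A\otimes R$, and by definition of a model it generates $M$ over $F$; a rank count then upgrades this generation to an equality. Consequently, for any $m\in M$ there exists a nonzero $d\in R$ with $dm\in M_R$.

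Next, for every maximal ideal $\fp\subset R$ I would check that the localization $M_R\otimes_R R_\fp$ is itself an $R_\fp$-model of $\underline{M}_\fp$: it is finitely generated over $A\otimes R_\fp$, it generates $M_\fp$ over $F_\fp$ (inherited from $M_R\subset M$ and $F_\fp = F\cdot R_\fp$), and the inclusion $\tau_M(\tau^*M_R)\subset M_R[\fj^{-1}]$ persists after tensoring with $R_\fp$. By maximality of $M_{R_\fp}$ (Lemma \ref{lem:uniqueness-of-maximal-integral}), this yields $M_R\otimes_R R_\fp\subset M_{R_\fp}$.

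The conclusion is then immediate: for every maximal ideal $\fp$ of $R$ not containing $d$, the element $d$ is a unit in $R_\fp$, so $m = d^{-1}(dm)\in M_R\otimes_R R_\fp\subset M_{R_\fp}$. Since only finitely many maximal ideals of the Dedekind domain $R$ contain a given nonzero element, the lemma follows. There is no serious obstacle; the only mildly delicate point is the identification $M_R\otimes_R F = M$, which ultimately rests on the local-freeness of maximal models proved in Proposition \ref{prop-integral-models-are-locally-free}.
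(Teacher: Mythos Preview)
Your proposal is correct and follows essentially the same approach as the paper: both argue by finding a nonzero $d\in R$ with $dm\in M_R$ and then using that $M_R$ sits inside every $M_{R_\fp}$ to conclude for all $\fp$ not dividing $d$. The paper invokes the intersection formula $M_R=\bigcap_\fp(M\cap M_{R_\fp})$ from Proposition~\ref{prop-existence-of-maximal-integral-model-global} directly rather than passing through $M_R\otimes_R R_\fp$, and it simply asserts the existence of $d$ (which follows already from $M_R$ generating $M$ over $F$, without needing local-freeness or a rank count), but these are cosmetic differences.
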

\begin{proof}
There exists a nonzero element $d\in R$ such that $dm \in M_{R}$. Let $\{\fq_1,\ldots,\fq_s\}$ be the finite set of maximal ideals in $R$ that contain $(d)$. By Proposition \ref{prop-existence-of-maximal-integral-model-global}, $m\in M_{R_{\fp}}$ for all $\fp$ not in $\{\fq_1,\ldots,\fq_s\}$.
\end{proof}

Let $N$ be a finite dimensional vector space over $F$ (resp. $F_{\fp}$). By a \emph{lattice in $N$} we mean a finitely generated module over $R$ (resp. $R_{\fp}$) in $N$ that contains a basis of $N$.
\begin{Lemma}[Strong approximation]\label{lem-strong-approximation}
Let $N$ be a finite dimensional $F$-vector space and, for all maximal ideals $\fp$ of $R$, let $N_{R_{\fp}}$ be an $R_{\fp}$-lattice in $N_{\fp}:=N\otimes_{R}F_{\fp}$ such that the intersection $\bigcap_{\fp}\left(N\cap N_{R_{\fp}}\right)$, over all maximal ideals $\fp$ of $R$, is an $R$-lattice in $N$. Let $T$ be a finite set of maximal ideals in $R$ and, for $\fq\in T$, let $n_{\fq}\in N_{\fq}$. Then, there exists $n\in N$ such that $n-n_{\fq}\in N_{R_{\fq}}$ for all $\fq\in T$ and $n\in N_{R_{\fp}}$ for all $\fp$ not in $T$.
\end{Lemma}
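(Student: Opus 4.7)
The plan is to reduce Lemma \ref{lem-strong-approximation} to the strong approximation theorem for the additive group of the finite-dimensional $F$-vector space $N$, viewed as an algebraic group over the global function field $F$. The essential ingredient will be the non-emptiness of the set of places of $F$ omitted from $\Spec R$: since $R = \cO_F[S^{-1}]$, this set is $S \cup \{\infty\}$ and always contains $\infty$, which is precisely what is needed to invoke strong approximation.

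First I would set $N_R := \bigcap_\fp(N \cap N_{R_\fp})$, which is an $R$-lattice in $N$ by hypothesis. This furnishes the two properties I rely on: $(i)$ $N_R \otimes_R F = N$, so that the family $\{N_R \otimes_R R_\fp\}_{\fp \in \Spec R}$ defines an adelic integral structure on $N$; $(ii)$ $N_R \otimes_R R_\fp \subseteq N_{R_\fp}$ for every $\fp$, obtained by base change from the obvious inclusion $N_R \subseteq N_{R_\fp}$.

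Next I would invoke strong approximation for $N$ (as an $F$-vector space) with respect to the nonempty set of places $S \cup \{\infty\}$: $N$ is dense in the restricted topological product $\prod'_\fp N_\fp$ taken with respect to the adelic lattice $\{N_R \otimes_R R_\fp\}_\fp$. Given the finite set $T$ and the prescribed elements $n_\fq \in N_\fq$ for $\fq \in T$, construct the adele $(x_\fp)$ by $x_\fq = n_\fq$ for $\fq \in T$ and $x_\fp = 0$ otherwise; this lies in the restricted product since $0 \in N_R \otimes_R R_\fp$. Consider the basic open neighborhood
\[
U \;:=\; \prod_{\fq \in T}\bigl(n_\fq + N_{R_\fq}\bigr) \,\times\, \prod_{\fp \notin T}\bigl(N_R \otimes_R R_\fp\bigr),
\]
which is open in the restricted-product topology because each $N_{R_\fq}$, being an $R_\fq$-lattice in $N_\fq$, is open in the $\fq$-adic topology. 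By density, select $n \in N \cap U$. Then $n - n_\fq \in N_{R_\fq}$ for each $\fq \in T$, and $n \in N_R \otimes_R R_\fp \subseteq N_{R_\fp}$ for each $\fp \notin T$, which is exactly the desired conclusion.

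The main obstacle I anticipate is justifying the invocation of strong approximation in this precise form. One must check that $\{N_R \otimes_R R_\fp\}_\fp$ constitutes a genuine adelic structure on $N$—this is exactly what the lattice hypothesis on $N_R$ provides—and that the classical strong approximation theorem for $\mathbb{G}_a^{\dim_F N}$ over $F$ transfers to this chosen adelic structure. The latter is a standard consequence of the non-emptiness of $S \cup \{\infty\}$, since choosing an $F$-basis $e_1,\dots,e_r$ of $N$ contained in $N_R$ trivializes the problem up to a matrix of class-group type, and strong approximation for $\mathbb{G}_a^r$ over $F$ (with respect to any nonempty set of avoided places) is classical. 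Once these two points are established, the conclusion follows directly without further computation.
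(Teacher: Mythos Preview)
Your proposal is correct and follows essentially the same approach as the paper: both reduce the claim to classical strong approximation for the additive group over $F$, using that $N_R\otimes_R R_{\fp}\subset N_{R_{\fp}}$ and that the set of omitted places (containing $\infty$) is nonempty. The only difference is presentational---the paper writes $N_R=Rb_1\oplus\cdots\oplus Rb_{r-1}\oplus\fa b_r$ via the structure theorem for modules over a Dedekind domain and applies the scalar strong approximation theorem coordinate-wise (tracking $v_{\fp}(\fa)$ in the last coordinate), whereas you phrase the same reduction in adelic language.
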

\begin{proof}
Let $N_{R}$ denote the intersection $\bigcap_{\fp}\left(N\cap N_{R_{\fp}}\right)$ over all maximal ideals $\fp$ of $R$. By the structure Theorem for finitely generated modules over the Dedekind domain $R$, there exists a nonzero ideal $\fa\subset R$ and elements $\{b_1,\ldots,b_r\}\subset M$ such that 
\begin{equation}
N_{R}=Rb_1\oplus \cdots \oplus Rb_{r-1}\oplus \fa b_r.
\nonumber
\end{equation}
Because $N_{R}\otimes_R R_{\fp}\subset N_{R_{\fp}}$ for $\fp\subset R$, we have $R_{\fp}b_1\oplus \cdots \oplus\fp^{v_{\fp}(\fa)}R_{\fp} b_r\subset N_{R_{\fp}}$. For $\fq\in T$, let us write $n_{\fq}=\sum_i{f_{\fq,i}b_i}$ with $f_{\fq,i}\in F_{\fq}$. By the strong approximation Theorem \cite[Thm.~6.13]{rosen}, for all $i\in\{1,\ldots,r\}$, there exists $f_i\in F$ such that 
\begin{enumerate}
\item for $\fq \in T$ and $i\in\{1,\ldots,r-1\}$, $v_{\fq}(f_i-f_{\fq,i})\geq 0$,
\item for $\fq \in T$,  $v_{\fq}(f_r-f_{\fq,r})\geq v_{\fq}(\fa)$,
\item for $\fp\notin T$ and $i\in\{1,\ldots,r-1\}$, $v_{\fp}(f_i)\geq 0$,
\item for $\fp\notin T$, $v_{\fp}(f_r)\geq v_{\fp}(\fa)$.
\end{enumerate}
The element $n=\sum_i{f_ib_i}\in N$ satisfies the assumption of the lemma.
\end{proof}

\begin{Lemma}\label{lem:strong-approximation}
We have 
\begin{equation}\label{eq:translated-intersection}
\bigcap_{\fp\subset R} \left(M[\fj^{-1}]\cap M_{R_{\fp}}[\fj^{-1}]+(\id-\tau_M)(M)\right)=M_{R}[\fj^{-1}]+(\id-\tau_M)(M)
\end{equation}
where the intersection is indexed over the maximal ideals of $R$.
\end{Lemma}
\begin{proof}
The inclusion $\supset$ follows from Proposition \ref{prop-existence-of-maximal-integral-model-global}. Conversely, let $m$ be an element of the left-hand side of \eqref{eq:translated-intersection}. By Lemma \ref{lem:m-belongs-to-finitely-many}, there exists a finite subset $T$ of maximal ideals of $R$ such that $m\in M_{R_{\fp}}[\fj^{-1}]$ for $\fp\notin T$. For $\fq\in T$, there exists $n_{\fq}\in M$ and $m_{\fq}\in M[\fj^{-1}]\cap M_{R_{\fq}}[\fj^{-1}]$ such that $m=m_{\fq}+n_{\fq}-\tau_M(\tau^*n_{\fq})$. 

Let $N$ be a finite dimensional sub-$F$-vector space of $M$ that contains $m$ and $n_{\fq}$ for all $\fq\in T$. For a maximal ideal $\fp$ of $R$, let $N_{R_\fp}:=M_{R_{\fp}}\cap (N\otimes_{F}F_{\fp})$. Let also $N_{R}:=\bigcap_{\fp}(N\cap N_{R_\fp})$. By Proposition \ref{prop-existence-of-maximal-integral-model-global}, we have $N_R=N\cap M_R$. In particular, this implies that $N_R$ generates $N$ over $F$. This also implies that $N_R$ is a finitely generated $R$-module : as $M_R$ is finite projective (Theorem \ref{thm:integral-models-are-locally-free}), it is included in a finite free $A\otimes R$-module from which we consider a basis $(m_1,\ldots,m_t)$. On the other-hand, if $(n_1,\ldots,n_s)$ is a basis of $N$ over $F$, there exists a constant $C\geq 0$ large enough for which each $n_i$ belongs to $\bigoplus_j (A\otimes F)_{\deg<C}\cdot m_j$, $i\in\{1,\ldots,s\}$, where we denoted $(A\otimes F)_{\deg<C}$ the finite dimensional sub-$F$-vector space of $A\otimes F$ of elements built as sums of elementary tensors $a\otimes f$ with $\deg(a)<C$. This yields:
\[
N_R\subseteq \bigoplus_{j=1}^t (A\otimes R)_{\deg<C}\cdot m_j
\]
and hence $N_R$ is finitely generated. We have therefore proved that $N_R$ is an $R$-lattice in $N$, and Lemma \ref{lem-strong-approximation} applies: there exists $n\in N$ such that $n-n_{\fq}\in N_{R_{\fq}}$ for all $\fq\in T$ and $n\in N_{R_{\fp}}$ for all $\fp$ not in $T$. Then $m+n-\tau_M(\tau^*n)\in N_{R}\subset M_{R}$, which ends the proof.
\end{proof}

\begin{proof}[Proof of Theorem \ref{thm:main2}]
Let $[\underline{E}]\in \Ext^1_{\cM_F}(\mathbbm{1},\underline{M})$ and let $m\in M[\fj^{-1}]$ be such that $[\underline{E}]=\iota(m)$. The proof of Theorem \ref{thm:main2} is achieved via the sequence of equivalence:
\begin{align*}
[\underline{E}]\in \Ext^1_{R}(\mathbbm{1},\underline{M}) &\Longleftrightarrow \forall \fp\in \Spm R: ~[\underline{E}_{\fp}]\in \Ext^1_{R_{\fp}}(\mathbbm{1}_\fp,\underline{M}_\fp) \\
&\Longleftrightarrow \forall \fp\in \Spm R: ~m\in M[\fj^{-1}]\cap [M_{R_{\fp}}[\fj^{-1}]+(\id-\tau_M)(M_{\fp})] \\
&\Longleftrightarrow \forall \fp\in \Spm R: ~m\in M[\fj^{-1}]\cap M_{R_{\fp}}[\fj^{-1}]+(\id-\tau_M)(M) \\
&\Longleftrightarrow m\in M_R[\fj^{-1}]+(\id-\tau_M)(M)  \\
&\Longleftrightarrow [\underline{E}]\in \iota(M_R[\fj^{-1}])
\end{align*}
where the second equivalence stems from Definition \ref{def:integral-models-of-motives}, the third from Lemma \ref{lem:decompletion-local} and the fourth from Lemma \ref{lem:strong-approximation}. The second assertion follows from Corollary \ref{cor:(id-tau)(M0)=(id-tau)(M)capMO}.
\end{proof}

\section{Regulated extensions}\label{sec:regulated-extensions}
Let $\fm\subset A$ be a maximal ideal of $A$ with associated local function field $K_\fm$, let $F_\fp$ be a finite field extension of $K_\fm$ with valuation ring $\cO_\fp$ and, as before, denote by $\kappa:A\to\cO_{\fp}$ the inclusion.

Consider an $A$-motive $\underline{M}$ over $\cO_{\fp}$. In the previous section, we proved that given any maximal ideal $\ell$ in $A$ distinct from $\fm$, there is an inclusion:
\begin{equation}\label{eq:inclusion-of-ext}
\Ext^1_{\cO_{\fp}}(\mathbbm{1},\underline{M})\subset \Ext^1_{\text{good}}(\mathbbm{1},\underline{M})_{\ell}
\end{equation}
of sub-$A$-modules of $\Ext^1_{\cM_{F_{\fp}}}(\mathbbm{1},\underline{M})$. Surprisingly, this is almost never an equality. In Subsection \ref{subsec:counter-example}, we construct explicitly a class in the right-hand side of \eqref{eq:inclusion-of-ext} which does not belong the left-hand side. In the remaining part of this text, we offer a conjectural framework in which we expect to solve this default.

\subsection{A particular extension of $\mathbbm{1}$ by itself}\label{subsec:counter-example}
We consider the case where $A=\bF[t]$ and consider the maximal ideal $\ell=(t)$ in $A$. Let $E$ be the local function field $\bF(\!(\pi)\!)$, $\cO=\bF[\![\pi]\!]$, with structure morphism $\kappa:A\to \cO$ defined by $\kappa(t)=1+\pi$ (that is, $\pi=\theta-1$ where $\theta=\kappa(t)$). We have $\kappa(\ell)\cO=(1+\pi)\bF[\![\pi]\!]=\cO$ so $\kappa(\ell)\cO=\cO$. Let $\underline{M}=\mathbbm{1}$ over $E$. By Proposition \ref{prop:diagram-explicit-cocycle} enriched with Proposition \ref{prop:artin-schreier}, there is a commutative square of $A$-modules:
\[
\begin{tikzcd}
\Ext^1_{\cM_E}(\mathbbm{1},\mathbbm{1}) \arrow[r] & \operatorname{H}^1(I_E,T_{(t)}\mathbbm{1}) \\
\displaystyle\frac{E[t,\frac{1}{t-\theta}]}{(\id-\tau)(E[t])} \arrow[u,"\wr","\iota"']\arrow[r]  & \displaystyle\frac{E^{\operatorname{ur}}[\![t]\!]}{\cO[\![t]\!]+(\id-\tau)(E[\![t]\!])} \arrow[u,"\wr"']
\end{tikzcd}
\]
where the bottom arrow is induced by the inclusion of $E[t,(t-\theta)^{-1}]$ in $E[\![t]\!]$. \\

Hereafter, we construct an element $m$ in $E[t,(t-\theta)^{-1}]$ of the form
\[
m=\frac{m_k}{(t-\theta)^k}+\cdots+\frac{m_1}{(t-\theta)}
\]
for some $m_1,\ldots,m_k$ in $E$, not all in $\cO$, such that $m$ belongs to $(\id-\tau)(E[\![t]\!])$. Then, $\iota(m)$ has good reduction with respect to $(t)$ (in the sense of Definition \ref{def:ext-good-red-with-resp-to-ell}) but does not belong to $\cO[t,(t-\theta)^{-1}]+(\id-\tau)(E[t])$.\\

Let $k=q^2$ where $q$ is the number of elements of $\bF$, and for $i\in\{0,\ldots,k-1\}$, define $n'_i$ as $\theta^i(\pi^{-1}-\pi^{-q})$ in $E$. So $n_i'$ has valuation $-q$. For all $c\geq 0$, let $f_{ck}$ be a root in $E$ of the polynomial:
\[
X^q-X+\theta^{-ck}n'_0.
\] 
Such a root exists in $E$ as $\theta^{-k}\equiv 1\pmod{\pi^{q^2}}$. We now define $f_l\in E$ for all $l\geq 0$ by the rule $f_l:=f_{ck}$ if $l=ck+r$ for $c\geq 0$ and $0\leq r<k$. We obtain
\begin{equation}\label{eq:fl}
\text{for~all~}l\geq 0:\quad \theta^{-l}n'_{\bar{l}}=f_l-f_l^q
\end{equation}
where $\overline{l}\in\{0,\ldots,k-1\}$ denotes the remainder of the euclidean division of $l$ by $k$. \\

For $l\geq 0$, let $S_k(l)$ be the \emph{Pascal matrix} whose $i$th row-$j$th column entry is the binomial coefficient $\binom{i+j+l}{i+l}$ ($0\leq i,j<k$). The following claims are easily proven:
\begin{enumerate}[label=$(\roman*)$]
\item\label{item:1} The determinant of $S_k(0)$ is $1$.
\item\label{item:2} Let $p$ be the characteristic of $\bF$. For $l\geq 0$, we have the formula
\[
S_k(l+1)\equiv \begin{pmatrix}
 & 1 & & & \\
 & & 1 & & \\
 & & & \ddots & \\
 & & & & 1 \\
1 & & & &
\end{pmatrix}S_k(l) \pmod{p}
\]
\item The application $l\mapsto S_k(l)$ is $k$-periodic modulo $p$.
\end{enumerate}
We now define $m_i'\in E$ for $i\in\{0,\ldots,k-1\}$ by mean of the formula:
\[
S_k(0)\begin{pmatrix}m_0' \\ m_1'\\ \vdots \\ m_{k-1}'\end{pmatrix}=\begin{pmatrix}n_0' \\ n_1'\\ \vdots \\ n_{k-1}'\end{pmatrix}
\]
Since the $n_i$'s have negative valuation, at least one of the $m_i$'s has negative valuation (by \ref{item:1}). From \ref{item:2}, we have 
\[
\text{for~all~}l\geq 0:\quad S_k(l)\begin{pmatrix}m_0' \\ m_1'\\ \vdots \\ m_{k-1}'\end{pmatrix}=\begin{pmatrix}n_{\bar{l}}' \\ n_{\overline{l+1}}'\\ \vdots \\ n_{\overline{l+k-1}}'\end{pmatrix}
\]
From \eqref{eq:fl}, we obtain:
\begin{equation}\label{eq:coefficients}
\text{for~all~}l\geq 0:\quad \theta^{-l}\left(\sum_{i=0}^{k-1}{m'_i\binom{i+l}{l}}\right)=f_l-f_l^q.
\end{equation}
Finally, for $i\in\{1,\ldots,k\}$, let $m_i:=(-\theta)^{i}m_{i-1}'$. Formula \eqref{eq:coefficients} amounts to:
\[
m:=\frac{m_{k}}{(t-\theta)^k}+\ldots+\frac{m_{1}}{(t-\theta)}=f-f^{(1)}
\]
where $f:=\sum_{l\geq 0}{f_l t^l}$. Therefore $\iota(m)$ has good reduction, although $m$ does not belong to $\cO[t,(t-\theta)^{-1}]+(\id-\tau)(E[t])$.

\subsection{Hodge poylgons of $A$-motives}\label{subsec:Hodge-polygon}
We recognize that the extension of \emph{Hodge-Pink structures} corresponding to $\iota(m)$ constructed in the previous subsection is not \emph{Hodge additive} in the sense of \cite[\S 6 \& 7]{pink}. We now introduce the notion of \emph{regulated extensions} which is the counterpart of Hodge additivity for $A$-motives. \\

Let $F$ be (any) field equipped with an $\bF$-algebra morphism $\kappa:A\to F$. Let $\underline{M}$ be an $A$-motive of rank $r$ over $F$ and characteristic morphism $\kappa$. Given $e$ a large enough integer for which $\fj^{e}\tau_M(\tau^*M)\subset M$, we have an isomorphism 
\[
M/\fj^{e}\tau_M(\tau^*M) \cong \bigoplus_{i=1}^r (A\otimes F)/\fj^{e+w_i}
\]
for some uniquely determined integers $w_1\leq \cdots \leq w_r$ independent of $e$ nor of the isomorphism. We call $(w_1,\ldots,w_r)$ the \emph{multiset of Hodge weights of $\underline{M}$}. Hodge weights are graphically organized into the \emph{Hodge polygon of $\underline{M}$} whose construction we now recall. 

By \emph{polygon}, we mean the graph in $\bR^2$ of a piecewise linear convex function $[0,n]\to \bR$ starting at $(0,0)$. All slopes are assumed to be rational numbers, and the length of the subinterval of $[0,n]$ on which the function as a given slope $q\in \bQ$ is called the \emph{multiplicity of $q$}. Therefore, each polygon $P$ is uniquely determined by its \emph{multiplicity function} $m_P:\bQ\to \bN$ which assigns to $q\in \bQ$ the multiplicity of the slope $q$. Such a function $m$ is the multiplicity function of a polygon if, and only if, the set $\{q\in \bQ|m(q)> 0\}$ is finite. The \emph{Hodge Polygon of $\underline{M}$} is the polygon whose multiplicity function is $q\mapsto \#\{i\in \{1,\ldots,r\}|w_i=q\}$. \\

Let $0\to \underline{M}\to \underline{E}\to \underline{N}\to 0$ be an exact sequence of $A$-motives over $F$. An argument due to Katz shows that the Hodge Polygon of $\underline{M}\oplus \underline{N}$ lies above that of $\underline{E}$ and has the same end points (see \cite[Lem. 1.2.3]{katz-slope} in the context of $F$-crystals, or \cite[Prop. 6.9]{pink} in our situation). In general, one cannot claim equality among those Hodge Polygons which makes our situation differ from classical mixed motives over $\bQ$. In the context of function fields Hodge structures, in order the compute Hodge groups explicitly, Pink introduced the condition of \emph{Hodge additivity} \cite[\S 6 \& 7]{pink} whose counterpart for $A$-motives is that of \emph{regulation}.
\begin{Definition}\label{def:regulated-extensions}
We call an exact sequence $0\to \underline{M}\to \underline{E} \to \underline{N} \to 0$ in $\cM_F$ \emph{regulated} if the Hodge polygon of $\underline{M}\oplus \underline{N}$ coincide with that of $\underline{E}$.
\end{Definition}

Clearly, any exact sequence equivalent to a regulated exact sequence in $\Ext^1_{\cM_F}(\underline{N},\underline{M})$ is itself regulated. We denote by $\Ext^{1,\operatorname{reg}}_{\cM_F}(\underline{N},\underline{M})$ the subset of regulated extensions.

\begin{Remark}\label{rmk:regulation}
The choice of the naming \emph{regulated} deserves some explanations. It is derived from the name \emph{regulator} which corresponds classically to the group morphism from extensions of classical mixed motives to extensions of classical mixed Hodge structures induced by the exactness of the Hodge realization functor. In function field arithmetic, one associates to an $A$-motive $\underline{M}$ the data of an $F$-vector space equipped with a decreasing filtration:
\begin{equation}\label{eq:hodge-filtration}
H:=\tau^*M/\fj\tau^* M, \quad p\in \bZ:~\operatorname{Fil}^p H:=\operatorname{image}\left(\tau^*M\cap \fj^p\cdot \tau_M^{-1}(M)\to \tau^*M/\fj\tau^* M\right),
\end{equation}
which one could legitimately called \emph{its associated Hodge structure} $(H,\operatorname{Fil})$ (\emph{e.g.} \cite{hartl-juschka}). However, following considerations by Pink, this functor is not exact\footnote{Call a sequence $S$ of Hodge structures \emph{exact} if the sequence $\operatorname{Fil}^p S$ is exact in the category of $F$-vector spaces for all $p$.} hence preventing the well-definedness of a regulator morphism in this setting. More precisely, as one could derived from \cite[Prop. 6.11]{pink}, an exact sequence of $A$-motives $\underline{S}$ induces an exact sequence of Hodge structures if and only if $\underline{S}$ is regulated. The notion of regulation is exploited in a sequel to this text to define a function field regulator among finite dimensional vector spaces (see \cite{gazda2} and \cite{gazda-maurischat-ext}).
\end{Remark}

The next proposition allows to compute regulated extensions in the category $\cM_F$. Let $\underline{M}$ and $\underline{N}$ be two objects in $\cM_F$. Recall the map $\iota=\iota_{\underline{N},\underline{M}}$ from Proposition \ref{prop-cohomology-in-tilde}.
\begin{Proposition}\label{prop:explicit-regulation}
Let $u\in \Hom_{A\otimes F}(\tau^*N,M)[\fj^{-1}]$ and let $[\underline{E}]$ be the extension of $\underline{N}$ by $\underline{M}$ given by $\iota_{\underline{N},\underline{M}}(u)$. Then, $[\underline{E}]$ is regulated if and only if there exists $f\in \Hom_{A\otimes F}(\tau^*N,\tau^* M)$ and $g\in \Hom_{A\otimes F}(N,M)$ such that $u=\tau_M\circ f-g\circ \tau_N$.
\end{Proposition}
\begin{proof}
We may assume that $\underline{E}$ is the $A$-motive $\left[M\oplus N, \left(\begin{smallmatrix}\tau_M & u \\ 0 & \tau_N \end{smallmatrix}\right)\right]$ (in full generality, the extension $[\underline{E}]$ is only equivalent to it). We preserve the notation~$E$ instead of~$M\oplus N$ below to lighten notations. For $e\geq 0$ large enough, there is an exact sequence of $A\otimes F$-modules
\[
S:\quad 0\longrightarrow M/\fj^e \tau_M(\tau^*M) \stackrel{m}{\longrightarrow} E/\fj^e \tau_E(\tau^* E) \stackrel{n}{\longrightarrow} N/\fj^e \tau_N(\tau^* N)\longrightarrow 0
\]
resulting from the Snake Lemma. We interpret the terms of $S$ as finite torsion modules over the discrete valuation ring $F[\![\fj]\!]:=\varprojlim_n A\otimes F/\fj^n$. The elementary divisors are of the form $e+i$, where $i$ runs through the slopes of the corresponding Hodge polygon. Thus, similar to the proof of \cite[Prop. 8.7]{pink}, the regulation of $[\underline{E}]$ is equivalent to the splitting of $S$ over $F[\![\fj]\!]$, itself equivalent to the splitting of $S$ over $A\otimes F$.

Assume that $[\underline{E}]$ is regulated. From a splitting of $S$, we get an isomorphism 
\begin{equation}\label{eq:reduced-comparison}
\varepsilon: M/\fj^e \tau_M(\tau^*M)\oplus N/\fj^e\tau_N(\tau^*N)\stackrel{\sim}{\longrightarrow} E/\fj^e \tau_E(\tau^* E)
\end{equation}
compatible with $S$. Consider the linear map $\bar{g}:N\to E/\fj^e \tau_E(\tau^* E)$, $n\mapsto \varepsilon(0,n)-(0,n)$, where the subtracted term is understood through the surjection $M\oplus N\twoheadrightarrow E/\fj^e \tau_E(\tau^* E)$. By compatibility of $\varepsilon$ with~$S$, we have $n\circ \bar{g}=0$, hence $\bar{g}$ factors through $m$. As $M$ is a projective module, $\bar{g}:N\to M/\fj^e \tau_M(\tau^*M)$ lifts to a map $g:N\to M$. By construction, we obtain a commutative square 
\[
\begin{tikzcd}[ampersand replacement = \&]
M/\fj^e \tau_M(\tau^*M)\oplus N/\fj^e\tau_N(\tau^*N) \arrow[r,"\varepsilon"] \& E/\fj^e \tau_E(\tau^* E) \\
M\oplus N \arrow[->>,u] \arrow[r,"{\left(\begin{smallmatrix} \id_M & g \\ 0 & \id_N \end{smallmatrix}\right)}"] \& M\oplus N \arrow[u,->>]
\end{tikzcd}
\]
The bottom row maps $\fj^e\left(\begin{smallmatrix}\tau_M & 0 \\ 0 & \tau_N\end{smallmatrix}\right)(\tau^*M\oplus \tau^*N)$ isomorphically to $\fj^e \left(\begin{smallmatrix} \tau_M & u \\ 0 & \tau_N \end{smallmatrix}\right)(\tau^*M\oplus \tau^* N)$. In particular, for any $n\in \tau^*N$, there exists a necessarily unique $m'\in \tau^*M$ such that 
\[
\begin{pmatrix} \id_M & g \\ 0 & \id_N \end{pmatrix}\begin{pmatrix} \tau_M & 0 \\ 0 & \tau_N \end{pmatrix}\begin{pmatrix} 0 \\ n \end{pmatrix}=\begin{pmatrix} \tau_M & u \\ 0 & \tau_N \end{pmatrix}\begin{pmatrix} m' \\ n \end{pmatrix}.
\] 
The assignment $n\mapsto m'$ is linear, and we denote it $f$. We have $u=\tau_M\circ f-g\circ \tau_N$ by construction, as desired.

It becomes clear from the proof how to construct a splitting of $S$, from the existence of $f$ and $g$ such that $u=\tau_M\circ f-g\circ \tau_N$, proving thusly the converse statement. 
\end{proof}

It follows from the above proposition that the subset $\Ext^{1,\text{reg}}_{\cM_F}(\underline{N},\underline{M})$ of regulated extensions of $\underline{N}$ by $\underline{M}$ is well-defined and is a sub-$A$-module of $\Ext^{1}_{\cM_F}(\underline{N},\underline{M})$. We also deduce that the notion of regulation is compatible with the duality property of extension modules:
\begin{Corollary}
Let $d:\Ext^1_{\cM_F}(\underline{N},\underline{M})\to \Ext^1_{\cM_F}(\mathbbm{1},\underline{M}\otimes \underline{N}^{\vee})$ be the canonical map of \eqref{eq:explici-ext1-map-dual}. Let $[\underline{E}]$ be an extension of $\underline{N}$ by $\underline{M}$. Then, $[\underline{E}]$ is regulated if and only if $d([\underline{E}])$ is.
\end{Corollary}
\begin{proof}
Fix an extension $[\underline{E}]:=[0\to \underline{M}\to \underline{E}\stackrel{p}{\to} \underline{N}\to 0]$. Under $d$, it is mapped to the extension $[\underline{E}']:=[0\to \underline{M}\otimes\underline{N}^{\vee} \to \underline{E}' \to \mathbbm{1} \to 0]$ where $\underline{E}'$ is the $A$-motive obtained as the pullback of
\[
\underline{E}\otimes \underline{N}^{\vee}\xrightarrow{p\otimes \id} \underline{N}\otimes \underline{N}^{\vee} \stackrel{\varepsilon}{\longleftarrow} \mathbbm{1}
\]
and where $\varepsilon$ is the unit morphism (\emph{cf}. Subsection \ref{subsec:monoidal-exact-cat} of the appendix). Let $u:(\tau^*N)[\fj^{-1}]\to M[\fj^{-1}]$ be an $A\otimes F$-linear map for which $\iota_{\underline{N},\underline{M}}(u)=[\underline{E}]$. This means that, up to replacing $[\underline{E}]$ by an equivalent extension, we can assume $\underline{E}$ to be the $A$-motive
\[
(E,\tau_E):=\left(M\oplus N, \begin{pmatrix} \tau_M & u \\ 0 & \tau_N \end{pmatrix}\right)
\]
with $p:E\to N$ being the projection $\operatorname{pr}_N$. Then, $\underline{E}'$ is described as the $A$-motive whose underlying module is 
\[
E'=\left\{v\in \Hom_{A\otimes F}(N,N\oplus M)~|~\operatorname{pr}_N\circ g \in (A\otimes F) \cdot \id_N \right\}
\]
and where $\tau_{E'}$ maps $v\in \tau^*\Hom(N,E)=\Hom(\tau^*N,\tau^*E)$ to $\tau_E\circ v\circ \tau_N^{-1}\in \Hom(N,E)[\fj^{-1}]$. Now observe that any element in $E'$ can be written uniquely as $h\oplus a\cdot \id_N$ for some $h:N\to M$ and $a\in A\otimes F$. Taking $v\in E'$ in this form, we get 
\[
\tau_{E'}(\tau^*v)= (\tau_M\circ \tau^*v\circ \tau_N^{-1}+a\cdot u\circ \tau_N^{-1})\oplus a\cdot \id_N.
\]
From this computation we deduce that the following diagram commutes
\begin{equation}
\begin{tikzcd}[column sep=4em]
\Hom_{A\otimes F}(\tau^*N,M)[\fj^{-1}] \arrow[r,"\iota_{\underline{N},\underline{M}}"]\arrow[d] & \Ext^1_{\cM_F}(\underline{N},\underline{M}) \arrow[d,"d"] \\
\Hom_{A\otimes F}(\tau^*(A\otimes F),M\otimes N^{\vee})[\fj^{-1}] \arrow[r,"\iota_{\mathbbm{1},\underline{M}\otimes \underline{N}^{\vee}}"] & \Ext^1_{\cM_F}(\mathbbm{1},\underline{M}\otimes \underline{N}^{\vee})
\end{tikzcd}\nonumber
\end{equation}
where the left vertical map sends $u$ to the morphism mapping $a\in \tau^*(A\otimes F)=A\otimes F$ to $a\cdot (u\circ \tau_N^{-1})$. Applying Proposition \ref{prop:explicit-regulation} we get that $[\underline{E}]$ is regulated, if and only if there exists $f$ and $g$ such that $u=g\circ \tau_N-\tau_M\circ f$, if and only if $u\circ \tau_N^{-1}=g-\tau_M\circ f\circ \tau_N^{-1}$, which happens if and only if $[\underline{E}']$ is itself regulated.
\end{proof}

In the particular situation of $\underline{N}=\mathbbm{1}$, Proposition \ref{prop:explicit-regulation} yields:
\begin{Corollary}\label{cor:regulated-extensions-1-M}
Let $\underline{M}$ be an $A$-motive over $F$. Then, $\iota$ induces an isomorphism of $A$-modules:
\[
\frac{M+\tau_M(\tau^*M)}{(\id-\tau_M)(M)} \stackrel{\sim}{\longrightarrow} \Ext^{1,\operatorname{reg}}_{\cM_F}(\mathbbm{1},\underline{M}).
\]
\end{Corollary}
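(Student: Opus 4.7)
The plan is to deduce this corollary directly from the preceding proposition (characterizing regulated extensions) combined with Theorem \ref{thm:cohomology-in-MR} (the explicit description of $\Ext^1$ via $\iota$), by specializing both to $\underline{N}=\mathbbm{1}$.

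First, I would unwind what the preceding proposition says in the case $\underline{N}=\mathbbm{1}$. Since $\Hom_{A\otimes L}(\mathbbm{1},\underline{M})=M$ and $\Hom_{A\otimes L}(\tau^*\mathbbm{1},\tau^*M)=\tau^*M$, the morphism $g\colon \mathbbm{1}\to\underline{M}$ is determined by an element $g\in M$, and $f\colon \tau^*\mathbbm{1}\to\tau^*M$ is determined by an element $f'\in\tau^*M$. Using $\tau_{\mathbbm{1}}=\mathbf{1}$, the equation $u=g\circ\tau_{\mathbbm{1}}-\tau_M\circ f$ translates under the identification $\Hom_{A\otimes L}(\tau^*\mathbbm{1},M)[\fj^{-1}]=M[\fj^{-1}]$ into the equality $u=g-\tau_M(f')$ inside $M[\fj^{-1}]$. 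Hence, the proposition characterizes $u\in M[\fj^{-1}]$ giving a regulated extension as precisely those $u$ lying in $M+\tau_M(\tau^*M)$, which is a well-defined sub-$A\otimes L$-module of $M[\fj^{-1}]$.

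Next, I would invoke Theorem \ref{thm:cohomology-in-MR}, which furnishes a surjective $A$-linear map $\iota\colon M[\fj^{-1}]\twoheadrightarrow \Ext^1_{\cM_L}(\mathbbm{1},\underline{M})$ with kernel $(\id-\tau_M)(M)$. By the previous paragraph, the image of the restriction $\iota|_{M+\tau_M(\tau^*M)}$ is exactly $\Ext^{1,\mathrm{reg}}_{\cM_L}(\mathbbm{1},\underline{M})$. Moreover, the kernel $(\id-\tau_M)(M)$ is contained in $M+\tau_M(\tau^*M)$: indeed, for any $m\in M$, one has $(\id-\tau_M)(m)=m-\tau_M(\tau^*m)$, the first summand lying in $M$ and the second in $\tau_M(\tau^*M)$. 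Therefore the kernel of $\iota|_{M+\tau_M(\tau^*M)}$ is still $(\id-\tau_M)(M)$, yielding the desired isomorphism
\[
\frac{M+\tau_M(\tau^*M)}{(\id-\tau_M)(M)}\stackrel{\sim}{\longrightarrow}\Ext^{1,\mathrm{reg}}_{\cM_L}(\mathbbm{1},\underline{M}).
\]

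There is no genuine obstacle; the whole content is bookkeeping with the identifications $\Hom(\mathbbm{1},-)=(-)$ and $\Hom(\tau^*\mathbbm{1},\tau^*M)=\tau^*M$, together with the observation that the kernel of $\iota$ automatically sits inside $M+\tau_M(\tau^*M)$. The only point that deserves explicit verification is that this last inclusion holds, which is immediate from the formula $(\id-\tau_M)(m)=m-\tau_M(\tau^*m)$.
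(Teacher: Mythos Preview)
Your proof is correct and is exactly the intended deduction: the paper states the corollary without proof, treating it as an immediate specialization of the preceding proposition to $\underline{N}=\mathbbm{1}$ together with Theorem \ref{thm:cohomology-in-MR}. Your bookkeeping with the identifications and the observation that $(\id-\tau_M)(M)\subset M+\tau_M(\tau^*M)$ is precisely what is needed.
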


One may use Corollary \ref{cor:regulated-extensions-1-M} to show that the extension of $\mathbbm{1}$ by itself constructed in Subsection \ref{subsec:counter-example} is not regulated. Indeed, using the notations of Subsection \ref{subsec:counter-example}, the aforementioned extension was represented by an element 
\[
m=\frac{m_{k}}{(t-\theta)^{k}}+\cdots +\frac{m_1}{(t-\theta)} \in M[\fj^{-1}]=E[t]\left[\frac{1}{t-\theta}\right]
\] 
where at least one of the $m_i$'s is non zero. In particular, it does not belong to $M+\tau_M(\tau^*M)$ which is $E[t]$ in this case. 

\begin{Remark}
For Hodge-Pink structures, Pink proved that the dimension of \emph{Hodge additive extension spaces} is finite \cite[Prop. 8.7]{pink}. It seems at first reasonable to expect a similar  result for integral regulated extensions of $A$-motives. However, the condition of \emph{regulation} alone is not sufficient to state a counterpart of conjecture \ref{item:conjectureC5} for $A$-motives. An other condition, that of \emph{analytic reduction at $\infty$}, is required. This is the main subject of our sequel \cite{gazda2}, where we prove the finite generation of the $A$-module of integral regulated extensions having analytic reduction at $\infty$ (see Thm. 4.1 in \emph{loc.\,cit.}). 
\end{Remark}

\subsection{Regulated extensions having good reduction}\label{subsec:regulated-good-reduction}
We now assume that $F$ is a finite field extension of $K$, let $\fp$ be a finite place of $F$ and let $\fm$ be the place of $K$ sitting under $\fp$. Hereafter, $\kappa$ is the inclusion of $A$ into $F$; it factors through $\cO_F$, the integral closure of $A$ in $F$. By the field $L$ (resp. the ring $\cO_L$) we shall mean either $F$ or $F_{\fp}$ (resp. $\cO_F$ or $\cO_{\fp}$). Let $\ell$ be a maximal ideal in $A$ distinct from $\fm$. \\

Let $\underline{M}$ be an $A$-motive over $L$. Recall that we considered two submodules of $\Ext^1_{\cM_L}(\mathbbm{1},\underline{M})$ related to integrality and good reduction respectively, \\

\begin{center}
\begin{tabular}{llc}
$\Ext^1_{\cO_L}(\mathbbm{1},\underline{M})$ & integral extensions & (Definition \ref{def:integral-part-local}) \\
$\Ext^1_{\operatorname{good}}(\mathbbm{1},\underline{M})_{\ell}$ & good reduction extensions w.r.t. $\ell$ & (Definition \ref{def:ext-good-red-with-resp-to-ell})
\end{tabular}
\end{center}

We now introduce their regulated avatar.

\begin{Definition}
We let $\Ext^{1,\text{reg}}_{\cO_L}(\mathbbm{1},\underline{M})$ be the submodule of $\Ext^1_{\cO_L}(\mathbbm{1},\underline{M})$ consisting of regulated extensions. Similarly, by $\Ext^{1,\text{reg}}_{\text{good}}(\mathbbm{1},\underline{M})_{\ell}$ we designate the submodule of $\Ext^{1}_{\text{good}}(\mathbbm{1},\underline{M})_{\ell}$ consisting of regulated extensions in the category $\cM_L$.
\end{Definition}

Assume now that $L=F_{\fp}$. By Theorem \ref{mthm:Integral-inside-Good} (Theorem \ref{thm:main2}), there is an inclusion of $A$-modules:
\begin{equation}\label{eq:property}
\Ext^{1,\text{reg}}_{\cO_{\fp}}(\mathbbm{1},\underline{M})\subseteq \Ext^{1,\text{reg}}_{\text{good}}(\mathbbm{1},\underline{M})_{\ell}.
\end{equation}
We strongly suspect the above to be an equality although we were unable to prove it in generality. The following is our expected analogue of Conjecture \ref{item:conjectureC4}:
\begin{Conjecture}\label{conjecture-reg}
The inclusion \eqref{eq:property} is an equality. In particular, $\Ext^{1,\text{reg}}_{\text{good}}(\mathbbm{1},\underline{M})_{\ell}$ does not depend on $\ell$.
\end{Conjecture}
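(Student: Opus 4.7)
The plan is to combine the $\ell$-adic description of maximal integral models from Section \ref{chapter:integral models} with the Artin–Schreier style statement for Frobenius spaces (Proposition \ref{prop:artin-schreier}), and then to exploit the regulated condition as a bound on the pole at $\fj$. The inclusion $\Ext^{1,\text{reg}}_{\cO_{\fp}}\subseteq \Ext^{1,\text{reg}}_{\text{good}}(-)_{\ell}$ is already provided by Theorem \ref{thm:main1}; only the reverse inclusion requires work.

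First, pick $[\underline{E}]\in \Ext^{1,\text{reg}}_{\text{good}}(\mathbbm{1},\underline{M})_{\ell}$. By Corollary \ref{cor:regulated-extensions-1-M}, we may choose a representative $m\in M+\tau_M(\tau^*M)$ with $\iota(m)=[\underline{E}]$. By Proposition \ref{prop-characterization-of-extension-good}, the good-reduction assumption provides $\xi\in \widehat{(M_{F_{\fp}^{\text{ur}}})}_\ell$ such that $m=\xi-\tau_M(\tau^*\xi)$. Reducing modulo $\ell^n$ for each $n\geq 1$ produces an element of $(M/\ell^n M)\otimes_{F_\fp} F_{\fp}^{\text{ur}}$ satisfying the analogous equation for the Frobenius space $(M/\ell^n M,\tau_M)$. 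Applying the Artin–Schreier criterion of Proposition \ref{prop:artin-schreier}\ref{item:i-unramified-solution}$\Leftrightarrow$\ref{item:iii-x-is-inetgral} to this Frobenius space yields $m\pmod{\ell^n}\in L_n+(\id-\tau_M)(M/\ell^n M)$, where $L_n$ denotes the maximal integral model of $(M/\ell^n M,\tau_M)$.

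Second, lifting the above inclusions to $M$ gives $m\in \tilde{L}_n+(\id-\tau_M)(M)+\ell^n M$ for every $n\geq 1$. When $\underline{M}$ is effective this is precisely the hypothesis of Proposition \ref{prop:variation-integral-model:motive-to-frobenius}, so $m$ lies in the $\ell$-adic closure of $M_{\cO_{\fp}}+(\id-\tau_M)(M)$ inside $M$. For non-effective $\underline{M}$, the analogous statement -- that $m$ lies in the $\ell$-adic closure of $M_{\cO_{\fp}}[\fj^{-1}]+(\id-\tau_M)(M)$ inside $M[\fj^{-1}]$ -- would follow from a non-effective analogue of Proposition \ref{prop:variation-integral-model:motive-to-frobenius}; this step should be provable by the same Lemmas \ref{lem:sequence-Lm-is-bounded-below} and \ref{lem:sequence-km-unbounded}, suitably adjusted to incorporate a uniform bound on the pole at $\fj$ coming from $\underline{M}$ itself.

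Third, and crucially, one must descend from the $\ell$-adic closure to the set $M_{\cO_{\fp}}[\fj^{-1}]+(\id-\tau_M)(M)$ itself. The counter-example of Subsection \ref{subsec:counter-example} shows that this descent fails without extra hypotheses: the offending element there has unbounded pole at $\fj$, which is exactly what the regulated condition forbids. More precisely, the regulated condition forces $m$ to live in the \emph{finitely generated} $A\otimes\cO_{\fp}$-submodule $M_{\cO_{\fp}}+\tau_M(\tau^*M_{\cO_{\fp}})$ up to a coboundary; intersecting the $\ell$-adic closure with this finitely generated module should, by a Noetherianity argument akin to the proof of Theorem \ref{thm:integral-model:motive-to-frobenius-module}, recover the set itself. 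Concretely, the hoped-for equality
\[
\left(M+\tau_M(\tau^*M)\right)\cap \overline{M_{\cO_{\fp}}[\fj^{-1}]+(\id-\tau_M)(M)}^{\,\ell}
=\left(M_{\cO_{\fp}}+\tau_M(\tau^*M_{\cO_{\fp}})\right)+(\id-\tau_M)(M\cap N),
\]
where $N$ is a fixed finitely generated sub-$A\otimes\cO_{\fp}$-module of $M$, would directly yield the conjecture. This intersection statement is the main obstacle: the difficulty lies in the interaction of $(\id-\tau_M)$ with the $\fj$-adic pole order, since $\tau_M$ can genuinely increase the pole at $\fj$ (by the order of the elementary divisors of $\tau_M^{-1}(M)\subset \tau^*M$, that is, by the slopes of the Hodge polygon), and naive truncation in $\fj$-adic order is not preserved by cohomological modifications. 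The two special cases treated in Subsection \ref{subsec:regulated-good-reduction} correspond exactly to situations where this pole behaviour is tame enough to force the intersection lemma by hand: in Theorem \ref{thm:true-for-pure0} the Hodge polygon is trivial, and in Theorem \ref{thm:true-for-carlitz} the rank-one structure of $\underline{A}(1)^{\otimes p}$ reduces the intersection to a direct computation in $F_{\fp}[t]$.
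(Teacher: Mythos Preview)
The statement you are attempting to prove is a \emph{conjecture} in the paper, not a theorem: the paper does not contain a proof of it, and explicitly leaves the general case open. What the paper does prove are the two special cases you mention at the end (Theorems \ref{thm:true-for-pure0} and \ref{thm:true-for-carlitz}), and your outline up to the ``third step'' is precisely the strategy used there: reduce modulo $\ell^n$, apply Proposition \ref{prop:artin-schreier} to land in $\tilde{L}_n+(\id-\tau_M)(M)+\ell^n M$, then invoke Proposition \ref{prop:variation-integral-model:motive-to-frobenius} to reach the $\ell$-adic closure of $M_{\cO_\fp}+(\id-\tau_M)(M)$.

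Your third step, however, is not a proof but a restatement of the open problem. The ``hoped-for equality'' you display is essentially equivalent to the conjecture itself, and the heuristic you give (``intersecting the $\ell$-adic closure with this finitely generated module should, by a Noetherianity argument\ldots'') does not work as stated: the set $M_{\cO_\fp}[\fj^{-1}]+(\id-\tau_M)(M)$ is not a module over $A\otimes\cO_\fp$ (because $(\id-\tau_M)$ is only $A$-linear), so Noetherianity of $A\otimes\cO_\fp$ gives no purchase. The paper's proof of Theorem \ref{thm:true-for-pure0} circumvents this by producing an explicit $F_\fp$-linear complement to $\ell^n M$ inside $M$ that is stable under $\tau_M$ (Lemma \ref{lem:lattice+N=Ninf}), which is only available because the weight-zero purity forces $\tau_M(\tau^*M)=M$; the proof of Theorem \ref{thm:true-for-carlitz} is a direct and rather intricate valuation computation specific to $\underline{A}(p^k)$. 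Neither argument suggests a general mechanism for the descent step, and your proposal does not supply one.
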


We stated Conjecture \ref{conjecture-reg} as the only examples we could produce of extensions which belongs to the right-hand side of \eqref{eq:property}, but not in the left-hand one, were not regulated. Still, we owe the reader stronger motivations for it. For the remainder of this section, we present some evidences for Conjecture \ref{conjecture-reg}, in Theorems \ref{thm:true-for-pure0} and \ref{thm:true-for-carlitz} below. \\

Hereafter we assume $A=\bF[t]$. Let $\underline{V}=(V,\varphi)$ be a Frobenius space over $F_\fp$ (Subsection \ref{subsec:integral-models-of-frobenius-modules}). From $\underline{V}$ we obtain an $A$-motive over $F_\fp$, denoted $A\otimes \underline{V}$, whose underlying module is $A\otimes V$ and whose morphisms is $\id_A\otimes \varphi$. Those $A$-motives form a quite restrictive class: observe that $A\otimes \underline{V}$ is effective of weight and Hodge weight zero. 

\begin{Theorem}\label{thm:true-for-pure0}
Assume that $\underline{V}$ has good reduction (Definition \ref{def:good-red-frob-space}). Then Conjecture \ref{conjecture-reg} is true for $A\otimes \underline{V}$.
\end{Theorem}

We will in fact prove a slightly more general version:
\begin{Proposition}\label{prop:thm:true-for-pure0}
Let $\underline{N}=(N,\tau_N)$ be an $A$-motive over $F_\fp$ such that :
\begin{enumerate}[label=$(\roman*)$]
\item it is effective,
\item it has good reduction,
\item\label{item:existence-lattice} there exists an $\cA(F_\fp)$-lattice $\Lambda$ in $N\otimes_{A\otimes F_\fp} \cB(F_\fp)$ such that both
\begin{enumerate}[label=$(\alph*)$]
\item $\tau_N(\tau^*\Lambda)=\Lambda$,
\item As $F_\fp$-vector spaces, $N\otimes_{A\otimes F_\fp}\cB_{\infty}(F_\fp)=N\oplus \Lambda$.
\end{enumerate}
\end{enumerate}
Then, Conjecture \ref{conjecture-reg} is true for $\underline{N}$.
\end{Proposition}

The assumption $A=\bF[t]$ was superfluous so far; we use it next to relate Proposition \ref{prop:thm:true-for-pure0} to Theorem \ref{thm:true-for-pure0} in the next lemma.
\begin{Lemma}
The $A$-motive $A\otimes \underline{V}$ satisfies the condition of Proposition \ref{prop:thm:true-for-pure0}.
\end{Lemma}
\begin{proof}
Write $\underline{N}=(N,\tau_N)$ for the $A$-motive $A\otimes \underline{V}$. That $\underline{N}$ is effective and has good reduction is clear. It remains to find $\Lambda$. Identifying $\cA(F_\fp)$ with $F_\fp[\![t^{-1}]\!]$ and $N\subset N_{A\otimes F_\fp}\cB_{\infty}(F_\fp)$ with $V[t]\subset V(\!(t^{-1})\!)$, we can choose $\Lambda=t^{-1}V[\![t^{-1}]\!]$.
\end{proof}

We are left with the proof of Proposition \ref{prop:thm:true-for-pure0}. First, we show:
\begin{Lemma}\label{lem:ell-adic-closed}
Let $\ell$ be a maximal ideal of $A$ and let $\underline{N}$ be an $A$-motive over $F_{\fp}$ as in Proposition \ref{prop:thm:true-for-pure0}. Then $N_{\cO_{\fp}}+(\id-\tau_N)(N)$ is $\ell$-adically closed in $N$.
\end{Lemma}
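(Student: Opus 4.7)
The approach is to combine Proposition~\ref{prop:variation-integral-model:motive-to-frobenius}, which characterises the $\ell$-adic closure of $N_{\cO_{\fp}}+(\id-\tau_N)(N)$ via approximate solutions in the reductions $(N/\ell^n N,\tau_N)$, with the good reduction hypothesis and the $\infty$-adic decomposition from Lemma~\ref{lem:lattice+N=Ninf}, and then to run a K\"onig-style compactness argument in the spirit of the proof of Theorem~\ref{thm:main1}.

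Concretely, I would first invoke Proposition~\ref{prop:variation-integral-model:motive-to-frobenius} (applicable since $\underline{N}$ is effective) to reformulate the statement: an element $x\in N$ lies in the $\ell$-adic closure of $N_{\cO_{\fp}}+(\id-\tau_N)(N)$ iff for every $n\geq 1$ there exist $\tilde\ell_n\in \tilde L_n$ and $z_n\in N$ with $x\equiv \tilde\ell_n+(\id-\tau_N)(z_n)\pmod{\ell^n N}$, where $\tilde L_n\subset N$ lifts the maximal integral model of the Frobenius space $(N/\ell^n N,\tau_N)$. Good reduction of $\underline{N}$, via Proposition~\ref{prop-good-reduction-local-case}, ensures that $T_\ell\underline{N}$ is unramified, hence so are all its finite quotients $T_\ell\underline{N}/\ell^n\cong T(N/\ell^n N,\tau_N)$; Proposition~\ref{prop:babyNOS} then gives that every $(N/\ell^n N,\tau_N)$ has good reduction, so $\tilde L_n$ coincides with a lift of the maximal good model $\tilde T_n$. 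Through Proposition~\ref{prop:artin-schreier}, this translates the approximation into the existence of a lift $y_n\in (N\otimes_{F_\fp}F_\fp^{\mathrm{ur}})/\ell^n$ with $x-\tilde\ell_n\equiv y_n-\tau_N(\tau^*y_n)\pmod{\ell^n}$.

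Next, I would mimic the K\"onig argument of Theorem~\ref{thm:main1}: the mod-$\ell^n$ solutions $y_n$ are determined up to elements of the finite kernel, a submodule of $T_\ell\underline{N}/\ell^n$; arranging them as nodes of a tree with edges given by compatibility modulo $\ell^n$, one obtains an infinite branch. Its limit $y$ lies in the $\ell$-adic completion of $N\otimes_{F_\fp}F_\fp^{\mathrm{ur}}$ and satisfies $x-(y-\tau_N(\tau^*y))\in\widehat{(N_{\cO_{\fp}})}_\ell$ exactly. The delicate step is then to descend $y$ from this completion to an element of $N$ itself, which is where Lemma~\ref{lem:lattice+N=Ninf} enters: the decomposition $N\otimes_{A\otimes F_\fp}\cB_\infty(F_\fp)=N\oplus\Lambda$, with $\Lambda$ a $\tau_N$-stable $\cA_\infty(F_\fp)$-lattice, bounds the $\infty$-adic tail of candidate solutions, forcing the $\fp$-adic denominators of $y$ to be controlled and ultimately placing it inside $N$.

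The main obstacle I expect is this final descent step: turning a formal solution that lives in a $\ell$-adic completion over $F_\fp^{\mathrm{ur}}$ into a genuine element of the much smaller module $N$. Effectiveness ensures that $(\id-\tau_N)$ is $\ell$-adically continuous and that the tree of partial solutions is well-defined; good reduction provides the \'etale Katz-style structure at each level $(N/\ell^n N,\tau_N)$ needed to make the tree finitely branching; and purity of weight zero---via the $\tau_N$-stable $\infty$-adic lattice $\Lambda$---supplies the $\fp$-adic control that prevents the solutions from escaping to infinity during the descent.
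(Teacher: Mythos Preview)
Your proposal has a genuine gap at the descent step, and the paper's argument is in fact considerably more direct.

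The K\"onig construction you outline produces an element $y$ in $\widehat{(N\otimes_{F_\fp}F_\fp^{\mathrm{ur}})}_\ell$ with $x-(\id-\tau_N)(y)\in\widehat{(N_{\cO_\fp})}_\ell$. You then assert that Lemma~\ref{lem:lattice+N=Ninf} lets one pull $y$ back into $N$, but this is precisely the difficulty the lemma is about: an element of $\widehat N_\ell$ is a formal $\ell$-adic series, and there is no a priori reason it truncates to a polynomial in $\ell$ unless one already knows the target submodule is $\ell$-adically closed. Your phrasing that Lemma~\ref{lem:lattice+N=Ninf} ``bounds the $\infty$-adic tail'' and hence ``controls $\fp$-adic denominators of $y$'' conflates two unrelated directions: the obstruction to descending $y$ is $\ell$-adic (growth along powers of $\ell$), not $\fp$-adic (integrality over $\cO_\fp$). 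On top of this, you still owe a descent from $F_\fp^{\mathrm{ur}}$ to $F_\fp$, which is a separate issue your sketch does not address.

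The paper avoids completions entirely. After reducing to $A=\bF[t]$, Lemma~\ref{lem:lattice+N=Ninf} is used to set $N_d:=\ell^d\Lambda\cap N$, giving a decomposition $N=N_d\oplus\ell^d N$ of $F_\fp$-vector spaces in which both summands are $\tau_N$-stable (using $\tau_N(\tau^*\Lambda)=\Lambda$ and $\tau_N(\tau^*N)=N$). Good reduction is invoked only to check that this splitting also respects $N_{\cO_\fp}$, namely $N_{\cO_\fp}=(N_{\cO_\fp}\cap N_d)\oplus\ell^d N_{\cO_\fp}$. Given $m$ in the closure, choose $d$ with $m\in N_d$ and a single approximant $m_d\in N_{\cO_\fp}+(\id-\tau_N)(N)$ with $m\equiv m_d\pmod{\ell^d N}$; applying the projection $p_d:N\to N_d$ yields $m=p_d(m_d)\in(N_{\cO_\fp}\cap N_d)+(\id-\tau_N)(N_d)$ directly. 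The point is that Lemma~\ref{lem:lattice+N=Ninf} is used to build a $\tau_N$-compatible \emph{retraction inside $N$}, not as an a posteriori bound on a formal solution living in a completion.
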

\begin{proof}
Let $\Lambda$ be as in Proposition \ref{prop:thm:true-for-pure0} \ref{item:existence-lattice}. For $n\geq 0$, let $N_n$ be the finite dimensional $F_{\fp}$-vector space $\ell^n\Lambda\cap N$. $(N_n)_n$ defines an increasing sequence of subspaces of $N$ and we both have $\bigcup_{n\geq 0}N_n=N$ and $N=N_n\oplus \ell^nN$. We claim that:
\[
N_{\cO_{\fp}}\cap N=(N_{\cO_{\fp}}\cap N_n)\oplus \ell^n(N_{\cO_{\fp}}\cap N).
\]
To see this, note that since $\underline{N}$ has good reduction, the image of $N_{\cO_\fp}\cap N_n$ through $N\to N/\ell^nN$ equals the maximal integral model of $(N/\ell^n N,\tau_M)$. Hence $N_{\cO_{\fp}}\cap N\subset (N_{\cO_{\fp}}\cap N_n)\oplus \ell^nN$ and the claim follows.

Let $m\in N$ be such that there exists a sequence $(m_n)_{n\geq 0}$ in $N_{\cO_{\fp}}+(\id-\tau_N)(N)$ which converges $\ell$-adically to $m$. We can assume without loss of generality that $m_n\in N_n$ for all $n$. Yet, we have $m\in N_d$ for a large enough integer $d$. If $p_d$ denotes the projection onto $N_d$ orthogonally to $\ell^dN$, we obtain $m=p_d(m)=p_d(m_d)\in (N_{\cO_\fp}\cap N_d)+(\id-\tau_N)(N_d)$ as desired.
\end{proof}

\begin{proof}[Proof of Proposition \ref{prop:thm:true-for-pure0}]
Let $[\underline{E}]=\iota(m)$ be an extension in $\Ext^{1,\text{reg}}_{\text{good}}(\mathbbm{1},\underline{N})_{\ell}$. By definition, $m\in N$, and by Proposition \ref{prop:explicit-cocycle}, there exists $\xi\in (N_{F_{\fp}^{\operatorname{ur}}})^{\wedge}_{\ell}$ such that
\begin{equation}\label{eq:x-txi=n}
m=\xi-\tau_N(\tau^*\xi).
\end{equation}
Reducing \eqref{eq:x-txi=n} modulo $\ell^n$ for all $n\geq 1$, we obtain from Proposition \ref{prop:artin-schreier}, applied to the Frobenius space $(N/\ell^n N,\tau_N)$, that
\[
\forall n\geq 1:\quad m\in \tilde{L}_n+(\id-\tau_N)(N)+\ell^nN
\]
where $\tilde{L}_n$ is a lift in $N$ of a maximal integral model for $(N/\ell^n N,\tau_N)$. It follows from Proposition \ref{prop:variation-integral-model:motive-to-frobenius} that $m$ belongs to the $\ell$-adic closure of $N_{\cO_\fp}+(\id-\tau_N)(N)$. But the later is already closed by Lemma \ref{lem:ell-adic-closed}. It follows that $m\in N_{\cO_\fp}+(\id-\tau_N)(N)$, which amounts to $[\underline{E}]\in \Ext^{1,\text{reg}}_{\cO_\fp}(\mathbbm{1},\underline{N})$ as desired.
\end{proof}

Let $\underline{M}=\underline{A}(n)$ be the $n$th twist of the Carlitz motive over $F_{\fp}$ (Example \ref{ex:carlitz-motive}). That is, $M=F_\fp[t]$ and that $\tau_M$ acts by mapping $\tau^*p(t)$ to $(t-\theta)^{-n}p(t)^{(1)}$, for $p(t)\in F_\fp[t]$. Let $(\ell)\neq \fm$ be a maximal ideal of $A=\bF[t]$.
\begin{Theorem}\label{thm:true-for-carlitz}
Let $n$ be a non negative integer and let $\underline{M}=\underline{A}(n)$. Then Conjecture \ref{conjecture-reg} is true for $\underline{M}$.
\end{Theorem}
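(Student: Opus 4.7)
The plan is to reduce the assertion for $\underline{A}(p^k)$ to the case of $\underline{A}(1)$ via a restriction-of-scalars decomposition, and then to settle $\underline{A}(1)$ directly through a Newton polygon analysis.

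\medskip\noindent\textbf{Reduction to $\underline{A}(1)$.} In characteristic $p$ one has the identity $(t-\theta)^{p^k} = t^{p^k}-\theta^{p^k}$. Setting $s := t^{p^k}$, $\tilde\theta := \theta^{p^k}$, and $\tilde A := \bF[s] \subset A$, the ring $A$ is free of rank $p^k$ over $\tilde A$ on $\{1, t, \ldots, t^{p^k-1}\}$, and $\tau_{\underline{A}(p^k)}$ acts as multiplication by $(s-\tilde\theta)^{-1}$. The decomposition
\[ F_\fp[t,(t-\theta)^{-1}] \;=\; \bigoplus_{i=0}^{p^k-1} F_\fp\bigl[s,(s-\tilde\theta)^{-1}\bigr]\,t^i \]
is $\tau$-stable and identifies $\id - \tau_{\underline{A}(p^k)}$ with $p^k$ copies of $\id - \tau_{\underline{\tilde A}(1)}$, where $\underline{\tilde A}(1)$ is the Carlitz dual over $F_\fp$ relative to $\tilde A$ with characteristic map $\tilde\kappa\colon s \mapsto \tilde\theta$. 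Setting $\tilde\ell := \ell \cap \tilde A$, the bijectivity of Frobenius on finite fields gives $\tilde A/\tilde\ell = A/\ell$, hence $\tilde\ell \cdot A = \ell^{p^k}$, and the $\ell$-adic and $\tilde\ell$-adic completions of $F_\fp^{\text{ur}} \otimes A$ coincide. The good-reduction equation $m = \xi - (t-\theta)^{-p^k} \xi^{(1)}$ decomposes coordinate-wise into $p^k$ instances of the corresponding equation for $\underline{\tilde A}(1)$ at $\tilde\ell$; regulatedness and integrality decompose accordingly. It therefore suffices to prove the conjecture for $\underline{A}(1)$ with an arbitrary characteristic map.

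\medskip\noindent\textbf{Classification for $\underline{A}(1)$.} Using the partial-fraction identity
\[ (\id - \tau_M)(at^d) \;=\; at^d - a^q \theta^d (t-\theta)^{-1} - a^q\bigl(t^{d-1} + \theta t^{d-2} + \cdots + \theta^{d-1}\bigr), \]
any regulated representative $q(t) + c(t-\theta)^{-1}$ (with $q \in F_\fp[t]$, $c \in F_\fp$) may be inductively reduced modulo $(\id - \tau_M)(F_\fp[t])$ to a representative $c'(t-\theta)^{-1}$ with $c' \in F_\fp$, and each reduction step preserves $\cO_\fp$-integrality of the coefficients. Since the top-degree coefficient of $(t-\theta) f - f^{(1)}$ equals the leading coefficient of $f$, distinct $c'$ yield distinct classes; this gives natural isomorphisms $\Ext^{1,\mathrm{reg}}_{\cM_{F_\fp}}(\mathbbm{1}, \underline{A}(1)) \cong F_\fp$ and $\Ext^{1,\mathrm{reg}}_{\cO_\fp}(\mathbbm{1}, \underline{A}(1)) \cong \cO_\fp$. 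It remains to prove that good reduction of $\iota(c(t-\theta)^{-1})$ at $\ell$ forces $c \in \cO_\fp$.

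\medskip\noindent\textbf{Newton polygon analysis.} Multiplying $c(t-\theta)^{-1} = \xi - (t-\theta)^{-1}\xi^{(1)}$ by $(t-\theta)$ yields $c = (t-\theta)\xi - \xi^{(1)}$ in $\cA_\ell(F_\fp^{\text{ur}}) = (\bF_\ell \otimes F_\fp^{\text{ur}})[\![\pi_\ell]\!]$. Projecting to a component $F_\fp^{\text{ur}}(\alpha)$ indexed by a root $\alpha \in \bar\bF$ of $\ell$ and extracting the $\pi_\ell^0$-term produces the Artin--Schreier-type equation
\[ \xi_0^q + (\theta - \alpha)\xi_0 + c \;=\; 0, \]
in which $\theta - \alpha$ is a unit of $\cO_{F_\fp^{\text{ur}}}$ by the hypothesis $\kappa(\ell)\cO_\fp = \cO_\fp$. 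If $v_\fp(c) < 0$, the Newton polygon of $X^q + (\theta - \alpha)X + c$ is the single segment from $(0, v_\fp(c))$ to $(q, 0)$, so all roots have valuation $v_\fp(c)/q$. When $q \nmid v_\fp(c)$, this is fractional and forces ramification. When $q \mid v_\fp(c)$, writing $v_\fp(c) = -qv$ and substituting $\xi_0 = \pi_\fp^{-v} u$ transforms the equation into $u^q + (\theta - \alpha)\pi_\fp^{(q-1)v} u + c^* = 0$ with $c^* \in \cO_\fp^\times$; its reduction mod $\fp$ is the purely inseparable $u^q + \bar c^* = 0$, at whose unique root $\bar u = (-\bar c^*)^{1/q} \in k_\fp$ Hensel's lemma fails, and successive approximations to the solution require adjunction of powers of $\pi_\fp^{1/q}$. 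In either case $\xi_0$ lies in a totally ramified extension, contradicting $\xi_0 \in F_\fp^{\text{ur}}$; hence $c \in \cO_\fp$, yielding $\Ext^{1,\mathrm{reg}}_{\mathrm{good}}(\mathbbm{1}, \underline{A}(1))_\ell \subset \Ext^{1,\mathrm{reg}}_{\cO_\fp}(\mathbbm{1}, \underline{A}(1))$. Combined with Theorem \ref{thm:main1}, this completes the proof. The main obstacle is the ramification argument in the equi-divisible case $q \mid v_\fp(c)$: showing that no lift of $\bar u$ to $\cO_{F_\fp^{\text{ur}}}$ satisfies the transformed equation relies on the strict inclusion $(1 + \fp_{F_\fp^{\text{ur}}})^q = 1 + \fp^q_{F_\fp^{\text{ur}}} \subsetneq 1 + \fp_{F_\fp^{\text{ur}}}$, which obstructs $q$-th root extraction for generic units of $\cO_{F_\fp^{\text{ur}}}$.
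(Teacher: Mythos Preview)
Your restriction-of-scalars reduction to $\underline{A}(1)$ via $s=t^{p^k}$ is a genuine simplification over the paper, which keeps $n=p^k$ throughout and writes out large coupled systems of equations modulo $\ell^N$. The Newton-polygon packaging is also more transparent than the paper's coefficient bookkeeping. Two gaps remain, however.

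The minor one: your ``projection to a component indexed by $\alpha$'' yields the single equation $\xi_0^q+(\theta-\alpha)\xi_0+c=0$ only when $\deg\ell=1$. For $d:=\deg\ell>1$ the $q$-Frobenius $1\otimes\mathrm{Frob}_q$ cyclically permutes the $d$ factors of $\bF_\ell\otimes F_\fp^{\text{ur}}\cong (F_\fp^{\text{ur}})^d$, so the constant-term equation in the $\alpha$-component involves the $\alpha^{1/q}$-component of $\xi_0$, not $\xi_0$ itself.

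The substantive one: even granting $d=1$, your claim in the case $q\mid v_\fp(c)$ --- that $u^q+(\theta-\alpha)\pi_\fp^{(q-1)v}u+c^*=0$ has no root in $\cO_{F_\fp^{\text{ur}}}$ --- is false. Take any $a\in\cO_\fp^\times$ and set
\[
c:=-a^q\pi_\fp^{-qv}-(\theta-\alpha)\,a\,\pi_\fp^{-v};
\]
then $v_\fp(c)=-qv<0$, yet $\xi_0:=a\pi_\fp^{-v}\in F_\fp$ solves $\xi_0^q+(\theta-\alpha)\xi_0+c=0$. The linear term absorbs precisely the obstruction to $q$-th-root extraction, so Hensel failure alone does not force ramification. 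What does work (still for $d=1$) is to use the full $\pi_\ell$-expansion: writing $\xi=\sum_{n\geq 0}\xi_n\pi_\ell^n$, the equation $c=(t-\theta)\xi-\xi^{(1)}$ yields the recursion $\xi_n^q+(\theta-\alpha)\xi_n=\xi_{n-1}$ for $n\geq 1$, and your Newton-polygon computation then forces $v_\fp(\xi_n)=v_\fp(c)/q^{n+1}$ whenever $v_\fp(c)<0$; this cannot remain integral for all $n$. This iterated division by $q$ is exactly the engine of the paper's proof (its integer $c$ is the maximal power of $q$ dividing some negative $v(\varepsilon_j)$, and the systems $(E_k)$ push it up unboundedly), and for $d>1$ one needs something like the paper's explicit coupling to carry it through.
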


The original proof of Theorem \ref{thm:true-for-carlitz} only worked for $n$ a power of the characteristic of $\bF$. We are thankful to the referee for suggesting this new argument which fills our previous gap. 

\begin{proof}[Proof of Theorem \ref{thm:true-for-carlitz}]
Let $m\in M+\tau_M(\tau^*M)$ be such that there exists $f\in (M_{F_{\fp}^{\operatorname{ur}}})^{\wedge}_{\ell}$ for which $f-\tau_M(\tau^*f)=m$. That is, $m=(t-\theta)^{-n}p\in (t-\theta)^{-n}\cdot F_{\fp}[t]$ and $f\in F_{\fp}^{\operatorname{ur}}[t]^{\wedge}_{\ell}$. We let $s:=v_{\fp}(p)$ be the minimum of the valuation of the coefficients of the polynomial $p$; we have to show that up to subtracting an element of $(\id-\tau_M)(M)$ to $m$, \emph{i.e.} a polynomial of the form $(t-\theta)^{n}h-h^{(1)}$ to $p$, we can find $p$ with $v_{\fp}(p)\geq 0$. Suppose otherwise, and denote by $s<0$ the maximal valuation of such $p$. 

From the relation
\[
p=(t-\theta)^n f-f^{(1)},
\]
which we consider in $F_{\fp}^{\operatorname{ur}}[t]^{\wedge}_{\ell}=\bF(\!(\varpi)\!)^d[\![\ell]\!]$ where $d=[A/\ell:\bF]$, we deduce that the coefficients $(c_i)_{i\geq 0}$ in $\bF(\!(\varpi)\!)^d$  of $f$ as a formal power series in the variable $\ell$ are bounded. In particular, the value $v_{\fp}(f):=\inf_{i} \{v_{\fp}(c_i)\}$ is well-defined and, since $s<0$, satisfies $s=v_{\fp}(f^{(1)})=qv_{\fp}(f)$. This implies that $q$ divides $s$, from which one deduces the existence of a polynomial $h$ such that $v_\fp(p+(t-\theta)^n h-h^{(1)})>s$. But this contradicts the maximality of $s$.
\end{proof}

\appendix

\section{Exact categories}\label{sec:exact-categories}
Exact categories were introduced by Quillen in \cite{quillen} in order to define their $K$-groups. By definition, exact categories possess a class of short sequences satisfying several axioms which are modeled out of properties of exact sequences in abelian category; in that respect, short sequences in those classes are called short exact.

The \emph{embedding theorem} (\emph{cf}. \cite[\S A]{buhler}) states that any exact category which is small embeds fully and faithfully into an abelian category, in such a way that exact sequences coincide. This is presumably the main reason why the literature on this topic does not abound: many of the properties of exact categories can be deduced from their counterpart in abelian categories.

There are, however, some statements involved at certain key steps in the course of this text that we did not find clearly stated in references. Among those,
\begin{enumerate}[label=$-$]
\item There does not seem to be a consensus on what \emph{long exact sequences} and \emph{higher extension groups} mean in an exact category $\eC$. One could either take Yoneda extensions for definition (as we do below), but then it is not immediate that there are long exact sequences of extension groups attached to a short exact sequence. To palliate to this, one could define extension groups after a choice of an embedding $\eC\to \eA$ into an abelian category by the embedding theorem, and then define extension groups in $\eC$ to be the ones in $\eA$. While this would yield equivalent definitions for degree $1$ extensions, it is not clear what degree $i>1$ extensions are, nor that they are independent of the abelian embedding. 
\item We did not find references for what a \emph{monoidal exact category} means. We define the corresponding notion in this appendix, and show canonical isomorphisms
\[
\Ext^i(A\otimes X,B)\cong \Ext^i(A,B\otimes X^{\vee})
\]
whenever $X$ is a dualizable object. 
\end{enumerate}
Our main reference for what follows is the paper of B\"uhler \cite{buhler}.

\subsection{Definitions}
Let $\eC$ be an additive category. We fix a class $\operatorname{ex}(\eC)$ of sequences $S$ of composable arrows in $\eC$ of the form: 
\begin{equation}\label{eq:typical-exact-sequence}
S: \quad 0\longrightarrow X \stackrel{f}\longrightarrow Z \stackrel{g}{\longrightarrow} Y\longrightarrow 0
\end{equation}
such that $g$ is a cokernel of $f$ and $f$ is a kernel of $g$. Sequences in the class $\operatorname{ex}(\eC)$ will be called \emph{short exact}. Morphisms $f$ (resp. $g$) which are featured in an exact sequence $S$ as in \eqref{eq:typical-exact-sequence} will be called \emph{($\operatorname{ex}(\eC)$)-admissible mono} (resp. \emph{epi}). \\

The next definition, in a reduced form compared to Quillen's, is due to Yoneda.
\begin{Definition}[Exact category]\label{def:exact-category}
We say that $(\eC,\operatorname{ex}(\eC))$--or just $\eC$ for short--is an \emph{exact category} if $\operatorname{ex}(\eC)$ verifies the following list of axioms:
\begin{enumerate}[label=$\operatorname{Ex}\arabic*$]
\item\label{item:identity-admissible} For all objects $X$ of $\eC$, the identity morphism $\id_X$ is an admissible mono (resp. epi).
\item\label{item:composition-admissible} The class of admissible mono (resp. epi) is stable under composition.
\item\label{item:excat-pullback} The pushout (resp. pullback) of an admissible mono along a morphism with the same source (resp. target) is representable in $\eC$ and itself is an admissible mono (resp. epi).
\end{enumerate}
\end{Definition}

It can be shown that any sequence $S$ isomorphic to a sequence in $\operatorname{ex}(\eC)$ is itself in $\eC$. For any pair of objects $(A,B)$ of $\eC$, the sequence $0\to A\to A\oplus B\to B\to 0$ belongs to $\operatorname{ex}(\eC)$ and is called \emph{the canonical split sequence}. We shall call $S$ \emph{split} if $S$ is isomorphic to the canonical split sequence. 

\subsection{Exact functor and the embedding theorem}
Let $(\eC,\operatorname{ex}(\eC))$ and $(\eD,\operatorname{ex}(\eD))$ be exact categories and let $F:\eC\to \eD$ be an additive functor. 
\begin{Definition}
The functor $F$ is called \emph{exact} if $F(S)\in \operatorname{ex}(\eD)$ for any $S\in\operatorname{ex}(\eC)$. $F$ is said to \emph{reflect exactness} if $F$ is exact and if $S\in \operatorname{ex}(\eC)$ whenever $F(S)\in \operatorname{ex}(\eD)$. \\
We say that $F$ \emph{reflects admissible mono} (resp. \emph{epi}) if $f$ is admissible mono (resp. epi) if and only if $F(f)$ is. 
\end{Definition}

Any abelian category is endowed with the structure of an exact category in an evident way. Conversely, we record the Freyd--Mitchel embedding theorem \cite[Thm. A.1]{buhler}.
\begin{Theorem}[Embedding Theorem]\label{thm:embedding-thm}
Let $(\eC,\operatorname{ex}(\eC))$ be a small exact category. 
\begin{enumerate}[label=$(\roman*)$]
\item\label{item:abelian-embedding} There is an abelian category $\eA$ and a fully faithful exact functor $i:\eC\to \eA$ that reflects exactness. Moreover, $\eC$ is closed under extensions in $\eA$.
\item\label{item:abelian-embedding-reflects-epi} Assume moreover that $\eC$ is weakly idempotent complete\footnote{Recall that an additive is called \emph{weakly idempotent complete} if every retraction admits a kernel (equivalently, any coretraction admits a cokernel) \cite[\S 7]{buhler}.}. Then, one can choose $i:\eC\to \eA$ as in \ref{item:abelian-embedding} in such a way that it further reflects admissible epi. 
\end{enumerate}
\end{Theorem}

\subsection{The group of extensions}\label{subsec:the group of extensions}
Let $(A,B)$ be a pair of objects in $\eC$ and let $i$ be a positive integer. By a \emph{degree $i$ extension of $A$ by $B$} we mean a sequence of composable arrows in $\eC$ of the form 
\begin{equation}\label{eq:typical-degreeIextension}
S:0\to B\xrightarrow{f} E_1\xrightarrow{d_1}\cdots \xrightarrow{d_{i-1}} E_i\xrightarrow{g} A\to 0
\end{equation}
such that, for all $j\in\{1,\ldots,i-1\}$, there exists a factorization of $d_j$ into $E_j\to F_{j+1}\to E_{j+1}$ such that the sequences $S_j:0\to F_j\to E_j\to F_{j+1}\to 0$ are short exact. If we do not want to make reference to $A$ and $B$, we shall call \eqref{eq:typical-degreeIextension} a (long) exact sequence in $\eC$.\\
Observe that the short exact sequences $S_j$ are uniquely determined by $S$ up to isomorphism. In particular, $F_{i}=A$, $F_{i-1}=\ker(E_i\to A)$ and, more generally, $F_j=\ker(E_{j+1}\to F_{j+1})$.

\paragraph{(Cup product)}A degree $i$ extension $S$ of $A$ by $B$ and a degree $j$ extension $T$ of $B$ by $C$ \emph{compose} into a degree $i+j$ extension of $A $ by $C$ via the \emph{cup product}:
\[
T\cup S:0\to C\to F_1\to \cdots \to F_j\xrightarrow{u} E_1 \to \cdots \to E_i\to A\to 0 
\]
where we wrote $T:0\to C\to F_1\to \cdots \to F_j\to B\to 0$ and where $u$ is the composition of $F_j\to B\to E_1$. By definition, a degree $i$ extension $S$ can be written uniquely as the iterative cup product of degree $1$ extensions $S=\cup_j S_j$. 

\paragraph{(Equivalence)}Given another extension $S'$ of $A$ by $B$ of the same degree, we write $S\equiv S'$ if there is a commutative diagram in $\eC$
\begin{equation}
\begin{tikzcd}
S: & 0 \arrow[r] & B\arrow[r]\arrow[d,"\id"] & E_1 \arrow[r]\arrow[d] & \cdots \arrow[r]\arrow[d] & E_i\arrow[r]\arrow[d] & A\arrow[r]\arrow[d,"\id"] & 0 \\
S': & 0 \arrow[r] & B\arrow[r] & E'_1 \arrow[r] & \cdots \arrow[r] & E_i\arrow[r] & A\arrow[r] & 0.
\end{tikzcd}\nonumber
\end{equation}
More generally, we shall say that $A$ and $B$ \emph{are equivalent} and write $S\equiv S'$ if they are equivalent for the equivalence relation generated\footnote{That is, $S$ is equivalent to $S'$ if and only if there exists a sequence $S''$ such that $S\equiv S''$ and $S'\equiv S''$. In fact, one could show that, in the case $i=1$, $\equiv$ already is an equivalence relation as a version of the five lemma holds in exact categories \cite[Lem. 8.9]{buhler}.} by $\equiv$. One verifies that cup products are preserved under equivalences: $T'\cup S'\equiv T\cup S$ whenever $T'\equiv T$ and $S'\equiv S$. \\

The extensions $0\to B\to B\oplus A\to A\to 0$ for $i=1$ and $0\to B\stackrel{\id}{\to} B\stackrel{0}{\to} \cdots \stackrel{0}{\to} A \stackrel{\id}{\to} A\to 0$ for $i>1$ are called \emph{canonically split extension of degree $i$}. A degree $i$ extension $S$ is called \emph{split} if it is equivalent to the canonically split extension of degree $i$. One verifies that $S\cup T$ is split whenever $S$ or $T$ is. 

\paragraph{(Pullback and Pushforward)} Consider two morphisms $a:A'\to A$ and $b:B'\to B$ in $\eC$. By \ref{item:excat-pullback}, the pullback $E\times_A A'$ and the pushout $B'\sqcup_B E$ exist and their universal property yield that the canonical maps $B\to E\times_A A'$ and $B'\sqcup_B E\to A$ are a kernel and a cokernel of $E\times_A A'\to A'$ and $B'\to B'\sqcup_B E$ respectively. Therefore, for any short exact sequence $S:0\to B\to E\to A\to 0$, we obtain short exact sequences:
\begin{align*}
a^*S: &\quad 0\longrightarrow B\longrightarrow E\times_A A'\longrightarrow A'\longrightarrow 0, \\
b_*S: &\quad 0\longrightarrow B'\longrightarrow B'\sqcup_B E\longrightarrow A\longrightarrow 0,
\end{align*}
which are respectively degree $1$ extensions of $A'$ by $B$ and $A$ by $B'$. We call $a^*S$ and $b_* S$ the \emph{pullback of $S$ by $a$} and the \emph{pushout of $S$ by $b$}. Pullbacks are immediately extended to degree $i>1$ extensions $S$ by first writing it as a splitting $S'\cup S_i$ where $S_i$ (resp $S'$) is a degree $1$ (resp. degree $i-1$) extension, then by setting $a^*S$ to be $S'\cup a^*S_i$. Dually for pushforwards.\\
We leave the following list of facts without proof.
\begin{enumerate}[label=$(\alph*)$]
\item\label{item:pullback-pushout-split} A pullback or pushout of a split sequence is itself split.
\item We have $\id_A^*S\equiv S$ and $\id_{B*}S\equiv S$.
\item If $S\equiv S'$, then $a^*S\equiv a^*S'$ and $b_*S\equiv b_*S'$. 
\item\label{item:pullback-pushout-composition} If $a':A''\to A'$ and $b':B'\to B''$, then $(a'\circ a)^*S\equiv a'^*(a^*S)$ and $(b\circ b')^*S\equiv b^*(b'^*S)$.
\item\label{item:exact-functor} Given $F:\eC\to \eD$ an exact functor, $F(S')\equiv F(S)$ whenever $S'\equiv S$.
\item\label{item:exact-functor-pullpush} Given $F:\eC\to \eD$ an exact functor, $F(a)^*F(S)\equiv F(a^*S)$ and $F(b)_*F(S)\equiv F(b_* S)$.
\item We have $b_*(a^*S)\equiv a^*(b_*S)$ as extensions of $A'$ by $B'$.
\item\label{item:split-pullback} The sequences $f_*S$ and $g^*S$ are split, where $f$ and $g$ are as in \eqref{eq:typical-degreeIextension}.
\end{enumerate}

\paragraph{(Extension groups)} We assume that $\eC$ is small. As a consequence, equivalence classes of degree $i$ extensions of $A$ by $B$ form a set, pointed by the equivalence class of the split extension, which we denote by:
\[
\Ext^i_{\eC}(A,B).
\]
The cup product defines a map of pointed set $\Ext^i_{\eC}(A,B)\times \Ext^j_{\eC}(B,C)\to \Ext^{i+j}_{\eC}(A,C)$. Proprieties \ref{item:pullback-pushout-split}--\ref{item:pullback-pushout-composition} ensure that $\Ext^i_{\eC}$ defines a bifunctor $\eC^{\operatorname{op}}\times \eC\to \mathbf{Set}_*$ of pointed sets. By \ref{item:exact-functor}, an exact functor $F:\eC\to \eD$ induces a natural transformation of bifunctors of pointed sets $\Ext^i_{\eC}(F):\Ext^i_{\eC}(-,-)\to \Ext^i_{\eC}(F(-),F(-))$.

This construction can be refined into a bifunctor of abelian groups under the \emph{Baer sum}. If $\nabla_A:A\to A\oplus A$ denotes the diagonal embedding and $\Delta_B:B\oplus B\to B$ the addition, the \emph{degree $i$ Baer sum} is defined as the composition:
\[
+:\Ext^i_{\eC}(A,B)\times \Ext^i_{\eC}(A,B)\to \Ext^i_{\eC}(A\oplus A,B\oplus B)\xrightarrow{\nabla_A^* \circ \Delta_{B*}} \Ext^i_{\eC}(A,B). 
\]
The first map corresponds to direct sums of exact sequences, which are also exact \cite[Prop. 2.9]{buhler}. Then, $\Ext^i_{\eC}(A,B)$ becomes an abelian group under $+$ with the class of the split exact sequence as the identity. The cup product, as well as $\Ext^i_{\eC}(F)$, become morphisms of abelian groups in virtue of properties \ref{item:exact-functor}-\ref{item:exact-functor-pullpush}. If the category $\eC$ is $R$-linear for some commutative ring $R$, then $\Ext^i_{\eC}(A,B)$ becomes an $R$-module with $r\in R$ acting as pullback by the multiplication by $r$ on $A$. 

\subsection{Long exact sequence of Extensions groups}\label{subsec:Long exact sequence of Extensions groups}
We again assume that $\eC$ is small. Consider a short exact sequence $S:0\to X\xrightarrow{f} Y \xrightarrow{g} Z\to 0$. Let also $A$ be an object of $\eC$. Then, one can produce two natural complexes of abelian groups.\\
The \emph{pushout sequence associated to $S$ and $A$}:
\begin{equation}
\begin{tikzcd}
0 \arrow[r] & \Hom_{\eC}(A,X)\arrow[r,"f\circ -"] & \Hom_{\eC}(A,Y)\arrow[r,"g\circ -"] & \Hom_{\eC}(A,Z)\arrow[d,"h\mapsto h^* S"] \\
& \Ext^1_{\eC}(A,Z)\arrow[d,"T\mapsto S\cup T"'] & \Ext^1_{\eC}(A,Y)\arrow[l,"g_*"'] & \Ext^1_{\eC}(A,X)\arrow[l,"f_*"'] \\
& \Ext^2_{\eC}(A,X)\arrow[r,"f_*"] & \Ext^2_{\eC}(A,Y)\arrow[r,"g_*"] & \Ext^2_{\eC}(A,Z)\arrow[r,"T\mapsto S\cup T"] & \cdots
\end{tikzcd}\nonumber
\end{equation}
and the \emph{pullback sequence associated to $S$ and $A$}:
\begin{equation}
\begin{tikzcd}
0 \arrow[r] & \Hom_{\eC}(Z,A)\arrow[r,"-\circ g"] & \Hom_{\eC}(Y,A)\arrow[r,"-\circ f"] & \Hom_{\eC}(X,A)\arrow[d,"h\mapsto h_* S"] \\
& \Ext^1_{\eC}(X,A)\arrow[d,"T\mapsto S\cup T"'] & \Ext^1_{\eC}(Y,A)\arrow[l,"f^*"'] & \Ext^1_{\eC}(Z,A)\arrow[l,"g^*"'] \\
& \Ext^2_{\eC}(Z,A)\arrow[r,"g^*"] & \Ext^2_{\eC}(Y,A)\arrow[r,"f^*"] & \Ext^2_{\eC}(X,A)\arrow[r,"T\mapsto S\cup T"] & \cdots
\end{tikzcd}\nonumber
\end{equation}
That two successive arrows in these sequences compose to zero follows easily from properties \ref{item:pullback-pushout-split}--\ref{item:split-pullback} above. 

\begin{Proposition}\label{prop:vanishing-higher-ext}
Assume that the functor $\Ext^i_{\eC}(A,-)$ transforms admissible epi into surjections. Then, the functors $\Ext^n_{\eC}(A,-)$ are identically zero for $n>i$. Dually, if $\Ext^i_{\eC}(-,B)$ transforms admissible mono into surjections, the functors $\Ext^n_{\eC}(-,B)$ are identically zero for $n>i$.
\end{Proposition}
\begin{proof}
The class of any extension $U$ in $\Ext^{i+1}_{\eC}(A,B)$ is decomposed as the cup product of a degree one extension $S$ and a degree $i$ extension $T$ whose class are in $\Ext^1_{\eC}(E,B)$ and $\Ext^{i}_{\eC}(A,E)$ respectively for some object $E$. The pushout sequence associated to $S$ and $A$ together with the surjectivity of $g_*$ imply that the map
\[
S\cup- :\Ext^i_{\eC}(A,E)\longrightarrow \Ext^{i+1}_{\eC}(A,B)
\]
starred in the pushout sequence, is zero. Since $U$ is in its image, its class is zero. Therefore, $\Ext^{i+1}_{\eC}(A,B)=(0)$.

For the general case, observe similarly that a degree $n>i+1$ extension $U$ decomposes as the cup product of a degree $i+1$ and $n-(i+1)$ extensions respectively. The class of the first is zero, hence so is the class of $U$.
\end{proof}

The author is not aware of a reference that states the exactness of the pushout and pullback long sequences at this level of generality. However, we prove under reasonable assumptions that they are. 

\begin{Proposition}\label{prop:pull-push-are-exact}
Assume that $\eC$ is a small exact category that is weakly idempotent complete. Then pushout and pullback sequences are exact.
\end{Proposition}

We begin with a lemma.
\begin{Lemma}\label{lem:conciliate-definitions}
Let $h:\eC\to \eD$ be an embedding of $\eC$ into an exact category $\eD$ which reflects exactness and admissible epi. Let $S:0\to X_n\to X_{n-1}\to \cdots \to X_1\to 0$ be a sequence in $\eC$. The following are equivalent:
\begin{enumerate}[label=$(\roman*)$]
\item\label{item:Grayson-exact} $S$ is exact,
\item\label{item:h-exact} $h(S)$ is exact in $\eD$.
\end{enumerate} 
\end{Lemma}
\begin{proof}
If $S$ is exact, then $S=\cup_i S_i$ for short exact sequences $S_i$, hence $h(S)=\cup_i h(S_i)$. Because $h$ reflects exactness, this proves \ref{item:Grayson-exact}$\Longrightarrow$\ref{item:h-exact}. \\
We prove the converse by induction on the length $n$ of the sequence; for $n\leq 3$, there is nothing to prove. For $n>3$,  suppose $h(S)$ is exact. We splice up $h(S)$ into exact sequences as
\begin{equation}\label{eq:splice-h}
\left(0\to h(X_n)\to \cdots \to h(X_2)\to Z\to 0\right)\cup \left(0\to Z\to h(X_1)\xrightarrow{h(d_1)} h(X_0)\to 0\right).
\end{equation}
In particular $h(d_1)$ is an admissible epi and so is $d_1$ by reflection. Hence $d_1$ admits a kernel $Y_1$ in $\eC$ and the sequence $V:0\to Y_1\to X_1\to X_0\to 0$ is short exact. By universality of the kernel, the splicing \eqref{eq:splice-h} corresponds to $h(S)=h(U)\cup h(V)$ for some sequence $U$ in $\eC$:
\[
U:0\to X_n\xrightarrow{d_n} X_{n-1}\to \cdots \to X_2\to Y_1\to 0.
\]
Since $h(U)$ is exact, so is $U$ by induction hypothesis. Since $V$ is exact, so is their cup product $U\cup V=S$. 
\end{proof}

\begin{proof}[Proof of Proposition \ref{prop:pull-push-are-exact}]
It suffices to apply the statement of Lemma \ref{lem:conciliate-definitions} to the functor $i:\eC\to \eA$ produced by the Freyd--Mitchell embedding in \ref{item:abelian-embedding-reflects-epi}. In particular, the pushout and pullback long sequences become the respective ones for the abelian category $\eA$ for which the statement is known. 
\end{proof}

\subsection{Extensions and adjunctions}
Let $(\eC,\operatorname{ex}(\eC))$ and $(\eD,\operatorname{ex}(\eD))$ be exact categories. Let $F:\eC\to \eD$ be an exact functor that admits a right-adjoint $G:\eD\to \eC$ which we assume to be exact as well (it is a priori unclear that this holds). We denote by $\eta:\id_{\eD}\to GF$ and $\varepsilon:FG\to \id_{\eC}$ the unit and counit of this adjunction. In particular, given any objects $A$, $B$ of $\eC$ and $\eD$ respectively, there are maps of groups:
\begin{align*}
g:& \Ext^i_{\eD}(F(A),B)\xrightarrow{G}\Ext^i_{\eC}(GF(A),G(B))\xrightarrow{\eta^*_A} \Ext^i_{\eC}(A,G(B)), \\
f:& \Ext^i_{\eC}(A,G(B))\xrightarrow{F}\Ext^i_{\eD}(F(A),FG(B))\xrightarrow{(\varepsilon_B)_*} \Ext^i_{\eD}(F(A),B).
\end{align*}
\begin{Proposition}\label{prop:ext-adjunction}
The maps $f$ and $g$ are mutually inverse. In particular, for all $i> 0$, there are isomorphisms of groups:
\[
\Ext^i_{\eD}(F(A),B)\cong \Ext^i_{\eC}(A,G(B)).
\]
\end{Proposition}
\begin{proof}
We only prove that $f\circ g$ is equivalent to the identity as the dual statement is proven along the same lines. We consider an extension in $\Ext^i_{\eD}(F(A),B)$ represented by a long exact sequence $S:0\to B\to E_1 \to \cdots \to E_i \to F(A)\to 0$. We have the following commutative diagram, whose rows are exact by assumption:
\begin{equation}
\begin{tikzcd}
0 \arrow[r] & B  \arrow[dr, phantom, "\square" description]\arrow[r] & E_1 \arrow[r] & \cdots \arrow[r] & E_i \arrow[r] & F(A) \arrow[r] & 0  \\
0 \arrow[r] & FG(B) \arrow[r]\arrow[u,"\varepsilon_B"] & FG(E_1) \arrow[r]\arrow[u,"\varepsilon_{E_1}"] & \cdots \arrow[r]\arrow[u,"\varepsilon"] & FG(E_i) \arrow[r]\arrow[u,"\varepsilon_{E_i}"] & FGF(A) \arrow[r]\arrow[u,"\varepsilon_{F(A)}"] & 0 \\
0 \arrow[r] & FG(B) \arrow[r]\arrow[u,equal]\arrow[d,"\varepsilon_B"] & FG(E_1) \arrow[r]\arrow[u,equal]\arrow[d] & \cdots \arrow[r]\arrow[u,equal]\arrow[d,equal] & F(*) \arrow[ur, phantom, "\urcorner", very near start]\arrow[r]\arrow[u,"F(t)"]\arrow[d,equal] & F(A) \arrow[r]\arrow[u,"F(\eta_A)"]\arrow[d,equal] & 0 \\
0 \arrow[r] & B \arrow[r] & **  \arrow[ul, phantom, "\ulcorner", very near start]\arrow[r] & \cdots \arrow[r] & F(*) \arrow[r] & F(A) \arrow[r] & 0
\end{tikzcd}
\nonumber
\end{equation}
Above, we denoted by $*$ the pullback of $G(E_i)\to GF(A)\leftarrow A$ and by $**$ the pushforward of $B\leftarrow FG(B)\rightarrow FG(E_1)$, as indicated by the symbols $\urcorner$ and $\ulcorner$.\\
This diagram corresponds to $S\stackrel{\varepsilon}{\leftarrow} FG(S)\leftarrow F(g(S)) \rightarrow f(g(S))$ vertically. Therefore, it suffices to show that the bottom row is equivalent to the first one. 

By commutativity of the square $\square$ and universal property of the pushforward, there exists a map $s:**\to E_1$ whose composition with $FG(E_1)\to **$ is the map $\varepsilon_{E_1}$ and whose composition with $B\to **$ is $B\to E_1$ starred in $S$. In particular, we get a commutative diagram 
\begin{equation}
\begin{tikzcd}
0 \arrow[r] & B \arrow[r]\arrow[d,equal] & ** \arrow[r]\arrow[d,"s"] & \cdots \arrow[r]\arrow[d,"\varepsilon"] & FG(E_i) \arrow[r]\arrow[d,"\varepsilon_{E_i}\circ F(t)"] & F(A) \arrow[r]\arrow[d,"\varepsilon_{F(A)}\circ F(\eta_A)"] & 0\\
0 \arrow[r] & B  \arrow[r] & E_1 \arrow[r] & \cdots \arrow[r] & E_i \arrow[r] & F(A) \arrow[r] & 0 
\end{tikzcd}
\nonumber
\end{equation}
To conclude that the above diagram is an equivalence among $f(g(S))$ and $S$, it remains to show that $\varepsilon_{F(A)}\circ F(\eta_A)=\id_{F(A)}$. This results from diagram:
\begin{equation}
\begin{tikzcd}
\Hom_{\eD}(F(A),FGF(A)) \times \Hom_{\eD}(FGF(A),F(A))  \arrow[r] & \Hom_{\eD}(F(A),F(A)) \\
\Hom_{\eC}(A,GF(A)) \arrow[u,"F\times \operatorname{ad}" description]\times \Hom_{\eC}(GF(A),GF(A)) \arrow[r] &\Hom_{\eC}(A,GF(A))\arrow[u,"\operatorname{ad}"]
\end{tikzcd}
\nonumber
\end{equation}
which commutes, as the adjunction maps $\operatorname{ad}$ are obtained by composing $F$ with the natural transformation $FG\to \id_{\eD}$. It remains to apply the commutativity of the above square to the couple $(\eta_A,\id_{GF(A)})$ sitting in the lower left corner. Via the lower path, it is mapped to $\eta_A$  and then to $\id_{F(A)}$. Via the upper path, to $(F(\eta_A),\varepsilon_{F(A)})$, then to $\varepsilon_{F(A)}\circ F(\eta_A)$. Hence $\varepsilon_{F(A)}\circ F(\eta_A)=\id_{F(A)}$ as announced.
\end{proof}

\subsection{Construction of exact categories}
We present a way of constructing exact categories out of certain functors. This procedure will be used in the sequel to endow the category of $A$-motives with the structure of an exact category (\emph{cf}. Definition \ref{def:exact-sequence}). Let $(\eD,\operatorname{ex}(\eD))$ be an exact category and let $F:\eC\to \eD$ be an additive functor. 

\begin{Proposition}\label{prop:Functor-to-exactCat}
Assume the following:
\begin{enumerate}[label=$(\roman*)$]
\item\label{item:admits-kernel-of-epi} Given $g:Y\to Z$ be a morphism in $\eC$, if $F(g)$ is an admissible epi of $\eD$, then $g$ admits a kernel $\ker g\to Y$ in $\eC$ and $F(\ker g\to Y)$ is a kernel of $F(g)$.
\item\label{item:admits-cokernel-of-mono} Given $f:X\to Y$ be a morphism in $\eC$, if $F(f)$ is an admissible mono of $\eD$, then $f$ admits a cokernel $Y\to\coker f$ in $\eC$ and $F(Y\to\coker f)$ is a cokernel of $F(f)$.
\end{enumerate}
Then there exists a unique structure of exact category on $\eC$ making $F$ a functor which reflects exactness. In fact, $F$ further reflects admissible epi and mono.
\end{Proposition}
\begin{proof}
Since $F$ has to reflect exactness, the candidate class is uniquely determined to be the class $\operatorname{ex}(\eC)$ of short sequences $S:0\to X\stackrel{f}{\to} Y\stackrel{g}{\to} Z \to 0$ such that $f$ is a kernel of $g$, $g$ is a cokernel of $f$, and $F(S)$ is exact in $\eD$.

Assumptions \ref{item:admits-kernel-of-epi} and \ref{item:admits-cokernel-of-mono} ensure that for short sequences of $\operatorname{ex}(\eC)$ the former non trivial arrow is a kernel of the latter, and dually. It remains to show the axioms of Definition \ref{def:exact-sequence}. Axioms \ref{item:identity-admissible} and \ref{item:composition-admissible} are clear as $F$ is a functor. To prove \ref{item:excat-pullback}, we fix a short sequence $S:0\to X\to Y\to Z\to 0$ in $\operatorname{ex}(\eC)$ and a morphism $Z'\to Z$. We claim that the pullback $Y\times_Z Z'$ is representable in $\eC$; indeed, $F(Y)\times_{F(Z)}F(Z')$ is representable in $\eD$ and hence $F(Y)\oplus F(Z')=F(Y\oplus Z')\to F(Z)$ admits a kernel. By the \emph{obscure axioms} in $\eD$ \cite[Prop. 2.16]{buhler}, it is an admissible epi since the composition $F(Y)\to F(Y\oplus Z')\to F(Z)$ is. Hence, $Y\oplus Z'\to Z$ admits a kernel by \ref{item:admits-kernel-of-epi}--this kernel represents the pullback $Y\times_Z Z'$--and we have $F(Y\times_Z Z')\cong F(Y)\times_{F(Z)}F(Z')$. \\
In particular, by \ref{item:excat-pullback} for $\eD$, $F(Y\times_Z Z'\to Z)$ is an admissible epi, hence $Y\times_Z Z'\to Z$ admits a kernel in $\eC$ by \ref{item:admits-kernel-of-epi} and $\ker F(Y\times_Z Z'\to Z)\cong F(\ker(Y\times_Z Z'\to Z))$. That is, the sequence 
\[
F(0\to \ker(Y\times_Z Z'\to Z)\to Y\times_Z Z'\to Z\to 0)
\]
belongs to $\operatorname{ex}(\eD)$, hence $0\to \ker(Y\times_Z Z'\to Z)\to Y\times_Z Z'\to Z\to 0$ is in $\operatorname{ex}(\eC)$. The dual statement follows similarly.

That $F$ further reflects admissible mono and epi is clear from the class $\operatorname{ex}(\eC)$.
\end{proof}

\subsection{Monoidal exact categories}\label{subsec:monoidal-exact-cat}
Let $(\eC,\operatorname{ex}(\eC))$ be an exact category and let $\otimes:\eC\times \eC\to \eC$ be a bifunctor making $(\eC,\otimes)$ an additive monoidal category \cite[0FNA]{stack}. We denote by $\mathbbm{1}$ a neutral object for $\otimes$. 

\begin{Definition}\label{def:monoidal-exact}
The datum of $(\eC,\operatorname{ex}(\eC),\otimes)$ is called a \emph{monoidal exact category} if, given any object $X$ of $\eC$, the endofunctors $-\otimes X$ and $X\otimes -$ are exact.
\end{Definition}

It is immediate that, given any couple of objects $(A,B)$ in $\eC$, the functor $-\otimes X$ determines group morphisms
\begin{equation}\label{eq:monoidal-structure-ext}
-\otimes X: \Ext^i_{\eC}(A,B)\longrightarrow \Ext^i_{\eC}(A\otimes X,B\otimes X), \quad [S]\longrightarrow [S\otimes X].
\end{equation}
Recall that $X$ is called \emph{dualizable} if the functor $-\otimes X$ admits a right-adjoint, denoted by $\cHom(X,-)$. We denote by $X^{\vee}$ the object $\cHom(X,\mathbbm{1})$, called \emph{the dual of $X$}. Since $\cHom(X,-)\cong X^{\vee}\otimes -$, the functor $\cHom(X,-)$ is exact. In particular, as a corollary of Proposition \ref{prop:ext-adjunction}, we obtain:
\begin{Corollary}\label{cor:dualizable-tensor}
Let $X$ be a dualizable object of $\eC$. Then,  there are canonical isomorphisms of groups
\[
\Ext^i_{\eC}(A\otimes X,B)\cong \Ext^i_{\eC}(A,B\otimes X^{\vee}).
\]
\end{Corollary}

\end{document}